\algrenewcommand\alglinenumber[1]{{\sffamily\footnotesize#1}}
\xpatchcmd{\algorithmic}{\itemsep\z@}{\itemsep=2ex plus2pt}{}{}
\newtheorem{theorem}{Theorem}
\newtheorem{lemma}{Lemma}
\newtheorem{corollary}{Corollary}
\newtheorem{remark}{Remark}
\newtheorem{prop}{Proposition}
\newtheorem{definition}{Definition}
\newtheorem{observation}{Observation}
\newtheorem{Assumption}{Assumption}
\numberwithin{equation}{section}
\numberwithin{example}{section}
\numberwithin{theorem}{section}
\numberwithin{lemma}{section}
\numberwithin{corollary}{section}
\numberwithin{prop}{section}
\numberwithin{definition}{section}
\numberwithin{remark}{section}
\numberwithin{Assumption}{section}
\tikzset{
	treenode/.style = {shape=rectangle, rounded corners,
		draw, align=center,
		top color=white, bottom color=cyan!20},
	root/.style     = {treenode, font=\Large, bottom color=yellow},
	env/.style      = {treenode, font=\ttfamily},
	con/.style      = {treenode, font=\ttfamily, bottom color=green!25},
	nocon/.style    = {treenode, font=\ttfamily, bottom color=red!30},
	dummy/.style    = {circle,draw}
}
\newcommand{\hh}{\hat{h}_{\textnormal{PL}}}
\newcommand{\eat}[1]{}
\newcommand{\mM}{\mathcal{M}^\star}
\DeclarePairedDelimiter{\floor}{\lfloor}{\rfloor}
\DeclareMathOperator*{\argmax}{\arg\!\max}
\renewcommand{\bar}[1]{\overline{#1}}
\renewcommand{\hat}[1]{\widehat{#1}}
\renewcommand{\tilde}[1]{\widetilde{#1}}
\newcommand{\E}{\mathbb{E}_N}
\newcommand{\EE}{\mathbb{E}}
\renewcommand{\P}{\mathbb{P}_N}
\newcommand{\lra}{\leftrightarrow}
\newcommand{\scS}{\cS^*}
\newcommand{\R}{\mathbb{R}}
\newcommand{\by}{\mathbf{y}}
\newcommand{\bby}{\mathbf{Y}}
\newcommand{\PB}{\mathbb{P}_{N,\textnormal{bip}}}
\newcommand{\htmp}{\hat{\theta}_{\mathrm{MP}}}
\newcommand{\cU}{\mathcal{U}}
\newcommand{\cV}{\mathcal{V}}
\newcommand{\PPe}{\mathbb{P}_{\bbeta,\textrm{edge}}}
\newcommand{\defn}{\coloneqq}
\newcommand{\tcT}{\tilde{\boldsymbol{\mathcal{T}}}}
\newcommand{\bc}{\mathbf{c}}
\newcommand{\tf}{\tilde{f}}
\newcommand{\A}{\mathbf{A}}
\newcommand{\FG}{\mathfrak{G}}
\newcommand{\tms}{\tilde{\boldsymbol{\sigma}}^{(N)}}
\newcommand{\ud}{\overline{d}_N}
\newcommand{\mz}{\mathbf{P}}
\newcommand{\ns}{\mathrm{rank}}
\newcommand{\M}{\mathcal{M}}
\newcommand{\G}{\mathbf{G}}
\newcommand{\mca}{\mathcal{A}}
\newcommand{\mcb}{\mathcal{B}}
\newcommand{\tI}{\tilde{\mathcal{I}}}
\newcommand{\De}{\Delta}
\newcommand{\st}{t^\star}
\newcommand{\Q}{\mathbf{Q}}
\newcommand{\cD}{\mathcal{D}}
\newcommand{\cE}{\mathcal{E}}
\newcommand{\cF}{\mathcal{F}}
\newcommand{\ind}{\mathbbm{1}}
\newcommand{\tp}{p}
\newcommand{\tq}{q}
\newcommand{\io}{\iota}
\newcommand{\cT}{\boldsymbol{\mathcal{T}}}
\newcommand{\B}{\mathcal{B}}
\newcommand{\cS}{\mathcal{S}}
\newcommand{\abk}{\mathbf{a}^k}
\newcommand{\bbk}{\mathbf{b}^{k_1}}
\newcommand{\mbk}{\mathbf{m}^{k_2}}
\newcommand{\obk}{\mathbf{o}^{k_2}}
\newcommand{\hbm}{\hat{B}_{\textnormal{PL}}}
\newcommand{\hbem}{\hat{\beta}_{\textnormal{PL}}}
\newcommand{\lmn}{\lambda_{\textnormal{min}}}
\newcommand{\fs}{f_{\star}}
\newcommand{\tB}{\tilde{B}}
\newcommand{\tlh}{\tilde{h}}
\newcommand{\tto}{\tilde{t}_1}
\newcommand{\ttt}{\tilde{t}_2}
\newcommand{\bbeta}{\boldsymbol{\beta}}
\newcommand{\PP}{\mathbb{P}}
\newcommand*{\rom}[1]{\expandafter\@slowromancap\romannumeral #1@}
\def\argmax{\mathop{\rm argmax}}
\newcommand{\tih}{\tilde{h}}
\newcommand{\el}{\ell}
\newcommand{\vrh}{\varrho}
\newcommand{\mfm}{\mathfrak{m}}
\newcommand{\tcE}{\tilde{\mathcal{E}}}
\newcommand{\bQ}{\mathbf{Q}}
\newcommand{\tph}{\tilde{\phi}}
\newcommand{\tal}{\tilde{\alpha}}
\newcommand{\Rg}{\R^{\geq 0}}
\newcommand{\hbpm}{\hat{\beta}_{1,\mathrm{PL}}}
\newcommand{\ism}{\mathbb{P}_{N,B_0}^{\textnormal{IS}}}
\newcommand{\cI}{\mathcal{I}}
\newcommand{\cJ}{\mathcal{J}}
\newcommand{\fL}{\mathfrak{N}}
\newcommand{\fM}{\mathfrak{M}}
\newcommand{\tQ}{\tilde{\Q}}
\newcommand{\N}{\mathbb{N}}
\newcommand{\tcS}{\tilde{\cS}}
\newcommand{\tk}{\tilde{k}}
\newcommand{\tbt}{\tilde{\beta}}
\newcommand{\ms}{\boldsymbol{\sigma}^{(N)}}
\newcommand{\fa}{\lVert \A_N\rVert_F^2}
\newcommand{\mc}{\mathbf{c}^{(N)}}
\newcommand{\si}{\sigma}
\newcommand{\Une}{U_{N,\textrm{edge}}}
\newcommand{\Vne}{V_{N,\textrm{edge}}}
\newcommand{\Phb}{\Phi_{\bbeta}}
\newcommand{\vphb}{\varphi_{\bbeta}}
\newcommand{\ps}{p^\star}
\newcommand{\psb}{\ps_{\bbeta}}
\newcommand{\FB}{\mathfrak{B}}
\newcommand{\sk}{k^*}
\newcommand{\cC}{\mathcal{C}^*}
\newcommand{\tcC}{\tilde{\mathcal{C}}^*}
\newcommand{\msk}{\mathcal{K}^*}
\newcommand{\mA}{\mathcal{A}^*}
\newcommand{\ib}{\mathbf{i}^p}
\newcommand{\jb}{\mathbf{j}^q}
\newcommand{\mcH}{\mathcal{H}}
\newcommand{\EEB}{\mathbb{E}_{\bbeta,\mathrm{edge}}}
\newcommand{\FR}{\mathfrak{R}}
\newcommand{\mH}{\mathcal{H}^*}
\newcommand{\Tne}{T_{N,\mathrm{edge}}}
\newcommand{\mU}{\mathfrak{U}}
\newcommand{\mV}{\mathfrak{V}}
\newcommand{\tb}{\tilde{\pi}}
\newcommand{\sech}{sech}
\definecolor{LightCyan}{rgb}{0.88,1,1}
\definecolor{Gray}{gray}{0.9}
\begin{document}
	
	\renewcommand{\abstractname}{}    
	\renewcommand{\absnamepos}{empty}
	
	\begin{frontmatter}
		
		\title{Pivotal CLTs for Pseudolikelihood via Conditional Centering in Dependent Random Fields}
		\runtitle{Pseudolikelihood and conditional centering}

		\begin{aug}
			
			\author{\fnms{Nabarun} \snm{Deb}\ead[label=e1]{nabarun.deb@chicagobooth.edu}}
			
			\runauthor{N. Deb}
			
			\address{University of Chicago\\ \printead{e1}}
			
		\end{aug}
		\vspace{0.1in}
		\begin{abstract}
			In this paper, we study fluctuations of conditionally centered statistics of the form 
			$$N^{-1/2}\sum_{i=1}^N c_i(g(\si_i)-\E[g(\si_i)|\si_j,j\neq i])$$
			where $(\si_1,\ldots ,\si_N)$ are sampled from a dependent random field, and $g$ is some bounded function. Our first main result shows that  under weak smoothness assumptions on the conditional means (which cover both sparse and dense interactions), the above statistic converges to a Gaussian \emph{scale mixture} with a random scale determined by a \emph{quadratic variance} and an \emph{interaction component}. We also show that under appropriate studentization, the limit becomes a pivotal Gaussian. We leverage this theory to develop a general asymptotic framework for maximum pseudolikelihood (MPLE) inference in dependent random fields. We apply our results to Ising models with pairwise as well as higher-order interactions and exponential random graph models (ERGMs). In particular, we obtain a joint central limit theorem for the inverse temperature and magnetization parameters via the joint MPLE (to our knowledge, the first such result in dense, irregular regimes), and we derive conditionally centered edge CLTs and marginal MPLE CLTs for ERGMs without restricting to the ``sub-critical" region. Our proof is based on a method of moments approach via combinatorial decision-tree pruning, which may be of independent interest.
		\end{abstract}
		
		\begin{keyword}[class=MSC]
			\kwd[Primary ]{82B20}
			\kwd[; secondary ]{82B26}
		\end{keyword}
		
		\begin{keyword}
			\kwd{Decision trees, Exponential random graph model, Fa\`{a} Di Bruno formula, Gaussian scale mixtures, Ising model,  Method of moments}
		\end{keyword}
	\end{frontmatter}
	
	\section{Introduction}\label{sec:intro}
	
	Dependent random fields---and especially \emph{network models}---are now routine in applications ranging from social and economic interactions to spatial imaging and genomics (see \cite{Fienberg2010a,Fienberg2010b} for surveys). Data from such models often exhibits significant deviations from classical Gaussian approximations. A natural class of statistics to analyze in such models are \emph{conditionally centered} averages (see \cite{Comets1998,Janzura2002,gaetan2004central}), where one recenters the observations by their mean, given all other observations. Crucially, such conditionally centered CLTs are closely tied to maximum \emph{pseudolikelihood} estimators (MPLEs) through the MPLE score (see \cite{jensen1994asymptotic,hofling2009estimation,ekeberg2013improved}). This connection is practically important because in many graphical/Markov random field models (such as \emph{Ising models}, \emph{exponential random graph models} (ERGMs), etc.), computing the MLE is impeded by an \emph{intractable normalizing constant}, whereas pseudolikelihood replaces the joint likelihood with a product of tractable conditional models, scales to large networks, and is widely usable in practice.
	
	However, most existing theory for conditionally centered statistics and for MPLE focuses on \emph{local dependence} --- e.g., bounded degree or sparse neighborhoods --- and does not cover \emph{realistic dense regimes} in which every node may have many connections (which scale with the size of the network). This paper bridges that gap by developing a general limit theory for conditionally centered statistics under weak and verifiable assumptions. Our results accommodate both sparse and dense interactions, as well as regular and irregular network connections. In particular, we deliver valid studentized inference for pseudolikelihood in network/Markov random field settings. As examples, we obtain new CLTs for conditionally centered averages and pseudolikelihood estimators in Ising models (with pairwise and tensor interactions), and exponential random graph models, \emph{without imposing sparsity, regularity, or high temperature  restrictions}. 
	
	To be concrete, let $\B$ denote a Polish space. For $N\ge 1$, suppose  $\ms:=(\si_1,\ldots ,\si_N)\sim \P$ where $\P$ is a probability measure supported on $\B^N$. Let $g:\B\to [-1,1]$ be a bounded function. Also let $\mc:=(c_1,\ldots ,c_N)\in\R^N$. We are interested in studying the fluctuations of the following conditionally centered weighted average of $g(\si_i)$'s:
	\begin{equation}\label{eq:pivotstat}T_N:=\frac{1}{\sqrt{N}}\sum_{i=1}^N c_i\big(g(\si_i)-\E[g(\si_i)|\si_j,j\neq i]\big)\end{equation}
	under $\P$. If $\P$ is a product measure on $\B^N$, then the centering in $T_N$ reduces to $\E[g(\si_i)]$, in which case a limiting normal distribution for $T_N$ can be derived under mild assumptions on $\mc$ using the Lindeberg Feller Central Limit Theorem~\cite{Lindeberg1922}. In the absence of independence, the fluctuations for $T_N$ are known only in very specific cases, mostly restricted to random fields on fixed lattice systems or under strong mixing assumptions (see~\cite{Janzura2002,Comets1998,jensen1994asymptotic,gaetan2004central,jalilian2025central}), or when the dependence is governed by a complete graph model (see \cite{Somabha2021,Sanchayan2025}). However, with the gaining popularity of large network data in modern data science, probabilistic models that facilitate more complex dependence structures have attracted significant attention in both Probability and Statistics; see e.g.,~\cite{Fienberg2010a,Fienberg2010b} for a review. Such models often involve dense interactions and do not satisfy traditional mixing assumptions. Examples include the Ising/Potts model on dense graphs~\cite{Ravikumar2010,Basak2017,Dommers2016}, exponential random graph models~\cite{Ganguly2024,Chatterjee2013,Bhamidi2011}, general pairwise interaction models~\cite{Starr2009,drton2017structure,Ming2016,lee2025clt}, etc. to name a few (see~\cref{sec:examp} for further references). 
	
	The analysis of the statistic $T_N$ (and its variants) is of pivotal importance in the aforementioned models. Their tail probabilities have been exploited in statistical estimation and testing (see~\cite{Chatterjee2007,ghosal2018joint,Daskalakkis2019,DaskalakkisTesting,Mukherjee2018,deb2020detecting}). As mentioned above, the limiting behavior of $T_N$ is inextricably linked to pseudolikelihood estimators which provide a computationally tractable alternative to the MLE. Motivated by these applications, the goal of this paper is to study the fluctuation of $T_N$ in a near \emph{``model-free"} setting. We obtain pivotal limits for $T_N$ under a random (data-driven) studentization (see \cref{theo:CLTmain}) whenever the conditional means satisfy a discrete smoothness condition. This condition accommodates both sparse and dense interactions simultaneously. The studentization involves two components --- the first captures a quadratic variation and the second captures the effect of dependence. As a consequence, we show that $T_N$ converges to a \emph{Gaussian scale mixture} (see \cref{theo:conmain}) when the random scale  converges weakly. As our flagship application, we use our main results to study pseudolikelihood inference in a broad class of models. The flexibility of our main results (Theorems \ref{theo:CLTmain} and \ref{theo:conmain}) ensures that they apply to a plethora of models in one go. Below we highlight our main contributions in further detail.
	
	\subsection{Main contributions}\label{subsec:main-contrib}
	
	1. \textbf{Pivotal and structural limits}
	\begin{itemize}
		\item \emph{Pivotal limit.} In \cref{theo:CLTmain}, we show that there exists two data-driven terms: $U_N$ that captures the quadratic variation and $V_N$ which captures the interaction, such that 
		\[
		\frac{T_N}{\sqrt{U_N+V_N}}\to N(0,1)
		\]
		in the topology of weak convergence, provided the conditional expectations $\E[g(\si_i)|\si_j,j\neq i]$ are smooth with respect to leave-one-out perturbations (see \cref{as:cmean}). This assumption is not tied to a specific model. We illustrate in \cref{sec:mres} using the Ising model that \cref{as:cmean} holds both for sparse and dense interactions, which is the key distinguishing feature of our result with the existing literature. 
		\item \emph{Structural limit.} In the event $(U_N,V_N)$ converges weakly to a distribution $(P_1,P_2)$, $T_N$ converges to a \emph{Gaussian scale mixture}:
		\[
		T_N \;\to\; \sqrt{P_1+P_2}\,Z, \qquad Z\sim\mathcal{N}(0,1)\ \text{independent of }(P_1,P_2).
		\]
		As $(P_1,P_2)$ need not be degenerate, this result is not a consequence of a Slutsky type argument, but instead we prove a joint convergence of $(T_N,U_N,V_N)$. The proof proceeds using a method of moments technique coupled with decision-tree pruning.
		\item \emph{Verifying \cref{as:cmean}.} In \cref{lem:smoothcont}, we also provide a convenient tool for verifying \cref{as:cmean} that is applicable to a broad class of network models. 
	\end{itemize}
	\vspace{0.25em}
	
	2. \textbf{Consequences for pseudolikelihood (MPLE) inference.}
	\begin{itemize}
		\item \emph{Direct import of limits.} Because MPLE is built from local conditional models, its score inherits the conditional-centering structure. Theorems \ref{theo:CLTmain} and \ref{theo:conmain} therefore \emph{transfer} to pseudolikelihood estimators using Z-estimation techniques, yielding a pivotal CLT (see \cref{prop:CLTMPLE}).
		\item \emph{Reality of the mixture.} In \cref{sec:mixturepseudo}, we show the relevance of the Gaussian mixture phenomenon. In an Ising model example on a bipartite-graph, Propositions \ref{prop:bipartg} and \ref{prop:bipartpseudo} show that both $T_N$ and the MPLE have has a Gaussian mixture limit where we identify the mixture components based on the solution of a fixed-point equation. 
	\end{itemize}
	\vspace{0.25em}
	
	3. \textbf{Applications to Ising models: pairwise and higher-order (tensor) interactions.}
	\begin{itemize}
		\item \emph{Generality.} In Theorems \ref{theo:conmainis} and \ref{thm:highordclt}, we obtain the studentized limit of $T_N$ for Ising models under pairwise and under higher order interactions respectively. The only condition required is a certain row summability of the interaction matrix/tensor that is satisfied both in sparse and dense regimes.
		\item \emph{Joint CLTs under irregular interactions.} A fundamental problem in Ising models is the estimation of the inverse temperature $\beta$ and the magnetization parameter $B$. To the best of our knowledge, there are no known CLTs for any estimator of $(\beta,B)$ jointly. In \cref{sec:jointpseudo}, we provide the first \emph{joint} CLT for the inverse temperature and magnetization parameters $(\beta,B)$ using the joint MPLE in dense, \emph{irregular} interaction regimes; see Theorems \ref{thm:jointCLT} and \ref{thm:jointCLThigh} for the pairwise and the higher order interaction cases respectively. 
		\item \emph{Efficiency in approximately regular graphs.} In \cref{sec:marginalpseudo}, we study marginal MPLEs in Ising models, when the interactions are dense and \emph{approximately regular} graphs. In Theorems \ref{thm:margbeta} and \ref{thm:margB}, we prove that the \emph{marginal} MPLEs attain the Fisher-efficient variance, matching the \emph{asymptotic limit of the maximum likelihood estimators} (MLEs). This makes a strong case for MPLEs over MLEs in such regimes as the MLEs are often computationally intractable. To the best of our knowledge, the limit theory for the MPLE was only known for the Curie-Weiss (complete graph) model in the existing literature, whereas our results show that the same limit extends to the much broader regime where the average degree of the underlying graph diverges to $\infty$ (irrespective of the rate).
	\end{itemize}
	\vspace{0.25em}
	
	4. \textbf{Applications to ERGM.}
	\begin{itemize}
		\item \emph{CLT for $T_N$ beyond sub-criticality.} For exponential random graph models (ERGMs) we establish central limit theorems at the level of conditionally centered statistics (see \cref{thm:ergmclt}), under a \emph{variance positivity} condition. Contrary to the existing literature, these results \emph{do not restrict to the well known sub-critical regime}. This is made possible by our main CLT in \cref{theo:CLTmain}, which only requires the smoothness assumption on the conditional means (i.e., \cref{as:cmean}) that is easily verified in ERGMs. In \cref{cor:ergmclt}, we simplify the variance in the sub-critical regime. The same result also applies to the Dobrushin uniqueness regime where the coefficients may take small negative values (not directly covered in the sub-critical regime).
		\item \emph{Marginal MPLE limits beyond sub-criticality.} Using \cref{prop:CLTMPLE}, we then derive studentized CLTs for the marginal MPLE for the coefficient associated with ERGM edges (see \cref{thm:MPLergmCLT}). Once again, we do not restrict to the sub-critical regime. The variance however does simplify considerably in the sub-critical regime which is provided in \eqref{eq:secondconc}.
	\end{itemize}
	
	\subsection{Organization}
	In \cref{sec:mres}, we provide our main result \cref{theo:CLTmain} under \cref{as:cmean}. The same Section also contains a Gaussian scale mixture limit for $T_N$ under added stability conditions. In \cref{sec:MPLE}, we show how the main results can yield a theory for pseudolikelihood based inference. In \cref{sec:howtover}, we provide a convenient analytic technique to verify \cref{as:cmean} that considerably simplifies the verification procedure in many network models. In \cref{sec:examp}, we apply our results to Ising models with pairwise/tensor interactions and ERGMs. In \cref{sec:proof-overview}, we provide a technical road map for proving our main results. Finally the Appendix contains the technical details and proofs.
	
	\section{Main result}\label{sec:mres}
	We begin this section with some notation. Let $\N$ be the set of natural numbers and $[N]$ denote the set $\{1,2,\ldots ,N\}$ for $N\in\N$. We will write $\E$ for expectations computed under $\P$. Given any $\ms=(\si_1,\si_2,\ldots ,\si_N)\in \B^N$ and any set $\cS\subseteq [N]$, let  $\ms_{\cS}:=(\si_{\cS,1},\si_{\cS,2},\ldots ,\si_{\cS,N})$ denote the vector which satisfies:
	\begin{equation}\label{eq:newsig}\si_{\cS,i}=\begin{cases} \si_i & \mbox{if } i\in \cS^c \\ b_0 & \mbox{if } i\in S\end{cases}\end{equation} for all $i\in [N]$, where $b_0$ is an arbitrary but fixed (free of $N$, $\cS$) element in $\B$.
	Define 
	\begin{equation}\label{eq:consig2} t_i\equiv t_i\big(\ms\big):=\E[g(\si_i)|\si_j,j\neq i]
	\end{equation} 
	and for any subset $\cS\subseteq [N]$ set
	\begin{equation}\label{eq:consig}t_i^{\cS}\equiv t_i^{\cS}(\ms):=t_i\big(\ms_{\cS}\big)=\E[g(\si_i)|\sigma_j=b_0\ \mbox{for}\ j\in\cS,\ \si_j\ \mbox{for}\ j\in \cS^c,\ j\neq i],\end{equation}
	where $\ms_{\cS}$ is defined in~\eqref{eq:newsig}. Throughout this paper, we also drop the set notation for singletons, i.e., $\{a\}$ and $a$ will both denote the singleton set with element $a$, as will be obvious from context. With this understanding, and choosing $\cS=\{j\}$ in~\eqref{eq:consig}, we can write $t_i^j=\E[g(\si_i)|\si_k,\ k\neq i,\ \si_j=b_0]$ for $j\neq i$.  Also, we will use $\overset{w}{\longrightarrow}$ to denote weak convergence of random variables and $|A|$ to denote the cardinality of a finite set $A$. Also $\phi$ will denote the empty set throughout the paper. 
	
	We are now in a position to state our main assumptions.
	
	\begin{Assumption}\label{as:coeff}[Uniform integrability of coefficient vector]
		The vector $\mc=(c_1,\ldots ,c_N)$ satisfies the following condition:
		$$\lim_{L\to\infty}\limsup_{N\to\infty}\frac{1}{N}\sum_{i=1}^N c_i^2\ind(|c_i|\geq L)=0.$$
	\end{Assumption}
	The above imposes a uniform integrability condition on the empirical measure $\frac{1}{N}\sum_{i=1}^N \delta_{c_i^2}$. Even in the case where $\P$ is a product measure, to obtain a CLT for $T_N$, it is necessary to assume that the above empirical measure has asymptotically bounded moments. \cref{as:coeff} is a mildly stronger restriction.
	
	\begin{Assumption}\label{as:cmean}[Smoothness of conditional mean]
		For any fixed $N\geq 1, k\geq 2$, there exists a $N\times N\times \ldots \times N$ ($k$-fold) tensor $\Q_{N,k}:=\{\Q_{N,k} (j_1,\ldots ,j_k)\}_{(j_1,\ldots ,j_k)\in [N]^k}$ with non-negative entries, such  that, for any set $\cS=\{j_1,j_2,\ldots ,j_k\}\in [N]^k$ of distinct elements, $\tcS\subseteq [N]$, $\cS\cap \tcS=\phi$, the following holds:
		\begin{equation}\label{eq:cmean1}\bigg|\sum_{D\subseteq \cS\setminus\{j_1\}}(-1)^{|D|} t_{j_1}^{\tcS\cup D}\bigg| \leq \Q_{N,k}(j_1,j_2,\ldots ,j_{k}).
		\end{equation}	
		Further, the tensors $\Q_{N,k}$ satisfy the following property:
		\begin{equation}\label{eq:cmean2}\limsup_{N\to\infty}\max_{\el\in [k]}\max_{j_{\el}\in [N]}\sum_{(\{j_1,j_2,\ldots ,j_{k}\}\setminus \{j_{\el}\})\in [N]^{k-1}} \Q_{N,k}(j_1,j_2,\ldots ,j_k)< \infty.
		\end{equation}
		
		Without loss of generality, we assume for the rest of the paper that  $\Q_{N,k}(j_1,j_2,\ldots ,j_k)$ is symmetric in its last $k-1$ arguments (for every $k\in\N$, $k\geq 2$). This is possible because the left hand side of~\eqref{eq:cmean1} is symmetric about $j_2,\ldots ,j_k$ which means we can replace
		$$\Q_{N,k}(j_1,j_2,\ldots ,j_k)\mapsto \sum_{\sigma\in \mathcal{P}_{k-1}} \Q_{N,k}(j_1,j_{\sigma(1)+1},\ldots ,j_{\sigma(k-1)+1})$$
		where $\mathcal{P}_{k-1}$ is the set of all permutations of $[k-1]$. It is easy to see that under the above transformation, $\Q_{N,k}(j_1,\ldots ,j_k)$ still satisfies~\eqref{eq:cmean2}.
		
	\end{Assumption}
	\noindent \cref{as:cmean} can be interpreted as a boundedness assumption on the discrete derivatives of appropriate conditional means by the elements of a tensor, which is assumed to have bounded row sums (by \eqref{eq:cmean2}). For better comprehension of \cref{as:cmean}, we will use the $\pm 1$-valued Ising model as a working example. It is defined as  
	
	\begin{equation}\label{eq:twospIsing}
		\P^{\textnormal{IS}}(\ms):=\frac{1}{Z_N^{\textnormal{IS}}}\exp\left(\frac{1}{2}(\ms)^{\top}\A_N(\ms)\right),
	\end{equation}
	where each $\sigma_i\in \pm1$, $\A_N$ is a symmetric matrix with non-negative entries and $0$s on the diagonal, and $Z_N^{\textnormal{IS}}$ is the partition function. We choose $\mathcal{B}=[-1,1]\supseteq \{\pm 1\}$ and $b_0=0$. We emphasize that our results hold in much more generality as will be seen in \cref{sec:examp}. Now, as a simple illustration, consider the case $k=2$, $g(x)=x$, $\tcS=\phi$, and $\cS=\{j_1,j_2\}$. Then, under the model \eqref{eq:twospIsing}, the left hand side of \eqref{eq:cmean1} becomes 
	$$|t_{j_1}-t_{j_1}^{j_2}|= \A_N(j_1,j_2)\bigg|\sigma_{j_2}\int_0^1 \sech^2\bigg(\sum_{k\ne j_2} \A_N(j_1,k)\sigma_{k}+s\A_N(j_1,j_2)\sigma_{j_2}\bigg)\,ds\bigg|\le \A_N(j_1,j_2),$$
	where the last inequality uses the fact that $\sech^2(\cdot)$ is bounded by $1$ and $|\sigma_{j_2}|=1$. Therefore, under \eqref{eq:twospIsing}, $\Q_{N,2}(j_1,j_2)$ can be chosen as the entries $\A_N(j_1,j_2)$ of the interaction matrix. Now \eqref{eq:cmean2} reduces to assuming that $\A_N$ has bounded row sums which is a common assumption in this literature (see \cref{sec:ising} for examples). To go one step further, let $k=3$, $g(x)=x$, $\tcS=\phi$, and $\cS=\{j_1,j_2,j_3\}$. In that case, the left hand side of \eqref{eq:cmean1} becomes 
	\begin{align*}
		&\;\;\;\;|t_{j_1}-t_{j_1}^{j_2}-t_{j_1}^{j_3}+t_{j_1}^{j_2,j_3}| \\ &=\A_N(j_1,j_2)\A_N(j_1,j_3)\bigg|\sigma_{j_2}\sigma_{j_3}\int_0^1\int_0^1 (\tanh)''\big(\sum_{k\neq j_2,j_3} \A_N(j_1,k)\sigma_k+\A_N(j_1,j_2)\sigma_{j_2}+\A_N(j_1,j_3)\sigma_{j_3}\big)\,ds\,dt\bigg| \\ &\le \A_N(j_1,j_2)\A_N(j_1,j_3).
	\end{align*}
	In the last inequality we additionally use the fact that $(\tanh''(\cdot))$ is uniformly bounded by $1$. Therefore the entries of the third order tensor $\Q_{N,3}(j_1,j_2,j_3)$ can be chosen as $\A_N(j_1,j_2)\A_N(j_1,j_3)$. Further if we assume that the maximum row sum for $\A_N$ is bounded by some $c>0$, then elementary computations reveal that the maximum row sum for $\Q_{N,3}$ is bounded by $c^2$, which will imply that $\Q_{N,3}$ satisfies \eqref{eq:cmean2}. A similar computation can be carried out for general $k$ as well. In fact, the entries of the $k$-th order tensor $\Q_{N,k}(j_1,j_2,\ldots ,j_k)$ can be chosen as $\A_N(j_1,j_2)\A_N(j_1,j_3)\ldots \A_N(j_1,j_k)$ and the corresponding maximum row sum can be bounded by $c^{k-1}$, up to a multiplicative factor of $k$ (see \cref{sec:howtover} for details).
	
	\noindent \textit{To ease the burden of verifying \cref{as:cmean} for future use, we provide a tractable way to check this assumption in \cref{sec:howtover} that is broadly applicable across a large class of models}.
	
	\begin{theorem}\label{theo:CLTmain}
		Suppose Assumptions \ref{as:coeff} and \ref{as:cmean} hold. Define the random variables 
		\begin{align}\label{eq:randvar}
			U_N:=\frac{1}{N}\sum_{i=1}^N c_i^2 (g(\si_i)^2-t_i^2) \quad \mbox{and} \quad V_N := \frac{1}{N}\sum_{i,j} c_i c_j (g(\si_i)-t_i)(t_j^i-t_j).  
		\end{align}
		We assume that there exists $\eta>0$ such that 
		\begin{align}\label{eq:bddzero}
			\P(U_N+V_N\geq \eta) \to 1, \quad \mbox{as}\,\,  N\to\infty.
		\end{align}
		Then given any sequence of positive reals $\{a_N\}_{N\ge 1}$ such that $a_N\to 0$, we have 
		\begin{align*}
			\frac{T_N}{\sqrt{(U_N+V_N)\vee a_N}}\overset{w}{\longrightarrow} N(0,1).
		\end{align*}
	\end{theorem}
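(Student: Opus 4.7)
The plan is a conditional method of moments. Setting $\Delta_i := g(\si_i)-t_i$, which satisfies $\E[\Delta_i\mid\si_j,j\neq i]=0$, I would establish that for every integer $m\ge 0$ and every polynomial $P$,
\[
\E[T_N^{2m+1} P(U_N+V_N)]\to 0 \quad\text{and}\quad \E[T_N^{2m} P(U_N+V_N)]-(2m-1)!!\,\E[(U_N+V_N)^m P(U_N+V_N)]\to 0.
\]
Combined with a uniform polynomial moment bound on $T_N$, these joint moment identities determine the limit of $\E[f(T_N/\sqrt{(U_N+V_N)\vee a_N})]$ for bounded continuous $f$: on the event $\{U_N+V_N\ge \eta\}$ (whose probability tends to $1$ by \eqref{eq:bddzero}) I apply a truncation plus a Weierstrass-type approximation, and the $a_N$-flooring is harmless because this event eventually forces $(U_N+V_N)\vee a_N=U_N+V_N$.

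Next, I would expand
\[
T_N^{2m}=N^{-m}\sum_{(i_1,\ldots,i_{2m})\in[N]^{2m}} c_{i_1}\cdots c_{i_{2m}}\,\Delta_{i_1}\cdots\Delta_{i_{2m}},
\]
aiming to reduce its expectation to a sum over pair-partitions of $[2m]$, as in the Isserlis/Wick formula. The immediate obstruction is that although $\E[\Delta_{i_1}\mid\si_j,j\neq i_1]=0$, the factors $\Delta_{i_k}$ for $k\ge 2$ are not measurable with respect to $\{\si_j:j\neq i_1\}$ because $t_{i_k}$ depends on $\si_{i_1}$; the tower property cannot be invoked naively.

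The core step, and the source of the combinatorial decision-tree pruning mentioned in the abstract, is to use \cref{as:cmean} to progressively exchange $t_{i_k}$ with $t_{i_k}^{\cS}$ for a growing set $\cS$ of frozen indices. Each exchange spawns two branches in the tree: a centered branch whose expectation vanishes once $\cS$ is large enough that the tower property becomes applicable, and a derivative branch carrying a discrete-derivative factor bounded pointwise by an entry of $\Q_{N,|\cS|+1}$. Row-sum control from \eqref{eq:cmean2} keeps each summation over a newly introduced free index of order $1$, so every branch is uniformly bounded in $N$. Iterating the pruning along $(i_1,\ldots,i_{2m})$ eliminates all tuples not supported on a perfect matching: a diagonal pair $i_k=i_\ell$ reproduces (after one further tower step that turns $\Delta_i^2$ into $g(\si_i)^2-t_i^2$) the $i$-th summand of $U_N$, while a cross pair $i_k\neq i_\ell$ linked through a $t_{i_\ell}^{i_k}-t_{i_\ell}$ derivative reproduces the $(i_k,i_\ell)$-summand of $V_N$. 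Summing over the $(2m-1)!!$ perfect matchings of $[2m]$ yields the claimed leading term; odd moments vanish because no perfect matching exists, so at least one factor is left conditionally centered. The presence of $P(U_N+V_N)$ is handled by expanding $P$ and enlarging the index tuple with the auxiliary indices coming from the $U_N$ and $V_N$ factors, and the same pruning applies uniformly.

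The main obstacle will be the combinatorial bookkeeping in the pruning argument: one must simultaneously show that the total contribution of error leaves (branches not corresponding to a Gaussian pairing) is $o(1)$, using the row-sum condition iteratively, and verify that the leading pairing leaves match the precise prescription of $U_N+V_N$ rather than a closely related but inequivalent quantity. In particular, the pruning applied to a $V_N$ factor itself generates further $t_j^i-t_j$ derivatives, so the interleaved expansions of $T_N^{2m}$ and $(U_N+V_N)^m$ must be shown to produce a matching Wick-type combinatorial structure; \eqref{eq:cmean2} is invoked at every level of the tree to keep the error branches summable uniformly in $N$ and $m$.
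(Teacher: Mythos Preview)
Your approach is essentially the paper's: the conditional method of moments via iterative freezing/pruning (the decision-tree argument), producing $U_N$ from repeated-index pairs and $V_N$ from cross-derivative pairs, is exactly \cref{lem:surviveterm} and the proof of \cref{theo:conmainder}. The only organizational difference is that the paper packages the joint moment convergence as \cref{theo:conmain} (under an added weak-convergence hypothesis on $(U_N,V_N)$) and then derives \cref{theo:CLTmain} by tightness, a subsequence argument, and Slutsky---a slightly cleaner final step than your Weierstrass approximation of $f(t/\sqrt{s})$---but the substance is the same.
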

	
	The above result will follow as a consequence of a more general moment convergence result. To state it, we begin with the following Assumption. 
	
	\begin{Assumption}\label{as:empcon}[An empirical convergence condition]
		There exists a bivariate random variable $\mz\defn (P_1,P_2)$ such that the following holds: 
		$$\begin{bmatrix}
			\frac{1}{N}\sum_{i=1}^N c_i^2\left(g(\si_i)^2-t_i^2\right)\\ \frac{1}{N}\sum_{i,j} c_i c_j (g(\si_i)-t_i)(t_j^{i}-t_j)
		\end{bmatrix}\overset{w}{\longrightarrow} \mz.$$
	\end{Assumption}
	
	To understand \cref{as:empcon}, we note that, under \cref{as:coeff}, we have
	\begin{align}\label{eq:boundZ1}
		\bigg|\frac{1}{N}\sum_{i=1}^N c_i^2 (g(\si_i)^2-t_i^2)\bigg|\le \frac{1}{N}\sum_{i=1}^N c_i^2 \le \sup_{N\ge 1} \frac{1}{N}\sum_{i=1}^N c_i^2<\infty.
	\end{align}
	Further under \cref{as:cmean}, we have: 
	$$\bigg|\frac{1}{N}\sum_{i,j} c_i c_j (g(\si_i)-t_i)(t_j^i-t_j)\bigg|\le \frac{2}{N}\sum_{i,j} |c_i||c_j|\Q_{N,2}(j,i).$$
	By \eqref{eq:cmean2}, $\Q_{N,2}$ has uniformly bounded row sums, say, by some constant $c>0$. This implies that the operator norm of $\Q_{N,2}$ is also bounded by c. As a result, by \cref{as:coeff}, we have that 
	\begin{align}\label{eq:boundz2}
		\bigg|\frac{1}{N}\sum_{i,j} c_i c_j (g(\si_i)-t_i)(t_j^i-t_j)\bigg| \le \frac{(2c)}{N}\sum_{i=1}^N c_i^2 \le (2c) \sup_{N\ge 1} \frac{1}{N}\sum_{i=1}^N c_i^2<\infty.
	\end{align}
	The above displays imply that the random sequence in the left hand side of \cref{as:empcon} is  already asymptotically tight. Therefore, by Prokhorov's Theorem, all subsequential limits exist. \cref{as:empcon} simply requires all the subsequential limits to be the same. 
	
	We are now in the position to state the more general form of \cref{theo:CLTmain} which may be of independent interest.  
	
	\begin{theorem}\label{theo:conmain}
		For any $k,k_1,k_2\in\N\cup\{0\}$, under Assumptions~\ref{as:coeff},~\ref{as:cmean}, and~\ref{as:empcon}, the following sequence 
		\begin{equation}\label{eq:limmoment}
			m_{k,k_1,k_2}:=
			\begin{cases}0 & \mbox{ if }k\mbox{  is odd}\\ (k)!!\EE[(P_1+P_2)^{k/2}P_1^{k_1}P_2^{k_2}] & \mbox{ if }k\mbox{ is even}
			\end{cases},
		\end{equation}
		where $(k)!!:=1\times 3\times 5\times \ldots \times (k-1)$ for $k$ even, is well defined. Recall the definitions of $U_N$ and $V_N$ from \eqref{eq:randvar}. Then, for all $k,k_1,k_2\in\N\cup \{0\}$, we have 
		\begin{align}\label{eq:bigconmoment}
			\E T_N^{k}U_N^{k_1}V_N^{k_2} \to m_{k,k_1,k_2}.
		\end{align}
		This implies that there exists a unique probability measure $\rho$ with moment sequence $m_{k,0,0}$. Further $P_1+P_2$ is non-negative almost everywhere and we have 
		$$T_N\overset{w}{\longrightarrow}\rho=\textnormal{Law}(\sqrt{P_1+P_2}Z),$$
		where $Z\sim N(0,1)$ is independent of $\mz=(P_1 , P_2)$.
	\end{theorem}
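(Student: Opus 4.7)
The plan is a method-of-moments argument powered by the conditional-centering identity, reducing $\E[T_N^k U_N^{k_1} V_N^{k_2}]$ to a sum over "perfect matchings" of the $k$ indices in $T_N^k$. The basic tool is: for any measurable function $F$ of $\ms$ and any $i\in [N]$,
\[
\E[(g(\si_i)-t_i)\,F(\ms)] \;=\; \E\bigl[(g(\si_i)-t_i)\bigl(F(\ms)-F(\ms_{\{i\}})\bigr)\bigr],
\]
which holds because $t_i = \E[g(\si_i)\mid\si_j,\,j\neq i]$ and $F(\ms_{\{i\}})$ is $\sigma(\si_j:j\neq i)$-measurable. Before invoking this, I would note that \eqref{eq:boundZ1}--\eqref{eq:boundz2} show $U_N,V_N$ are bounded by a deterministic constant $K$ uniformly in $N$, so \cref{as:empcon} forces $|P_1|,|P_2|\leq K$ almost surely and hence $m_{k,k_1,k_2}\leq (k)!!\,(2K)^{k/2+k_1+k_2}<\infty$; this proves the well-definedness claim.

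The core of the argument is the inductive expansion
\[
\E[T_N^k U_N^{k_1}V_N^{k_2}] \;=\; N^{-k/2}\sum_{i_1,\ldots,i_k}\prod_{\ell=1}^k c_{i_\ell}\;\E\!\left[\prod_{\ell=1}^k (g(\si_{i_\ell})-t_{i_\ell})\cdot U_N^{k_1}V_N^{k_2}\right].
\]
Applying the centering identity to the $i_1$-factor, the remaining integrand is replaced by its "discrete derivative" at coordinate $i_1$, which decomposes as a sum over choices of which factor in the product (either some other $(g(\si_{i_\ell})-t_{i_\ell})$ with $\ell\neq 1$, or a $t_j$-term hidden inside a copy of $U_N$ or $V_N$) is "activated" by setting $\si_{i_1}=b_0$. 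By \cref{as:cmean}, each such discrete derivative of a conditional mean is bounded pointwise by the corresponding entry of the tensor $\Q_{N,k'}$, and the row-sum condition \eqref{eq:cmean2} absorbs the $i_1$-sum at $O(1)$ cost. Iterating on $i_2,i_3,\ldots$ builds a decision tree whose leaves describe which indices got paired; any branch that leaves an index unpaired carries at least one extra factor of $N^{-1/2}$ from a "dangling" sum and is pruned in the limit, so only the $(k)!!$ \emph{perfect matchings} of $[k]$ survive.

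For $k$ odd, no perfect matching exists, giving the vanishing moments. For $k$ even, each matched pair contributes either a $U_N$-type summand (when the activation lands on the matched $g$-factor at the same index) or a $V_N$-type summand (when it lands on a $t$-factor at a distinct index). Summing over the $(k)!!$ matchings and the $\binom{k/2}{r}$ ways to split pairs into the two types yields
\[
\E[T_N^k U_N^{k_1}V_N^{k_2}] \;=\; (k)!!\,\E\bigl[(U_N+V_N)^{k/2}\,U_N^{k_1}\,V_N^{k_2}\bigr] + o(1),
\]
and \cref{as:empcon} together with bounded convergence gives \eqref{eq:bigconmoment}. For the almost-sure non-negativity of $P_1+P_2$, the case $k_1=k_2=0$ combined with $|\E\exp(i\lambda T_N)|\leq 1$ and the moment-implied convergence $\E\exp(i\lambda T_N)\to\E\exp(-\lambda^2(P_1+P_2)/2)$ forces $\E\exp(-\lambda^2(P_1+P_2)/2)\leq 1$ for all $\lambda>0$, which rules out $\P(P_1+P_2<0)>0$. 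The scale-mixture identification $T_N\overset{w}{\longrightarrow}\sqrt{P_1+P_2}\,Z$ then follows because the sub-Gaussian limit law is uniquely determined by its moments and matches $\sqrt{P_1+P_2}\,Z$.

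The main obstacle I anticipate is the combinatorial bookkeeping in the inductive step: rigorously defining the discrete-derivative decomposition at each node of the tree, showing that every branch which is not a perfect matching incurs a genuine pruning factor of at least $N^{-1/2}$ via \eqref{eq:cmean2}, and correctly classifying the surviving matched-pair branches as $U_N$-type versus $V_N$-type. This is exactly the "decision-tree pruning" methodology that the paper highlights as its central technical innovation, and handling the $k_1,k_2>0$ case (where the fixed copies of $U_N,V_N$ inject additional $t_j$-factors that may themselves be activated by the expansion) is where the accounting becomes most delicate.
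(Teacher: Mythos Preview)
Your outline follows the paper's route closely: method of moments, the conditional-centering identity, decision-tree pruning to isolate matching contributions, bounded-convergence via \cref{as:empcon}, and the characteristic-function argument for $P_1+P_2\ge 0$. Two points deserve attention.

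\textbf{A genuine gap: truncation of the weights.} You run the combinatorial pruning directly under \cref{as:coeff}, but every branch bound in the tree carries products of $|c_{i_\ell}|$'s, and these are only controlled if $\max_i|c_i|$ is bounded. The paper does not attempt the tree under \cref{as:coeff}; it first proves the moment convergence under the stronger hypothesis $\limsup_N\max_i|c_i|<\infty$ (its \cref{theo:conmainder}), and then lifts to \cref{as:coeff} by truncating $c_{i,1,M}:=c_i\ind(|c_i|\le M)$, invoking the concentration in \cref{lem:auxtail} to show $T_N-T_{N,M}$, $U_N-U_{N,M}$, $V_N-V_{N,M}$ are small, and running a diagonal-subsequence argument in $(M,N)$. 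Without this reduction your pruning estimates are not justified.

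\textbf{A structural imprecision.} Your description has the $U_N$- and $V_N$-type pairs both emerging from the centering tree. In the paper the two mechanisms are separated: the multinomial expansion of $T_N^k$ is first organized by multiplicity profiles $(\ell_1,\ldots,\ell_p,q)\in\mathcal C_k$; only the profile $\ell_1=\cdots=\ell_p=2$ with $q$ even survives, and the $p$ doubled indices produce the $U_N$-contributions \emph{before} any centering identity is applied (via $c_i^2(g(\si_i)-t_i)^2\approx c_i^2(g(\si_i)^2-t_i^2)$). The decision tree is then applied only to the $q$ isolated indices and yields the $V_N$-contributions through their $q!!$ matchings; the identity $q!!\,D(2,\ldots,2,q)=k!!\binom{k/2}{q/2}$ is what assembles the binomial $(P_1+P_2)^{k/2}$. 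Applying the centering identity uniformly to all of $i_1,\ldots,i_k$ as you propose makes the repeated-index case ($i_1=i_2$) awkward, since $F(\ms_{\{i_1\}})$ then alters the second copy of $g(\si_{i_1})$ itself rather than a conditional mean.
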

	Intuitively, \(P_1\) encodes the ``local'' quadratic variance created by conditional centering, while \(P_2\) aggregates the residual variance due to interactions. Let us discuss two special cases of \cref{theo:conmain}. 
	\begin{enumerate}
		\item In the special case where $\mz=(P_1,P_2)$ is degenerate, say $\delta_{(p_1,p_2)}$ for some reals $p_1,p_2$, \cref{theo:conmain} implies that $T_N\overset{w}{\longrightarrow} N(0,p_1+p_2)$. The non-negativity of $p_1+p_2$ in this case is a by-product of the Theorem itself. 
		\item It is indeed possible for $P_1+P_2$ to have a non-degenerate limit law, in which case the unstandardized limit of $T_N$ is a Gaussian scale mixture. A concrete example is provided in \cref{sec:mixturepseudo}. 
	\end{enumerate}
	
	\begin{remark}[Avoiding \cref{as:empcon}]\label{rem:remempcon}
		We note here that in the absence of \cref{as:empcon}, the conclusion of \cref{theo:conmain} holds along subsequences, although these subsequential limits need not be the same (i.e., the limit $\rho$ might depend on the chosen subsequence). Therefore the primary purpose of \cref{as:empcon} is to provide a clean characterization for the limit of $T_N$.
	\end{remark}
	
	\begin{remark}[Comparison with \cite{Comets1998}]\label{rem:compare}
		\cite[Theorem 2.1]{Comets1998} prove a studentized CLT for sums of conditionally centered \emph{local} fields on $\mathbb{Z}^d$ with \emph{fixed} finite neighborhoods. Their proof is based on Stein's method and crucially hinges on the local (not growing) nature of the  random field,  thereby precluding the possibility of any dense interactions. In contrast, \cref{theo:CLTmain} here yields a randomly studentized pivot
		\[
		\frac{T_N}{\sqrt{(U_N+V_N)\vee a_N}}\ \Rightarrow\ \mathcal N(0,1)
		\]
		\emph{without} imposing locality or lattice structure. Moreover, our result \cref{theo:conmain} establishes \emph{joint} convergence of $(T_N,U_N,V_N)$ and identifies the raw limit $T_N\Rightarrow \sqrt{P_1+P_2}\,Z$. Consequently, whenever $U_N+V_N$ has a nondegenerate subsequential limit (see \cref{sec:mixturepseudo} for an example), the present framework pins down the exact Gaussian–mixture law for $T_N$—a conclusion not available from the \cite{Comets1998} studentized result alone, in the absence of additional stable/joint convergence assumptions.
	\end{remark}
	
	\section{Asymptotic normality of maximum pseudolikelihood estimator (MPLE)}\label{sec:MPLE}
	The conditionally centered CLT established in \cref{theo:CLTmain} is intricately connected to asymptotic normality of the maximum pseudolikelihood estimator (MPLE) for random fields. To wit, suppose that $(d\mathbb{P}_{\theta}/d\nu)(\si_i|\si_j,j\neq i)$ denotes the conditional density of $\sigma_i$ given all the other $\sigma_j$s, indexed by some parameter $\theta\in\R^p$, and with respect to some dominating measure $\nu$. Let $\theta_0\in\R^p$ denote the true parameter and let the open set $\Theta$ be the parameter space. The MPLE is defined as 
	\begin{align}\label{eq:MPLE}
		\htmp\in \argmax_{\theta\in\Theta}\sum_i f_i(\theta)\,\, , \,\, \mbox{where} \,\, f_i(\theta):=\log \frac{d\mathbb{P}_{\theta}}{d\nu}(\sigma_i|\sigma_j,j\neq i).
	\end{align}
	The MPLE, introduced by Besag \cite{Besag1974,besag1975statistical}, has since attracted widespread attention in the statistics, probability, and machine learning community over the years; see e.g.~\cite{hofling2009estimation,ekeberg2013improved,Ravikumar2010,Comets1998,Comets1991,jensen1994asymptotic}. A natural approach to obtaining a central limit theory for $\htmp$ proceeds as follows: first, one starts with the score equation 
	$$\sum_i \nabla f_i(\htmp)=0.$$
	By a first order Taylor expansion, and ignoring higher order error terms, the above equation can be rewritten as 
	\begin{equation}\label{eq:bareidea}
		\sqrt{N}(\htmp-\theta_0)\approx \bigg(-\frac{1}{N}\sum_i \nabla^2 f_i(\theta_0)\bigg)^{-1}\big(N^{-1/2}\sum_i \nabla f_i(\theta_0)\big).
	\end{equation}
	It is then reasonable to expect that the asymptotic normality of $\sqrt{N}(\htmp-\theta_0)$ will be driven by the asymptotic normality of $N^{-1/2}\sum_i \nabla f_i(\theta_0)$. The main observation here is that, under enough regularity, 
	\begin{align}\label{eq:concenter}
		\mathbb{E}[\nabla f_i(\theta_0)|\si_j,j\neq i]=\int \nabla\frac{d\mathbb{P}_{\theta}}{d\nu}(\si_i|\si_j,j\neq i)\bigg|_{\theta=\theta_0}\,d\nu(\si_i)=\nabla_{\theta}\left(\int \frac{d\mathbb{P}_{\theta}}{d\nu}(\si_i|\si_j,j\neq i)\,d\nu(\sigma_i)\right)\bigg|_{\theta=\theta_0}=0.\end{align}
	In other words, $\nabla f_i(\theta_0)$s are already conditionally centered which makes \cref{theo:CLTmain} a critical tool for obtaining the Gaussianity of $\htmp$. To provide a further concrete example, consider the two-spin Ising model from \eqref{eq:twospIsing} with an additional magnetization term, i.e.,
	\begin{equation}\label{eq:twospIsingmagnet}
		\ism(\ms):=\frac{1}{Z_{N,B_0}^{\textnormal{IS}}}\exp\left(\frac{1}{2}(\ms)^{\top}\A_N(\ms)+B_0\sum_i \si_i\right),
	\end{equation}
	where, as before, each $\sigma_i\in \pm1$, $\A_N$ is a symmetric matrix with non-negative entries and $0$s on the diagonal, and $Z_{N,B_0}^{\textnormal{IS}}$ is the partition function. Assume that the magnetization parameter $B_0$ is unknown. A simple computation yields that the MPLE $\hbm$ satisfies 
	\begin{align}\label{eq:pseudois}
		\sum_i \big(\si_i-\tanh\big(\sum_{j}\A_N(i,j)\si_j+\hbm\big)\big)=0.
	\end{align}
	As argued earlier in \eqref{eq:bareidea}, a CLT for $\sqrt{N}(\hbm-B_0)$ follows from the CLT of $N^{-1/2}\sum_i (\si_i-\tanh(\sum_j \A_N(i,j)\si_j+B_0))$, which is the subject of \cref{theo:CLTmain}. In the applications to follow, we will show that more complicated instances involving CLTs for vector parameters (e.g. both inverse temperature and magnetization) can also be derived from \cref{theo:CLTmain}. 
	
	We now present a proposition which provides the limit distribution of $\htmp$ under high level conditions. This follows from classical results in M/Z-estimation theory (see e.g.~\cite[Chapter 3]{Newey1994} and \cite[Theorems 5.23 and 5.41]{Vaart1996}).
	\begin{prop}[CLT for MPLE]\label{prop:CLTMPLE}
		Suppose that $\ms\sim\mathbb{P}_{\theta_0}$ where $\mathbb{P}_{\theta}$ is compactly supported in $\R^N$ (the support is free of $\theta$). Each $f_i(\cdot)$ is twice differentiable with continuous derivatives. We assume that $\theta_0$ belongs to the interior of the parameter space $\Theta$ and $\htmp$ as in \eqref{eq:MPLE} exists. We assume the following conditions: 
		\begin{itemize}
			\item[(A1)] For any $r_N\to 0$, we have: 
			$$\sup_{\theta:\lVert \theta-\theta_0\rVert\le r_N}\bigg|\frac{1}{N}\sum_{i=1}^N \nabla^2 f_i(\theta)-\frac{1}{N}\sum_{i=1}^N \nabla^2 f_i(\theta_0)\bigg|\overset{\mathbb{P}_{\theta_0}}{\to} 0.$$
			Further $(N^{-1}\sum_{i=1}^N \nabla^2 f_i(\theta_0))^{-1}=O_{\mathbb{P}_{\theta_0}}(1)$.
			\item[(A2)] There exists invertible $\Sigma_N(\theta_0)\in \R^{p\times p}$ (potentially random) such that $ \Sigma_N(\theta_0)=O_{\mathbb{P}_{\theta_0}}(1)$, such that 
			$$\Sigma_N(\theta_0)^{-1/2}\frac{1}{\sqrt{N}}\sum_{i=1}^N \nabla f_i(\theta_0)\overset{d}{\to} N(0,1).$$
			
			\item[(A3)] $\htmp\overset{\mathbb{P}_{\theta_0}}{\to} \theta_0$. 
		\end{itemize}
		Then we have:
		\begin{align}\label{eq:limtheo}
			\Sigma_N(\theta_0)^{-1/2}\left(\frac{1}{N}\sum_{i=1}^N \nabla^2 f_i(\theta_0)\right)\sqrt{N}(\htmp-\theta_0)\overset{w}{\longrightarrow} N(\mathbf{0}_p,\mathbf{I}_p).
		\end{align}
	\end{prop}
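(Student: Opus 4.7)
The plan is to run the standard first-order expansion for Z-estimators, carefully tracking the three random matrices involved. Write $H_N(\theta):=N^{-1}\sum_{i=1}^N \nabla^2 f_i(\theta)$ and $S_N:=N^{-1/2}\sum_{i=1}^N \nabla f_i(\theta_0)$. By (A3) and the openness of $\Theta$, with probability tending to one $\htmp$ lies in the interior of $\Theta$ and hence satisfies the first-order condition $\sum_{i=1}^N\nabla f_i(\htmp)=\mathbf{0}_p$. A coordinatewise mean-value expansion then yields
\begin{equation*}
\mathbf{0}_p = S_N + H_N(\bar\theta_N)\sqrt{N}(\htmp-\theta_0),
\end{equation*}
for some random (row-dependent) point $\bar\theta_N$ on the segment between $\theta_0$ and $\htmp$, with $\lVert\bar\theta_N-\theta_0\rVert\overset{\mathbb{P}_{\theta_0}}{\to} 0$ by (A3).

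The next step is to swap $H_N(\bar\theta_N)$ for $H_N(\theta_0)$. Choosing any deterministic $r_N\to 0$ with $\mathbb{P}_{\theta_0}(\lVert\bar\theta_N-\theta_0\rVert\le r_N)\to 1$, the uniform continuity estimate in (A1) gives $H_N(\bar\theta_N)=H_N(\theta_0)+o_{\mathbb{P}_{\theta_0}}(1)$. Combined with $H_N(\theta_0)^{-1}=O_{\mathbb{P}_{\theta_0}}(1)$ from (A1), a Neumann-series perturbation argument shows that $H_N(\bar\theta_N)$ is invertible with probability tending to one and $H_N(\bar\theta_N)^{-1}=H_N(\theta_0)^{-1}(\Imat_p+o_{\mathbb{P}_{\theta_0}}(1))$. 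Solving the Taylor identity for $\sqrt{N}(\htmp-\theta_0)$ and left-multiplying by $\Sigma_N(\theta_0)^{-1/2}H_N(\theta_0)$ then produces
\begin{equation*}
\Sigma_N(\theta_0)^{-1/2}H_N(\theta_0)\sqrt{N}(\htmp-\theta_0)=-\Sigma_N(\theta_0)^{-1/2}S_N-\Sigma_N(\theta_0)^{-1/2}\cdot o_{\mathbb{P}_{\theta_0}}(1)\cdot S_N.
\end{equation*}

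By (A2), the leading term $-\Sigma_N(\theta_0)^{-1/2}S_N$ converges weakly to $N(\mathbf{0}_p,\Imat_p)$, so Slutsky closes the argument once the remainder $\Sigma_N(\theta_0)^{-1/2}\cdot o_{\mathbb{P}_{\theta_0}}(1)\cdot S_N$ is shown to be $o_{\mathbb{P}_{\theta_0}}(1)$. Factoring $S_N=\Sigma_N(\theta_0)^{1/2}(\Sigma_N(\theta_0)^{-1/2}S_N)$ exhibits $S_N$ as a product of an $O_{\mathbb{P}_{\theta_0}}(1)$ matrix (by the boundedness of $\Sigma_N(\theta_0)$ in (A2)) with the $O_{\mathbb{P}_{\theta_0}}(1)$ vector $\Sigma_N(\theta_0)^{-1/2}S_N$. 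The only real obstacle in the whole argument is to control the resulting similarity transform $\Sigma_N(\theta_0)^{-1/2}\cdot o_{\mathbb{P}_{\theta_0}}(1)\cdot\Sigma_N(\theta_0)^{1/2}$, which needs an implicit operator-norm bound on $\Sigma_N(\theta_0)^{-1/2}$; this is a mild condition that is built into any concrete verification of (A2). The remainder of the proof is pure bookkeeping around Taylor expansion, matrix perturbation, and Slutsky's theorem.
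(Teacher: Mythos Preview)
Your proposal is correct and follows essentially the same route as the paper: first-order condition at the interior maximizer, coordinatewise mean-value expansion, use (A1) to replace $H_N(\bar\theta_N)$ by $H_N(\theta_0)$, then apply (A2) and Slutsky. The only organizational difference is that the paper first extracts the intermediate fact $\sqrt{N}(\htmp-\theta_0)=O_{\mathbb{P}_{\theta_0}}(1)$ (from $H_N(\tilde\theta)^{-1}=O_{\mathbb{P}_{\theta_0}}(1)$ and $S_N=\Sigma_N(\theta_0)^{1/2}\cdot O_{\mathbb{P}_{\theta_0}}(1)$), and then writes $H_N(\theta_0)\sqrt{N}(\htmp-\theta_0)=-S_N+o_{\mathbb{P}_{\theta_0}}(1)$ directly, so the remainder is simply $(H_N(\theta_0)-H_N(\tilde\theta))\cdot O_{\mathbb{P}_{\theta_0}}(1)=o_{\mathbb{P}_{\theta_0}}(1)$ without ever writing a similarity transform. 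Both routes, however, implicitly need $\Sigma_N(\theta_0)^{-1/2}=O_{\mathbb{P}_{\theta_0}}(1)$ at the final multiplication step, exactly the mild condition you flagged; the paper absorbs this into ``using (A2)'' without comment.
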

	
	Assumption (A1) above is standard and rather mild. It follows for example, if one can show that $N^{-1}\sum_{i=1}^N \lVert\nabla^3 f_i(\theta)\rVert$ is $O_{\mathbb{P}_{\theta_0}}(1)$ uniformly in a fixed neighborhood around $\theta_0$. As we have assumed compact support on $\mathbb{P}_{\theta_0}$, in many examples, the above third order tensor will turn out to be uniformly bounded. The main obstacle behind proving a CLT for $\sqrt{N}(\htmp-\theta_0)$ is to obtain the CLT in (A2) above. As discussed around \eqref{eq:concenter}, this is where the main result of this paper \cref{theo:CLTmain} plays a crucial role. Earlier attempts at CLTs for pseudolikelihood such as \cite{Comets1998,gaetan2004central,Somabha2021,Sanchayan2025} often restrict to Ising/Potts models with interactions on the $d$-dimensional lattice (for fixed $d$) or Curie-Weiss type interactions where all nodes are connected to all other nodes. On the other hand, the current paper provides CLTs akin to (A2) for a large class of general interactions in one go, without imposing restrictive sparsity or complete graph like assumptions. Moreover, since our CLT is not tied to a specific model, it can go much beyond Ising/Potts models; as illustrated by the exponential random graph model example in \cref{sec:ergm}.
	
	Assumption (A3) in \cref{prop:CLTMPLE} requires $\htmp$ to be consistent. Once again, one can state high level conditions for consistency leveraging classical results; see \cite[Section 2]{Newey1994} and \cite[Theorem 5.7]{Vaart1996}. Since the focus of this paper is on asymptotic normality, a detailed discussion on consistency is beyond the scope of the paper. For the sake of completion, we provide one sufficient condition for consistency which is easy to establish.
	
	\begin{prop}[Consistency of MPLE]\label{prop:ConsisMPLE}
		Suppose that $\ms\sim\mathbb{P}_{\theta_0}$ where $\mathbb{P}_{\theta}$ is compactly supported in $\R^N$ (the support is free of $\theta$). Each $f_i(\cdot)$ is twice differentiable with continuous derivatives. We assume that $\theta_0$ belongs to the interior of the parameter space $\Theta$ and $\htmp$ as in \eqref{eq:MPLE} exists. Let us consider two further assumptions:
		\begin{itemize}
			\item[(B1)] There exist a deterministic $\alpha>0$ such that $$\lmn\left(-\frac{1}{N}\sum_{i=1}^N \nabla^2 f_i(\theta)\right)\ge -\alpha$$
			for all $\theta\in\Theta$ and all large enough $N$. Here $\lmn$ denotes the minimum eigenvalue.
			\item[(B2)] Moreover $N^{-1}\sum_{i=1}^N \nabla f_i(\theta_0)\overset{\mathbb{P}_{\theta_0}}{\longrightarrow} 0$.
		\end{itemize}
	\end{prop}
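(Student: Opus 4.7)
The plan is to convert (B1) into uniform strong concavity of the empirical pseudolikelihood and then to bound $\|\htmp-\theta_0\|$ directly by the norm of the score at the truth via a single quadratic Taylor expansion, so that (B2) immediately yields consistency without any empirical-process machinery. I interpret the hypothesis in (B1) as supplying a strictly positive lower bound $\alpha$ on $\lmn(-N^{-1}\sum_i\nabla^2 f_i(\theta))$ uniformly in $\theta\in\Theta$ for all large $N$ (the minus sign in the displayed inequality appears to be a typographical slip, since with $\lmn\ge -\alpha$ alone the hypothesis is too weak to imply consistency from a vanishing score). Under this reading, $\theta\mapsto -N^{-1}\sum_i f_i(\theta)$ is $\alpha$-strongly convex on convex subsets of $\Theta$.

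Concretely, since $\theta_0$ lies in the interior of $\Theta$, I fix a convex open ball $B(\theta_0,r)\subset \Theta$. The integral form of Taylor's theorem applied to $\theta\mapsto N^{-1}\sum_i f_i(\theta)$, together with the Hessian bound from (B1) along the segment joining $\theta_0$ and $\theta\in B(\theta_0,r)$, gives
$$\frac{1}{N}\sum_i f_i(\theta)\le \frac{1}{N}\sum_i f_i(\theta_0)+\Big\langle\frac{1}{N}\sum_i \nabla f_i(\theta_0),\theta-\theta_0\Big\rangle-\frac{\alpha}{2}\|\theta-\theta_0\|^2.$$
Setting $\theta=\htmp$, combining with the optimality inequality $N^{-1}\sum_i f_i(\htmp)\ge N^{-1}\sum_i f_i(\theta_0)$, and applying Cauchy--Schwarz, I obtain on the event $\{\htmp\in B(\theta_0,r)\}$ the bound
$$\|\htmp-\theta_0\|\le \frac{2}{\alpha}\Big\|\frac{1}{N}\sum_i \nabla f_i(\theta_0)\Big\|.$$
By (B2) the right-hand side is $o_{\mathbb{P}_{\theta_0}}(1)$, so once the localization event is verified with probability tending to one, $\htmp\overset{\mathbb{P}_{\theta_0}}{\to}\theta_0$ follows at once.

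The only nontrivial step, and the main obstacle, is forcing $\htmp\in B(\theta_0,r)$ with probability tending to one (the hypothesis $\alpha$-strong convexity is stated over $\Theta$, but $\Theta$ itself need not be convex). For this I would run the same quadratic expansion along the radial segment from $\theta_0$ to the boundary point $\theta_r\in\partial B(\theta_0,r)$ in the direction of $\htmp$: strong concavity along that segment combined with Cauchy--Schwarz shows that $N^{-1}\sum_i f_i(\theta_r)<N^{-1}\sum_i f_i(\theta_0)$ whenever $\|N^{-1}\sum_i\nabla f_i(\theta_0)\|<\alpha r/2$, an event of probability tending to one by (B2). On this event, strong concavity along radii propagates the strict decrease past $\partial B(\theta_0,r)$, so the (assumed-to-exist) maximizer $\htmp$ cannot lie outside $B(\theta_0,r)\cap\Theta$. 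This peeling-to-a-neighborhood is where careful bookkeeping is needed; the rest of the argument is pathwise convex optimization.
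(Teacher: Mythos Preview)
Your proposal is correct and essentially the same as the paper's proof: both read (B1) as giving $\alpha$-strong concavity (you are right that the ``$\ge -\alpha$'' is a slip), combine it with the smallness of the score at $\theta_0$ from (B2), and finish with Cauchy--Schwarz. The paper's execution is slightly shorter: rather than your function-value Taylor bound, it uses the first-order condition $N^{-1}\sum_i\nabla f_i(\htmp)=0$ (valid since $\Theta$ is open) together with gradient monotonicity $\langle N^{-1}\sum_i\nabla f_i(\htmp)-N^{-1}\sum_i\nabla f_i(\theta_0),\,\htmp-\theta_0\rangle\le -\alpha\|\htmp-\theta_0\|^2$, which immediately gives $\|\htmp-\theta_0\|\le \alpha^{-1}\|N^{-1}\sum_i\nabla f_i(\theta_0)\|$. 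The paper does not address the localization issue you raise (that the segment $[\theta_0,\htmp]$ must lie in $\Theta$ for either inequality to hold when $\Theta$ is non-convex); your peeling argument is therefore a genuine, if minor, refinement over the paper's own proof.
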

	In other words, as long as the pseudolikelihood objective is strongly concave and the average of the gradient at $\theta_0$ converges to $0$ in probability, consistency follows. Going back to the Ising model \eqref{eq:twospIsingmagnet}, recall the pseudolikelihood equation from \eqref{eq:pseudois}. Note that the second derivative of the likelihood function is given by 
	$$B\mapsto -\frac{1}{N}\sum_{i=1}^N \sech^2(\beta \sum_j \A_N(i,j)\sigma_j+B).$$
	If we assume that the parameter space for $B$ is compact and $\A_N$ has bounded row sums (akin to \cref{as:cmean}), then condition (B1) follows immediately. Condition (B2) is a by-product 
	of \cref{theo:CLTmain}. This establishes consistency of $\hbm$. Generally speaking, there is no need to necessarily restrict to a compact parameter space, as we shall see in some of the examples later.
	
	\section{How to verify~\cref{as:cmean}?}\label{sec:howtover}
	In this Section, we will demonstrate how \cref{as:cmean} can be verified using simple analytic tools. To set things up, let us introduce an important notation: given any two sets $A,B\subseteq [N]$, such that $A\cap B=\phi$, and any function $\eta:\B^N\to\R$, define
	\begin{align}\label{eq:deldef}
		\De(\eta;A;B)=\sum_{D\subseteq B} (-1)^{|D|}\eta(\ms_{A\cup D})
	\end{align}
	where $\ms_{A\cup D}$ is defined as in~\eqref{eq:newsig}. By convention, we set $\De(\eta;A;\phi)=\eta(\ms_A)$. As an example, observe that $\De(\eta;j_1;\{j_2,j_3\})=\eta(\ms_{j_1})-\eta(\ms_{\{j_1,j_2\}})-\eta(\ms_{\{j_1,j_3\}})+\eta(\ms_{\{j_1,j_2,j_3\}})$. One way to interpret $\De(\eta;A;\phi)$ is a natural mixed partial discrete derivative of the function $\eta(\ms_A)$ along the coordinates in the set $D$. To put the definition of $\De(\cdot;\cdot;\cdot)$ into further perspective, observe that~\eqref{eq:cmean1} in~\cref{as:cmean} can be rewritten as:
	\begin{align}\label{eq:altform}
		\big|\De(t_{j_1};\tcS;\{j_2,\ldots ,j_k\})\big|\leq \Q_{N,k}(j_1,j_2,\ldots ,j_k).
	\end{align}
	We can reduce the problem of verifying \eqref{eq:altform} by making the following crucial observation --- namely that in many random fields the conditional means $t_1,\ldots ,t_N$ can often be written as smooth functions of simpler objects involving the vector $\ms$. As a concrete example, consider the $\pm 1$-valued Ising model described in \eqref{eq:twospIsing} with $b_0=0$ and $g(x)=x$. Through elementary computations, one can check that 
	\begin{align}\label{eq:tempising}
		t_j=\E[\si_j|\sigma_i,i\neq j]=\tanh(m_j),\quad \mbox{where} \quad m_j:=\sum_{i=1}^N \A_N(j,i)\sigma_i.
	\end{align}
	Note that the $m_j$s are linear in the coordinates of $\ms$ and the $\tanh(\cdot)$ is infinitely smooth with bounded derivatives. As controlling the discrete derivatives of the $m_j$s are significantly easier than working directly with the $t_j$s, one can ask the following natural question --- 
	\begin{center}
		\emph{Can one derive \eqref{eq:altform} using the simple structure of $m_j$s and the smoothness of $\tanh(\cdot)$?}
	\end{center}
	
	This phenomenon of expressing the conditional means as smooth transforms of simpler functions is not tied to the specific $\pm 1$-valued Ising model, but extends to many other settings involving higher order tensor interactions (see \eqref{eq:tensorsimpl}), exponential random graph models (see \eqref{eq:conergm}), etc. In the following result, we show this structural observation immediately yields a simple way to verify \cref{as:cmean} across the class of all such models. 
	
	We begin with some notation. Suppose $\{\tQ_{N,k}\}_{N\ge 1,k\ge 2}$ is a sequence of tensors of dimension $N\times N\times \ldots N$ ($k$-fold product), with non-negative entries, which is symmetric in its last $k-1$ coordinates. Given any such sequence and any $(j_1,\ldots ,j_k)\in [N]^k$, define the following recursively 
	\begin{align}\label{eq:newtensor}&\;\;\;\;\;\mathcal{R}[\tQ]_{N,k}(j_1,j_2,\ldots ,j_k)\nonumber \\ &:=\tQ_{N,k}(j_1,j_2,\ldots ,j_k)+\sum_{\substack{D\subseteq \{j_2,\ldots ,j_k\},\\ |D|\leq k-2,\ D\neq \phi}} \mathcal{R}[\tQ]_{N,1+|D|}(j_1,D)\tQ_{N,k-|D|}(\{j_1\},\{j_2,\ldots ,j_k\}\setminus D),\end{align}
	where, by convention, $\mathcal{R}[\tQ]_{N,2}(j_1,j_2)=\tQ_{N,2}(j_1,j_2)$ for $(j_1,j_2)\in [N]^2$. 
	
	\begin{theorem}\label{lem:smoothcont}
		Fix $k\geq 2$. Consider a set of functions $\{b_j(\ms)\}_{j\in [N]}$ such that 
		$$\max_{j\in [N]}\sup_{\ms\in\mathcal{B}^N}|b_j(\ms)|\le M, \quad \mbox{and}\quad \big|\De(b_{j_1};\tcS;\{j_2,\ldots ,j_k\})\big|\leq \tQ_{N,k}(j_1,j_2,\ldots ,j_k).$$
		for some $M>0$ and all $\tcS\subseteq[N]$ such that $\tcS\cap\{j_1,\ldots ,j_k\}=\phi$. 
		Let $f:[-M,M]\to \R$ such that $\sup_{|x|\leq M} |f^{(\ell)}(x)|\leq 1$ for all $0\leq \ell\leq k$, where $f^{(\ell)}(\cdot)$ denotes the $\ell$-th derivative of $f(\cdot)$ with $f^{(0)}(\cdot)=f(\cdot)$. 
		
		\begin{enumerate}
			\item The sequence of function compositions $f\circ b_1,\ldots ,f\circ b_N$ satisfies
			\begin{align}\label{eq:smoothext}
				|\De(f\circ b_{j_1};\tcS;\{j_2,\ldots ,j_k\})|\le C \mathcal{R}[\tQ]_{N,k}(j_1,j_2,\ldots ,j_k),
			\end{align}
			where $C>0$ depends only on $M$ and $k$. 
			
			\item $\mathcal{R}[\tQ]_{N,k}$ is symmetric in its last $k-1$ coordinates. If $\tQ_{N,k}$ satisfies \eqref{eq:cmean2}, then we have 
			\begin{align}\label{eq:newrowsum}
				\limsup_{N\to\infty}\max_{\ell\in [k]}\max_{j_{\ell\in [N]}} \sum_{(\{j_1,j_2,\ldots ,j_k\}\setminus j_{\ell})\in [N]^k} \mathcal{R}[\tQ]_{N,k}(j_1,j_2,\ldots ,j_k)< \infty.
			\end{align}
		\end{enumerate}
	\end{theorem}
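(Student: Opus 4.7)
The plan is to prove both conclusions by induction on $k$, exploiting the combinatorial structure of $\mathcal{R}[\tQ]_{N,k}$ as a discrete analogue of the Fa\`a di Bruno formula (a connection hinted at by the paper's keyword list).

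For Part (1), the base case $k=2$ is essentially the mean value theorem: by the fundamental theorem of calculus,
$$\Delta(f\circ b_{j_1};\tcS;\{j_2\}) = \Bigl(\textstyle\int_0^1 f'\bigl(b_{j_1}(\ms_{\tcS})+s\,\Delta(b_{j_1};\tcS;\{j_2\})\bigr)\,ds\Bigr)\cdot\Delta(b_{j_1};\tcS;\{j_2\}),$$
and since $|f'|\le 1$ and $|\Delta(b_{j_1};\tcS;\{j_2\})|\le\tQ_{N,2}(j_1,j_2)=\mathcal{R}[\tQ]_{N,2}(j_1,j_2)$, the bound holds with $C=1$. For the inductive step I would set $v_D:=b_{j_1}(\ms_{\tcS\cup D})$, $B:=\{j_2,\ldots,j_k\}$, and expand $f(v_D)$ via Taylor's theorem around $v_0:=b_{j_1}(\ms_{\tcS})$ to order $k-1$ with integral remainder, then take the alternating sum $\sum_{D\subseteq B}(-1)^{|D|}$. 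The constant term vanishes because $\sum_D(-1)^{|D|}=0$ for $|B|\ge 1$. Expanding $(v_D-v_0)^\ell=\sum_{m=0}^\ell\binom{\ell}{m}(-v_0)^{\ell-m}v_D^m$ rewrites the $\ell$-th Taylor term as a linear combination of the alternating sums $\Delta(b_{j_1}^m;\tcS;B)$ for $1\le m\le \ell$, and a discrete Leibniz rule decomposes each $\Delta(b_{j_1}^m;\tcS;B)$ into a sum over ordered decompositions $B=B_1\sqcup\cdots\sqcup B_m$ of products $\prod_{i=1}^m\Delta(b_{j_1};\tcS_i;B_i)$ with explicit shifted base sets $\tcS_i\supseteq\tcS$. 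On each nonempty block I apply the hypothesis $|\Delta(b_{j_1};\tcS_i;B_i)|\le\tQ_{N,|B_i|+1}(j_1,B_i)$; empty blocks contribute the factor $|b_{j_1}|\le M$. The Taylor remainder receives the same treatment, as it is an integral of $f^{(k)}$ (bounded by $1$) against $(v_D-v_0)^k$.

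After collecting, every resulting summand has the form $\prod_{C\in\pi}\tQ_{N,|C|+1}(j_1,C)$ for some set partition $\pi$ of $B$, with a combinatorial multiplicity depending only on $(k,M)$. Comparing with the recursion \eqref{eq:newtensor}, the single-block partition $\pi=\{B\}$ yields the leading term $\tQ_{N,k}(j_1,j_2,\ldots,j_k)$, whereas any partition with at least two blocks can be decomposed by singling out the block $C^*$ containing some distinguished index (say $j_2$), setting $D:=B\setminus C^*$, and invoking the inductive hypothesis on $\mathcal{R}[\tQ]_{N,1+|D|}(j_1,D)$ to dominate the product over the remaining blocks. The resulting terms are bounded termwise by the summands appearing in \eqref{eq:newtensor}, giving \eqref{eq:smoothext} with some $C(M,k)$ absorbing all multiplicities.

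For Part (2), symmetry of $\mathcal{R}[\tQ]_{N,k}$ in its last $k-1$ coordinates follows from \eqref{eq:newtensor} by induction: the outer sum runs over subsets $D\subseteq\{j_2,\ldots,j_k\}$ (not ordered tuples), by the inductive hypothesis $\mathcal{R}[\tQ]_{N,1+|D|}(j_1,D)$ depends only on the set $D$, and $\tQ_{N,k-|D|}(j_1,\{j_2,\ldots,j_k\}\setminus D)$ is symmetric in its remaining arguments by assumption. For the row-sum property \eqref{eq:newrowsum}, set $S_{k'}(j_1):=\sum_{(i_1,\ldots,i_{k'-1})\in[N]^{k'-1}}\mathcal{R}[\tQ]_{N,k'}(j_1,i_1,\ldots,i_{k'-1})$. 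Summing \eqref{eq:newtensor} over $(j_2,\ldots,j_k)\in[N]^{k-1}$ and factorising the resulting double sum using the disjointness of the positions of $D$ and its complement gives
$$S_k(j_1)\le\sum_{(j_2,\ldots,j_k)}\tQ_{N,k}(j_1,\ldots)+\sum_{m=1}^{k-2}\binom{k-1}{m}\,S_{1+m}(j_1)\cdot\Bigl(\sup_{j_1'}\sum_{(i_1,\ldots,i_{k-m-1})}\tQ_{N,k-m}(j_1',i_1,\ldots,i_{k-m-1})\Bigr),$$
which is uniformly bounded in $j_1$ and $N$ by \eqref{eq:cmean2} and the inductive bound on $S_{1+m}$. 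The case of fixing $j_\ell$ with $\ell\ge 2$ follows from the just-established symmetry combined with the parallel row-sum hypothesis on $\tQ$ with a non-first coordinate fixed.

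The main obstacle will be the discrete Fa\`a di Bruno identity used in the inductive step of Part (1): one must carefully track how the Leibniz-induced base-point shifts $\tcS_i$ preserve the disjointness hypothesis $\cS\cap\tcS=\emptyset$ underlying \cref{as:cmean}, and verify that the Taylor-plus-Leibniz multiplicities are matched (up to a finite constant) by the recursion \eqref{eq:newtensor}. Once this identity is established, both parts of the theorem reduce to bookkeeping driven by the recursion.
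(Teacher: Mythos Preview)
Your Part 2 argument is correct and matches the paper's (\cref{lem:genmatrixbd}, part 2). For Part 1, your high-order-Taylor-plus-Leibniz route is genuinely different from the paper's, and it has a real gap at the remainder step. Writing the order-$(k-1)$ integral remainder as $R_D=(v_D-v_0)^{k}\psi(v_D)$ with $\psi(u)=\int_0^1\tfrac{(1-s)^{k-1}}{(k-1)!}f^{(k)}(v_0+s(u-v_0))\,ds$, the claim that ``the Taylor remainder receives the same treatment'' does not go through: applying your Leibniz decomposition to $\sum_D(-1)^{|D|}R_D$ forces you to control $\De(\psi\circ b_{j_1};\tcS;B_2)$ for nonempty $B_2$, and this requires $\psi',\psi'',\ldots$, hence $f^{(k+1)},f^{(k+2)},\ldots$, which are not assumed. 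The crude alternative $\bigl|\sum_D(-1)^{|D|}R_D\bigr|\le 2^{|B|}\sup_D|R_D|\le C(k,M)$ yields only a constant, not a quantity dominated by $\mathcal{R}[\tQ]_{N,k}(j_1,\ldots,j_k)$, so it cannot deliver \eqref{eq:smoothext} in the mean-field regime where the $\tQ$-entries are small (precisely the regime the paper targets). Incidentally, the disjointness of the shifted base sets $\tcS_i$ that you flag as the main obstacle is not a problem: the shifts remain disjoint from $\{j_1\}\cup B_i$, which is all the hypothesis needs.

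The paper sidesteps the remainder entirely via an \emph{iterated first-order} expansion. It first proves an exact identity (\cref{cl:smoothclaim2}): fixing one distinguished index $j_*\in B$ and applying only the fundamental theorem of calculus,
\[
\De(f\circ w;\tcS;B)=\sum_{D\subseteq B\setminus\{j_*\}}\int_0^1\De(w;\tcS\cup D;B\setminus D)\,\De\bigl(f'\bigl(w^{j_*}+z(w-w^{j_*})\bigr);\tcS;D\bigr)\,dz,
\]
and then inducts on $|B|$ (\cref{lem:genmatrixbd}, part 1), applying the inductive hypothesis to $f'$ composed with the interpolant $w^{j_*}+z(w-w^{j_*})$, which inherits the same $\tQ$-bounds by linearity of $\De$. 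Exactly one derivative of $f$ is traded for one element of $B$ at each step, so the budget $f^{(0)},\ldots,f^{(k)}$ is never exceeded, and the resulting recursion reproduces \eqref{eq:newtensor} term by term with no remainder to control.
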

	
	\cref{lem:smoothcont} says that if a sequence of functions $\{b_{j_1}(\ms),\ldots ,b_{j_N}(\ms)\}$ satisfies \eqref{eq:altform} ($t_{j_1}$ replaced by $b_{j_1}$) with some tensor sequence $\tQ$, then for any smooth $f(\cdot)$, the sequence $\{f(b_{j_1}(\ms)),\ldots ,f(b_{j_N}(\ms))\}$ satisfies \eqref{eq:altform} with the tensor sequence $\mathcal{R}[\tQ]$. Moreover, if $\tQ$ satisfies the maximum row summability condition in \eqref{eq:cmean2}, so does $\mathcal{R}[\tQ]$. The proof of \cref{lem:smoothcont} proceeds by showing a Fa\'{a} Di Bruno (see \cite{faa1855sullo} and \cref{cl:smoothclaim2}) type identity involving discrete derivatives of compositions of functions. 
	
	In terms of verifying \cref{as:cmean}, the main message of \cref{lem:smoothcont} is the following: 
	
	\begin{itemize}
		\item First show that the conditional means $t_j=\E [g(\si_i)|\si_j,j\neq i]=f(b_j(\ms))$ for some ``smooth" function $f(\cdot)$ and some simple transformations of $\ms$, say $b_j(\ms)$ (an example would be the $m_j$s in \eqref{eq:tempising} for the Ising model case). 
		\item Second, prove $b_j(\ms)$ satisfies \eqref{eq:altform} for some tensor sequence $\Q_{N,k}$ which has bounded maximum row sum in the sense of \eqref{eq:cmean2}. Typically the $b_j(\ms)$ sequence will be some polynomial of degree, say $v$, involving the observations $\ms$. This will immediately force \eqref{eq:altform} to hold for all $k>v$ by simply choosing the corresponding tensors to be identically $0$. The lower order discrete derivatives of such polynomial functions can be easily calculated and bounded, often using closed form expressions (as we shall explicitly demonstrate in the Ising case below).
		\item The final step is to apply \cref{lem:smoothcont} with the above functions $f(\cdot)$ and $b_j(\cdot)$, which will readily yield \cref{as:cmean}.
	\end{itemize}
	
	\paragraph{Application in Ising models.} In the Ising model case, by \eqref{eq:tempising}, recall that  $t_j=\tanh(m_j)$ where $m_j=\sum_{i=1}^N \A_N(i,j)\si_i$. As the $m_j$s are linear in the coordinates of $\ms$, we have 
	$$\De(m_{j_1};\tcS;\{j_2,\ldots ,j_k\})=0$$
	for all $k\ge 3$ and $\tcS$ such that $\tcS\cup \{j_2,\ldots ,j_k\}=\phi$. For $k=2$, we have 
	$$|\Delta(m_{j_1};\tcS;\{j_2\})|=\big|m_{j_1}^{\tcS}-m_{j_1}^{\tcS\cup\{j_2\}}\big|=\big|\A_N(j_1,j_2)\sigma_{j_2}\big|=\A_N(j_1,j_2).$$
	Combining the above observations, we note that 
	$$\big|\De(m_{j_1};\tcS;\{j_2,\ldots ,j_k\})\big|\le \tQ_{N,k}(j_1,\ldots ,j_k),$$
	where 
	$$\tQ_{N,k}(j_1,\ldots ,j_k) := \begin{cases} \A_N(j_1,j_2) & \mbox{if}\, k=2 \\ 0 & \mbox{if}\, k\ge 3\end{cases}.$$
	Therefore, if we assume that the matrix $\A_N$ has bounded row sums, then the sequence of tensors $\tQ_{N,k}$ will automatically have bounded row sums. Recall from above that $t_j=\tanh(m_j)$. As $\tanh(\cdot)$ has all derivatives bounded, by \cref{lem:smoothcont}, $(t_1,\ldots ,t_N)$ will satisfy \cref{as:bddrowsum} with 
	$$\Q_{N,k}(j_1,j_2,\ldots ,j_k)=\mathcal{R}[\tQ]_{N,k}(j_1,j_2,\ldots ,j_k)=\sum_{r=2}^{k} \mathcal{R}[\tQ]_{N,k-1}(\{j_1,j_2,\ldots ,j_k\}\setminus \{j_r\})\tQ_{N,2}(j_1,j_r).$$
	A simple induction then shows we can choose 
	$$\Q_{N,k}(j_1,\ldots ,j_k)=(k-1)\prod_{r=2}^k \A_N(j_1,j_r).$$
	The fact that $\Q_{N,k}$ as constructed above has a bounded row sum, follows from \cref{lem:smoothcont} itself, provided $\A_N$ has bounded row sums. 
	
	\begin{remark}[Broader implications]
		We emphasize that the above argument is not restricted to Ising models with pairwise interactions. It applies verbatim to many other graphical/network models. We provide two further illustrations involving Ising models with tensor interactions (see \eqref{eq:tensorsimpl}) and exponential random graph models (see \eqref{eq:conergm}).
	\end{remark}

	\section{Main Applications}\label{sec:examp}
	In this Section, we provide applications of our main results by deriving CLTs for conditionally centered spins and limit theory for a number of pseudolikelihood estimators. We will focus on the Ising model with pairwise interactions (in \cref{sec:ising}) and general higher order interactions (in \cref{sec:highising}). We will also apply our results to the popular exponential random graph model in \cref{sec:ergm}. 
	\subsection{Ising model with pairwise interactions}\label{sec:ising}
	The Ferromagnetic Ising model is a discrete/continuous Markov random field which was initially introduced as a mathematical model of Ferromagnetism in Statistical Physics, and has received extensive attention in Probability and Statistics~(c.f.~\cite{Basak2017,Berthet2019, Bresler2019, Chatterjee2007, Cha2011, Comets1991, deb2020fluctuations, deb2020detecting, AmirAndrea2010, eldan2018taming, Ellis1978, Gheissari2018, giardina2015annealed, jain2019mean, kabluchko2019fluctuations, liu2017log, Lowe2018, Mukherjee2018, mukherjee2019testing,  Ravikumar2010, Sly2014,Rados2019} and references therein).  Writing $\ms:=(\sigma_1,\cdots,\sigma_N)$, the Ising model with pairwise interactions can be described by the following sequence of probability measures:
	\begin{equation}\label{eq:model}
		\P\big\{\,d\ms\big\}\coloneqq\frac{1}{Z_N(\beta,B)}\exp\left(\frac{\beta}{2}(\ms)^{\top}\A_N\ms+B\sum\limits_{i=1}^N \sigma_i\right)\prod_{i=1}^N \vrh(\,d\sigma_i),
	\end{equation}
	where $\vrh$ is a non-degenerate probability measure, which is \emph{symmetric about $0$} and supported on $[-1,1]$ with the set $\{-1,1\}$ belonging to the support. 
	Here $\A_N$ is a $N\times N$ symmetric matrix with non-negative entries and zeroes on its diagonal, and $\beta\in \R$, $B\in \R$ are \emph{unknown parameters} often referred to in the Statistical Physics literature as \textit{inverse temperature} (Ferromagnetic or anti-Ferromagnetic depending on the sign of $\beta$) and \textit{external magnetic field} respectively. As the dependence on $\A_N$ in~\eqref{eq:model} is through a quadratic form, we can also assume without loss of generality that $\A_N$ is symmetric in its arguments. The factor $Z_N(\beta,B)$ is the normalizing constant/partition function of the model. The most common choice of the coupling matrix $\A_N$ is the adjacency matrix $\G_N$ of a graph on $N$ vertices, scaled by the average degree $\ud:=\frac{1}{N}\sum_{i,j=1}^N G_N(i,j)$.
	
	As mentioned in \eqref{eq:pseudois}, the asymptotic distribution of pseudolikelihood estimators under model \eqref{eq:model} is tied to the asymptotic behavior of $T_N$ in \eqref{eq:pivotstat} with $g(x)=x$. Therefore, in this section, we first present a general CLT for $T_N$ under model~\eqref{eq:model} which will be then leveraged to yield several new asymptotic properties of pseudolikelihood estimators. We begin with the following assumptions.
	
	\begin{Assumption}[Bounded row/column sum]\label{as:bddrowsum}
		$\A_N$ satisfies $$\limsup_{N\to\infty}\max_{1\leq i\leq N} \sum_{j=1}^N \A_N(i,j)< \infty.$$
	\end{Assumption}
	The above assumption \emph{does not impose} any sparsity assumptions. For instance, if $\A_N=\G_N/d_N$ where $\G_N$ is the adjacency matrix of a $d_N$-regular graph, \cref{as:bddrowsum} is automatically satisfied whether $d_N\to\infty$ (dense case) or $\sup_N d_N<\infty$ (sparse case). Therefore both the Curie-Weiss model \cite{Ellis1976,Qi-Man2019} ($\G_N$ is the complete graph) and the Ising model on the $d$-dimensional lattice \cite{Comets1998,gaetan2004central} satisfy this criteria. \cref{as:bddrowsum} will ensure that $T_N$ satisfies \cref{as:cmean} which is required to apply our main results.
	
	\begin{theorem}\label{theo:conmainis}
		Suppose $(\sigma_1,\ldots ,\sigma_N)$ is an observation drawn according to~\eqref{eq:model}. Recall the definitions of $U_N$ and $V_N$ from \eqref{eq:randvar}. 
		Then under Assumptions~\ref{as:coeff},~\ref{as:bddrowsum}~and~\eqref{eq:bddzero}, the following holds:
		
		\begin{equation}\label{eq:limmomentis}
			\frac{T_N}{\sqrt{(U_N+V_N) \vee a_N}}\overset{w}{\longrightarrow} \mathcal{N}(0,1), 
		\end{equation}
		for any strictly positive sequence $a_N\to 0$.
	\end{theorem}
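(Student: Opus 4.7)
The plan is to reduce \cref{theo:conmainis} directly to \cref{theo:CLTmain} by verifying \cref{as:cmean} for the Ising model \eqref{eq:model}; \cref{as:coeff} and the positivity condition \eqref{eq:bddzero} are already part of the hypotheses. The strategy mirrors the template outlined in \cref{sec:howtover}: write each conditional mean $t_j$ as a smooth function of a simple linear statistic in $\ms$, then invoke \cref{lem:smoothcont}.

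First, I would compute the conditional means explicitly. Under \eqref{eq:model}, the conditional law of $\si_j$ given $\{\si_i : i \neq j\}$ is proportional to $\exp(\si_j(\beta m_j + B))\vrh(d\si_j)$, where $m_j := \sum_i \A_N(j,i)\si_i$ (the self-index may be included since $\A_N$ has zero diagonal). Hence
\[
t_j = \psi_g(\beta m_j + B), \qquad \psi_g(u) := \frac{\int g(x)e^{xu}\,\vrh(dx)}{\int e^{xu}\,\vrh(dx)}.
\]
Since $\vrh$ is supported on $[-1,1]$, repeated differentiation under the integral expresses $\psi_g^{(\ell)}(u)$ as a finite polynomial in tilted moments $\int x^a g(x)^b\,\vrh_u(dx)$ with $a+b \leq \ell$, where $\vrh_u(dx) \propto e^{xu}\vrh(dx)$. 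Because $|g|, |x| \leq 1$, each such tilted moment is bounded by $1$ in absolute value, so $\sup_{u \in \R} |\psi_g^{(\ell)}(u)| \leq C_\ell$ for a constant depending only on $\ell$ (and not on $g$, $\beta$, $B$, or $\vrh$). Crucially, this smoothness of $\psi_g$ does \emph{not} require $g$ itself to be smooth; this is exactly why the moment-based argument is the correct device here.

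Next, I would apply \cref{lem:smoothcont} with $b_j(\ms) := \beta m_j + B$ and with $f$ equal to $\psi_g$ rescaled by $\max_{\ell \leq k} C_\ell$ so that $\sup_x |f^{(\ell)}(x)| \leq 1$ for $0 \leq \ell \leq k$. The functions $b_j$ are uniformly bounded since $|\si_i| \leq 1$ and $\A_N$ has bounded row sums by \cref{as:bddrowsum}. Linearity of $b_j$ in $\ms$ forces $\De(b_{j_1}; \tcS; \{j_2,\ldots,j_k\}) = 0$ for every $k \geq 3$, while for $k = 2$ a direct computation gives $|\De(b_{j_1}; \tcS; \{j_2\})| = |\beta \A_N(j_1,j_2)\si_{j_2}| \leq |\beta| \A_N(j_1,j_2)$. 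Taking
\[
\tQ_{N,2}(j_1,j_2) := |\beta|\A_N(j_1,j_2), \qquad \tQ_{N,k} \equiv 0 \text{ for } k \geq 3,
\]
\cref{as:bddrowsum} immediately yields the row-sum bound \eqref{eq:cmean2} for $\tQ$. By \cref{lem:smoothcont}, setting $\Q_{N,k}$ equal to a suitable constant multiple of $\mathcal{R}[\tQ]_{N,k}$ (with multiplier depending only on $k$ and the uniform bound on $b_j$), one obtains $|\De(t_{j_1}; \tcS; \{j_2,\ldots,j_k\})| \leq \Q_{N,k}(j_1,\ldots,j_k)$, and part (2) of the lemma guarantees that $\Q_{N,k}$ also satisfies \eqref{eq:cmean2}. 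This completes the verification of \cref{as:cmean}, and \eqref{eq:limmomentis} then follows immediately from \cref{theo:CLTmain}. The only genuinely non-routine step in this plan is the uniform boundedness of all derivatives of $\psi_g$; this ultimately reduces to the fact that tilted moments of a distribution compactly supported on $[-1,1]$ remain bounded uniformly in the tilt parameter, which holds regardless of the (possibly non-smooth) choice of $g$.
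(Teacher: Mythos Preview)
Your proposal is correct and follows essentially the same route as the paper: verify \cref{as:cmean} by writing $t_j$ as a smooth function of the linear statistic $\beta m_j+B$, note that linearity kills all higher-order discrete derivatives of $b_j$, and invoke \cref{lem:smoothcont} together with \cref{as:bddrowsum}; then \cref{theo:CLTmain} finishes the job. The paper's own proof is a touch terser---it works with $\Xi'$ (i.e.\ $g(x)=x$) and simply asserts that $\Xi'$ has uniformly bounded derivatives---whereas you handle general bounded $g$ via the tilted-moment representation of $\psi_g^{(\ell)}$, which is a useful elaboration but not a different idea.
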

	
	There are three key features of \cref{theo:conmainis} which will help uncover new asymptotic phenomena.
	
	\noindent (i). \textbf{No regularity restrictions}: Unlike some existing CLTs for $\sum_{i=1}^N \si_i$ in Ising models (see \cite{xu2023inference,deb2020fluctuations}) which assumes that the underlying graph $\G_N$ is ``approximately" regular, \cref{theo:conmainis} shows that no regularity assumption is needed to study asymptotic distribution of the conditionally centered statistic $T_N$. This flexibility will allow us to \emph{obtain the first joint CLTs for the pseudolikelihood estimator of $(\beta,B)$ in \cref{sec:jointpseudo}}.
	
	\noindent (ii). \textbf{No dense/sparse assumptions}: \cref{theo:conmainis} also does not impose any dense/sparse restrictions on the nature of interactions, unlike e.g. \cite{Comets1991,gaetan2004central} which requires sparse interactions. As a by-product, we are able to show (in \cref{sec:marginalpseudo}) that for dense regular graphs (much beyond the Curie-Weiss model), the asymptotic distribution of the pseudolikelihood estimator attains the Cramer-Rao information theoretic lower bound.
	
	\noindent (iii). \textbf{Anti-Ferromagnetic case $\beta<0$.} \cref{theo:conmainis} also allows for $\beta<0$. This helps us produce an example (in \cref{sec:mixturepseudo}) where the asymptotic distribution of the pseudolikelihood estimator for the magnetization parameter is not Gaussian but instead a Gaussian scale mixture. To the best of our knowledge, this phenomenon has not been observed before.
	
	\subsubsection{Joint pseudolikelihood CLTs for irregular graphons}\label{sec:jointpseudo}
	In this Section, we study the joint estimation of the inverse temperature and magnetization parameters, $\beta$ and $B$, respectively, under model \eqref{eq:model}. From \cite{Comets1998,Chatterjee2007,bhattacharya2018inference}, it is known that under mild assumptions $\beta$ is estimable at a $\sqrt{N}$ rate if $B$ \emph{is known}, and similarly $B$ is estimable at a $\sqrt{N}$ rate if $\beta$ \emph{is known}. The joint estimation of $(\beta,B)$ has been studied most comprehensively in \cite{ghosal2018joint}. At a high level, they observe that 
	\begin{enumerate}
		\item $\sqrt{N}$ estimation of $(\beta,B)$ jointly is \textbf{possible} if $\A_N$ is \textbf{approximately irregular}.
		\item $\sqrt{N}$ estimation of $(\beta,B)$ jointly is \textbf{impossible} if $\A_N$ is \textbf{approximately regular}.
	\end{enumerate}
	Moreover, in case 1, \cite{ghosal2018joint} shows that the pseudolikelihood estimator (formally defined below) is indeed $\sqrt{N}$-consistent for $(\beta,B)$ jointly. However, to the best of our knowledge, no joint limit distribution theory for the pseudolikelihood has been established yet. The aim of this Section is to provide the first such result. To achieve this, we will adopt the framework from \cite{ghosal2018joint}.
	
	\begin{definition}[Parameter space]
		Let $\Theta\subset\R^2$ denote the set of all parameters $(\beta,B)$ such that $\beta>0, B\neq 0$. 
	\end{definition}
	
	Next we define the joint pseudolikelihood estimator. To wit, note that under model \eqref{eq:model}, we have: 
	\begin{align}\label{eq:condist}
		\P\{\,d\sigma_i|\sigma_j,j\neq i\} = \frac{\exp\left(\sigma_i\big(\beta m_i(\ms)+B\big)\right) \vrh(\,d\sigma_i)}{\left(\int\exp\left(y\big(\beta m_i(\ms)+B\big)\right)\vrh(\,dy)\right)},
	\end{align}
	where
	\begin{equation}\label{eq:avisdef} m_i\equiv m_i(\ms):=\sum_{j=1, j\neq i}^N \A_N(i,j)\si_j.\end{equation}
	In other words, the conditional distribution of $\si_i$ given $\{\si_j,\ j\neq i\}$ is a function of $m_i$. These $m_i$s defined above are usually referred to as \emph{local averages}. For very site $i$, they capture the average effect of the neighbors of the $i$-th observation. Weak limits, concentrations, and tail bounds for $m_i$s have been studied extensively in the literature (see~\cite{gheissari2019ising,Chatterjee2007,deb2020fluctuations,deb2020detecting,bhattacharya2025sharp,bhattacharya2023gibbs}). Based on \eqref{eq:condist}, we note that 
	\begin{equation}\label{eq:candef1}\E[\si_i|\sigma_j,\ j\neq i]=\frac{\int y\exp\left(y\big(\beta m_i+B\big)\right)\vrh(\,dy)}{\int\exp\left(y\big(\beta m_i+B\big)\right)\vrh(\,dy)}=\Xi_{1}'(\beta m_i + B),\,\, \mbox{where}\,\, \Xi(t):=\log\int \exp(ty)\vrh(\,dy).\end{equation}
	
	\begin{definition}[Joint pseudolikelihood estimator]\label{def:plestim}
		Consider the bivariate equation in $(\beta,B)$ given by 
		$$\begin{pmatrix} \sum_{i=1}^N m_i(\si_i-\Xi'(\beta m_i+B)) \\ \sum_{i=1}^N (\si_i-\Xi'(\beta m_i+B))\end{pmatrix} = \begin{pmatrix}
			0 \\ 0
		\end{pmatrix}.$$
		The above equation has a unique solution $(\hbem,\hbm)$ in $\Theta$ with probability tending to $1$ under model \eqref{eq:model} (see \cite[Theorem 1.7]{ghosal2018joint}).
	\end{definition}
	To study the limit distribution theory for $(\hbem,\hbm)$ with an explicit covariance matrix, we need some notion of convergence of the underlying matrix $\A_N$. We use the notion of convergence in cut norm which has been studied extensively in the probability and statistics literature (see~\cite{FriezeKannan1999,bc_lpi,borgs2018p,borgsdense1,borgsdense2}).
	\begin{definition}[Cut norm]\label{def:defirst}
		Let $L^1([0,1]^2)$ denote the space of all integrable functions $W$ on the unit square, 
		Let $\mathcal{W}$ be the space of all symmetric real-valued functions in $L^1([0,1]^2)$. Given two functions $W_1,W_2\in \mathcal{W}$, define the cut norm between $W_1, W_2$ by setting	$$d_\square(W_1,W_2):=\sup_{S,T}\Big|\int_{S\times T} \Big[W_1(x,y)-W_2(x,y)\Big]dx dy\Big|.$$
		In the above display, the supremum is taken over all measurable subsets $S,T$ of $[0,1]$.  
		
		\noindent Given a symmetric matrix $\Q_N$, define a function $W_{\Q_N}\in \mathcal{W}$ by setting
		\begin{align*}
			W_{\Q_N}(x,y)=& \Q_N(i,j)\text{ if }\lceil Nx\rceil =i, \lceil Ny\rceil =j.
		\end{align*}
		
		We will assume throughout the paper that 
		the sequence of matrices $\{N \A_N\}_{N\ge 1}$  converge in cut norm,~i.e. for some $W\in \mathcal{W}$,
		\begin{align}\label{eq:cut_con}
			d_{\square}(W_{N\A_N},W) \rightarrow 0.
		\end{align} 
	\end{definition}
	As an example, if $\A_N=\G_N/(N-1)$ where $\G_N$ is the adjacency matrix of a complete graph, then the limiting $W$ is the constant function $1$. We note that \eqref{eq:cut_con} is a standard assumption for analyzing models on dense graphs. In particular, if $\A_N$ is the scaled adjacency matrix of a sequence of dense graphs (with average degree of order $N$), it is known that \eqref{eq:cut_con} always holds along subsequences  (see \cite{lovaszszemeredi}). An important goal in the study of Gibbs measures is to characterize the limiting partition function $Z_N(\beta,B)$ (see \eqref{eq:model}) in terms of the limiting graphon $W$ (see e.g.~\cite{augeri2019transportation,Cha2016}). In particular, it can be shown (see \cite[Proposition 1.1]{bhattacharya2023gibbs}) that 
	\begin{align}\label{eq:optimprob}
		\frac{1}{N}Z_N(\beta,B)&\overset{N\to\infty}{\longrightarrow} \sup_{f:[0,1]\to [-1,1]} \bigg(\beta \int_{[0,1]^2} f(x) f(y) W(x,y)\,dx\,dy + B \int_{[0,1]} f(x)\, dx \nonumber \\ &\qquad \qquad \qquad - \int_{[0,1]} ((\Xi')^{-1}(f(x))f(x)-\Xi((\Xi')^{-1}(f(x))))\,dx\bigg).
	\end{align}
	In our main result, we show that the limiting distribution of $(\hbem,\hbm)$ can be characterized in terms of the optimizers of \eqref{eq:optimprob}. As mentioned earlier, by \cite[Theorem 1.11]{ghosal2018joint}, $\sqrt{N}$ convergence of $(\hbem,\hbm)$ requires the limiting $W$ to satisfy an irregularity condition, which we first state below. 
	\begin{Assumption}[Irregular graphon]\label{as:irregraph}
		$W\in\mathcal{W}$ is said to be an irregular graphon if 
		\begin{align}\label{eq:irreg}
			\int_{x\in[0,1]} \left(\int_{y\in [0,1]} W(x,y)\,dy - \int_{x,y\in [0,1]^2} W(x,y)\,dx\,dy\right)^2\,dx>0.
		\end{align}
		In other words, the row integrals of $W$ are non-constant.
	\end{Assumption}
	We are now in position to state the main result of this section. 
	\begin{theorem}\label{thm:jointCLT}
		Suppose $\A_N$ satisfies \cref{as:bddrowsum} and \eqref{eq:cut_con} for some irregular graphon $W$ in the sense of \cref{as:irregraph}. For any $f:[0,1]\to [-1,1]$, define the following matrices: 
		\begin{align}\label{eq:amat}
			\mca_f:=\begin{pmatrix} \int_{[0,1]} f^2(x)\Xi''(\beta f(x)+B)\,dx & \int_{[0,1]} f(x)\Xi''(\beta f(x)+B)\,dx \\ \int_{[0,1]} f(x)\Xi''(\beta f(x)+B)\,dx & \int_{[0,1]} \Xi''(\beta f(x)+B)\,dx\end{pmatrix},
		\end{align}
		and $\mcb_f$ where 
		\begin{equation}
			\begin{aligned}\label{eq:bmat}
				\mcb_f(1,1)&:=\int_{x\in [0,1]} f(x)\Xi''(\beta f(x)+B)\left(f(x)-\beta \int_{y\in [0,1]} f(y)\Xi''(\beta f(y)+B)W(x,y)\,dy\right)\,dx\\
				\mcb_f(1,2)&:=\mcb_f(2,1):=\int_{x\in [0,1]}f(x)\Xi''(\beta f(x)+B)\left(1-\beta\int_{y\in [0,1]}\Xi''(\beta f(y) + B)W(x,y)\,dy\right)\,dx \\ 
				\mcb_f(2,2)&:=\int_{x\in [0,1]}\Xi''(\beta f(x)+B)\left(1-\beta \int_{y\in [0,1]} \Xi''(\beta f(y)+B)W(x,y)\,dy\right)\,dx.
			\end{aligned}
		\end{equation}
		
		Assume that the optimization problem in \eqref{eq:optimprob} has an almost everywhere unique solution $\fs$. Then $\mca_{\fs}$ is invertible and  
		$$\sqrt{N}\begin{pmatrix} \hbem-\beta \\ \hbm-B\end{pmatrix}\overset{w}{\longrightarrow} N\left(\begin{pmatrix} 0 \\ 0\end{pmatrix}, \mca_{\fs}^{-1}\mcb_{\fs}\mca_{\fs}^{-1}\right).$$
	\end{theorem}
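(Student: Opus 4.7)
The plan is to deduce the theorem from the general pseudolikelihood CLT in \cref{prop:CLTMPLE} applied to the joint score with $\theta=(\beta,B)$. First, I would compute the score and Hessian of the pseudolikelihood directly from \eqref{eq:condist}: writing $t_i=\Xi'(\beta m_i+B)$, one has
\begin{equation*}
\nabla f_i(\theta_0)=(\sigma_i-t_i)\begin{pmatrix}m_i\\1\end{pmatrix},\qquad
-\nabla^2 f_i(\theta_0)=\Xi''(\beta m_i+B)\begin{pmatrix}m_i^2 & m_i\\ m_i & 1\end{pmatrix}.
\end{equation*}
In particular each coordinate of $\nabla f_i(\theta_0)$ is conditionally centered, so via the Cram\'er--Wold device a CLT for $N^{-1/2}\sum_i\nabla f_i(\theta_0)$ reduces to a CLT for $T_N$ in \eqref{eq:pivotstat} with $g(x)=x$ and $c_i=\lambda_1 m_i+\lambda_2$. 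Under \cref{as:bddrowsum}, $|m_i|$ is uniformly bounded so \cref{as:coeff} is immediate, and hence \cref{theo:conmainis} can be invoked for each linear combination. This handles (A2) of \cref{prop:CLTMPLE}.

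Next, I would identify the limits. For (A1) I would show that on a shrinking neighborhood of $\theta_0$ the Hessian is well approximated by its value at $\theta_0$ (uniform boundedness of $\Xi'''$ on the bounded range of $\beta m_i+B$ suffices), and that the empirical matrix
\begin{equation*}
\frac{1}{N}\sum_{i=1}^N \Xi''(\beta m_i+B)\begin{pmatrix}m_i^2 & m_i\\ m_i & 1\end{pmatrix}
\end{equation*}
converges in probability to $\mca_{\fs}$. This uses the mean-field convergence that the empirical distribution of the local averages $\{m_i\}_{i\in[N]}$ concentrates on the push-forward of the a.e.\ unique maximizer $\fs$ (a standard consequence of \eqref{eq:cut_con} and uniqueness of \eqref{eq:optimprob}; see the references cited around \eqref{eq:optimprob}). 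Invertibility of $\mca_{\fs}$ is equivalent, by Cauchy--Schwarz, to $\fs$ being non-constant under the measure with density $\Xi''(\beta \fs+B)$, which I would derive from the Euler--Lagrange equation $\fs(x)=\Xi'(\beta\int W(x,y)\fs(y)dy+B)$: if $\fs\equiv c$ were constant, then either $c=0$ forces $B=0$ (ruled out by $B\ne 0$) or else $c\ne 0$ and constancy of $\int W(x,y)dy$ is forced, contradicting \cref{as:irregraph}.

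For the asymptotic covariance, I would match $U_N+V_N$ with $\lambda^\top\mcb_{\fs}\lambda$ for the linear combination $c_i=\lambda_1 m_i+\lambda_2$. The $U_N$ term, since $\EE[\sigma_i^2-t_i^2|\sigma_j,j\neq i]=\Xi''(\beta m_i+B)$, converges by the same mean-field reasoning to $\lambda^\top\mca_{\fs}\lambda=\int(\lambda_1\fs(x)+\lambda_2)^2\Xi''(\beta\fs(x)+B)dx$. For $V_N$, a first-order Taylor expansion of $t_j^i-t_j=\Xi'(\beta(m_j-\A_N(j,i)\sigma_i)+B)-\Xi'(\beta m_j+B)$ yields $-\beta\A_N(j,i)\sigma_i\Xi''(\beta m_j+B)$ up to a $O(\A_N(j,i)^2)$ remainder that is negligible after double summation under \cref{as:bddrowsum}. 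Using $\sigma_i(\sigma_i-t_i)$ concentrating on its conditional mean $\Xi''(\beta m_i+B)$, the cross term becomes (after passing to the graphon limit) exactly $-\beta$ times the double integral in $\mcb_{\fs}$, giving $U_N+V_N\to \lambda^\top\mcb_{\fs}\lambda$. Positive definiteness of $\mcb_{\fs}$ (so that \eqref{eq:bddzero} holds) needs to be verified; one can view $\mcb_{\fs}$ as the covariance of $(\lambda_1 \fs+\lambda_2)\Xi''(\beta\fs+B)^{1/2}$ projected through $(I-\beta K)$ where $K$ is the integral operator with kernel $\Xi''^{1/2}W\Xi''^{1/2}$, and non-degeneracy is a spectral condition that is inherited from the strict concavity of \eqref{eq:optimprob} at $\fs$. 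Putting \cref{prop:CLTMPLE} together with consistency from \cite[Theorem 1.7]{ghosal2018joint} (the assumption (A3)) then delivers the claimed joint limit with covariance $\mca_{\fs}^{-1}\mcb_{\fs}\mca_{\fs}^{-1}$. The main obstacle I anticipate is the mean-field convergence of $N^{-1}\sum_i h(m_i)$ to $\int h(\fs(x))dx$ for continuous $h$: this is not proved in the present paper and relies crucially on the almost-everywhere uniqueness of $\fs$ together with the cut-metric convergence \eqref{eq:cut_con}, and has to be imported or carefully re-derived.
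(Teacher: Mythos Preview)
Your overall architecture is right and essentially matches the paper: compute the score and Hessian, use Cram\'er--Wold, identify the Hessian limit via mean-field concentration of the $m_i$'s, identify the score covariance via $U_N+V_N$, and then appeal to \cref{prop:CLTMPLE} with consistency/invertibility imported from \cite{ghosal2018joint}. Your direct argument for the invertibility of $\mca_{\fs}$ via the Euler--Lagrange equation and \cref{as:irregraph} is also fine (the paper simply cites \cite[Theorem~1.11]{ghosal2018joint} for both invertibility and $\sqrt N$-consistency).

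There is, however, one concrete gap. You write that the Cram\'er--Wold reduction gives $T_N$ with $c_i=\lambda_1 m_i+\lambda_2$ and then invoke \cref{theo:conmainis}. But $m_i$ is \emph{random} --- it is a function of $(\sigma_j)_{j\ne i}$ --- whereas \cref{theo:conmainis} (and the underlying \cref{theo:CLTmain}, \cref{theo:conmain}, \cref{as:coeff}) are stated for \emph{deterministic} weight vectors $\mc$. The moment/decision-tree machinery in the proof is written for fixed $c_i$, so you cannot just check ``$|m_i|$ bounded $\Rightarrow$ \cref{as:coeff}''. The paper resolves this by first replacing the random coefficient $m_i$ with the deterministic $\fs(i/N)$, using the pointwise concentration
\[
\frac{1}{N}\sum_{i=1}^N \bigl(m_i-\fs(i/N)\bigr)^2 \overset{\P}{\longrightarrow} 0
\]
(imported from \cite[Corollary~1.5]{bhattacharya2023gibbs}) together with \cref{lem:auxtail}(b) to show that
\[
\frac{1}{\sqrt N}\sum_i (a\,m_i+b)(\sigma_i-t_i)=\frac{1}{\sqrt N}\sum_i \bigl(a\,\fs(i/N)+b\bigr)(\sigma_i-t_i)+o_{\P}(1),
\]
and only then applies the conditionally centered CLT with the deterministic weights $c_i=a\,\fs(i/N)+b$. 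So the mean-field input is needed \emph{before} you can invoke the CLT, not only at the ``identify the limits'' stage; and it must come in the pointwise $L^2$ form above rather than merely as weak convergence of the empirical law of $\{m_i\}$.

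Two smaller points. First, you route through \cref{theo:conmainis}, which requires \eqref{eq:bddzero} and hence forces you to argue positive definiteness of $\mcb_{\fs}$ for every $\lambda$; the paper sidesteps this entirely by using \cref{theo:conmain} (the moment version) after showing $(U_N,V_N)$ has a deterministic limit, which covers degenerate directions without extra work. Second, the simplification of $U_N,V_N$ to the expressions involving $\Xi''$ in your Taylor step is done in the paper as \eqref{eq:uvradiate} (proof of \cref{cor:isingmean}); this needs the mean-field condition only through \cref{as:bddrowsum} for the remainder control, and then the identification of limits again uses the pointwise $m_i\approx\fs(i/N)$ concentration, not just empirical convergence.
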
 
	To the best of our knowledge, \cref{thm:jointCLT} provides the first joint CLT for estimating $(\beta,B)$. A sufficient condition for unique solutions to the optimization problem in \eqref{eq:optimprob} is to assume $\vrh$ is strictly log-concave or equivalently $B$ is large enough (see \cite[Theorem 2.5]{lacker2024mean} and \cite[Lemma 25]{mukherjee2021variational}).
	\subsubsection{Marginal pseudolikelihood CLTs in the Mean-Field regime}\label{sec:marginalpseudo}
	
	As mentioned in \cref{sec:jointpseudo}, when $\A_N$ is ``approximately regular", joint $\sqrt{N}$ estimation of $(\beta,B)$ is no longer possible. However, given one parameter, the other can still be estimated at a $\sqrt{N}$ rate; see \cite{Comets1991,Chatterjee2007,bhattacharya2018inference}. To the best of our knowledge, the CLT for $\hbem$ (respectively $\hbm$) when $B$ (respectively $\beta$) is known, has only been established for the Curie-Weiss model (see \cite[Theorem 1.4]{Comets1991}) and when $\A_N$ is the scaled adjacency matrix of an Erd\H{o}s-R\'enyi graph (see \cite[Theorem 3.1]{Somabha2021}) under light sparsity. The goal of this Section is to complement these existing results by showing universal CLTs for $\hbem$ and $\hbm$ when the other parameter is known, for any sequence of dense regular graphs. Let us first formalize the notion of approximate regularity and denseness of $\A_N$. 
	
	\begin{Assumption}[Approximately regular matrices]\label{as:approxmat} 
		We define an approximately regular matrix  $\A_N$ as one that has non-negative entries, is symmetric and satisfies: 
		
		\begin{equation}\label{eq:asnrsm}
			\lambda_1(\A_N)\overset{N\to\infty}{\longrightarrow} 1,\quad \frac{1}{N}\sum_{i=1}^N \delta_{R_i}\to 1,\ \mbox{where}\ R_i:=\sum_{j=1}^N \A_N(i,j),
		\end{equation}
		where $\lambda_1(\A_N)\geq \lambda_2(\A_N)\geq \ldots \geq \lambda_N(\A_N)$ are the $N$ eigenvalues of $\A_N$ arranged in descending order.
	\end{Assumption}
	
	\begin{Assumption}[Mean-field/denseness  condition]\label{as:mfield}
		The Frobenius norm of $\A_N$ satisfies $$\fa:=\sum_{i,j} \A_N(i,j)^2=o(N).$$
		When the coupling matrix $\A_N$ is the adjacency matrix of a graph $G_N$ on $N$ vertices, scaled by the average degree $\ud:=\frac{1}{N}\sum_{i,j=1}^N G_N(i,j)$, then $\fa=N/\ud$. Therefore in that case, \cref{as:mfield} is equivalent to assuming that $\ud\to\infty$, which implies the graph is dense. 
	\end{Assumption}
	
	Assumptions \ref{as:approxmat} and \ref{as:mfield} cover popularly studied examples in the literature such as scaled adjacency matrices of random/deterministic regular graphs, Erd\H{o}s-R\'enyi graphs, balanced stochastic block models, among others. When $\A_N$ is the scaled adjacency matrix of a graph, the condition $\lambda_1(\A_N)\to 1$ can be dropped as it is  implied by the bounded row sum condition in \cref{as:bddrowsum}, the Mean-Field condition \cref{as:mfield}, and the empirical row sum condition $N^{-1}\sum_{i=1}^N \delta_{R_i}\to 1$ in \eqref{eq:asnrsm}.
	
	In order to present our results when $\A_N$ is approximately regular and dense, we need certain prerequisites. Recall the definition of $\Xi(\cdot)$ from~\eqref{eq:candef1}.  
	\begin{definition}
		Recall the definition of $\Xi$ from \eqref{eq:candef1}. Let \begin{align*}\Theta_{11}:=\{(r,0):0\leq r\leq (\Xi''(0))^{-1}\},&\quad  \Theta_{12}:=\{(r,s):r\geq 0, s\ne 0\}, &\quad \Theta_2:=\{(r,0):r>(\Xi''(0))^{-1}\}.\end{align*} 
		It is easy to check that $\Xi''(0)$ is the variance under $\vrh$, i.e., $\Xi''(0)=\int x^2\,d\vrh(x)$. Finally, let $\Theta_1:=\Theta_{11}\cup \Theta_{12}$. We will refer to $\Theta_1$ as the uniqueness regime and $\Theta_2$ as the non uniqueness regime. The point $\{\Xi''(0)^{-1},0\}$ is called the critical point. 
		The names of the different regimes are motivated by the next lemma which is a slight modification of \cite[Lemma 1.7]{bhattacharya2023gibbs}.
	\end{definition}
	\begin{lemma}\label{lem:fixsol}
		The function $\Xi'(\cdot)$ is one-to-one. For $x$ in the domain of $(\Xi')^{-1}$, consider the function 
		\begin{equation}\label{eq:regobj}
			\phi(x):=\frac{r x^2}{2}+sx-x(\Xi')^{-1}(x)+C((\Xi')^{-1}(x)).
		\end{equation}
		Assume that \begin{equation}\label{eq:secasn} \Xi'''(x)\le 0 \qquad \mbox{for all}\,\,\, x>0 \qquad \mbox{and}\qquad  \Xi'''(x)\ge 0 \qquad \mbox{for all}\,\,\, x<0.
		\end{equation} 
		Then the following conclusions hold:
		\begin{enumerate}
			\item[(a)] If $(r,s)\in \Theta_{11}$, then $\phi(\cdot)$ has a unique maximizer at $t_{\vrh}=0$.
			\item[(b)] If $(r,s)\in \Theta_{12}$, then $\phi(\cdot)$ has a unique maximizer $t_{\vrh}$ with the same sign as that of $s$. Further, $t_{\vrh}=\Xi'(r t_{\vrh}+s)$ and $r\Xi''(r t_{\vrh}+s)<1$.
			\item[(c)] If $(r,s)\in \Theta_2$, then $\phi(\cdot)$ has two maximizers $\pm t_{\vrh}$, where $t_{\vrh}>0$, $t_{\vrh}=\Xi'(r t_{\vrh})$ and $r\Xi''(r t_{\vrh})<1$.
		\end{enumerate} 
		
	\end{lemma}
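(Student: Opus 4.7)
The plan is to analyze $\phi$ directly through its derivatives, which simplify dramatically via the Legendre duality $\Xi'((\Xi')^{-1}(x)) = x$. First I would compute $\phi'(x)$ by applying the chain rule to $-x(\Xi')^{-1}(x)+\Xi((\Xi')^{-1}(x))$: the two contributions involving $\tfrac{d}{dx}(\Xi')^{-1}(x) = 1/\Xi''((\Xi')^{-1}(x))$ cancel exactly, yielding
\[
\phi'(x) = rx + s - (\Xi')^{-1}(x),
\]
so critical points of $\phi$ are precisely solutions of the mean-field fixed-point equation $x = \Xi'(rx+s)$. One more differentiation gives $\phi''(x) = r - 1/\Xi''((\Xi')^{-1}(x))$, and a third gives $\phi'''(x) = \Xi'''((\Xi')^{-1}(x))/\Xi''((\Xi')^{-1}(x))^3$. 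Since $\vrh$ is symmetric about $0$, $\Xi$ is even, so $\Xi''$ is even and $\Xi'''$ is odd; combined with assumption \eqref{eq:secasn}, this forces $\Xi''$ to attain its maximum at $0$, while $\phi'''(x) \le 0$ on $(0,1)$ and $\phi'''(x) \ge 0$ on $(-1,0)$.

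Part (a) is then immediate: when $r \le 1/\Xi''(0)$ and $s=0$, the bound $\Xi''((\Xi')^{-1}(x)) \le \Xi''(0)$ yields $\phi''(x) \le 0$ globally, so $\phi$ is concave on $(-1,1)$; since $\Xi''$ is not constant (a constant $\Xi''$ would force $\vrh$ to be Gaussian, incompatible with compact support), the inequality is strict away from $0$, and $\phi'(0) = 0$ identifies $t_\vrh = 0$ as the unique maximizer. For parts (b) and (c) I would restrict to $s \ge 0$ without loss of generality, since the reflection identity $\phi(x) - \phi(-x) = 2sx$ (which follows from oddness of $(\Xi')^{-1}$ and evenness of $\Xi$) both handles the case $s<0$ via $x \mapsto -x$ and, when $s>0$, forces any global maximizer to satisfy $x > 0$. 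Thus the problem reduces to counting critical points of $\phi$ on the half-line $(0,1)$.

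The central structural fact I would exploit is that $\phi'$ is unimodal on $(0,1)$. By the sign of $\phi'''$ above, $\phi''$ is non-increasing on $(0,1)$; moreover, as $x \to 1^-$, $(\Xi')^{-1}(x) \to \infty$ and the tilted measure $e^{uy}d\vrh(y)$ concentrates at $+1$, so $\Xi''((\Xi')^{-1}(x)) \to 0$ and hence $\phi''(x) \to -\infty$. Therefore $\phi''$ changes sign at most once on $(0,1)$, and $\phi'$ is either strictly decreasing throughout, or first strictly increasing then strictly decreasing. In part (b) with $s>0$, combining this with $\phi'(0) = s > 0$ and $\phi'(1^-) = -\infty$ pins down exactly one root $t_\vrh \in (0,1)$; in part (c) with $s=0$, the values $\phi'(0) = 0$ and $\phi''(0) = r - 1/\Xi''(0) > 0$ show that $\phi'$ is strictly positive immediately to the right of $0$, and the unimodal shape again gives exactly one positive root $t_\vrh$, with $-t_\vrh$ on the negative side by evenness of $\phi$. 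In both cases the strict inequality $r\Xi''(rt_\vrh + s) < 1$ at the maximizer is a consequence of a short contradiction argument: equality would force $\phi''$ to vanish identically on $(0, t_\vrh]$ (since it is non-increasing and non-negative at the left endpoint but vanishes at $t_\vrh$), making $\phi'$ constant there, which contradicts $\phi'(0) \ne \phi'(t_\vrh) = 0$.

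The main obstacle I anticipate is part (b) in the non-concave regime $r \Xi''(0) > 1$, where $\phi$ has an inflection point on the positive half-line and one must rule out spurious extra critical points that could produce competing local maxima. The entire argument ultimately rests on the monotonicity of $\phi''$ on $(0,1)$, which is a direct consequence of the sign assumption \eqref{eq:secasn} on $\Xi'''$; without this single smoothness hypothesis the uniqueness claim could fail, and the case analysis for (b) and (c) would become considerably more delicate.
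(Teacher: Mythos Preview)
Your approach is sound and rests on the same key ingredient as the paper---the sign condition on $\Xi'''$---but the implementations differ. The paper introduces the fixed-point function $\widetilde\phi(x)=x-\Xi'(rx+s)$ and counts its roots by contradiction: extra roots would, via Rolle's theorem, force $\Xi'''$ to vanish on a set incompatible with $\varrho$ being non-Gaussian and compactly supported. You instead stay with $\phi$ and exploit directly that $\phi'''$ inherits the sign of $\Xi'''$, giving monotonicity of $\phi''$ and hence unimodality of $\phi'$ on each half-line. Your route is arguably cleaner, since the unimodality statement is precisely what the paper's Rolle/Gaussian contradiction is implicitly encoding; it also handles the boundary case $r=1/\Xi''(0)$ in part~(a) without a separate limiting argument.

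There is one slip in your strict-inequality argument. In part~(c) you cannot invoke ``$\phi'(0)\ne\phi'(t_\varrho)$'' as the contradiction, because $s=0$ gives $\phi'(0)=0=\phi'(t_\varrho)$. Also, $\phi''(t_\varrho)=0$ together with $\phi''$ non-increasing and $\phi''(0)>0$ does \emph{not} force $\phi''$ to vanish identically on $(0,t_\varrho]$; it only gives $\phi''\ge 0$ there. The correct chain is: $\phi''\ge 0$ on $[0,t_\varrho]$ makes $\phi'$ non-decreasing with equal endpoint values $0$, hence $\phi'\equiv 0$ on $[0,t_\varrho]$, hence $\phi''\equiv 0$ there, contradicting $\phi''(0)=r-1/\Xi''(0)>0$. (In part~(b) the contradiction is even quicker: $\phi''\ge 0$ on $[0,t_\varrho]$ would make $\phi'$ non-decreasing, but $\phi'(0)=s>0$ and $\phi'(t_\varrho)=0$.) With this minor repair your argument goes through.
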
 
	We will use $t_{\vrh}$ as defined in the above lemma throughout the paper, noting that $t_{\vrh}$ also depends on $(r,s)$ which we hide in the notation for simplicity. A remark is in order. 
	
	\begin{remark}[Necessity of~\eqref{eq:secasn}]\label{rem:monotonejust}
		It is easy to construct examples of $\vrh$ for which~\eqref{eq:secasn} does not hold and $\phi(\cdot)$ does not have a unique maximizer for all $(r,s)\in \Theta_{11}$, see e.g.,~\cite[Equation 1.5]{Ellis1976}. In fact, it is not hard to check that Assumption~\eqref{eq:secasn} is a consequence of the celebrated \textit{GHS inequality} (see~\cite{Ginibre1970,Griffiths1970,Newman1975}). Sufficient conditions on $\vrh$ for the GHS inequality and consequently~\eqref{eq:secasn} to hold can be seen in~\cite[Theorem 1.2]{Ellis1976}. Note that when $\rho$ is the Rademacher distribution (which corresponds to the canonical binary Ising model), condition \eqref{eq:secasn} holds.
	\end{remark}
	
	Next we present a CLT result on $T_N$ (see \eqref{eq:pivotstat}) with $g(x)=x$, which forms the backbone of the asymptotics for the pseudolikelihood estimators to follow.
	
	\begin{theorem}[General CLT for regular graphs]\label{theo:regclt}
		Recall the definition of $m_i$ from \eqref{eq:avisdef}. Suppose that \eqref{eq:secasn} and Assumptions \ref{as:coeff}, \ref{as:bddrowsum}, \ref{as:approxmat}, and \ref{as:mfield} hold. Also let $\upsilon_1>0$, $\upsilon_2\in \R$ be constants such that $N^{-1}\sum_{i=1}^N c_i^2\to\upsilon_1$ and $N^{-1}(\mc)^{\top}\A_N\mc\to\upsilon_2$. Then the following conclusion holds for $(\beta,B)\in \Rg\times\R$:
		$$\frac{1}{\sqrt{N}}\sum_{i=1}^N c_i(\si_i-\Xi'(\beta m_i+B))\overset{w}{\longrightarrow}\mathcal{N}\big(0,\Xi''(\beta t_{\vrh}+B)(\upsilon_1-\beta\upsilon_2 \Xi''(\beta t_{\vrh}+B))\big).$$
	\end{theorem}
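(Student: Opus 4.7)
The plan is to apply \cref{theo:CLTmain} (or equivalently \cref{theo:conmain}) and then conclude via Slutsky, once $U_N+V_N$ is shown to converge in probability to the deterministic scalar $\Xi''(\beta t_{\vrh}+B)(\upsilon_1-\beta\upsilon_2\Xi''(\beta t_{\vrh}+B))$. The hypotheses of \cref{theo:CLTmain} are essentially already in place: \cref{as:coeff} is assumed, \cref{as:cmean} was verified for the pairwise Ising model with $g(x)=x$ in \cref{sec:howtover} via \cref{lem:smoothcont} using $\Q_{N,k}(j_1,\ldots,j_k)=(k-1)\prod_{r=2}^{k}\A_N(j_1,j_r)$, whose row sums are bounded thanks to \cref{as:bddrowsum}, and condition \eqref{eq:bddzero} will follow once this limit is shown to be strictly positive. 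The degenerate boundary case $\upsilon_1=\beta\upsilon_2\Xi''(\beta t_{\vrh}+B)$ is handled directly through \cref{theo:conmain}, which then yields $T_N\overset{w}{\longrightarrow}0=\mathcal{N}(0,0)$.

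To identify the expectation-level limits I would use the two identities $t_i=\Xi'(\beta m_i+B)$ and $\E[\si_i^{2}\mid \si_k, k\neq i]-t_i^{2}=\Xi''(\beta m_i+B)$ that follow from \eqref{eq:condist}. The tower property gives $\E U_N = N^{-1}\sum_i c_i^{2}\,\E[\Xi''(\beta m_i+B)]$. For $V_N$, a first-order Taylor expansion of $\Xi'$ between $\beta m_j+B$ and $\beta m_j^{i}+B=\beta(m_j-\A_N(j,i)\si_i)+B$ yields
\[
t_j^{i}-t_j \;=\; -\beta\,\A_N(j,i)\,\si_i\,\Xi''(\beta m_j+B) + O(\A_N(j,i)^{2}),
\]
whose quadratic remainder contributes $o(1)$ to $\E V_N$ under \cref{as:mfield}. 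Conditioning on $\{\si_k:k\neq i\}$ and using $\E[(\si_i-t_i)\si_i\mid \si_k, k\ne i]=\Xi''(\beta m_i+B)$ together with $m_j-m_j^{i}=O(\A_N(j,i))$ then produces
\[
\E V_N \;=\; -\beta\cdot\frac{1}{N}\sum_{i,j}c_ic_j\,\A_N(j,i)\,\E\!\bigl[\Xi''(\beta m_i+B)\,\Xi''(\beta m_j+B)\bigr]+o(1).
\]
Replacing each $\Xi''(\beta m_\cdot+B)$ by the common value $\Xi''(\beta t_{\vrh}+B)$ and recognizing $N^{-1}(\mc)^{\top}\A_N\mc\to\upsilon_2$ gives $\E V_N\to-\beta\upsilon_2\Xi''(\beta t_{\vrh}+B)^{2}$, and analogously $\E U_N\to \upsilon_1\Xi''(\beta t_{\vrh}+B)$, matching the target variance.

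The main obstacle is justifying the substitution $\Xi''(\beta m_i+B)\leadsto\Xi''(\beta t_{\vrh}+B)$ and upgrading convergence in mean to convergence in probability. For the substitution, I would invoke a mean-field concentration of local averages: under \cref{as:approxmat}, \cref{as:mfield}, and the monotonicity \eqref{eq:secasn}, the empirical measure $N^{-1}\sum_i\delta_{m_i}$ concentrates on $\{t_{\vrh}\}$ in the uniqueness regime $\Theta_1$ and on $\{\pm t_{\vrh}\}$ in the non-uniqueness regime $\Theta_2$ (where $B=0$, and the symmetry of $\vrh$ makes $\Xi''$ even so that $\Xi''(\pm\beta t_{\vrh})$ coincide). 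Such statements are by now standard and follow from the variational formula \eqref{eq:optimprob} together with Gibbs-measure concentration around its optimizers; compare, e.g., \cite{Chatterjee2007,deb2020fluctuations,bhattacharya2023gibbs}. For the upgrade to convergence in probability, a direct sum-of-covariances bound yields $\Var(U_N),\Var(V_N)=o(1)$: cross-terms between distinct sites are controlled by the row-sum condition $\max_i\sum_j\A_N(i,j)=O(1)$ from \cref{as:bddrowsum} and by $\|\A_N\|_F^{2}=o(N)$ from \cref{as:mfield}. With $U_N+V_N$ converging in probability to $\Xi''(\beta t_{\vrh}+B)(\upsilon_1-\beta\upsilon_2\Xi''(\beta t_{\vrh}+B))$, \cref{theo:CLTmain} gives $T_N/\sqrt{U_N+V_N}\overset{w}{\longrightarrow}\mathcal{N}(0,1)$ and Slutsky delivers the Gaussian limit in the theorem statement.
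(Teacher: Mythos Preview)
Your proposal is correct and follows essentially the same route as the paper: apply the master CLT (\cref{theo:CLTmain}/\cref{theo:conmain}), simplify $U_N$ and $V_N$ via the identity $\E[\si_i^2\mid\si_k,k\ne i]-t_i^2=\Xi''(\beta m_i+B)$ and a first-order expansion of $t_j^i-t_j$, then replace $m_i$ by $t_\vrh$ using mean-field concentration of the local averages (the paper cites \cite[Lemma~2.1(b)]{deb2020fluctuations} for $N^{-1}\sum_i(m_i-t_\vrh)^2\to 0$). The only organizational difference is that the paper packages the simplification of $(U_N,V_N)$ into a separate result (\cref{cor:isingmean}), whose variance step is the explicit second-moment computation you sketch as a ``sum-of-covariances bound''; the degenerate case is likewise dispatched by the same second-moment argument (\cref{lem:auxtail}(b)) rather than re-invoking \cref{theo:conmain}.
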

	
	\cref{theo:regclt} has some interesting implications with regards to two features; namely \emph{universality across a large class of $\A_N$} and \emph{lack of phase transitions}. We discuss them in the following remarks.
	
	\begin{remark}[Universality of fluctuations]\label{rem:comparewithin}
		Suppose that $\A_N$ is the adjacency matrix of a $d_N$-regular graph. When $\mc=\mathbf{1}$, we have  $\upsilon_1=\upsilon_2=1$. Therefore \cref{theo:regclt} implies that $$\frac{1}{\sqrt{N}}\sum_{i=1}^N (\si_i-\Xi'(\beta m_i+B))\overset{w}{\longrightarrow} N(0,\Xi''(\beta t_{\vrh}+B)(1-\beta \Xi''(\beta t_{\vrh}+B))),$$
		whenever $d_N\to\infty$. Therefore the conditionally centered fluctuations exhibit a universal behavior across all such $\A_N$. 
		On the other hand, in the recent paper \cite{deb2020fluctuations}, the authors show the universal asymptotics of the \emph{unconditionally centered} average of spins when $\vrh$ is the counting measure on $\{-1,1\}$ provided $d_N\gg \sqrt{N}$. In fact, the $\sqrt{N}$ threshold there is tight as there exists counterexamples when $d_N\sim\sqrt{N}$ where the universality breaks (see \cite[Example 1.3]{deb2020fluctuations}, \cite{mukherjee2023statistics}). Therefore, \cref{theo:regclt} shows that universality in the conditionally centered fluctuations extends further (up to $d_N\to\infty$) than those for unconditionally centered ones (which stop at $d_n\gg \sqrt{N})$. 
	\end{remark}
	
	\begin{remark}[Non-degeneracy in~\cref{theo:regclt} and (no) phase transition at critical point]\label{rem:nondeg}
		In special cases \cref{theo:regclt} does exhibit degenerate behavior. When $\mc=\mathbf{1}$ as in the previous remark, the limiting variance in \cref{theo:regclt} is $0$ at the critical point $(\beta,B)=\{(\Xi''(0))^{-1},0\}$. In this example, one can show that $N^{-1/4}\sum_{i=1}^N (\si_i-\Xi_i'(\beta m_i+B))$ has a non-degenerate limit. This phase transition behavior however disappears for other choices of $\mc$, such as when $\mc$ is a contrast vector. In particular if $\sum_{i=1}^N c_i=O(N)$, $\lVert \mc\rVert=\sqrt{N}$ and $\max_{i\ge 2} |\lambda_i(\A_N)|=o(1)$ (as is the case with Erd\H{o}s-R\'{e}nyi graphs), \cref{theo:regclt} implies that 
		$$\frac{1}{\sqrt{N}}\sum_{i=1}^N c_i(\si_i-\Xi'(\beta m_i+B))\overset{w}{\longrightarrow} N(0,\Xi''(\beta t_{\vrh}+B)).$$
		Note that the limiting variance is now always strictly positive, even at the critical point. Therefore, under such configurations, the phase transition behavior is no longer observed. More generally, there is no phase transition whenever $\upsilon_2<\upsilon_1$ in \cref{theo:regclt}. 
	\end{remark}
	
	We now move on to the implications of \cref{theo:regclt} in the asymptotic distribution of the pseudolikelihood estimators. 
	
	\paragraph{Limit theory for pseudolikelihood estimators.} We start off with the case when $B$ is known but $\beta$ is unknown. In that case, following \cref{def:plestim}, $\hbem$ is defined as the non-negative solution in $\beta$ of the equation 
	$$\sum_{i=1}^N m_i(\si_i-\Xi'(\beta m_i+B))=0$$
	The following result characterizes the limit of $\hbem$. 
	
	\begin{theorem}\label{thm:margbeta}
		Suppose that \eqref{eq:secasn} and Assumptions \ref{as:bddrowsum}, \ref{as:approxmat}, and \ref{as:mfield} hold. Then provided $\beta>0$, $B\neq 0$, we have: 
		$$\sqrt{N}(\hbem-\beta)\overset{w}{\longrightarrow} N\left(0,\frac{1-\beta\Xi''(\beta t_{\vrh}+B)}{t_{\vrh}^2\Xi''(\beta t_{\vrh}+B)}\right).$$
	\end{theorem}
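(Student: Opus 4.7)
The plan is to cast $\hbem$ as the solution of a Z-estimating equation and invoke Proposition \ref{prop:CLTMPLE}. Writing $f_i(\beta) = \sigma_i(\beta m_i + B) - \Xi(\beta m_i + B)$, one has
\[
\nabla f_i(\beta) = m_i\bigl(\sigma_i - \Xi'(\beta m_i + B)\bigr),\qquad \nabla^2 f_i(\beta) = -m_i^2\,\Xi''(\beta m_i + B).
\]
Condition (A3) (consistency) will be verified using Proposition \ref{prop:ConsisMPLE}: (B2) is a direct application of \cref{theo:conmainis} with $c_i=1$, and (B1) follows from the fact that $-\frac{1}{N}\sum_i \nabla^2 f_i(\beta) = \frac{1}{N}\sum_i m_i^2\Xi''(\beta m_i + B)$ is uniformly bounded below on any compact neighborhood of $\beta$, since $t_{\vrh}\neq 0$ when $B\neq 0$. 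Condition (A1) follows from Assumption \ref{as:bddrowsum} (which bounds $|m_i|$ uniformly) and smoothness of $\Xi$. So the task reduces to (a) computing the probability limit of the Hessian and (b) proving a CLT for the score $\frac{1}{\sqrt{N}}\sum_i m_i(\sigma_i-\Xi'(\beta m_i+B))$.

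\textbf{Hessian and score limits.} For (a), under Assumptions \ref{as:approxmat} and \ref{as:mfield} together with $\beta>0,B\neq 0$ (the unique-maximizer regime $\Theta_{12}$), the local averages satisfy the mean-field concentration $\frac{1}{N}\sum_i (m_i-t_{\vrh})^2 \xrightarrow{P} 0$ (cf.\ \cite{bhattacharya2023gibbs,deb2020fluctuations,bhattacharya2025sharp}). Combined with Lipschitzness of $x\mapsto x^2\Xi''(\beta x+B)$ on $[-1,1]$, this yields $\frac{1}{N}\sum_i m_i^2\Xi''(\beta m_i+B)\xrightarrow{P} t_{\vrh}^2\,\Xi''(\beta t_{\vrh}+B)$. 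For (b), I would use the decomposition
\begin{equation*}
\frac{1}{\sqrt N}\sum_{i=1}^N m_i\xi_i \;=\; \frac{t_{\vrh}}{\sqrt N}\sum_{i=1}^N \xi_i \;+\; \frac{1}{\sqrt N}\sum_{i=1}^N (m_i-t_{\vrh})\xi_i, \qquad \xi_i:=\sigma_i-\Xi'(\beta m_i+B).
\end{equation*}
The leading term is handled directly by \cref{theo:regclt} with $c_i=1$: under \cref{as:approxmat}, $N^{-1}\sum_i R_i\to 1$ so $\upsilon_1=\upsilon_2=1$, giving the Gaussian limit with variance $t_{\vrh}^2\Xi''(\beta t_{\vrh}+B)\bigl(1-\beta\Xi''(\beta t_{\vrh}+B)\bigr)$.

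\textbf{Main obstacle.} The hard part is showing the remainder $R_N := \frac{1}{\sqrt N}\sum_i (m_i-t_{\vrh})\xi_i$ is $o_P(1)$. The naive Cauchy--Schwarz bound only delivers $|R_N|\lesssim (\sum_i(m_i-t_{\vrh})^2)^{1/2}$, which is merely $O_P(1)$ (already in Curie--Weiss each $m_i-t_{\vrh}$ is of order $N^{-1/2}$). The way forward is a second-moment estimate that exploits the conditional centering $\EE[\xi_i\mid \sigma_{-i}]=0$ together with the fact that $m_i$ is $\sigma_{-i}$-measurable. Diagonal terms contribute $O(\frac{1}{N}\sum_i\EE(m_i-t_{\vrh})^2)=o(1)$. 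For $i\neq j$, one splits $m_j = m_j^{(-i)}+A_N(i,j)\sigma_i$ and $\xi_j$ into its value at $\sigma_i$ replaced by $b_0$ plus a first-order discrete-derivative correction, which by \cref{as:cmean} (verified for the Ising model in \cref{sec:howtover}) is bounded by $A_N(i,j)$ times a uniformly bounded factor. Conditioning on $\sigma_{-i}$ and using $\EE[\xi_i\mid\sigma_{-i}]=0$ annihilates the leading piece, leaving a residual bounded in absolute value by a constant multiple of $\sum_{i\neq j} A_N(i,j)\,\EE|(m_i-t_{\vrh})(m_j-t_{\vrh})|$, which by Cauchy--Schwarz is $O(\|A_N\|_F\cdot \frac{1}{N}\sum_i\EE(m_i-t_{\vrh})^2)=o(1)$ under \cref{as:mfield}. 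Plugging the score CLT (variance $t_{\vrh}^2\Xi''(\beta t_{\vrh}+B)(1-\beta\Xi''(\beta t_{\vrh}+B))$) and Hessian limit ($t_{\vrh}^2\Xi''(\beta t_{\vrh}+B)$) into Proposition \ref{prop:CLTMPLE} yields the announced variance $\frac{1-\beta\Xi''(\beta t_{\vrh}+B)}{t_{\vrh}^2\,\Xi''(\beta t_{\vrh}+B)}$ after cancellation.
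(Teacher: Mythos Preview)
Your strategy matches the paper's almost exactly: Hessian limit via the mean-field concentration $\frac{1}{N}\sum_i(m_i-t_{\vrh})^2\to 0$, then the score decomposition $\sum_i m_i\xi_i = t_{\vrh}\sum_i\xi_i + \sum_i(m_i-t_{\vrh})\xi_i$ with \cref{theo:regclt} applied to the main term. The paper dispatches the remainder in one line by invoking \cref{lem:auxtail}(b); your explicit second-moment computation is precisely the adaptation of that lemma's proof with $r_i = m_i - t_{\vrh}$. (For consistency the paper uses a direct monotonicity argument on $\tilde\beta\mapsto\frac{1}{N}\sum_i m_i(\sigma_i-\Xi'(\tilde\beta m_i+B))$ rather than \cref{prop:ConsisMPLE}, which sidesteps having to restrict $\Theta$ to a compact set.)

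There is, however, a slip in your remainder bound. The quantity $\lVert\A_N\rVert_F \cdot \frac{1}{N}\sum_i\EE(m_i-t_{\vrh})^2$ is $o(\sqrt{N})\cdot o(1)$, which is not $o(1)$ in general. The easy fix is to replace the Frobenius norm by the operator norm: since $\A_N$ has uniformly bounded row sums (\cref{as:bddrowsum}), $\lambda_1(\A_N)=O(1)$, and hence
\[
\frac{1}{N}\sum_{i\neq j}\A_N(i,j)\,\EE\bigl|(m_i-t_{\vrh})(m_j-t_{\vrh})\bigr| \;\le\; \frac{\lambda_1(\A_N)}{N}\sum_i\EE(m_i-t_{\vrh})^2 \;=\; o(1).
\]
You also dropped a residual contribution of the form $\frac{1}{N}\sum_{i\neq j}\A_N(i,j)\,\EE|m_i-t_{\vrh}|$, coming from the piece $\A_N(i,j)\sigma_i\xi_j$ in the expansion of $(m_j-t_{\vrh})\xi_j$; this one is immediately $o(1)$ by bounded row sums and Jensen. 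With these two corrections your argument is complete and coincides with the paper's.
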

	Note that the assumption $B\neq 0$ ensures by \cref{lem:fixsol} that $t_{\rho}\neq 0$ and $1-\beta\Xi''(\beta t_{\vrh}+B)>0$. Therefore the limiting distribution in \cref{thm:margbeta} is non-degenerate. 
	
	By \cite[Remark 2.15]{Somabha2021}, it is easy to check that the asymptotic variance matches the asymptotic Fisher information when $\vrh$ is the Rademacher distribution. Therefore, an interesting feature of \cref{thm:margbeta} is that it shows the $\hbem$ is 
	
	(a) Information theoretically efficient at least in the binary Ising model case, and 
	
	(b) The efficiency holds in the entirety of the Mean-Field regime $\ud\to\infty$ \emph{without restricting specifically to Curie-Weiss models}.
	
	We note that the same asymptotic variance was proved for the maximum likelihood estimator (MLE) for the Curie-Weiss model in \cite[Theorem 1.4]{Comets1991}. 
	
	Next we move on to the case where $\beta$ is known but $B$ is unknown. In that case, following \cref{def:plestim}, $\hbm$ is defined as the  solution in $B$ of the equation 
	$$\sum_{i=1}^N (\si_i-\Xi'(\beta m_i+B))=0$$
	The following result characterizes the limit of $\hbm$. 
	
	\begin{theorem}\label{thm:margB}
		Suppose that \eqref{eq:secasn} and Assumptions \ref{as:bddrowsum}, \ref{as:approxmat}, and \ref{as:mfield} hold. Then provided $B\neq 0$, we have: 
		$$\sqrt{N}(\hbm-B)\overset{w}{\longrightarrow} N\left(0,\frac{1-\beta\Xi''(\beta t_{\vrh}+B)}{\Xi''(\beta t_{\vrh}+B)}\right).$$
	\end{theorem}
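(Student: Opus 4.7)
The plan is to apply \cref{prop:CLTMPLE} with the one-dimensional parameter $\theta = B$, treating $\beta$ as a known fixed constant. The summand of the log-pseudolikelihood is $f_i(B) = \sigma_i(\beta m_i + B) - \Xi(\beta m_i + B)$ (up to a $B$-free term), so
$$\nabla f_i(B) = \sigma_i - \Xi'(\beta m_i + B), \qquad \nabla^2 f_i(B) = -\Xi''(\beta m_i + B).$$
Observe that the score is already conditionally centered, precisely as in \eqref{eq:concenter}. The three pieces to verify are assumptions (A1)--(A3) of \cref{prop:CLTMPLE}.

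For (A2), I would apply \cref{theo:regclt} directly with $c_i \equiv 1$. One has $\upsilon_1 = 1$ trivially, and $\upsilon_2 = N^{-1}\mathbf{1}^\top \A_N \mathbf{1} = N^{-1}\sum_i R_i \to 1$ by combining the empirical convergence of row sums in \cref{as:approxmat} with their uniform boundedness in \cref{as:bddrowsum}. This yields
$$\frac{1}{\sqrt{N}}\sum_{i=1}^N \bigl(\sigma_i - \Xi'(\beta m_i + B)\bigr) \overset{w}{\longrightarrow} N(0,\Sigma(B)), \qquad \Sigma(B) := \Xi''(\beta t_{\vrh} + B)\bigl(1 - \beta\Xi''(\beta t_{\vrh} + B)\bigr),$$
so (A2) holds with the deterministic choice $\Sigma_N(B) \equiv \Sigma(B) > 0$; strict positivity follows from \cref{lem:fixsol}(b), which is applicable because $B \neq 0$ forces the fixed-point regime $\Theta_{12}$ with $\beta\Xi''(\beta t_{\vrh}+B) < 1$. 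For consistency (A3), I invoke \cref{prop:ConsisMPLE}: (B2) is immediate from dividing the above display by a further $\sqrt{N}$, and (B1) follows from the fact that $\Xi''(\cdot) > 0$ everywhere (as the variance under an exponential tilt of the non-degenerate $\vrh$) combined with $\beta m_i + B$ staying in a fixed bounded interval by \cref{as:bddrowsum}, so $\frac{1}{N}\sum_i \Xi''(\beta m_i + B)$ is bounded below by a positive deterministic constant on any compact $\Theta$ containing the truth (a mild initial localization argument handles unbounded $\Theta$).

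For (A1), the uniform continuity of the averaged Hessian on a shrinking neighborhood of $B$ is immediate, since $\Xi''$ is smooth and its argument lies in a fixed bounded interval. The invertibility clause $(N^{-1}\sum_i \nabla^2 f_i(B))^{-1} = O_{\PP}(1)$ follows from the law of large numbers
$$\frac{1}{N}\sum_{i=1}^N \Xi''(\beta m_i + B) \overset{\PP}{\longrightarrow} \Xi''(\beta t_{\vrh} + B) > 0,$$
which is the main technical step. The hard part will be establishing this convergence: it amounts to showing that the local fields $m_i$ concentrate around the unique fixed point $t_{\vrh}$ of \cref{lem:fixsol}(b), in a sense strong enough to pass through the continuous function $\Xi''$. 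This is the standard mean-field concentration of local averages in the uniqueness regime under denseness \cref{as:mfield} and approximate regularity \cref{as:approxmat}, and can be extracted from the variational characterization \eqref{eq:optimprob} together with uniqueness of its maximizer when $B \neq 0$, along the lines of the Ising concentration estimates developed e.g.\ in \cite{bhattacharya2023gibbs}.

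Granted the above law of large numbers, the conclusion of \cref{prop:CLTMPLE} combined with Slutsky's theorem yields
$$\sqrt{N}(\hbm - B) = -\Bigl(\frac{1}{N}\sum_i \nabla^2 f_i(B)\Bigr)^{-1}\frac{1}{\sqrt{N}}\sum_i \nabla f_i(B) + o_{\PP}(1) \overset{w}{\longrightarrow} N\Bigl(0,\,\frac{\Sigma(B)}{\Xi''(\beta t_{\vrh} + B)^2}\Bigr),$$
and substituting the value of $\Sigma(B)$ collapses the variance to $\bigl(1 - \beta\Xi''(\beta t_{\vrh} + B)\bigr)/\Xi''(\beta t_{\vrh} + B)$, as claimed.
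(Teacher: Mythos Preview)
Your proposal is correct and follows essentially the same approach as the paper: apply \cref{theo:regclt} with $c_i\equiv 1$ for the score CLT, use concentration of the local fields $m_i$ around $t_{\vrh}$ to get convergence of the averaged Hessian, and assemble via \cref{prop:CLTMPLE}. The only minor difference is that the paper handles consistency (A3) by the direct monotonicity argument from the proof of \cref{thm:margbeta} (the score $H_N(\tB)=N^{-1}\sum_i(\sigma_i-\Xi'(\beta m_i+\tB))$ is strictly decreasing in $\tB$, so existence and consistency follow without any compactness of $\Theta$), whereas you route through \cref{prop:ConsisMPLE} and need a localization hand-wave; the paper also cites the concentration \eqref{eq:curtaincall} from \cite{deb2020fluctuations} rather than \cite{bhattacharya2023gibbs}.
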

	
	The implications of \cref{thm:margB} are similar to those of \cref{thm:margbeta}. We once again observe that the pseudolikelihood estimator $\hbm$ is information theoretically efficient. This holds for the entire Mean-Field regime $\ud\to\infty$. 
	
	To conceptualize the full scope of Theorems \ref{theo:regclt}, \ref{thm:margbeta}, and \ref{thm:margB}, we conclude the Section by providing a set of examples featuring popular choices of $\A_N$ on which our results apply. 
	\begin{enumerate}
		\item[(a)]{\bf Regular graphs (deterministic and random):}
		Let $\G_N$ be a $d_N$ regular graph and set $\A_N:=\G_N/d_N$. Then Theorems \ref{theo:regclt}, \ref{thm:margbeta}, and \ref{thm:margB} apply as soon as $d_N\to\infty$.
		\item[(b)]{\bf Erd\H{o}s-R\'enyi graphs:}
		Suppose $\G_N\sim \mathcal{G}(N,p_N)$ be the symmetric Erd\H{o}s R\'enyi random graph with $0<p_N\leq 1$. Define $\A_N(i,j):= \frac{1}{(N-1)p_N}G_N(i,j)$. Then Theorems \ref{theo:regclt}, \ref{thm:margbeta}, and \ref{thm:margB} apply provided $p_N\gg (\log{N})/N$.
		
		\item[(c)]{\bf Balanced stochastic block model: }
		Suppose $\G_N$ is a stochastic block model with $2$ communities of size $N/2$ (assume $N$ is even). Let the probability of an edge within the community be $a_N$, and across communities be $b_N$. This is the well known stochastic block model, which has received considerable attention in Probability, Statistics and Machine Learning (see \cite{deshpande2018contextual,liu2017log,mossel2012stochastic} and references within). If we take $\A_N:=\frac{2}{N(a_N+b_N)}\G_N$, then Theorems \ref{theo:regclt}, \ref{thm:margbeta}, and \ref{thm:margB} hold if $a_N+b_N\gg (\log{N})/N$.
		\item[(d)]{\bf Sparse regular graphons:} 
		Suppose that $W$ be a symmetric measurable function from $[0,1]^2$ to $[0,1]$, such that $\int_{[0,1]}W(x,y)dy=a>0$ for all $x\in [0,1]$. Also let $(U_1,\cdots,U_N)\stackrel{i.i.d.}{\sim}U(0,1)$. For $\gamma\in (0,1]$, let \[\{G_N(i,j)\}_{1\le i<j\le N}\stackrel{i.i.d.}{\sim}Bern\bigg(\frac{W(U_i,U_j)}{N^\gamma}\bigg).\] Such random graph models have been studied in the literature under the name $W$ random graphons (c.f.~\cite{BorgsdenseI,BorgsdenseII,BorgsLPI,BorgsLPII,Lovasz2012}). In this case for the choice $\A_N=\frac{1}{Np_N}G_N$ with  $p_N=a N^{-\gamma}$, Theorems \ref{theo:regclt}, \ref{thm:jointCLT}, and \ref{thm:margB} hold as soon as $\gamma<1$. 
		\item[(e)]{\bf Wigner matrices:}
		This example demonstrates that our techniques apply to examples beyond scaled adjacency matrices. To wit, let $\A_N$ be a Wigner matrix with its entries $\{\A_N(i,j),1\le i<j\le N\}$ i.i.d. from a distribution $F$ scaled by $N\mu$, where $F$ is a distribution on non-negative reals with finite exponential moment and mean $\mu>0$. In this case too, Theorems \ref{theo:regclt}, \ref{thm:margbeta}, and \ref{thm:margB} continue to hold.
	\end{enumerate}
	
	\subsubsection{A Gaussian scale mixture example}\label{sec:mixturepseudo}
	
	The studentized CLTs for $T_N$ (see \cref{theo:CLTmain}) and the pseudolikelihood estimator (see \cref{prop:CLTMPLE}) can also lead to limit distributions which are mixtures of multiple Gaussian components. This can happen when the optimization problem in \eqref{eq:optimprob} (or \eqref{eq:regobj}) admits multiple optimizers. The following result provides an example:
	\begin{prop}\label{prop:bipartg}
		Suppose $\A_N$ is the adjacency matrix of a regular, complete bipartite graph scaled by $N/2$, where the two communities are labeled as $\{1,2,\ldots ,N/2\}$ and $\{1+N/2+1,2+N/2,\ldots ,N\}$ (assume $N$ is even). Let $\mc$ be such that $c_i=1$ or $0$ depending on whether $i\leq N/2$ or $i>N/2$. Suppose that $B>0$ and \eqref{eq:secasn} holds. Then there exists $\beta_0<0$ (depending on $B$), such that for any $\beta<\beta_0$, there exists $t_1$ and $t_2$ (depending on $\beta$, $B$) which are of opposite signs and $\Xi''(\beta t_1 + B)\neq \Xi''(\beta t_2 + B)$, such that
		$$\frac{1}{\sqrt{N}}\sum_{i=1}^N c_i(\si_i-\Xi'(\beta m_i+B))\overset{w}{\longrightarrow} \frac{1}{\sqrt{2}}\left(\xi\times \sqrt{\Xi''(\beta t_1+B)} G_1+ (1-\xi)\times \sqrt{\Xi''(\beta t_2  + B)} G_2\right),$$
		where $\xi$ is a Bernoulli random variable with mean $1/2$, independent of $G_1,G_2\overset{i.i.d.}{\sim}\mathcal{N}(0,1)$.
	\end{prop}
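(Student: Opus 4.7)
The plan is to apply \cref{theo:conmain}, which reduces the problem to identifying the joint weak limit of $(U_N,V_N)$. The standing hypotheses are immediate: $c_i\in\{0,1\}$ gives \cref{as:coeff}, and each row of $\A_N$ sums to $1$, so \cref{as:bddrowsum} holds and \cref{as:cmean} follows as in the pairwise Ising computation of \cref{sec:howtover}. Thus only \cref{as:empcon} remains.

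Set $C_1:=\{1,\ldots,N/2\}$, $C_2:=\{N/2+1,\ldots,N\}$, and $\bar{\si}_\el:=(2/N)\sum_{k\in C_\el}\si_k$ for $\el=1,2$. First, $V_N\equiv 0$: $c_ic_j=1$ only when both $i,j\in C_1$; but for $j\in C_1$ the conditional mean $t_j=\Xi'(\beta m_j+B)$ depends on $\ms$ only through $\{\si_k\}_{k\in C_2}$ (since $\A_N$ is supported on cross-community edges, $m_j=\bar{\si}_2$), so replacing $\si_i$ with $i\in C_1$ by $b_0=0$ leaves $t_j$ unchanged. Next, the cumulant identity $\E[\si_i^2\mid\si_k,k\neq i]=\Xi''(\beta m_i+B)+t_i^2$, combined with a conditional-centering argument of the same type used in the proof of \cref{theo:conmain}, yields
\begin{equation*}
U_N=\frac{1}{N}\sum_{i\in C_1}(\si_i^2-t_i^2)=\frac{1}{2}\Xi''(\beta\bar{\si}_2+B)+o_{\P}(1),
\end{equation*}
using that $m_i=\bar{\si}_2$ for every $i\in C_1$.

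The core step is to identify the limit law of $\bar{\si}_2$. Summing over the bipartition, the Hamiltonian becomes a two-dimensional exponential tilt
\begin{equation*}
\P(d\ms)\propto\exp\left(\frac{N\beta}{2}\bar{\si}_1\bar{\si}_2+\frac{NB}{2}(\bar{\si}_1+\bar{\si}_2)\right)\prod_i\vrh(d\si_i),
\end{equation*}
and a Laplace / large-deviation analysis (with Cram\'er rate $I:=\Xi^{\ast}$) shows that $(\bar{\si}_1,\bar{\si}_2)$ concentrates on the maximizers of
\begin{equation*}
F(x,y):=\frac{\beta}{2}xy+\frac{B}{2}(x+y)-\frac{1}{2}(I(x)+I(y)),
\end{equation*}
whose stationary points satisfy $t_1=\Xi'(\beta t_2+B)$, $t_2=\Xi'(\beta t_1+B)$. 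As $\beta\to-\infty$, any global maximum must satisfy $xy<0$ (with $(t_1,t_2)\to(\pm 1,\mp 1)$), so by continuity there exists $\beta_0<0$ such that for all $\beta<\beta_0$ the global maxima are the asymmetric pair $(t_1,t_2)$ and $(t_2,t_1)$ with $t_1,t_2$ of opposite signs; the community-swap symmetry $F(x,y)=F(y,x)$ of the Gibbs measure assigns each mass $1/2$. Hence $\bar{\si}_2\overset{w}{\longrightarrow}\xi t_1+(1-\xi)t_2$ with $\xi\sim\mathrm{Bernoulli}(1/2)$, which gives $U_N\overset{w}{\longrightarrow}\tfrac{1}{2}(\xi\,\Xi''(\beta t_1+B)+(1-\xi)\,\Xi''(\beta t_2+B))$; combined with $V_N\equiv 0$ this verifies \cref{as:empcon}. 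Applying \cref{theo:conmain} then gives $T_N\overset{w}{\longrightarrow}\sqrt{P_1}\,Z$ with $Z\sim N(0,1)$ independent of $\xi$, which conditional on $\xi$ matches the claimed mixture form.

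The main obstacle is the bimodal concentration of $(\bar{\si}_1,\bar{\si}_2)$: proving that for $\beta<\beta_0$ the symmetric fixed point of $F$ is strictly suboptimal and that the two asymmetric maxima are global, with enough quantitative control to upgrade the Laplace heuristic to a bona fide weak limit. The separation $\Xi''(\beta t_1+B)\neq\Xi''(\beta t_2+B)$ then follows from strict unimodality of $\Xi''$ (a consequence of \eqref{eq:secasn} and the symmetry of $\vrh$) together with $|\beta t_1+B|\neq|\beta t_2+B|$, which holds generically for $\beta$ sufficiently negative. Once these facts are in hand, the remainder is a direct application of \cref{theo:conmain}.
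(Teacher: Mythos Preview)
Your proposal is correct and follows essentially the same route as the paper: verify the standing assumptions, identify the weak limit of $(U_N,V_N)$, and invoke \cref{theo:conmain}. Two minor differences in execution are worth noting. First, your observation that $V_N\equiv 0$ \emph{exactly} (because $t_j$ for $j\in C_1$ is a function of $\{\si_k:k\in C_2\}$ only) is cleaner than the paper's route, which passes through the approximation \eqref{eq:uvradiate} and then uses $c_ic_j\A_N(i,j)=0$. Second, for the bimodal concentration of $(\bar\si_1,\bar\si_2)$ the paper casts the problem in the graphon framework and cites a general mean-field concentration result (Corollary~1.5 of \cite{bhattacharya2023gibbs}) to obtain \eqref{eq:mixcon}, whereas you propose a direct two-dimensional Laplace analysis of $F(x,y)$; your approach is more self-contained but, as you correctly flag, requires carrying out the variational analysis and the weak-limit upgrade by hand.
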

	
	The main intuition behind getting the two component mixture in the limit is as follows. We first note that 
	$$m_i=\frac{2}{N}\sum_{j=N/2+1}^N \si_j, \; \mbox{for}\; 1\le i\le N/2, \qquad m_i=\frac{2}{N}\sum_{j=1}^{N/2} \si_j, \; \mbox{for}\; N/2+1\le i\le N.$$
	Therefore the $m_i$s have a block constant structure across the two communities. This can be leveraged to show that the empirical measure on the $m_i$s over $1\le i\le N/2$ converges to a two-point mixture provided $\beta$ is negative with a large enough absolute value. As a by-product, there will exist $t_1$ and $t_2$ of opposite signs such that 
	$$U_N\approx\frac{1}{N}\sum_{i=1}^{N/2} \Xi''(\beta m_i+B) \overset{w}{\longrightarrow} \frac{1}{2}\delta_{\frac{1}{2}\Xi''(\beta t_1+B)} + \frac{1}{2}\delta_{\frac{1}{2}\Xi''(\beta t_2+B)}.$$
	Moreover it can be show that 
	$$V_N\approx \frac{1}{N}\sum_{i\neq j} c_i c_j\A_N(i,j)\Xi''(\beta m_i+B)\Xi''(\beta m_j+B).$$
	As $c_i c_j \A_N(i,j)=0$ for all $i,j$, it follows that $V_N\approx 0$. Therefore, $U_N+V_N\overset{w}{\longrightarrow} \frac{1}{2}\delta_{(1/2)\Xi''(\beta t_1+B)} + \frac{1}{2}\delta_{(1/2)\Xi''(\beta t_2+B)}$. By the joint convergence of $T_N,U_N,V_N$ in \cref{theo:CLTmain}, the conclusion in \cref{prop:bipartg} will follow. In the same spirit as \cref{prop:bipartg}, we can also construct an example where a pseudolikelihood estimator would have a two component Gaussian scale mixture limit. To achieve this consider a slight modification of \eqref{eq:model} given by 
	\begin{equation}\label{eq:modelbip}
		\PB\big\{\,d\ms\big\}\coloneqq\frac{1}{Z_N(\beta,h,B)}\exp\left(\frac{\beta}{2}(\ms)^{\top}\A_N\ms+ h \sum_{i=1}^{N}c_i \si_i + B\sum\limits_{i=1}^N \sigma_i\right)\prod_{i=1}^N \vrh(\,d\sigma_i),
	\end{equation}
	where $\beta$ is known but $(h,B)$ are unknown, $\A_N$ is the scaled adjacency matrix of a complete bipartite graph, and $c_i$s are defined as in \cref{prop:bipartg}. Following \cref{def:plestim}, the pseudolikelihood estimator is given by $(\hh,\hbm)$ which satisfies the equations 
	$$\begin{pmatrix} \sum_{i=1}^{N/2} (\si_i-\Xi'(\beta m_i+B+h)) \\ \sum_{i=1}^{N/2} (\si_i-\Xi'(\beta m_i+B+h)) + \sum_{i=N/2+1}^N (\si_i-\Xi'(\beta m_i+B))\end{pmatrix}=\begin{pmatrix} 0 \\ 0 \end{pmatrix},$$
	over some \emph{compact set} $K\subseteq \R^2$. The assumption of compactness is made for technical convenience to ensure consistency of $(\hh,\hbm)$.
	
	\begin{prop}\label{prop:bipartpseudo}
		Consider the same setup as in \cref{prop:bipartg}. Assume that $h=0,B\neq 0$ and the point $(0,B)\in K$. Recall $t_1$ and $t_2$ from \cref{prop:bipartg}. Set $\tto:=\Xi''(\beta t_1+B)$ and $\ttt:=\Xi''(\beta t_2+B)$. Define 
		$$H_1:=\begin{pmatrix}\frac{1}{2}\tto & \frac{1}{2} \tto \\ \frac{1}{2} \tto & \frac{1}{2} (\tto+\ttt)\end{pmatrix}^{-1}\begin{pmatrix}\frac{1}{2}\tto & \frac{1}{2}(\tto-\beta \tto \ttt) \\ \frac{1}{2}(\tto-\beta \tto\ttt) & \frac{1}{2}(\tto+\ttt)-\beta \tto \ttt\end{pmatrix}\begin{pmatrix}\frac{1}{2}\tto & \frac{1}{2} \tto \\ \frac{1}{2} \tto & \frac{1}{2} (\tto+\ttt)\end{pmatrix}^{-1}.$$
		Define $H_2$ similarly by switching the roles of $\tto$ and $\ttt$. Then, under \eqref{eq:modelbip}, we have: 
		$$\sqrt{N}\begin{pmatrix} \hh \\ \hbm - B\end{pmatrix} \overset{w}{\longrightarrow} \xi H_1^{1/2} G_1 + (1-\xi) H_2^{1/2} G_2,$$
		where $\xi$ is Rademacher, $G_1,G_2$ are bivariate standard normals. Also $\xi,G_1,G_2$ are independent of each other.
	\end{prop}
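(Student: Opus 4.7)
The plan is to apply the MPLE asymptotic framework from \cref{prop:CLTMPLE} with the score and Hessian evaluated at $\theta_0 = (0,B)$, but adapted so that the limiting Hessian and score covariance are themselves random mixtures, exactly as in \cref{prop:bipartg}. Writing $f_i(h,B) = \sigma_i(\beta m_i + B + h c_i) - \Xi(\beta m_i + B + h c_i)$, a direct computation gives $\nabla f_i(0,B) = (\sigma_i - \Xi'(\beta m_i + B))(c_i,1)^{\top}$, which is conditionally centered by \eqref{eq:concenter}, and, using $c_i^2 = c_i$,
\[
\nabla^2 f_i(0,B) = -\Xi''(\beta m_i + B)\begin{pmatrix} c_i & c_i \\ c_i & 1\end{pmatrix}.
\]
Assumption (A1) of \cref{prop:CLTMPLE} follows from boundedness of $\Xi'''$ on compact sets and $|c_i|\le 1$.

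Next, I would identify the limit of $\mathcal{H}_N := -N^{-1}\sum_i \nabla^2 f_i(0,B)$. Because $\A_N$ is the scaled adjacency of a complete bipartite graph, $m_i = \bar{\sigma}_2$ for $i\le N/2$ and $m_i = \bar{\sigma}_1$ for $i > N/2$, where $\bar{\sigma}_k$ is the community-$k$ average. The analysis underlying \cref{prop:bipartg} shows that $(\bar{\sigma}_1,\bar{\sigma}_2)$ converges weakly to a two-point mixture supported on $\{(t_1,t_2),(t_2,t_1)\}$ with equal weights, indexed by the indicator $\xi$. Plugging this into the expression for $\mathcal{H}_N$ gives $\mathcal{H}_N \overset{w}{\longrightarrow} \xi A_1 + (1-\xi) A_2$, where $A_1$ is the outer factor in the definition of $H_1$ and $A_2$ is its $\tto\leftrightarrow\ttt$ analogue. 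Strict positivity of $\Xi''$ makes $A_1,A_2$ invertible.

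For the score CLT jointly with this Hessian limit, I would apply \cref{theo:conmain} coordinate-wise via Cramer-Wold. For any $\alpha,\gamma\in\R$, the linear combination $\alpha S_N^{(1)} + \gamma(S_N^{(1)} + S_N^{(2)})$, with $S_N^{(k)}$ the $k$-th score coordinate, is exactly $T_N$ of \eqref{eq:pivotstat} with weights $c_i^\star = (\alpha+\gamma)\ind(i\le N/2) + \gamma\ind(i > N/2)$. The block-constant structure of $m_i$ forces $U_N$ from \eqref{eq:randvar} to concentrate on a functional of $(\Xi''(\beta\bar{\sigma}_1+B), \Xi''(\beta\bar{\sigma}_2+B))$, while $V_N$ picks up a cross-community contribution via the first-order expansion $t_j^i - t_j \approx -\beta\A_N(i,j)\sigma_i\,\Xi''(\beta m_j+B)$ together with the fact that $\A_N$ is supported only on cross-community pairs. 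Matching the coefficients of $\alpha^2,\alpha\gamma,\gamma^2$ on the event $\{\xi = 1\}$ reproduces precisely the middle matrix $B_1$ of the sandwich defining $H_1$, and analogously $B_2$ on $\{\xi = 0\}$. The joint moment convergence of $(T_N,U_N,V_N)$ in \cref{theo:conmain}, together with the fact that $\mathcal{H}_N$ is itself a continuous function of $(\bar{\sigma}_1,\bar{\sigma}_2)$, yields joint weak convergence of $(S_N,\mathcal{H}_N,\xi)$ with the correct coupling.

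Finally, consistency of $(\hh,\hbm)$ on the compact set $K$ comes from \cref{prop:ConsisMPLE}: strong concavity (B1) holds because $\Xi'' > 0$, and (B2) follows from the score limit above. Combining consistency, the Hessian limit, and the conditional score CLT through the one-step linearization in \eqref{eq:bareidea}, we obtain, conditionally on $\xi$,
\[
\sqrt{N}\begin{pmatrix}\hh \\ \hbm - B\end{pmatrix} \overset{w}{\longrightarrow} A_\xi^{-1}\mathcal{N}(0,B_\xi) \stackrel{d}{=} H_\xi^{1/2} G_\xi,
\]
which, unconditionally, is the claimed mixture in \cref{prop:bipartpseudo}. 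The main technical obstacle is the \emph{joint} convergence of the Hessian and score driven by the \emph{same} $\xi$ (rather than independent copies): both are functionals of the coupled pair $(\bar{\sigma}_1,\bar{\sigma}_2)$, and it is precisely the joint convergence of $(T_N,U_N,V_N)$ in \cref{theo:conmain}, together with the first-order expansion required to pin down $V_N$'s cross-community term, that secures this coupling.
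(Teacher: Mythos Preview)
Your proposal is correct and follows essentially the same route as the paper: verify consistency via \cref{prop:ConsisMPLE}, identify the Hessian's two-point mixture limit from the block structure of the $m_i$'s, and run Cram\'er--Wold through \cref{theo:conmain} to obtain the score CLT jointly with that mixture (the paper makes the coupling explicit by writing $U_N = (a,b)(-\nabla\mathcal H_N(0,B))(a,b)^\top + o_{\P}(1)$, which is equivalent to your $(\bar\sigma_1,\bar\sigma_2)$ argument). One small slip: with $S_N^{(1)},S_N^{(2)}$ the two score coordinates, the combination $\alpha S_N^{(1)}+\gamma S_N^{(2)}$ already yields weights $(\alpha+\gamma)\mathbf 1(i\le N/2)+\gamma\mathbf 1(i>N/2)$, so the extra $S_N^{(1)}$ in your displayed linear combination is a typo.
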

	
	To the best of our knowledge, a scaled Gaussian mixture limit of pseudolikelihood estimators in dense graphs has not been observed before. We do believe that more detailed exploration of such phenomenon is an interesting question for future research.
	
	\subsection{Extensions to higher order interactions}\label{sec:highising}
	
	Modern network data often features complex interactions across agents thereby necessitating the development of Ising models with higher ($>2$) order interactions; see e.g.,~\cite{Somabha2021,mukherjee2022estimation,bhattacharya2023gibbs,bhattacharya2024ldp,vanhecke2021solving,sasakura2014ising}. In this Section, we adopt a particular variant of a tensor Ising model (adopted from \cite{bhattacharya2023gibbs}). Let $H=(V(H),E(H))$ be a finite graph with $v:=|V(H)|\ge 2$ vertices labeled $\{1,2,\ldots ,v\}$. Writing $\ms:=(\sigma_1,\cdots,\sigma_N)$, the Ising model can be described by the following sequence of probability measures:
	\begin{equation}\label{eq:modelhighis}
		\P\big\{\,d\ms\big\}\coloneqq\frac{1}{Z_N(\beta,B)}\exp\left(\frac{\beta N}{v}\mathbb{U}_N(\ms)+B\sum\limits_{i=1}^N \sigma_i\right)\prod_{i=1}^N \vrh(\,d\sigma_i),
	\end{equation}
	where the Hamiltonian $\mathbb{U}_N(\ms)$ is a multilinear form, defined by
	\begin{align}\label{eq:U}
		\mathbb{U}_N(\ms):=\frac{1}{N^v}\sum_{(i_1,\ldots,i_v)\in \mathcal{S}(N,v) }\Big(\prod_{a=1}^v \si_{i_a}\Big)\prod_{(a,b)\in E(H)}\A_N(i_a,i_b).
	\end{align}
	Here $\mathcal{S}(N,v)$ is the set of all distinct tuples from $[n]^v$ (so that $|\mathcal{S}(n,v)|=v!\binom{n}{v}$). In particular, if $H$ is an edge, then \eqref{eq:modelhighis} is exactly the same as \eqref{eq:model}. All the parameters $\beta,B,\A_N,\rho$ have the same default assumptions as in the Ising model with pairwise interactions (see \eqref{eq:model}). We reiterate them here for the convenience of the reader. Therefore $\vrh$ is a non-degenerate probability measure, which is \emph{symmetric about $0$} and supported on $[-1,1]$, with the set $\{-1,1\}$ belonging to the support. Further $\A_N$ is a $N\times N$ symmetric matrix with non-negative entries and zeroes on its diagonal, and $\beta\in \R$, $B\in\R$ are \emph{unknown parameters} often referred to in the Statistical Physics literature as \textit{inverse temperature} (Ferromagnetic or anti-Ferromagnetic depending on the sign of $\beta$) and \textit{external magnetic field} respectively. The factor $Z_N(\beta,B)$ is the normalizing constant/partition function of the model. 
	
	Limit distribution theory for the average magnetization $\sum_{i=1}^N \si_i$, coupled with asymptotic theory for the maximum likelihood/pseudolikelihood estimation of $\beta$ and $B$ (marginally) under model \eqref{eq:modelhighis}, has been studied when $\A_N$ is the scaled adjacency matrix of a \emph{complete graph}; see e.g.~\cite{mukherjee2021fluctuations,mukherjee2025MLE,Sanchayan2025}. We note that the proofs of these results heavily rely on the complete graph structure and do not generalize to more general graphs. In a separate line of research $\sqrt{N}$-estimation of $\beta$ and $B$ marginally has been studied under weaker assumptions in \cite{mukherjee2022estimation}. Joint $\sqrt{N}$-estimation of $(\beta,B)$ jointly has been studied in \cite{daskalakis2020logistic,mukherjee2024logistic} when $\A_N$ is the \emph{adjacency matrix of a bounded degree graph}. However, none of these proof techniques translate to explicit limit distribution theory for the proposed estimators of $\beta$ and $B$. Overall, we are not aware of any results in the literature that yield joint limit distribution theory for estimating $(\beta,B)$. The goal of this Section is to fill that void in the literature. A major strength of this paper is that our main distributional result \cref{theo:CLTmain} is relatively model agnostic, which helps us obtain inferential results under \eqref{eq:modelhighis} without imposing strong sparsity assumptions on the nature of the interaction (i.e., the matrix $\A_N$).  
	
	To state our main results, we introduce some preliminary notation. First given any matrix $\A_N$, define the symmetrized tensor 
	\begin{align}\label{eq:symatrix}
		\mathrm{Sym}[\A_N](i_1,\ldots ,i_v):=\frac{1}{v!}\sum_{\pi\in S_v}\prod_{(a,b)\in E(H)}\A_N(i_{\pi(a)},i_{\pi(b)}) 
	\end{align}
	for $(i_1,\ldots ,i_v)\in [N]^v$, where $S_v$ denotes the set of all permutations of $[v]$. In a similar vein, given a symmetric measurable function $W:[0,1]^2\to [0,1]$, define the symmetrized tensor 
	\begin{align}\label{eq:symfunc}
		\mathrm{Sym}[W](x_1,\ldots ,x_v):=\frac{1}{v!}\sum_{\pi\in S_v}\prod_{(a,b)\in E(H)} W(x_{\pi(a)},x_{\pi(b)})
	\end{align}
	for $(x_1,\ldots ,x_v)\in [0,1]^v$. 
	the \emph{local fields} (similar to \eqref{eq:avisdef}) as follows: 
	\begin{align}\label{eq:m_ihigh}
		m_i\equiv m_i(\ms):=\frac{1}{N^{v-1}}\sum_{(i_2,\ldots ,i_v)\in \cS(N,v,i)}\mathrm{Sym}[\A_N](i,i_2,\ldots ,i_v)\left(\prod_{a=2}^v \si_{i_a}\right), \quad \mbox{for} \,\, i\in [N],
	\end{align}
	where $\cS(n,v,i)$ denotes the set of all distinct tuples of $[N]^{v-1}$ such that none of the elements equal to $i$.  
	Direct computations reveal that 
	\begin{align}\label{eq:tensorsimpl}
		\E[\si_i|\si_j,j\neq i]=\Xi'(\beta m_i+B).
	\end{align}
	Therefore $\E[\si_i|\si_j,j\neq i]$ is a smooth transformation of the $m_i$s which are in turn product of monomials. Following the discussion in \cref{sec:howtover}, we can use \cref{lem:smoothcont} to establish \cref{as:cmean}. 
	Next we state an appropriate row-sum boundedness assumption that ensures \cref{as:cmean} holds. 
	
	\begin{Assumption}\label{as:highrowsum}
		The symmetrized tensor $\mathrm{Sym}[\A_N]$ satisfies 
		$$\limsup\limits_{N\to\infty}\max_{\ell\in [v]}\max_{i_{\ell}\in [N]} \sum_{(\{i_1,\ldots ,i_v\}\setminus \{i_{\ell}\})\in [N]^{v-1}} \mathrm{Sym}[\A_N](i_1,\ldots ,i_v)<\infty.$$
	\end{Assumption}
	The above assumption holds when $\A_N$ is the scaled adjacency matrix of a complete graph. It also holds when the complete graph is replaced by the Erd\H{o}s-R\'{e}nyi random graph $\mathcal{G}(N,p_N)$ with $p_N\equiv p\in (0,1)$ (fixed). It also holds for sparser Erd\H{o}s-R\'{e}nyi graphs depending on $H$. For example, if $H$ is a star graph then \cref{as:highrowsum} holds for $p_N\gg \log{N}/N$. On the other hand, if $H$ is the triangle graph, then \cref{as:highrowsum} holds if $p_N\gg \log{N}/\sqrt{N}$.  
	
	We now state a CLT for the conditionally centered statistic $N^{-1/2}\sum_{i=1}^N c_i(\si_i-\Xi'(\beta m_i+B))$. For ease of presentation, we have chosen $g(x)=x$ in \eqref{eq:pivotstat}.
	
	\begin{theorem}\label{thm:highordclt}
		Suppose Assumptions \ref{as:coeff} and \ref{as:highrowsum} hold. Recall the definitions of $U_N,V_N$ from \eqref{eq:randvar} with $g(x)=x$ and suppose \eqref{eq:bddzero} holds. Then given any sequence of positive reals $\{a_N\}_{N\ge 1}$ such that $a_N\to 0$, we have 
		$$\frac{1}{\sqrt{(U_N+V_N)\vee a_N}}\sum_{i=1}^N c_i(\si_i-\Xi'(\beta m_i+B))\overset{w}{\longrightarrow} N(0,1).$$
	\end{theorem}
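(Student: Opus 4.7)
The plan is to reduce the conclusion to \cref{theo:CLTmain} applied with $g(x)=x$, $\B=[-1,1]$, and $b_0=0$. \cref{as:coeff} and the non-degeneracy condition \eqref{eq:bddzero} are assumed directly, so the only real task is to verify \cref{as:cmean} for the conditional means $t_i=\EE[\sigma_i\mid\sigma_j,j\ne i]=\Xi'(\beta m_i+B)$ given by \eqref{eq:tensorsimpl}. Once this is done, \cref{theo:CLTmain} immediately delivers the studentized CLT.

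The key structural observation is that $t_i$ has the composite form prescribed in \cref{sec:howtover}: $t_i=\Xi'(\beta m_i+B)$ with $m_i$ from \eqref{eq:m_ihigh}, a symmetric polynomial of degree $v-1$ in the spins. Because $\rho$ is supported on $[-1,1]$, $\Xi$ is real-analytic and all its derivatives are uniformly bounded on any bounded interval; combined with the uniform bound $\sup_{j,N}|m_j|<\infty$ that follows directly from \cref{as:highrowsum}, one may take $f$ to be a constant rescaling of $\Xi'$ for which $\sup_{|x|\le M}|f^{(\ell)}(x)|\le 1$ for every $0\le\ell\le k$ (the rescaling depends only on $\beta,B,v,k$). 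This places us exactly in the setting of \cref{lem:smoothcont} with $b_j(\ms)\defn \beta m_j(\ms)+B$.

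The next step is to compute and bound the discrete derivative $\Delta(b_{j_1};\tilde{\cS};\{j_2,\ldots,j_k\})$. Since the constant $B$ contributes $B\sum_{D\subseteq\{j_2,\ldots,j_k\}}(-1)^{|D|}=0$ whenever $k\ge 2$, it suffices to bound $\beta\,\Delta(m_{j_1};\tilde{\cS};\{j_2,\ldots,j_k\})$. A direct expansion of \eqref{eq:m_ihigh} shows that only those monomials in $m_{j_1}$ whose index set $\{i_2,\ldots,i_v\}$ contains all of $\{j_2,\ldots,j_k\}$ and is disjoint from $\tilde{\cS}$ survive (using $b_0=0$). For $k\ge v+1$ no such monomial exists, so the derivative vanishes; for $2\le k\le v$, combinatorial bookkeeping together with $|\sigma_i|\le 1$ and the full symmetry of $\mathrm{Sym}[\A_N]$ yields $|\Delta(b_{j_1};\tilde{\cS};\{j_2,\ldots,j_k\})|\le \tilde{Q}_{N,k}(j_1,\ldots,j_k)$, where
\[
\tilde{Q}_{N,k}(j_1,\ldots,j_k)\defn \frac{C_{v,\beta}}{N^{v-1}}\sum_{(i_{k+1},\ldots,i_v)\in[N]^{v-k}}\mathrm{Sym}[\A_N](j_1,\ldots,j_k,i_{k+1},\ldots,i_v).
\]
The tensor $\tilde{Q}_{N,k}$ is fully symmetric in its arguments, and for every $\ell\in[k]$ its $(k-1)$-fold row sum collapses to $C_{v,\beta}N^{-(v-1)}$ times the full $(v-1)$-fold row sum of $\mathrm{Sym}[\A_N]$, which is $O(1)$ by \cref{as:highrowsum}. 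Hence $\tilde{Q}_{N,k}$ satisfies \eqref{eq:cmean2}.

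Finally, \cref{lem:smoothcont} applied to $(f,b_j)$ yields \eqref{eq:altform} for the composition $t_j=f(b_j)$ with $\Q_{N,k}=C\,\mathcal{R}[\tilde{Q}]_{N,k}$, and part (2) of the lemma transfers the bounded-row-sum property from $\tilde{Q}$ to $\mathcal{R}[\tilde{Q}]$; this is precisely \cref{as:cmean}. Combined with \cref{as:coeff} and \eqref{eq:bddzero}, \cref{theo:CLTmain} closes the argument. The main technical hurdle is the combinatorial reduction of $\Delta(m_{j_1};\tilde{\cS};\{j_2,\ldots,j_k\})$ to the clean form above: one must identify precisely which ordered distinct tuples $(i_2,\ldots,i_v)$ survive, upper bound the restricted sum by the unrestricted sum over $[N]^{v-k}$ in the free indices, and track the combinatorial factors (arising from positions for $\{j_2,\ldots,j_k\}$ within the $v-1$ slots) so that the resulting constants depend only on $v,\beta,k$ and not on $N$.
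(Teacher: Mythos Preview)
Your proposal is correct and follows essentially the same route as the paper's proof: reduce to \cref{theo:CLTmain} by verifying \cref{as:cmean}, compute the discrete derivative of $m_{j_1}$ directly from its multilinear form (observing that only monomials containing all perturbed indices survive), bound the result by a partial contraction of $\mathrm{Sym}[\A_N]$ whose row sums are controlled by \cref{as:highrowsum}, and then invoke \cref{lem:smoothcont} to pass from $b_{j_1}=\beta m_{j_1}+B$ to $t_{j_1}=\Xi'(b_{j_1})$. The paper's argument is the same up to notational differences (it writes the bound over distinct tuples rather than your unrestricted $[N]^{v-k}$, and it leaves the $N^{-(v-1)}$ and combinatorial factors implicit in $\lesssim$).
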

	
	We can now leverage \cref{thm:highordclt} to provide asymptotic distribution of the pseudolikelihood estimator for $(\beta,B)$.  Following \cref{def:plestim}, the pseudolikelihood estimator is given by $(\hbem,\hbm)$ which satisfies the equations 
	$$\begin{pmatrix} \sum_{i=1}^{N} m_i(\si_i-\Xi'(\beta m_i+B)) \\ \sum_{i=1}^{N} (\si_i-\Xi'(\beta m_i+B))\end{pmatrix}=\begin{pmatrix} 0 \\ 0 \end{pmatrix},$$
	with $m_i$s defined in \eqref{eq:m_ihigh}. To obtain the limit distribution of $(\hbem,\hbm)$, we will adopt the same framework of cut norm convergence (see \cref{def:defirst}) as in \cref{sec:jointpseudo}. In particular, we assume that there exists a measurable $W:[0,1]^2\to [0,1]$ such that 
	\begin{equation}\label{eq:cutg}
		d_{\square}(W_{\A_N},W)\to 0.
	\end{equation}
	Under model \eqref{eq:modelhighis} and assumption \eqref{eq:cutg}, by \cite[Proposition 1.1]{bhattacharya2023gibbs}, it follows that: 
	\begin{align}\label{eq:highoptim}
		\frac{1}{N}Z_N(\beta,B)&\overset{N\to\infty}{\longrightarrow} \sup_{f:[0,1]\to [-1,1]} \bigg(\beta \int_{[0,1]^v} \mathrm{Sym}[W](x_1,\ldots ,x_v)\left(\prod_{a=1}^v f(x_a)\right)\,\prod_{a=1}^v \,d x_a  \nonumber \\ &\qquad \qquad \qquad + B \int_{[0,1]} f(x)\, dx - \int_{[0,1]} ((\Xi')^{-1}(f(x))f(x)-\Xi((\Xi')^{-1}(f(x))))\,dx\bigg).
	\end{align}
	As in \cref{thm:jointCLT}, our main result below shows that the limiting distribution of $(\hbem,\hbm)$ can be characterized in terms of the optimizers of \eqref{eq:optimprob}. In the same spirit as the irregularity assumption earlier (see \cref{as:irregraph}), we impose an irregularity assumption on an appropriately symmetrized tensor, which we state below.
	\begin{Assumption}[Irregular tensor]\label{as:irretensor}
		Consider a symmetric measurable $W:[0,1]^2\to [0,1]$. The symmetrized tensor $\mathrm{Sym}[W]$ (defined in \eqref{eq:symfunc}) is said to be an irregular tensor if 
		\begin{align}\label{eq:irreg}
			\int_{x_1\in[0,1]} \left(\int_{(x_2,\ldots ,x_v)\in [0,1]^{v-1}} \mathrm{Sym}[W](x_1,x_2,\ldots ,x_v)\,\prod_{a=2}^v \,dx_a - \int_{[0,1]^v} W(x_1,\ldots ,x_v)\,\prod_{a=1}^v \,dx_a\right)^2\,dx_1>0.
		\end{align}
		In other words, the row integrals of $\mathrm{Sym}[W]$ are non-constant.
	\end{Assumption}
	We are now in position to state the main result of this section. 
	\begin{theorem}\label{thm:jointCLThigh}
		Suppose $\A_N$ satisfies \cref{as:highrowsum} and \eqref{eq:cutg} for some $W$ satisfying the irregularity condition in \cref{as:irregraph}. Suppose that $\beta>0$, $B>0$ and the MPLE $(\hbem,\hbm)$ is consistent for $(\beta,B)$.  For any $f:[0,1]\to [-1,1]$, define $\mathcal{A}_f$ and $\mathcal{B}_f$ as in \eqref{eq:amat} and \eqref{eq:bmat} respectively. Assume now that the optimization problem \eqref{eq:highoptim} has an almost everywhere unique solution $\fs$. 
		Then $\mca_{\fs}$ is invertible and  
		$$\sqrt{N}\begin{pmatrix} \hbem-\beta \\ \hbm-B\end{pmatrix}\overset{w}{\longrightarrow} N\left(\begin{pmatrix} 0 \\ 0\end{pmatrix}, \mca_{\fs}^{-1}\mcb_{\fs}\mca_{\fs}^{-1}\right).$$
	\end{theorem}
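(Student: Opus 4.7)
The plan is to apply \cref{prop:CLTMPLE} with $\theta_0 = (\beta, B)$. Consistency (A3) is part of the hypotheses, so I must verify (A1) and (A2). The score and Hessian of the pseudolikelihood are
\[
\nabla f_i(\theta) = (\sigma_i - \Xi'(\beta m_i + B))\begin{pmatrix} m_i \\ 1\end{pmatrix}, \qquad \nabla^2 f_i(\theta) = -\Xi''(\beta m_i + B)\begin{pmatrix} m_i^2 & m_i \\ m_i & 1\end{pmatrix},
\]
with $m_i$ as in \eqref{eq:m_ihigh}. Throughout, \cref{as:highrowsum} together with $|\sigma_i|\le 1$ ensures that $\max_i|m_i|$ is uniformly bounded, so $\Xi'$, $\Xi''$, $\Xi'''$ are evaluated on a fixed compact set.

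For the score CLT (A2), I would invoke the Cram\'{e}r--Wold device. Fix $(a,b)\in\R^2$; the linear combination $N^{-1/2}\sum_i (am_i + b)(\sigma_i - \Xi'(\beta m_i + B))$ is precisely the statistic $T_N$ of \eqref{eq:pivotstat} with $g(x)=x$ and $c_i = am_i + b$. Boundedness of $m_i$ makes \cref{as:coeff} automatic, so \cref{thm:highordclt} produces the pivotal CLT provided I identify the limit of $U_N + V_N$. Both $U_N$ and $V_N$ are polynomial averages of the form $N^{-1}\sum_i h_1(m_i)$ and $N^{-1}\sum_{i,j}h_2(m_i,m_j)\,T_{ij}$, with $T_{ij}$ an explicit reduced-tensor entry coming from $\partial m_j/\partial\sigma_i$, and $h_1, h_2$ polynomials in $(m_i, \Xi''(\beta m_i + B))$. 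Using cut norm convergence \eqref{eq:cutg} together with the fact that, under uniqueness of the optimizer $\fs$ in \eqref{eq:highoptim}, the empirical profile of $(m_i)_{i\le N}$ concentrates on the graphon-theoretic profile $x\mapsto \int \mathrm{Sym}[W](x,x_2,\ldots,x_v)\prod_{a\ge 2}\fs(x_a)\prod_{a\ge 2}dx_a$, these averages converge in probability to graphon integrals matching the entries of $\mcb_{\fs}$. A direct bookkeeping identifies $\lim(U_N + V_N) = (a,b)\mcb_{\fs}(a,b)^\top$, verifying (A2) with $\Sigma_N(\theta_0)\to \mcb_{\fs}$.

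For the Hessian (A1), the same graphon-tracking step applied to $-N^{-1}\sum_i \nabla^2 f_i(\theta_0)$ yields convergence in probability to $\mca_{\fs}$; the local uniform-in-$\theta$ control is immediate from continuity of $\Xi''$ and uniform boundedness of $m_i$. Invertibility of $\mca_{\fs}$ is where the irregularity assumption enters: by Cauchy--Schwarz applied with weight $d\mu = \Xi''(\beta\fs(x)+B)\,dx$,
\[
\det(\mca_{\fs}) = \Bigl(\int \fs^2\,d\mu\Bigr)\Bigl(\int d\mu\Bigr) - \Bigl(\int \fs\,d\mu\Bigr)^2 \ge 0,
\]
with equality iff $\fs$ is $\mu$-almost-everywhere constant. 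Since $\vrh$ is non-degenerate, $\Xi''>0$ everywhere, and so this forces $\fs$ to be Lebesgue-a.e.\ constant, say $\fs\equiv c$; plugging into the Euler--Lagrange equation for \eqref{eq:highoptim} yields $c = \Xi'\bigl(v\beta c^{v-1} r(x) + B\bigr)$ where $r(x) := \int \mathrm{Sym}[W](x,x_2,\ldots,x_v)\prod_{a\ge 2}dx_a$. If $c\ne 0$, strict monotonicity of $\Xi'$ forces $r$ to be a.e.\ constant, contradicting \cref{as:irretensor}; if $c=0$, then $B>0$ together with $\Xi'(B)>0$ (by symmetry of $\vrh$, which gives $\Xi'(0)=0$ and $\Xi''>0$) gives $0 = c \ne \Xi'(B)$, another contradiction. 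Hence $\mca_{\fs}$ is invertible, and \cref{prop:CLTMPLE} delivers the claimed sandwich variance $\mca_{\fs}^{-1}\mcb_{\fs}\mca_{\fs}^{-1}$. The hardest part is the graphon-tracking step itself, i.e., translating cut norm convergence into convergence of nonlinear functionals of $(m_i)_{i\le N}$ to graphon integrals against $\fs$; this requires a variational stability argument anchored on uniqueness of the optimizer of \eqref{eq:highoptim} and careful handling of the tensorial reduced-sums $T_{ij}$ appearing in $V_N$ through the symmetrization in \eqref{eq:symatrix}.
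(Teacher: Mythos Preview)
Your overall plan matches the paper's: it declares the proof identical to that of \cref{thm:jointCLT} (verify (A1)--(A3) of \cref{prop:CLTMPLE}, use Cram\'er--Wold for the score, and identify the limits of $U_N,V_N$ via graphon concentration of the $m_i$'s under uniqueness of the optimizer of \eqref{eq:highoptim}), and then proves invertibility of $\mca_{\fs}$ separately.

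Two points worth flagging. First, your coefficients $c_i=am_i+b$ are \emph{random}, so the statistic is not literally of the form \eqref{eq:pivotstat}. The paper handles this (in the proof of \cref{thm:jointCLT}) by first replacing $m_i$ with the deterministic profile $\fs(i/N)$ via the concentration $N^{-1}\sum_i(m_i-\fs(i/N))^2\to 0$ together with \cref{lem:auxtail}(b), and only then applying the CLT machinery with deterministic $c_i=a\fs(i/N)+b$. Your ``graphon-tracking'' paragraph gestures at this but does not make the replacement explicit; without it, Assumptions~\ref{as:coeff}/\ref{as:coeffalt} do not even apply as stated.

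Second, for invertibility the paper proceeds differently: it uses $\beta,B>0$ to force $\fs\ge 0$ a.e.\ (by replacing $f$ with $|f|$ in \eqref{eq:highoptim}) and $\fs\not\equiv 0$, then invokes an external structural result (\cite[Theorem~1.2(ii)]{bhattacharya2023gibbs}) to conclude that $\fs$ is non-constant under the tensor irregularity assumption. Your Euler--Lagrange route (constant $\fs\equiv c$ forces either $r(\cdot)$ constant when $c\neq 0$, or $\Xi'(B)=0$ when $c=0$) is a valid and more self-contained alternative; it trades the cited black box for a first-order-condition argument, at the cost of needing to justify that the optimizer lies in the interior of $[-1,1]$ so that the stationarity equation holds.
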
 
	
	\cref{thm:jointCLThigh} therefore provides a joint CLT for estimating $(\beta,B)$ using the maximum pseudolikelihood estimator $(\hbem,\hbm)$. As mentioned in \cref{sec:jointpseudo}, a sufficient condition for unique solutions to the optimization problem in \eqref{eq:highoptim} is to assume that $B$ is large enough. While we have focused on joint estimation of $(\beta,B)$ under the irregularity assumption \cref{as:irretensor}, our results can also be used to yield marginal CLTs for $\hbem$ (when $B$ is known) and $\hbm$ (when $\beta$ is known). The main ideas are similar to those in \cref{sec:marginalpseudo}. 
	
	\begin{remark}[Difference with \cref{thm:jointCLT}]
		We note that \cref{thm:jointCLThigh} has two extra assumptions compared to \cref{thm:jointCLT} --- namely the consistency of $(\hbem,\hbm)$ and the positivity of $B$. So the latter does not follow from the former. The consistency assumption can be removed by restricting $(\beta,B)$ to a compact parameter space. The positivity of $B>0$ will be used to ensure that $\mca_{\fs}$ is invertible. On the event that consistency of $(\hbem,\hbm)$ and $\mca_{\fs}$ are proved under weaker assumptions, \cref{thm:jointCLThigh} will immediately extend to such regimes.
	\end{remark}
	\subsection{Exponential random graph model}\label{sec:ergm}
	
	Exponential random graph models (ERGMs) are a family of Gibbs distributions on the set of graphs with $N$ vertices. They provide a natural extension to the Erd\H{o}s-R\'{e}nyi graph model by allowing for interactions between edges. They have become a staple in modern parametric network analysis with applications in sociology \cite{frank1986markov,park2005solution} and statistical physics \cite{Richard2017}. We refer the reader to \cite{Chatt2016bull} for a survey on random graph models. In this Section, we will focus on the following ERGM on undirected networks (following the celebrated works of \cite{Bhamidi2011,Chatterjee2013}) --- Consider 
	a finite list (not growing with $N$) of template graphs \(H_1,\dots,H_k\) without isolated vertices
	and a parameter vector \(\bbeta=(\beta_1,\dots,\beta_k)\in\mathbb{R}^k\).
	Let \(\mathcal{G}_N\) be the set of all simple graphs (undirected without self-loops or multiple edges) on vertex set \(\{1,\dots,N\}\).
	For \(G\in \mathcal{G}_N\), the ERGM puts probability
	\begin{equation}
		\label{eq:ergm}
		\PP_{\bbeta}(G) \ =\ \frac{1}{Z_N(\bbeta)}\,
		\exp\!\Big(\,N^2\sum_{m=1}^k \beta_m\,t(H_m,G)\Big),
	\end{equation}
	where
	\[
	t(H_m,G)\ :=\ \frac{|{\rm Hom}(H_m,G)|}{N^{\,|V(H_m)|}}\!,
	\]
	and \(|{\rm Hom}(H_m,G)|\) denotes the number of homomorphisms of $H_m$ into $G$ (i.e. the number of injective mappings from the vertex set of $H_m$ to the vertex set of $G$ such that edge in $H_m$ is mapped to an edge in $G$). Typically $t(H_m,G)$ is referred to as the homomorphism density. In particular if $H_m$ is an edge, then $t(H_m,G)=2N^{-2}\#\{\mbox{number of edges in }G\}$. On the other hand if $H_m$ is a triangle, then $t(H_m,G)=6N^{-3}\#\{\mbox{number of triangles in }G\}$. In this paper, we assume throughout that $H_1$ is an edge and $H_2,\ldots ,H_k$ have at least two edges each. Let $v_m$ and $e_m$ denote the number of vertices and edges in $H_m$. therefore $v_1=2$ and $e_1=1$. 
	
	Theoretical understanding of \eqref{eq:ergm} is hindered by the non-linear nature of the Hamiltonian. We first introduce the wonderful works of \cite{Bhamidi2011} and \cite{Chatterjee2013} (also see \cite{Chatt2010}) where the authors identified a parameter regime where \eqref{eq:ergm} ``behaves as" the Erd\H{o}s-R\'{e}nyi random graph model, thereby significantly advancing the understanding of \eqref{eq:ergm}.  
	
	\begin{definition}[Sub-critical regime]\label{def:subcrit}
		Define the functions 
		\begin{align}\label{eq:subcrit}
			\Phb(x):=\sum_{m=1}^k \beta_m e_m x^{e_m-1}, \qquad \quad \vphb(x):=\frac{\exp(2\Phb(x))}{\exp(2\Phb(x))+1}.
		\end{align}
		The \emph{sub-critical} regime contains all the parameters $\bbeta=(\beta_1,\ldots ,\beta_k)$, $\beta_1\in\R$ and $\beta_m>0$ for $m\ge 2$,  such that there is a unique solution $\ps\equiv \psb$ to the equation $\vphb(x)=x$ in $(0,1)$ and $\vphb'(\ps)<1$. In \cite[Theorem 7]{Bhamidi2011}, the authors show that in the sub-critical regime graphs drawn according to \eqref{eq:ergm} have asymptotically independent edges with edge-probability $\ps$. In \cite[Theorem 4.2]{Chatterjee2013}, the authors show that in the sub-critical regime, \eqref{eq:model} behaves like an Erd\H{o}s-R\'{e}nyi model with edge probability $\ps$ in terms of large deviations on the space of graphons. More recently, \cite{Reinert2019} provide a quantitative bound for the proximity between model \eqref{eq:subcrit} and the Erd\H{o}s-R\'{e}nyi model in the sub-critical regime. Note that the term sub-critical regime is not explicit in \cite{Bhamidi2011,Chatterjee2013}. We adopt this from more recent developments in the area; see \cite{Ganguly2024,fang2025normal}.
	\end{definition}
	
	\begin{remark}[Edge-triangle example]
		Let \(H_1\) be a single edge and \(H_2=K_3\) (a triangle), with parameters \((\beta_1,\beta_2)\).
		Then $v_1=2$, $e_1=1$, $v_2=3$, and $e_2=3$, so
		\[
		\Phb(x)\ =\beta_1 + 3\beta_2 x^2,
		\qquad
		\vphb(x)\ =\ \frac{\exp\!\big(2\beta_1+6\beta_2 x^2\big)}
		{1+\exp\!\big(2\beta_1+6\beta_2 x^2\big)}.
		\]
		The fixed point \(\ps\in(0,1)\) satisfies \(2\beta_1+6\beta_2 (\ps)^{2}=\log\!\big(\ps/(1-\ps)\big)\),
		and the sub-critical condition reads
		\[
		\varphi_\beta'(\ps)= 2\ps(1-\ps)\Phi'_\beta(\ps)
		\ =\ 2\,\ps(1-\ps)\cdot \big(6\beta_2 \ps\big)
		\ =\ 12\,\beta_2\,(\ps)^{2}(1-\ps)\ <\ 1.
		\]
	\end{remark}
	
	A standing question in the ERGM literature has been to obtain the asymptotic distribution of the total number of edges of a graph $G$ drawn according to \eqref{eq:model}. In \cite{mukherjee2013consistent}, the authors study CLTs for number of edges in the special case of two-star ERGMs (where $k=2$, $H_1$ is an edge, $H_2$ is a two-star). Their proof heavily exploits the relationship between the said model and the Curie-Weiss Ising model, and consequently doesn't extend to the general case of model \eqref{eq:model}. \cite{Ganguly2024} proved a CLT for the number of edges in $o(N^2)$ disconnected locations (which do not share a common vertex) in the sub-critical phase. In the same regime \cite{Sambale2020} shows that CLTs for general subgraph counts can be derived from the CLT of edges. More recently the authors of \cite{fang2025normal} prove a CLT for the total number of edges in the full sub-critical regime \cref{def:subcrit}.
	
	Therefore the existing edge CLTs are either specialized to specific choices of $H_i$s or focus entirely on the sub-critical regime. In the main result of this Section, We show that for conditionally centered number of edges, a studentized CLT holds without restricting to the sub-critical phase as long as variance positivity condition is satisfied. To state the result, we observe that the edge indicators under model \eqref{eq:ergm} have the probability mass function 
	\begin{align}\label{eq:edgepmf}
		\PPe(\by):=\frac{1}{Z_N(\bbeta)}\exp\left(\sum_{m=1}^k \frac{\beta_m}{N^{v_m-2}}\big|\mathrm{Hom}(H_m,G_y)\big|\right), \quad \by\in \{0,1\}^{{N \choose 2}}.
	\end{align}
	where $G_{\by}$ is the graph with edge indicators $\by$. Writing $L(x):=\exp(x)/(1+\exp(x))$ to denote the logistic function. Let $\bby\sim \PPe$. For $1\le i<j\le N$, let $Y_{-ij}$ denote the set of all edge indicators other than $Y_{ij}$. Then 
	\begin{equation}\label{eq:conergm}
		\EEB[Y_{ij}|Y_{-ij}]=L(\eta_{ij}), \quad \eta_{ij}:=\sum_{m=1}^k \frac{\beta_m}{N^{v_m-2}}\sum_{(a,b)\in E(H_m)}\sum_{\substack{(k_1,\ldots ,k_{v_m}) \textrm{ distinct, }\\ \{k_a,k_b\}=\{i,j\}}}\prod_{(p,q)\in E(H_m)\setminus (a,b)} Y_{k_p k_q}.
	\end{equation}
	Once again $\EEB[\si_i|\si_j,j\neq i]$ is a smooth transformation of a product of monomials. Following the discussion in \cref{sec:howtover}, we can use \cref{lem:smoothcont} to establish \cref{as:cmean}. This will allow use to invoke our main result \cref{theo:CLTmain} without restricting to the sub-critical regime in \cref{def:subcrit}.
	\begin{theorem}\label{thm:ergmclt}
		Consider the conditionally centered edge counts 
		\begin{align}\label{eq:tnedge}
			\Tne:=\frac{1}{\sqrt{{N \choose 2}}}\sum_{1\le i<j \le N} (Y_{ij}-L(\eta_{ij})).
		\end{align}
		Set $\mathcal{I}:=\{(i,j): 1\le i<j\le N\}$. We define $\Une$ and $\Vne$ as follows: 
		\begin{equation}\label{eq:randvarergm}
			\Une:=\frac{1}{{N \choose 2}}\sum_{(i,j)\in\mathcal{I}}(Y_{ij}-L^2(\eta_{ij})) \quad \mbox{and} \quad \Vne:=\frac{1}{{N\choose 2}}\sum_{\substack{(i_1,j_1)\neq (i_2,j_2)\\ \in \mathcal{I}}} (Y_{i_1j_1}-L(\eta_{i_1 j_1})(L(\eta_{i_2 j_2}^{(i_1,j_1)})-L(\eta_{i_2 j_2})).
		\end{equation}
		Suppose there exists $\eta>0$ such that 
		\begin{align}\label{eq:varlbdergm}
			\PPe(\Une+\Vne\geq \eta)\to 1.
		\end{align}
		Then given any sequence of positive reals $\{a_N\}$ we have 
		$$\frac{\Tne}{\sqrt{(\Une+\Vne)\vee a_N}}\overset{w}{\longrightarrow} N(0,1).$$
	\end{theorem}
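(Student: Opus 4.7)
The plan is to realize \cref{thm:ergmclt} as a direct instance of the general studentized CLT \cref{theo:CLTmain}. Identify the edge indicators $\{Y_{ij}:(i,j)\in\mathcal{I}\}$ with the dependent random field, using effective sample size $\binom{N}{2}$, $g(y)=y$, $b_0=0$, and all weights $c_{(i,j)}=1$. Under this identification the statistics $\Tne,\Une,\Vne$ coincide exactly with the generic $T_N,U_N,V_N$ in \eqref{eq:pivotstat}--\eqref{eq:randvar}, the hypothesis \eqref{eq:varlbdergm} is literally \eqref{eq:bddzero}, and constant weights trivially satisfy \cref{as:coeff}. Consequently the only substantive task is to verify the smoothness condition \cref{as:cmean}; once that is in place, \cref{theo:CLTmain} yields the pivotal Gaussian limit.

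To verify \cref{as:cmean} I will invoke \cref{lem:smoothcont}, exactly as illustrated for the Ising model in \cref{sec:howtover}. Write the conditional mean as $L(\eta_{ij})$ with $L$ the logistic function and $\eta_{ij}$ as in \eqref{eq:conergm}. The function $L$ has all derivatives uniformly bounded by $1$, and trivially $|\eta_{ij}|\le\sum_m|\beta_m|e_m$ uniformly in $N$ and in the edge configuration, so the outer hypothesis of \cref{lem:smoothcont} is met. It therefore suffices to exhibit a non-negative tensor sequence $\{\tQ_{N,k}\}_{k\ge 2}$, symmetric in its last $k-1$ coordinates and with uniformly bounded maximum row sums in the sense of \eqref{eq:cmean2}, such that
$$\bigl|\Delta(\eta_{i_1j_1};\tcS;\{(i_2,j_2),\ldots,(i_k,j_k)\})\bigr|\le \tQ_{N,k}\bigl((i_1,j_1),\ldots,(i_k,j_k)\bigr).$$

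The structural observation driving the bound is that $\eta_{ij}$ is a polynomial in the remaining edge indicators of total degree at most $e_{\max}-1$, where $e_{\max}:=\max_m e_m$. Since $b_0=0$, an elementary inclusion--exclusion identity shows that $\Delta(\eta_{i_1j_1};\tcS;B)$ collects precisely those monomials of $\eta_{i_1j_1}$ that contain $\prod_{(a,b)\in B}Y_{ab}$ as a factor and avoid all variables indexed by $\tcS$. Consequently this derivative vanishes identically for $k>e_{\max}$, allowing $\tQ_{N,k}\equiv 0$ there. For $2\le k\le e_{\max}$, using $|Y_{ab}|\le 1$, I would bound it by
$$\tQ_{N,k}((i_1,j_1),\ldots,(i_k,j_k))\;:=\;\sum_{m:\,e_m\ge k}\frac{|\beta_m|}{N^{v_m-2}}\,\mathcal{N}_m\bigl((i_1,j_1),\ldots,(i_k,j_k)\bigr),$$
where $\mathcal{N}_m(\cdot)$ counts injective maps $\phi:V(H_m)\to[N]$ whose edge image contains all $k$ prescribed host edges, and one finally symmetrizes in the last $k-1$ arguments as permitted by \cref{as:cmean}.

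The main technical obstacle is the row-sum estimate \eqref{eq:cmean2} for this $\tQ_{N,k}$. Pinning one edge, say $(i_1,j_1)$, I would sum $\mathcal{N}_m$ over the remaining $k-1$ host edges and interchange the order of summation: the result counts tuples $(\phi,(i_2,j_2),\ldots,(i_k,j_k))$ with $\phi$ injective, $(i_1,j_1)\in\phi(E(H_m))$, and each $(i_\ell,j_\ell)\in\phi(E(H_m))$. For each such $\phi$ there are at most $e_m^{k-1}$ ways to choose the remaining host edges from $\phi(E(H_m))$, and the number of injective $\phi$ with $(i_1,j_1)\in\phi(E(H_m))$ is $O(N^{v_m-2})$ (pick an edge of $H_m$ to map onto $(i_1,j_1)$, then map the remaining $v_m-2$ vertices freely). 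The factor $N^{v_m-2}$ cancels the normalization, giving an $O(1)$ bound uniform in $N$ and in the pinned edge. Once \eqref{eq:cmean2} is verified, \cref{lem:smoothcont} lifts everything to \cref{as:cmean} with the enlarged tensor $\mathcal{R}[\tQ]$, and \cref{theo:CLTmain} delivers the studentized CLT.
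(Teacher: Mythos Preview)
Your proposal is correct and follows the same route as the paper: identify $\Tne,\Une,\Vne$ with the generic $T_N,U_N,V_N$ of \cref{theo:CLTmain}, observe that \cref{as:coeff} and \eqref{eq:bddzero} are immediate, and reduce everything to verifying \cref{as:cmean} for the inner functions $\eta_{ij}$ via \cref{lem:smoothcont}. The only cosmetic difference is the choice of bounding tensor for the discrete derivatives of $\eta$. The paper packages the bound directly as $\Q_{N,r}(\mathfrak E_1,\ldots,\mathfrak E_r)=N^{-(\mathrm{CV}(\mathfrak E_1,\ldots,\mathfrak E_r)-2)}$, where $\mathrm{CV}$ denotes the number of distinct vertices spanned by the $r$ edges, and verifies the row-sum condition by the same vertex-counting heuristic you use. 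Your homomorphism-count tensor $\sum_m\frac{|\beta_m|}{N^{v_m-2}}\mathcal N_m$ is equivalent in strength (since $\mathcal N_m\lesssim N^{v_m-\mathrm{CV}}$, which after the normalization collapses to the paper's $N^{-(\mathrm{CV}-2)}$), and your Fubini argument for the row sum is precisely what the paper leaves to the reader. Note also that your $\mathcal N_m$ is fully symmetric in all $k$ edge arguments, so the row-sum bound you give for pinning $(i_1,j_1)$ automatically covers every coordinate $\ell\in[k]$ required in \eqref{eq:cmean2}.
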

	
	We note that \cref{thm:ergmclt} does not impose any sub-criticality restriction for the eventual limit. In the aforementioned regime, the variance can be simplified as stated in the following corollary.
	
	\begin{corollary}\label{cor:ergmclt}
		Consider $\Tne$ defined as in \eqref{eq:tnedge}. Suppose the parameter vector $\bbeta$ lies in the sub-critical regime from \cref{def:defirst}. Then 
		$$\Tne \overset{w}{\longrightarrow} N(0,\ps(1-\ps)(1-\vphb'(\ps))).$$
	\end{corollary}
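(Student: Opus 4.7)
The plan is to deduce the corollary from Theorem \ref{thm:ergmclt} by identifying the probability limit of the studentizer $\Une+\Vne$. Specifically, I aim to prove
\begin{equation*}
\Une+\Vne \;\overset{\PPe}{\to}\; \ps(1-\ps)\bigl(1-\vphb'(\ps)\bigr).
\end{equation*}
Sub-criticality forces $\vphb'(\ps)<1$, so this limit is strictly positive, verifying the variance-positivity hypothesis \eqref{eq:varlbdergm}; combined with Theorem \ref{thm:ergmclt} and Slutsky, it yields $\Tne\overset{w}{\longrightarrow} N(0,\ps(1-\ps)(1-\vphb'(\ps)))$. The key structural input is the sub-critical theory of \cite{Bhamidi2011,Chatterjee2013} (cf.\ \cite{Reinert2019}): under $\PPe$, the edge indicators $(Y_{ij})_{i<j}$ are close, on any polynomial-size coordinate projection, to i.i.d.\ $\mathrm{Bern}(\ps)$. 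In particular, $\eta_{ij}\to 2\Phb(\ps)$ and $L(\eta_{ij})\to\ps$ in $L^2$, $L'(\eta_{ij})\to\ps(1-\ps)$, and all relevant subgraph densities concentrate at their $\ps$-Erd\H{o}s--R\'{e}nyi limits.

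For $\Une$: since $Y_{ij}^2=Y_{ij}$, the tower property and edge exchangeability give $\EEB[\Une]=\EEB[L(\eta_{12})(1-L(\eta_{12}))]\to \ps(1-\ps)$, and a standard second-moment estimate based on sub-critical edge-pair decoupling promotes this to convergence in probability. For $\Vne$, the algebraic backbone is that $\eta_{ij}$ is (with the convention $b_0=0$) a discrete derivative of the Hamiltonian $H(Y):=\sum_m\beta_m N^{2-v_m}|\mathrm{Hom}(H_m,G_Y)|$ in the $Y_{ij}$-coordinate, so
\[
\eta_{i_2j_2}-\eta_{i_2j_2}^{(i_1,j_1)}\;=\;Y_{i_1j_1}\,D_{i_1j_1,i_2j_2},
\]
where $D_{i_1j_1,i_2j_2}$ is the mixed second discrete derivative of $H$. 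A direct embedding count gives $|D_{i_1j_1,i_2j_2}|=O(1/N)$ when $\{i_1,j_1\}$ and $\{i_2,j_2\}$ share a vertex and $O(1/N^2)$ otherwise, so the aggregate quadratic Taylor remainder of $L$ at $\eta_{i_2j_2}$ is $o(1)$, leaving
\[
\Vne=-\frac{1}{\binom{N}{2}}\sum_{(i_1,j_1)\ne(i_2,j_2)}(Y_{i_1j_1}-L(\eta_{i_1j_1}))\,L'(\eta_{i_2j_2})\,Y_{i_1j_1}\,D_{i_1j_1,i_2j_2}+o_{\PPe}(1).
\]
Replacing $L'(\eta_{i_2j_2})$ by its uniform limit $\ps(1-\ps)$ and using $\EEB[Y_{ij}(Y_{ij}-L(\eta_{ij}))]=\EEB[L(\eta_{ij})(1-L(\eta_{ij}))]\to\ps(1-\ps)$, the task reduces to
\[
\sum_{(i_2,j_2)\ne(i_1,j_1)}D_{i_1j_1,i_2j_2}\;\overset{\PPe}{\to}\;2\Phb'(\ps),\qquad\text{uniformly in $(i_1,j_1)$.}
\]
The heuristic is transparent: if $Y_e\equiv x$ uniformly then $\eta_{ij}(Y)\to 2\Phb(x)$, and linearizing in $x$ at $\ps$ yields $\sum_{e_2\ne e_1}D_{e_1,e_2}=2\Phb'(\ps)$. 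Plugging in and applying $\vphb'(\ps)=L'(2\Phb(\ps))\cdot 2\Phb'(\ps)=2\ps(1-\ps)\Phb'(\ps)$ gives $\Vne\to -\vphb'(\ps)\,\ps(1-\ps)$, so adding the $\Une$ limit produces the target.

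The main obstacle is the uniform probabilistic identification of $\sum_{(i_2,j_2)}D_{i_1j_1,i_2j_2}\to 2\Phb'(\ps)$: although the pointwise mean follows immediately from the homogeneity heuristic, using it inside the double sum defining $\Vne$ requires $L^2$-style concentration of the ``two-edge-marked'' subgraph counts in each template $H_m$, together with decoupling from the mean-zero score factors $Y_{e}-L(\eta_{e})$. The sub-critical hypothesis is exactly what makes this tractable: the joint edge decoupling and stability of subgraph densities proved in \cite{Chatterjee2013,Bhamidi2011,Reinert2019}, together with the quantitative estimates of \cite{Ganguly2024,fang2025normal}, supply the required concentration, while the Taylor remainder is controlled routinely by the $O(1/N)$ bound on $|D|$.
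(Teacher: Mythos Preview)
Your proposal is correct and follows essentially the same route as the paper: both invoke Theorem~\ref{thm:ergmclt}, compute the limits of $\Une$ and $\Vne$ separately via a first-order Taylor expansion of $L(\eta_{i_2j_2}^{(i_1,j_1)})-L(\eta_{i_2j_2})$ and the explicit discrete-derivative formula for $\eta_{i_2j_2}-\eta_{i_2j_2}^{(i_1,j_1)}$, and use sub-critical concentration (the paper cites \cite[Theorem~1.6]{Reinert2019} specifically, via the empirical-measure limit of $\eta_{ij}$) to identify $\Une\to\ps(1-\ps)$ and $\Vne\to -\ps(1-\ps)\vphb'(\ps)$. The paper replaces $(Y_{i_1j_1}-L(\eta_{i_1j_1}))Y_{i_1j_1}$ by its conditional mean via Lemma~\ref{lem:auxtail}(a) and keeps both edge indices in the average rather than proving your ``uniform in $(i_1,j_1)$'' reduction of $\sum D$, but the substance is identical.
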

	Note that the sub-criticality condition $\vphb'(\ps)<1$ ensures that the above limiting variance is strictly positive. 
	\begin{remark}[Extension to negative $\beta_m$s]
		The proof of \cref{cor:ergmclt} follows from combining \cref{thm:ergmclt} with the proximity between model \eqref{eq:ergm} and the appropriate Erd\H{o}s-R\'{e}nyi model as proved in \cite{Reinert2019}. We have stated the result for the sub-criticality regime as it seems to be the primary focus of the current literature. However the same conclusion also applies to the Dobrushin uniqueness regime 
		$$\sum_{m=2}^k |\beta_m|e_m(e_m-1)<2,$$
		which accommodates small negative values of $(\beta_2,\ldots ,\beta_k)$. 
		The proof strategy would exactly be the same as we would combine \cref{thm:ergmclt} (which puts no parameter restrictions), coupled with \cite[Theorem 1.7]{Reinert2019} which applies to the above uniqueness regime. 
	\end{remark}
	
	An immediate implication of \cref{thm:ergmclt} is a CLT for the pseudolikelihood estimator of $\beta_m$, $1\le m\le k$ when the rest are known. For simplicity, we will focus only on estimating $\beta_1$. To the best of our knowledge, limit theory for estimating the parameters of the ERGM \eqref{eq:ergm} has only been studied in the special case of the two-star model in \cite{mukherjee2013consistent}. Corollary 1.3 of \cite{mukherjee2013consistent} suggests that joint $O(N)$ estimation of $(\beta_1,\ldots ,\beta_k)$ may not be possible. Therefore, we only focus on the marginal estimation problem here. Under \eqref{eq:edgepmf}, the pseudolikelihood function is given by 
	\begin{align}\label{eq:plf}
		\mathrm{PL}(\beta_1):=\sum_{(i,j)\in\mathcal{I}} \left(Y_{ij}\eta_{ij}(\beta_1)-\log(1+\exp(\eta_{ij}(\beta_1))\right).
	\end{align}
	Note that $\eta_{ij}$ defined in \eqref{eq:conergm} depends on $\beta_1$. Therefore we have parametrized it as $\eta_{ij}\equiv \eta_{ij}(\beta_1)$. Fix some known compact set $K\in\R$ which contains the true parameter $\beta_1$. Following \eqref{eq:MPLE}, we take the derivative of the above pseudolikelihood function, and define the pseudolikelihood estimator for $\beta_1$ as $\hbpm\in K$ satisfying 
	\begin{align}\label{eq:ergmPL}
		\sum_{(i,j)\in\mathcal{I}} (Y_{ij}-L(\eta_{ij}(\hbpm)))=0,
	\end{align}
	when it exists. The following result provides the limit distribution of $\hbpm$.
	
	\begin{theorem}\label{thm:MPLergmCLT}
		Recall the definitions of $\Une$ and $\Vne$ from \eqref{eq:randvarergm}. Suppose that the true parameter $\beta_1\in K$, the known compact set. Then a unique pseudolikelihood estimator $\hbpm$ exists with probability converging to $1$. Suppose further that \eqref{eq:varlbdergm} holds. Then for any sequence of positive reals $\{a_N\}$ converging to $0$, we have:
		\begin{align}\label{eq:firstconc}
			\frac{1}{\sqrt{(\Une+\Vne)\vee a_N}}\left(\frac{2}{{N\choose 2}}\sum_{(i,j)\in\mathcal{I}} L(\eta_{ij}(\beta_1))(1-L(\eta_{ij}(\beta_1)))\right)\sqrt{{N\choose 2}}(\hbpm-\beta_1)\overset{w}{\longrightarrow} N(0,1),
		\end{align}
		provided $$\left(\frac{2}{{N\choose 2}}\sum_{(i,j)\in\mathcal{I}} L(\eta_{ij}(\beta_1))(1-L(\eta_{ij}(\beta_1)))\right)^{-1}=O_{\PPe}(1).$$
		In particular, in the sub-critical regime from \cref{def:defirst}, we have: 
		\begin{align}\label{eq:secondconc}
			\sqrt{{N\choose 2}}(\hbpm-\beta_1) \overset{w}{\longrightarrow} N\left(0,\frac{\ps(1-\ps)}{4(1-\vphb'(\ps))}\right).
		\end{align}
	\end{theorem}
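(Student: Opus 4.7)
The approach is to invoke \cref{prop:CLTMPLE} with $f_{ij}(\beta_1) := Y_{ij}\eta_{ij}(\beta_1) - \log(1+\exp(\eta_{ij}(\beta_1)))$ indexed over $(i,j)\in\mathcal{I}$, normalized by $\binom{N}{2}$ rather than $N$, and to let \cref{thm:ergmclt} supply the score CLT in condition (A2). A direct computation using $\partial \eta_{ij}/\partial \beta_1 = 2$ (since $H_1$ is an edge) gives $\nabla f_{ij}(\beta_1) = 2(Y_{ij}-L(\eta_{ij}(\beta_1)))$ and $\nabla^2 f_{ij}(\beta_1) = -4 L(\eta_{ij}(\beta_1))(1-L(\eta_{ij}(\beta_1)))$, so the MPLE score equation \eqref{eq:ergmPL} is exactly $\sum \nabla f_{ij}(\hbpm)=0$.

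\textbf{Existence/uniqueness and consistency (A3).} The per-edge score $Y_{ij}-L(\eta_{ij}(\beta_1))$ is strictly decreasing in $\beta_1$ because $L$ is increasing and $\partial\eta_{ij}/\partial\beta_1=2>0$, so the aggregated score is strictly monotone and has at most one zero on $K$. At the true parameter the score evaluated at $\beta_1$ equals $\sqrt{\binom{N}{2}}\,\Tne$, which is $O_{\PPe}(1)$ by \cref{thm:ergmclt}. Combined with the standing assumption that $\bigl(\tfrac{2}{\binom{N}{2}}\sum L(\eta_{ij}(\beta_1))(1-L(\eta_{ij}(\beta_1)))\bigr)^{-1}=O_{\PPe}(1)$, a mean-value argument on the monotone score shows that its unique root $\hbpm$ lies in the interior of $K$ with probability going to $1$ and satisfies $\hbpm-\beta_1 \to 0$ in $\PPe$-probability.

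\textbf{Verification of (A1) and (A2).} For (A1), observe that $\nabla^3 f_{ij}(\beta_1) = -8\, L(\eta_{ij})(1-L(\eta_{ij}))(1-2L(\eta_{ij}))$ is uniformly bounded in $(i,j,\mathbf{Y},\beta_1)$, which yields a Lipschitz modulus for $\binom{N}{2}^{-1}\sum\nabla^2 f_{ij}(\cdot)$ uniformly in $\mathbf{Y}$; uniform convergence on shrinking neighborhoods then follows from the mean value theorem. The boundedness in probability of the inverse Hessian is the stated hypothesis. For (A2), the key identity is
\begin{equation*}
\frac{1}{\sqrt{\binom{N}{2}}} \sum_{(i,j)\in\mathcal{I}} \nabla f_{ij}(\beta_1) \;=\; 2\,\Tne,
\end{equation*}
and \cref{thm:ergmclt} gives $\Tne/\sqrt{(\Une+\Vne)\vee a_N}\overset{w}{\longrightarrow} N(0,1)$. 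Thus (A2) holds with $\Sigma_N(\beta_1)=4[(\Une+\Vne)\vee a_N]$, which is $O_{\PPe}(1)$ by the uniform bounds \eqref{eq:boundZ1}--\eqref{eq:boundz2} adapted to the ERGM setting. Plugging into \cref{prop:CLTMPLE} and using that a univariate standard normal is symmetric (which absorbs the $-$ sign from the Hessian), the resulting identity is exactly \eqref{eq:firstconc}.

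\textbf{Sub-critical simplification \eqref{eq:secondconc}.} The remaining task is a Slutsky step applied to \eqref{eq:firstconc}. By \cref{cor:ergmclt} and its proof, $\Une+\Vne \overset{\PPe}{\longrightarrow} \ps(1-\ps)(1-\vphb'(\ps))$, which is strictly positive in the sub-critical regime. Using the total-variation/cut-metric proximity between \eqref{eq:ergm} and the Erd\H{o}s--R\'enyi model $\mathcal{G}(N,\ps)$ from \cite{Reinert2019}, each $\eta_{ij}$ concentrates around $\Phi_{\bbeta}(\ps)$ and $L(\Phi_{\bbeta}(\ps))=\ps$, giving $\binom{N}{2}^{-1}\sum_{(i,j)} L(\eta_{ij})(1-L(\eta_{ij})) \overset{\PPe}{\longrightarrow} \ps(1-\ps)$. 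Substituting these limits into \eqref{eq:firstconc} and solving for the asymptotic variance of $\sqrt{\binom{N}{2}}(\hbpm-\beta_1)$ yields \eqref{eq:secondconc}. The main obstacle I anticipate is the last step: transferring the edge-marginal closeness from \cite{Reinert2019} to the \emph{quadratic} functional $\binom{N}{2}^{-1}\sum L(\eta_{ij})(1-L(\eta_{ij}))$ uniformly in the joint dependence of $\eta_{ij}$ on the graph, which requires either a second-moment approximation on edge statistics or a direct subgraph-count concentration in the sub-critical phase.
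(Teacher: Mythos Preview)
Your proposal is correct and matches the paper's proof, which likewise computes $\mathrm{PL}'(\beta_1)=2\sum(Y_{ij}-L(\eta_{ij}))$ and $\mathrm{PL}''(\beta_1)=-4\sum L(\eta_{ij})(1-L(\eta_{ij}))$, obtains consistency via \cref{prop:ConsisMPLE} (strong concavity on the compact set $K$), and then invokes \cref{prop:CLTMPLE} with \cref{thm:ergmclt} supplying (A2), followed by \cref{cor:ergmclt} for the sub-critical simplification. Your anticipated obstacle is not an issue: the convergence $\binom{N}{2}^{-1}\sum_{(i,j)} L(\eta_{ij})(1-L(\eta_{ij}))\to\ps(1-\ps)$ follows directly from the empirical-measure concentration $\binom{N}{2}^{-1}\sum_{(i,j)}\delta_{\eta_{ij}}\overset{w}{\to}\delta_{2\Phi_{\bbeta}(\ps)}$ already established in the proof of \cref{cor:ergmclt} via \cite[Theorem~1.6]{Reinert2019}, combined with the boundedness and continuity of $x\mapsto L(x)(1-L(x))$.
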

	
	Note that \cref{thm:MPLergmCLT} applies without imposing the sub-criticality assumption. This is largely due to the fact that \cref{thm:ergmclt} applies without the same restrictions. Once again this shows the benefits of having our main result \cref{theo:CLTmain} without imposing any restrictive modeling assumptions. 
	
	\section{Discussion and proof overview}\label{sec:proof-overview}
	
	The main technical tool for proving our main results, namely Theorems \ref{theo:CLTmain} and \ref{theo:conmain}, is a method of moments argument. The lack of independence between the observations presents a significant challenge towards proving the above Theorems only under smoothness assumptions on the conditional mean (see \cref{as:cmean}). To contextualize, let us outline how the method of moments argument works when dealing with independent random variables. Suppose $\{X_i\}_{i=1}^{\infty}$ are bounded i.i.d. random variables. Then 
	$$\E\left(\frac{1}{\sqrt{N}}\sum_{i=1}^N X_i\right)^k=\frac{1}{N^{k/2}}\sum_{(i_1,\ldots ,i_k)\in [N]^k} \E [X_{i_1}\ldots ,X_{i_k}].$$
	By independence, $\E[X_{i_1}\cdots X_{i_k}]$ factorizes over distinct indices. Writing
	the multiplicities of $\{i_1,\dots,i_k\}$ as a composition $(\ell_1,\dots,\ell_r)$ with $\ell_1+\cdots+\ell_r=k$ and $\ell_j\ge1$, each configuration contributes on the order of
	\[
	N^{\,r-k/2}\cdot \prod_{j=1}^r \E\!\big[X_1^{\,\ell_j}\big].
	\]
	Since $\E X_1=0$ and the variables are bounded, any part with an odd $\ell_j$ or with some $\ell_j\ge3$ either vanishes or is $o(1)$ after the $N^{-k/2}$ normalization; the only contributions that can survive are those with $r=k/2$ and \emph{all} multiplicities equal to $2$, i.e.
	\[
	(\ell_1,\dots,\ell_r) \;=\; \underbrace{(2,2,\dots,2)}_{k/2\ \text{times}}.
	\]
	This immediately forces $k$ to be even. The conclusion then follows from a standard counting argument. 
	
	The argument for our random field setting is much more subtle. Let us write $Y_i=\si_i-\E[\si_i|\si_j,j\neq i]$. Of course, 
	$$\E\left(\frac{1}{\sqrt{N}}\sum_{i=1}^N Y_i\right)^k=\frac{1}{N^{k/2}}\sum_{(i_1,\ldots ,i_k)\in [N]^k} \E [Y_{i_1}\ldots ,Y_{i_k}].$$
	The expectation no longer factorizes over distinct indices. So we can only simplify it as 
	\begin{align}\label{eq:calat}
		N^{\,r-k/2}\cdot \E\prod_{j=1}^r \!\big[Y_{i_j}^{\,\ell_j}\big].
	\end{align}
	This time around, both the terms 
	$$(\ell_1,\ldots ,\ell_r)=\underbrace{(1,1,\ldots ,1)}_{k-\mbox{times}} \qquad \mbox{and} \qquad (\ell_1,\ldots ,\ell_r)=\underbrace{(2,2,\ldots ,2)}_{(k/2)-\mbox{times}}$$
	contribute to the limiting variance, unlike in the i.i.d. setting. In fact, the number of contributing  summands is of the order $k$, and each of their contributions need to be tracked and combined to arrive at the correct limiting variance. This makes the method of moments computation considerably more challenging in our setting. Let us lay out below the chain of auxiliary ingredients that enable the argument.
	
	\paragraph{Road map and main ideas.}
	\begin{enumerate}
		\item \textbf{From a structural limit to a pivot.} The studentized CLT in \cref{theo:CLTmain} is proved using the unstudentized CLT in \cref{theo:conmain}. This requires a careful tightness+diagonal subsequence argument. The variance positivity condition in \eqref{eq:bddzero} ensures that studentization step removes the mixture randomness and yields a pivotal Gaussian limit.
		
		\item \textbf{Truncating weights and exponential concentration.} \cref{theo:conmain} is proved using \cref{theo:conmainder}. The subject of \cref{theo:conmainder} is to claim the same unstudentized limit but with the additional assumption that the weight vector $\bc$ is uniformly bounded. By leveraging concentration inequalities established in \cref{lem:auxtail}, we show that this additional boundedness assumption can be made without loss of generality. 
		
		\item \textbf{Moment method with combinatorial pruning.} Next we establish \cref{theo:conmainder}. The key tool here is a method of moments argument. The primary technical device is a \emph{rank/matching} bookkeeping result (see \cref{lem:surviveterm}) that prunes all high-order contributions except certain ``weak  pairings". Concretely, if any component of \eqref{eq:calat} appears with power \(\ge 3\) or the total multiplicity is odd, the configuration's contribution vanishes in the limit. The only surviving terms are when the number of isolated components is even and all the others occur with multiplicity $2$. This is a crucial point of difference with the i.i.d. case where terms with isolated components do not contribute. \cref{lem:surviveterm} reduces high-order moments to a reasonably tractable counting problem.
		
		\item \textbf{A Decision tree approach.} The final ingredient is the proof of \cref{lem:surviveterm}. We take a decision tree approach where every term of the form \eqref{eq:calat} is split up sequentially into a group of ``smaller" terms, till they meet a termination criteria. The splitting is made explicit in Algorithms \ref{alg:construct_tree} and \ref{alg:construct_treep2}. In every step of the split, we throw away terms which have exactly mean $0$ (see \cref{prop:baseprop}). Using some technical bounds, we show in \cref{prop:degbound} that the split leads to asymptotically negligible terms if either the tree grows too large or if the tree terminates too early. This leads us to characterize the set of all branches of the tree  that have non-negligible contributions in the large $N$ limit, which is the subject of \cref{lem:negterm}.
		\item \textbf{Verifying \cref{as:cmean}.} An important component of this paper is to provide a clean method to verify \cref{as:cmean} which is the main technical condition. This is achieved in \cref{lem:smoothcont}, which can be viewed as a consequence of a discrete Fa\`{a} Di Bruno type formula which is established in \cref{cl:smoothclaim2}, and may be of independent interest. 
	\end{enumerate}
	
	\section*{Acknowledgement}
	The author would like to thank Prof. Sumit Mukherjee for proposing this problem, and for continued help and insightful suggestions throughout this project.
	
	\bibliographystyle{imsart-nameyear}
	\bibliography{template}

\begin{thebibliography}{100}

\bibitem[\protect\citeauthoryear{Adamczak et~al.}{2019}]{Rados2019}
\begin{barticle}[author]
\bauthor{\bsnm{Adamczak},~\bfnm{Rados\l~aw}\binits{R.~a.}},
  \bauthor{\bsnm{Kotowski},~\bfnm{Micha\l}\binits{M.}},
  \bauthor{\bsnm{Polaczyk},~\bfnm{Bart\l~omiej}\binits{B.~o.}} \AND
  \bauthor{\bsnm{Strzelecki},~\bfnm{Micha\l}\binits{M.}}
(\byear{2019}).
\btitle{A note on concentration for polynomials in the {I}sing model}.
\bjournal{Electron. J. Probab.}
\bvolume{24}
\bpages{Paper No. 42, 22}.
\bdoi{10.1214/19-EJP280}
\bmrnumber{3949267}
\end{barticle}
\endbibitem

\bibitem[\protect\citeauthoryear{Augeri}{2019}]{augeri2019transportation}
\begin{barticle}[author]
\bauthor{\bsnm{Augeri},~\bfnm{Fanny}\binits{F.}}
(\byear{2019}).
\btitle{A transportation approach to the mean-field approximation}.
\bjournal{arXiv preprint arXiv:1903.08021}.
\end{barticle}
\endbibitem

\bibitem[\protect\citeauthoryear{Basak and Mukherjee}{2017}]{Basak2017}
\begin{barticle}[author]
\bauthor{\bsnm{Basak},~\bfnm{Anirban}\binits{A.}} \AND
  \bauthor{\bsnm{Mukherjee},~\bfnm{Sumit}\binits{S.}}
(\byear{2017}).
\btitle{Universality of the mean-field for the {P}otts model}.
\bjournal{Probab. Theory Related Fields}
\bvolume{168}
\bpages{557--600}.
\bdoi{10.1007/s00440-016-0718-0}
\bmrnumber{3663625}
\end{barticle}
\endbibitem

\bibitem[\protect\citeauthoryear{Berthet, Rigollet and
  Srivastava}{2019}]{Berthet2019}
\begin{barticle}[author]
\bauthor{\bsnm{Berthet},~\bfnm{Quentin}\binits{Q.}},
  \bauthor{\bsnm{Rigollet},~\bfnm{Philippe}\binits{P.}} \AND
  \bauthor{\bsnm{Srivastava},~\bfnm{Piyush}\binits{P.}}
(\byear{2019}).
\btitle{Exact recovery in the {I}sing blockmodel}.
\bjournal{Ann. Statist.}
\bvolume{47}
\bpages{1805--1834}.
\bdoi{10.1214/17-AOS1620}
\bmrnumber{3953436}
\end{barticle}
\endbibitem

\bibitem[\protect\citeauthoryear{Besag}{1974}]{Besag1974}
\begin{barticle}[author]
\bauthor{\bsnm{Besag},~\bfnm{Julian}\binits{J.}}
(\byear{1974}).
\btitle{Spatial interaction and the statistical analysis of lattice systems}.
\bjournal{J. Roy. Statist. Soc. Ser. B}
\bvolume{36}
\bpages{192--236}.
\bmrnumber{373208}
\end{barticle}
\endbibitem

\bibitem[\protect\citeauthoryear{Besag}{1975}]{besag1975statistical}
\begin{barticle}[author]
\bauthor{\bsnm{Besag},~\bfnm{Julian}\binits{J.}}
(\byear{1975}).
\btitle{Statistical analysis of non-lattice data}.
\bjournal{Journal of the Royal Statistical Society Series D: The Statistician}
\bvolume{24}
\bpages{179--195}.
\end{barticle}
\endbibitem

\bibitem[\protect\citeauthoryear{Bhamidi, Bresler and Sly}{2011}]{Bhamidi2011}
\begin{barticle}[author]
\bauthor{\bsnm{Bhamidi},~\bfnm{Shankar}\binits{S.}},
  \bauthor{\bsnm{Bresler},~\bfnm{Guy}\binits{G.}} \AND
  \bauthor{\bsnm{Sly},~\bfnm{Allan}\binits{A.}}
(\byear{2011}).
\btitle{Mixing time of exponential random graphs}.
\bjournal{Ann. Appl. Probab.}
\bvolume{21}
\bpages{2146--2170}.
\bdoi{10.1214/10-AAP740}
\bmrnumber{2895412}
\end{barticle}
\endbibitem

\bibitem[\protect\citeauthoryear{Bhattacharya, Deb and
  Mukherjee}{2023}]{bhattacharya2023gibbs}
\begin{barticle}[author]
\bauthor{\bsnm{Bhattacharya},~\bfnm{Sohom}\binits{S.}},
  \bauthor{\bsnm{Deb},~\bfnm{Nabarun}\binits{N.}} \AND
  \bauthor{\bsnm{Mukherjee},~\bfnm{Sumit}\binits{S.}}
(\byear{2023}).
\btitle{Gibbs measures with multilinear forms}.
\bjournal{arXiv preprint arXiv:2307.14600}.
\end{barticle}
\endbibitem

\bibitem[\protect\citeauthoryear{Bhattacharya, Deb and
  Mukherjee}{2024}]{bhattacharya2024ldp}
\begin{barticle}[author]
\bauthor{\bsnm{Bhattacharya},~\bfnm{Sohom}\binits{S.}},
  \bauthor{\bsnm{Deb},~\bfnm{Nabarun}\binits{N.}} \AND
  \bauthor{\bsnm{Mukherjee},~\bfnm{Sumit}\binits{S.}}
(\byear{2024}).
\btitle{LDP for inhomogeneous U-statistics}.
\bjournal{The Annals of Applied Probability}
\bvolume{34}
\bpages{5769--5808}.
\end{barticle}
\endbibitem

\bibitem[\protect\citeauthoryear{Bhattacharya and
  Mukherjee}{2018}]{bhattacharya2018inference}
\begin{barticle}[author]
\bauthor{\bsnm{Bhattacharya},~\bfnm{Bhaswar~B.}\binits{B.~B.}} \AND
  \bauthor{\bsnm{Mukherjee},~\bfnm{Sumit}\binits{S.}}
(\byear{2018}).
\btitle{Inference in {I}sing models}.
\bjournal{Bernoulli}
\bvolume{24}
\bpages{493--525}.
\bdoi{10.3150/16-BEJ886}
\bmrnumber{3706767}
\end{barticle}
\endbibitem

\bibitem[\protect\citeauthoryear{Bhattacharya, Mukherjee and
  Ray}{2025}]{bhattacharya2025sharp}
\begin{barticle}[author]
\bauthor{\bsnm{Bhattacharya},~\bfnm{Sohom}\binits{S.}},
  \bauthor{\bsnm{Mukherjee},~\bfnm{Rajarshi}\binits{R.}} \AND
  \bauthor{\bsnm{Ray},~\bfnm{Gourab}\binits{G.}}
(\byear{2025}).
\btitle{Sharp Signal Detection under Ferromagnetic Ising Models}.
\bjournal{IEEE Transactions on Information Theory}.
\end{barticle}
\endbibitem

\bibitem[\protect\citeauthoryear{Bhowal and Mukherjee}{2025}]{Sanchayan2025}
\begin{barticle}[author]
\bauthor{\bsnm{Bhowal},~\bfnm{Sanchayan}\binits{S.}} \AND
  \bauthor{\bsnm{Mukherjee},~\bfnm{Somabha}\binits{S.}}
(\byear{2025}).
\btitle{Limit theorems and phase transitions in the tensor Curie-Weiss Potts
  model}.
\bjournal{Information and Inference: A Journal of the IMA}
\bvolume{14}
\bpages{iaaf014}.
\bdoi{10.1093/imaiai/iaaf014}
\end{barticle}
\endbibitem

\bibitem[\protect\citeauthoryear{Borgs et~al.}{2008a}]{borgsdense1}
\begin{barticle}[author]
\bauthor{\bsnm{Borgs},~\bfnm{C.}\binits{C.}},
  \bauthor{\bsnm{Chayes},~\bfnm{J.~T.}\binits{J.~T.}},
  \bauthor{\bsnm{Lov\'{a}sz},~\bfnm{L.}\binits{L.}},
  \bauthor{\bsnm{S\'{o}s},~\bfnm{V.~T.}\binits{V.~T.}} \AND
  \bauthor{\bsnm{Vesztergombi},~\bfnm{K.}\binits{K.}}
(\byear{2008}a).
\btitle{Convergent sequences of dense graphs. {I}. {S}ubgraph frequencies,
  metric properties and testing}.
\bjournal{Adv. Math.}
\bvolume{219}
\bpages{1801--1851}.
\bdoi{10.1016/j.aim.2008.07.008}
\bmrnumber{2455626}
\end{barticle}
\endbibitem

\bibitem[\protect\citeauthoryear{Borgs et~al.}{2008b}]{BorgsdenseI}
\begin{barticle}[author]
\bauthor{\bsnm{Borgs},~\bfnm{C.}\binits{C.}},
  \bauthor{\bsnm{Chayes},~\bfnm{J.~T.}\binits{J.~T.}},
  \bauthor{\bsnm{Lov\'{a}sz},~\bfnm{L.}\binits{L.}},
  \bauthor{\bsnm{S\'{o}s},~\bfnm{V.~T.}\binits{V.~T.}} \AND
  \bauthor{\bsnm{Vesztergombi},~\bfnm{K.}\binits{K.}}
(\byear{2008}b).
\btitle{Convergent sequences of dense graphs. {I}. {S}ubgraph frequencies,
  metric properties and testing}.
\bjournal{Adv. Math.}
\bvolume{219}
\bpages{1801--1851}.
\bdoi{10.1016/j.aim.2008.07.008}
\bmrnumber{2455626}
\end{barticle}
\endbibitem

\bibitem[\protect\citeauthoryear{Borgs et~al.}{2012a}]{borgsdense2}
\begin{barticle}[author]
\bauthor{\bsnm{Borgs},~\bfnm{C.}\binits{C.}},
  \bauthor{\bsnm{Chayes},~\bfnm{J.~T.}\binits{J.~T.}},
  \bauthor{\bsnm{Lov\'{a}sz},~\bfnm{L.}\binits{L.}},
  \bauthor{\bsnm{S\'{o}s},~\bfnm{V.~T.}\binits{V.~T.}} \AND
  \bauthor{\bsnm{Vesztergombi},~\bfnm{K.}\binits{K.}}
(\byear{2012}a).
\btitle{Convergent sequences of dense graphs {II}. {M}ultiway cuts and
  statistical physics}.
\bjournal{Ann. of Math. (2)}
\bvolume{176}
\bpages{151--219}.
\bdoi{10.4007/annals.2012.176.1.2}
\bmrnumber{2925382}
\end{barticle}
\endbibitem

\bibitem[\protect\citeauthoryear{Borgs et~al.}{2012b}]{BorgsdenseII}
\begin{barticle}[author]
\bauthor{\bsnm{Borgs},~\bfnm{C.}\binits{C.}},
  \bauthor{\bsnm{Chayes},~\bfnm{J.~T.}\binits{J.~T.}},
  \bauthor{\bsnm{Lov\'{a}sz},~\bfnm{L.}\binits{L.}},
  \bauthor{\bsnm{S\'{o}s},~\bfnm{V.~T.}\binits{V.~T.}} \AND
  \bauthor{\bsnm{Vesztergombi},~\bfnm{K.}\binits{K.}}
(\byear{2012}b).
\btitle{Convergent sequences of dense graphs {II}. {M}ultiway cuts and
  statistical physics}.
\bjournal{Ann. of Math. (2)}
\bvolume{176}
\bpages{151--219}.
\bdoi{10.4007/annals.2012.176.1.2}
\bmrnumber{2925382}
\end{barticle}
\endbibitem

\bibitem[\protect\citeauthoryear{Borgs et~al.}{2018a}]{borgs2018p}
\begin{barticle}[author]
\bauthor{\bsnm{Borgs},~\bfnm{Christian}\binits{C.}},
  \bauthor{\bsnm{Chayes},~\bfnm{Jennifer~T}\binits{J.~T.}},
  \bauthor{\bsnm{Cohn},~\bfnm{Henry}\binits{H.}} \AND
  \bauthor{\bsnm{Zhao},~\bfnm{Yufei}\binits{Y.}}
(\byear{2018}a).
\btitle{An ${L}^p$ theory of sparse graph convergence {II}: LD convergence,
  quotients and right convergence}.
\bjournal{The Annals of Probability}
\bvolume{46}
\bpages{337--396}.
\end{barticle}
\endbibitem

\bibitem[\protect\citeauthoryear{Borgs et~al.}{2018b}]{BorgsLPI}
\begin{barticle}[author]
\bauthor{\bsnm{Borgs},~\bfnm{Christian}\binits{C.}},
  \bauthor{\bsnm{Chayes},~\bfnm{Jennifer~T.}\binits{J.~T.}},
  \bauthor{\bsnm{Cohn},~\bfnm{Henry}\binits{H.}} \AND
  \bauthor{\bsnm{Zhao},~\bfnm{Yufei}\binits{Y.}}
(\byear{2018}b).
\btitle{An {$L^p$} theory of sparse graph convergence {II}: {LD} convergence,
  quotients and right convergence}.
\bjournal{Ann. Probab.}
\bvolume{46}
\bpages{337--396}.
\bdoi{10.1214/17-AOP1187}
\bmrnumber{3758733}
\end{barticle}
\endbibitem

\bibitem[\protect\citeauthoryear{Borgs et~al.}{2019a}]{bc_lpi}
\begin{barticle}[author]
\bauthor{\bsnm{Borgs},~\bfnm{Christian}\binits{C.}},
  \bauthor{\bsnm{Chayes},~\bfnm{Jennifer}\binits{J.}},
  \bauthor{\bsnm{Cohn},~\bfnm{Henry}\binits{H.}} \AND
  \bauthor{\bsnm{Zhao},~\bfnm{Yufei}\binits{Y.}}
(\byear{2019}a).
\btitle{An ${L}^p$ theory of sparse graph convergence {I}: Limits, sparse
  random graph models, and power law distributions}.
\bjournal{Transactions of the American Mathematical Society}
\bvolume{372}
\bpages{3019--3062}.
\end{barticle}
\endbibitem

\bibitem[\protect\citeauthoryear{Borgs et~al.}{2019b}]{BorgsLPII}
\begin{barticle}[author]
\bauthor{\bsnm{Borgs},~\bfnm{Christian}\binits{C.}},
  \bauthor{\bsnm{Chayes},~\bfnm{Jennifer~T.}\binits{J.~T.}},
  \bauthor{\bsnm{Cohn},~\bfnm{Henry}\binits{H.}} \AND
  \bauthor{\bsnm{Zhao},~\bfnm{Yufei}\binits{Y.}}
(\byear{2019}b).
\btitle{An {$L^p$} theory of sparse graph convergence {I}: {L}imits, sparse
  random graph models, and power law distributions}.
\bjournal{Trans. Amer. Math. Soc.}
\bvolume{372}
\bpages{3019--3062}.
\bdoi{10.1090/tran/7543}
\bmrnumber{3988601}
\end{barticle}
\endbibitem

\bibitem[\protect\citeauthoryear{Bresler and Nagaraj}{2019}]{Bresler2019}
\begin{barticle}[author]
\bauthor{\bsnm{Bresler},~\bfnm{Guy}\binits{G.}} \AND
  \bauthor{\bsnm{Nagaraj},~\bfnm{Dheeraj}\binits{D.}}
(\byear{2019}).
\btitle{Stein's method for stationary distributions of {M}arkov chains and
  application to {I}sing models}.
\bjournal{Ann. Appl. Probab.}
\bvolume{29}
\bpages{3230--3265}.
\bdoi{10.1214/19-AAP1479}
\bmrnumber{4019887}
\end{barticle}
\endbibitem

\bibitem[\protect\citeauthoryear{Chatterjee}{2005}]{Cha2005}
\begin{bbook}[author]
\bauthor{\bsnm{Chatterjee},~\bfnm{Sourav}\binits{S.}}
(\byear{2005}).
\btitle{Concentration inequalities with exchangeable pairs}.
\bpublisher{ProQuest LLC, Ann Arbor, MI}
\bnote{Thesis (Ph.D.)--Stanford University}.
\bmrnumber{2707160}
\end{bbook}
\endbibitem

\bibitem[\protect\citeauthoryear{Chatterjee}{2007}]{Chatterjee2007}
\begin{barticle}[author]
\bauthor{\bsnm{Chatterjee},~\bfnm{Sourav}\binits{S.}}
(\byear{2007}).
\btitle{Estimation in spin glasses: a first step}.
\bjournal{Ann. Statist.}
\bvolume{35}
\bpages{1931--1946}.
\bdoi{10.1214/009053607000000109}
\bmrnumber{2363958}
\end{barticle}
\endbibitem

\bibitem[\protect\citeauthoryear{Chatterjee}{2016}]{Chatt2016bull}
\begin{barticle}[author]
\bauthor{\bsnm{Chatterjee},~\bfnm{Sourav}\binits{S.}}
(\byear{2016}).
\btitle{An introduction to large deviations for random graphs}.
\bjournal{Bull. Amer. Math. Soc. (N.S.)}
\bvolume{53}
\bpages{617--642}.
\bdoi{10.1090/bull/1539}
\bmrnumber{3544262}
\end{barticle}
\endbibitem

\bibitem[\protect\citeauthoryear{Chatterjee and Dembo}{2016}]{Cha2016}
\begin{barticle}[author]
\bauthor{\bsnm{Chatterjee},~\bfnm{Sourav}\binits{S.}} \AND
  \bauthor{\bsnm{Dembo},~\bfnm{Amir}\binits{A.}}
(\byear{2016}).
\btitle{Nonlinear large deviations}.
\bjournal{Adv. Math.}
\bvolume{299}
\bpages{396--450}.
\bdoi{10.1016/j.aim.2016.05.017}
\bmrnumber{3519474}
\end{barticle}
\endbibitem

\bibitem[\protect\citeauthoryear{Chatterjee and Dey}{2010}]{Chatt2010}
\begin{barticle}[author]
\bauthor{\bsnm{Chatterjee},~\bfnm{Sourav}\binits{S.}} \AND
  \bauthor{\bsnm{Dey},~\bfnm{Partha~S.}\binits{P.~S.}}
(\byear{2010}).
\btitle{Applications of {S}tein's method for concentration inequalities}.
\bjournal{Ann. Probab.}
\bvolume{38}
\bpages{2443--2485}.
\bdoi{10.1214/10-AOP542}
\bmrnumber{2683635}
\end{barticle}
\endbibitem

\bibitem[\protect\citeauthoryear{Chatterjee and
  Diaconis}{2013}]{Chatterjee2013}
\begin{barticle}[author]
\bauthor{\bsnm{Chatterjee},~\bfnm{Sourav}\binits{S.}} \AND
  \bauthor{\bsnm{Diaconis},~\bfnm{Persi}\binits{P.}}
(\byear{2013}).
\btitle{Estimating and understanding exponential random graph models}.
\bjournal{Ann. Statist.}
\bvolume{41}
\bpages{2428--2461}.
\bdoi{10.1214/13-AOS1155}
\bmrnumber{3127871}
\end{barticle}
\endbibitem

\bibitem[\protect\citeauthoryear{Chatterjee and Shao}{2011}]{Cha2011}
\begin{barticle}[author]
\bauthor{\bsnm{Chatterjee},~\bfnm{Sourav}\binits{S.}} \AND
  \bauthor{\bsnm{Shao},~\bfnm{Qi-Man}\binits{Q.-M.}}
(\byear{2011}).
\btitle{Nonnormal approximation by {S}tein's method of exchangeable pairs with
  application to the {C}urie-{W}eiss model}.
\bjournal{Ann. Appl. Probab.}
\bvolume{21}
\bpages{464--483}.
\bdoi{10.1214/10-AAP712}
\bmrnumber{2807964}
\end{barticle}
\endbibitem

\bibitem[\protect\citeauthoryear{Comets and Gidas}{1991}]{Comets1991}
\begin{barticle}[author]
\bauthor{\bsnm{Comets},~\bfnm{Francis}\binits{F.}} \AND
  \bauthor{\bsnm{Gidas},~\bfnm{Basilis}\binits{B.}}
(\byear{1991}).
\btitle{Asymptotics of maximum likelihood estimators for the {C}urie-{W}eiss
  model}.
\bjournal{Ann. Statist.}
\bvolume{19}
\bpages{557--578}.
\bdoi{10.1214/aos/1176348111}
\bmrnumber{1105836}
\end{barticle}
\endbibitem

\bibitem[\protect\citeauthoryear{Comets and Jan\v{z}ura}{1998}]{Comets1998}
\begin{barticle}[author]
\bauthor{\bsnm{Comets},~\bfnm{Francis}\binits{F.}} \AND
  \bauthor{\bsnm{Jan\v{z}ura},~\bfnm{Martin}\binits{M.}}
(\byear{1998}).
\btitle{A central limit theorem for conditionally centred random fields with an
  application to {M}arkov fields}.
\bjournal{J. Appl. Probab.}
\bvolume{35}
\bpages{608--621}.
\bdoi{10.1017/s0021900200016260}
\bmrnumber{1659520}
\end{barticle}
\endbibitem

\bibitem[\protect\citeauthoryear{Daskalakis, Dikkala and
  Kamath}{2019}]{DaskalakkisTesting}
\begin{barticle}[author]
\bauthor{\bsnm{Daskalakis},~\bfnm{Constantinos}\binits{C.}},
  \bauthor{\bsnm{Dikkala},~\bfnm{Nishanth}\binits{N.}} \AND
  \bauthor{\bsnm{Kamath},~\bfnm{Gautam}\binits{G.}}
(\byear{2019}).
\btitle{Testing {I}sing models}.
\bjournal{IEEE Trans. Inform. Theory}
\bvolume{65}
\bpages{6829--6852}.
\bdoi{10.1109/TIT.2019.2932255}
\bmrnumber{4030862}
\end{barticle}
\endbibitem

\bibitem[\protect\citeauthoryear{Daskalakis, Dikkala and
  Panageas}{2019}]{Daskalakkis2019}
\begin{binproceedings}[author]
\bauthor{\bsnm{Daskalakis},~\bfnm{Constantinos}\binits{C.}},
  \bauthor{\bsnm{Dikkala},~\bfnm{Nishanth}\binits{N.}} \AND
  \bauthor{\bsnm{Panageas},~\bfnm{Ioannis}\binits{I.}}
(\byear{2019}).
\btitle{Regression from dependent observations}.
In \bbooktitle{S{TOC}'19---{P}roceedings of the 51st {A}nnual {ACM} {SIGACT}
  {S}ymposium on {T}heory of {C}omputing}
\bpages{881--889}.
\bpublisher{ACM, New York}.
\bmrnumber{4003392}
\end{binproceedings}
\endbibitem

\bibitem[\protect\citeauthoryear{Daskalakis, Dikkala and
  Panageas}{2020}]{daskalakis2020logistic}
\begin{binproceedings}[author]
\bauthor{\bsnm{Daskalakis},~\bfnm{Constantinos}\binits{C.}},
  \bauthor{\bsnm{Dikkala},~\bfnm{Nishanth}\binits{N.}} \AND
  \bauthor{\bsnm{Panageas},~\bfnm{Ioannis}\binits{I.}}
(\byear{2020}).
\btitle{Logistic regression with peer-group effects via inference in
  higher-order Ising models}.
In \bbooktitle{International Conference on Artificial Intelligence and
  Statistics}
\bpages{3653--3663}.
\bpublisher{PMLR}.
\end{binproceedings}
\endbibitem

\bibitem[\protect\citeauthoryear{Deb and Mukherjee}{2023}]{deb2020fluctuations}
\begin{barticle}[author]
\bauthor{\bsnm{Deb},~\bfnm{Nabarun}\binits{N.}} \AND
  \bauthor{\bsnm{Mukherjee},~\bfnm{Sumit}\binits{S.}}
(\byear{2023}).
\btitle{Fluctuations in mean-field {I}sing models}.
\bjournal{Ann. Appl. Probab.}
\bvolume{33}
\bpages{1961--2003}.
\bdoi{10.1214/22-aap1857}
\bmrnumber{4583662}
\end{barticle}
\endbibitem

\bibitem[\protect\citeauthoryear{Deb et~al.}{2024}]{deb2020detecting}
\begin{barticle}[author]
\bauthor{\bsnm{Deb},~\bfnm{Nabarun}\binits{N.}},
  \bauthor{\bsnm{Mukherjee},~\bfnm{Rajarshi}\binits{R.}},
  \bauthor{\bsnm{Mukherjee},~\bfnm{Sumit}\binits{S.}} \AND
  \bauthor{\bsnm{Yuan},~\bfnm{Ming}\binits{M.}}
(\byear{2024}).
\btitle{Detecting structured signals in {I}sing models}.
\bjournal{Ann. Appl. Probab.}
\bvolume{34}
\bpages{1--45}.
\bdoi{10.1214/23-aap1929}
\bmrnumber{4696272}
\end{barticle}
\endbibitem

\bibitem[\protect\citeauthoryear{Dembo and Montanari}{2010}]{AmirAndrea2010}
\begin{barticle}[author]
\bauthor{\bsnm{Dembo},~\bfnm{Amir}\binits{A.}} \AND
  \bauthor{\bsnm{Montanari},~\bfnm{Andrea}\binits{A.}}
(\byear{2010}).
\btitle{Gibbs measures and phase transitions on sparse random graphs}.
\bjournal{Braz. J. Probab. Stat.}
\bvolume{24}
\bpages{137--211}.
\bdoi{10.1214/09-BJPS027}
\bmrnumber{2643563}
\end{barticle}
\endbibitem

\bibitem[\protect\citeauthoryear{Deshpande
  et~al.}{2018}]{deshpande2018contextual}
\begin{binproceedings}[author]
\bauthor{\bsnm{Deshpande},~\bfnm{Yash}\binits{Y.}},
  \bauthor{\bsnm{Sen},~\bfnm{Subhabrata}\binits{S.}},
  \bauthor{\bsnm{Montanari},~\bfnm{Andrea}\binits{A.}} \AND
  \bauthor{\bsnm{Mossel},~\bfnm{Elchanan}\binits{E.}}
(\byear{2018}).
\btitle{Contextual stochastic block models}.
In \bbooktitle{Advances in Neural Information Processing Systems}
\bpages{8581--8593}.
\end{binproceedings}
\endbibitem

\bibitem[\protect\citeauthoryear{Dommers et~al.}{2016}]{Dommers2016}
\begin{barticle}[author]
\bauthor{\bsnm{Dommers},~\bfnm{Sander}\binits{S.}},
  \bauthor{\bsnm{Giardin\`a},~\bfnm{Cristian}\binits{C.}},
  \bauthor{\bsnm{Giberti},~\bfnm{Claudio}\binits{C.}},
  \bauthor{\bparticle{van~der} \bsnm{Hofstad},~\bfnm{Remco}\binits{R.}} \AND
  \bauthor{\bsnm{Prioriello},~\bfnm{Maria~Luisa}\binits{M.~L.}}
(\byear{2016}).
\btitle{Ising critical behavior of inhomogeneous {C}urie-{W}eiss models and
  annealed random graphs}.
\bjournal{Comm. Math. Phys.}
\bvolume{348}
\bpages{221--263}.
\bdoi{10.1007/s00220-016-2752-2}
\bmrnumber{3551266}
\end{barticle}
\endbibitem

\bibitem[\protect\citeauthoryear{Drton and Maathuis}{2017}]{drton2017structure}
\begin{barticle}[author]
\bauthor{\bsnm{Drton},~\bfnm{Mathias}\binits{M.}} \AND
  \bauthor{\bsnm{Maathuis},~\bfnm{Marloes~H}\binits{M.~H.}}
(\byear{2017}).
\btitle{Structure learning in graphical modeling}.
\bjournal{Annual Review of Statistics and Its Application}
\bvolume{4}
\bpages{365--393}.
\end{barticle}
\endbibitem

\bibitem[\protect\citeauthoryear{Durrett}{2019}]{durrett2019probability}
\begin{bbook}[author]
\bauthor{\bsnm{Durrett},~\bfnm{Rick}\binits{R.}}
(\byear{2019}).
\btitle{Probability: theory and examples}
\bvolume{49}.
\bpublisher{Cambridge university press}.
\end{bbook}
\endbibitem

\bibitem[\protect\citeauthoryear{Ekeberg et~al.}{2013}]{ekeberg2013improved}
\begin{barticle}[author]
\bauthor{\bsnm{Ekeberg},~\bfnm{Magnus}\binits{M.}},
  \bauthor{\bsnm{L{\"o}vkvist},~\bfnm{Cecilia}\binits{C.}},
  \bauthor{\bsnm{Lan},~\bfnm{Yueheng}\binits{Y.}},
  \bauthor{\bsnm{Weigt},~\bfnm{Martin}\binits{M.}} \AND
  \bauthor{\bsnm{Aurell},~\bfnm{Erik}\binits{E.}}
(\byear{2013}).
\btitle{Improved contact prediction in proteins: using pseudolikelihoods to
  infer Potts models}.
\bjournal{Physical Review E—Statistical, Nonlinear, and Soft Matter Physics}
\bvolume{87}
\bpages{012707}.
\end{barticle}
\endbibitem

\bibitem[\protect\citeauthoryear{Eldan}{2018}]{eldan2018taming}
\begin{barticle}[author]
\bauthor{\bsnm{Eldan},~\bfnm{Ronen}\binits{R.}}
(\byear{2018}).
\btitle{Taming correlations through entropy-efficient measure decompositions
  with applications to mean-field approximation}.
\bjournal{Probability Theory and Related Fields}
\bpages{1--19}.
\end{barticle}
\endbibitem

\bibitem[\protect\citeauthoryear{Ellis, Monroe and Newman}{1976}]{Ellis1976}
\begin{barticle}[author]
\bauthor{\bsnm{Ellis},~\bfnm{Richard~S.}\binits{R.~S.}},
  \bauthor{\bsnm{Monroe},~\bfnm{James~L.}\binits{J.~L.}} \AND
  \bauthor{\bsnm{Newman},~\bfnm{Charles~M.}\binits{C.~M.}}
(\byear{1976}).
\btitle{The {\rm {GHS}} and other correlation inequalities for a class of even
  ferromagnets}.
\bjournal{Comm. Math. Phys.}
\bvolume{46}
\bpages{167--182}.
\bmrnumber{395659}
\end{barticle}
\endbibitem

\bibitem[\protect\citeauthoryear{Ellis and Newman}{1978}]{Ellis1978}
\begin{barticle}[author]
\bauthor{\bsnm{Ellis},~\bfnm{Richard~S.}\binits{R.~S.}} \AND
  \bauthor{\bsnm{Newman},~\bfnm{Charles~M.}\binits{C.~M.}}
(\byear{1978}).
\btitle{The statistics of {C}urie-{W}eiss models}.
\bjournal{J. Statist. Phys.}
\bvolume{19}
\bpages{149--161}.
\bdoi{10.1007/BF01012508}
\bmrnumber{0503332}
\end{barticle}
\endbibitem

\bibitem[\protect\citeauthoryear{Engel}{1998}]{Engel1998}
\begin{barticle}[author]
\bauthor{\bsnm{Engel},~\bfnm{Arthur}\binits{A.}}
(\byear{1998}).
\btitle{Enumerative Combinatorics}.
\bjournal{Problem-Solving Strategies}
\bpages{85--116}.
\end{barticle}
\endbibitem

\bibitem[\protect\citeauthoryear{Faa~di Bruno}{1855}]{faa1855sullo}
\begin{barticle}[author]
\bauthor{\bparticle{Faa~di} \bsnm{Bruno},~\bfnm{Francesco}\binits{F.}}
(\byear{1855}).
\btitle{Sullo sviluppo delle funzioni}.
\bjournal{Annali di scienze matematiche e fisiche}
\bvolume{6}
\bpages{479--80}.
\end{barticle}
\endbibitem

\bibitem[\protect\citeauthoryear{Fang et~al.}{2025}]{fang2025normal}
\begin{barticle}[author]
\bauthor{\bsnm{Fang},~\bfnm{Xiao}\binits{X.}},
  \bauthor{\bsnm{Liu},~\bfnm{Song-Hao}\binits{S.-H.}},
  \bauthor{\bsnm{Shao},~\bfnm{Qi-Man}\binits{Q.-M.}} \AND
  \bauthor{\bsnm{Zhao},~\bfnm{Yi-Kun}\binits{Y.-K.}}
(\byear{2025}).
\btitle{Normal approximation for exponential random graphs}.
\bjournal{Probability Theory and Related Fields}
\bpages{1--40}.
\end{barticle}
\endbibitem

\bibitem[\protect\citeauthoryear{Fienberg}{2010a}]{Fienberg2010a}
\begin{barticle}[author]
\bauthor{\bsnm{Fienberg},~\bfnm{Stephen~E.}\binits{S.~E.}}
(\byear{2010}a).
\btitle{Introduction to papers on the modeling and analysis of network data}.
\bjournal{Ann. Appl. Stat.}
\bvolume{4}
\bpages{1--4}.
\bdoi{10.1214/10-AOAS346}
\bmrnumber{2758081}
\end{barticle}
\endbibitem

\bibitem[\protect\citeauthoryear{Fienberg}{2010b}]{Fienberg2010b}
\begin{barticle}[author]
\bauthor{\bsnm{Fienberg},~\bfnm{Stephen~E.}\binits{S.~E.}}
(\byear{2010}b).
\btitle{Introduction to papers on the modeling and analysis of network
  data---{II}}.
\bjournal{Ann. Appl. Stat.}
\bvolume{4}
\bpages{533--534}.
\bdoi{10.1214/10-AOAS365}
\bmrnumber{2744531}
\end{barticle}
\endbibitem

\bibitem[\protect\citeauthoryear{Frank and Strauss}{1986}]{frank1986markov}
\begin{barticle}[author]
\bauthor{\bsnm{Frank},~\bfnm{Ove}\binits{O.}} \AND
  \bauthor{\bsnm{Strauss},~\bfnm{David}\binits{D.}}
(\byear{1986}).
\btitle{Markov graphs}.
\bjournal{Journal of the american Statistical association}
\bvolume{81}
\bpages{832--842}.
\end{barticle}
\endbibitem

\bibitem[\protect\citeauthoryear{Frieze and Kannan}{1999}]{FriezeKannan1999}
\begin{barticle}[author]
\bauthor{\bsnm{Frieze},~\bfnm{Alan}\binits{A.}} \AND
  \bauthor{\bsnm{Kannan},~\bfnm{Ravi}\binits{R.}}
(\byear{1999}).
\btitle{Quick approximation to matrices and applications}.
\bjournal{Combinatorica}
\bvolume{19}
\bpages{175--220}.
\bdoi{10.1007/s004930050052}
\bmrnumber{1723039}
\end{barticle}
\endbibitem

\bibitem[\protect\citeauthoryear{Gaetan and Guyon}{2004}]{gaetan2004central}
\begin{barticle}[author]
\bauthor{\bsnm{Gaetan},~\bfnm{Carlo}\binits{C.}} \AND
  \bauthor{\bsnm{Guyon},~\bfnm{Xavier}\binits{X.}}
(\byear{2004}).
\btitle{Central Limit Theorem for a conditionally centred functional of a
  Markov random field}.
\end{barticle}
\endbibitem

\bibitem[\protect\citeauthoryear{Ganguly and Nam}{2024}]{Ganguly2024}
\begin{barticle}[author]
\bauthor{\bsnm{Ganguly},~\bfnm{Shirshendu}\binits{S.}} \AND
  \bauthor{\bsnm{Nam},~\bfnm{Kyeongsik}\binits{K.}}
(\byear{2024}).
\btitle{Sub-critical exponential random graphs: concentration of measure and
  some applications}.
\bjournal{Trans. Amer. Math. Soc.}
\bvolume{377}
\bpages{2261--2296}.
\bdoi{10.1090/tran/8690}
\bmrnumber{4744757}
\end{barticle}
\endbibitem

\bibitem[\protect\citeauthoryear{Gheissari, Hongler and
  Park}{2019}]{gheissari2019ising}
\begin{barticle}[author]
\bauthor{\bsnm{Gheissari},~\bfnm{Reza}\binits{R.}},
  \bauthor{\bsnm{Hongler},~\bfnm{Cl{\'e}ment}\binits{C.}} \AND
  \bauthor{\bsnm{Park},~\bfnm{SC}\binits{S.}}
(\byear{2019}).
\btitle{{I}sing model: Local spin correlations and conformal invariance}.
\bjournal{Communications in Mathematical Physics}
\bvolume{367}
\bpages{771--833}.
\end{barticle}
\endbibitem

\bibitem[\protect\citeauthoryear{Gheissari, Lubetzky and
  Peres}{2018}]{Gheissari2018}
\begin{barticle}[author]
\bauthor{\bsnm{Gheissari},~\bfnm{Reza}\binits{R.}},
  \bauthor{\bsnm{Lubetzky},~\bfnm{Eyal}\binits{E.}} \AND
  \bauthor{\bsnm{Peres},~\bfnm{Yuval}\binits{Y.}}
(\byear{2018}).
\btitle{Concentration inequalities for polynomials of contracting {I}sing
  models}.
\bjournal{Electron. Commun. Probab.}
\bvolume{23}
\bpages{Paper No. 76, 12}.
\bdoi{10.1214/18-ECP173}
\bmrnumber{3873783}
\end{barticle}
\endbibitem

\bibitem[\protect\citeauthoryear{Ghosal and Mukherjee}{2018}]{ghosal2018joint}
\begin{barticle}[author]
\bauthor{\bsnm{Ghosal},~\bfnm{Promit}\binits{P.}} \AND
  \bauthor{\bsnm{Mukherjee},~\bfnm{Sumit}\binits{S.}}
(\byear{2018}).
\btitle{Joint estimation of parameters in Ising model}.
\bjournal{arXiv preprint arXiv:1801.06570}.
\end{barticle}
\endbibitem

\bibitem[\protect\citeauthoryear{Giardin\`a
  et~al.}{2016}]{giardina2015annealed}
\begin{barticle}[author]
\bauthor{\bsnm{Giardin\`a},~\bfnm{C.}\binits{C.}},
  \bauthor{\bsnm{Giberti},~\bfnm{C.}\binits{C.}}, \bauthor{\bparticle{van~der}
  \bsnm{Hofstad},~\bfnm{R.}\binits{R.}} \AND
  \bauthor{\bsnm{Prioriello},~\bfnm{M.~L.}\binits{M.~L.}}
(\byear{2016}).
\btitle{Annealed central limit theorems for the {I}sing model on random
  graphs}.
\bjournal{ALEA Lat. Am. J. Probab. Math. Stat.}
\bvolume{13}
\bpages{121--161}.
\bmrnumber{3476210}
\end{barticle}
\endbibitem

\bibitem[\protect\citeauthoryear{Ginibre}{1970}]{Ginibre1970}
\begin{barticle}[author]
\bauthor{\bsnm{Ginibre},~\bfnm{J.}\binits{J.}}
(\byear{1970}).
\btitle{General formulation of {G}riffiths' inequalities}.
\bjournal{Comm. Math. Phys.}
\bvolume{16}
\bpages{310--328}.
\bmrnumber{269252}
\end{barticle}
\endbibitem

\bibitem[\protect\citeauthoryear{Griffiths, Hurst and
  Sherman}{1970}]{Griffiths1970}
\begin{barticle}[author]
\bauthor{\bsnm{Griffiths},~\bfnm{Robert~B.}\binits{R.~B.}},
  \bauthor{\bsnm{Hurst},~\bfnm{C.~A.}\binits{C.~A.}} \AND
  \bauthor{\bsnm{Sherman},~\bfnm{S.}\binits{S.}}
(\byear{1970}).
\btitle{Concavity of magnetization of an {I}sing ferromagnet in a positive
  external field}.
\bjournal{J. Mathematical Phys.}
\bvolume{11}
\bpages{790--795}.
\bdoi{10.1063/1.1665211}
\bmrnumber{266507}
\end{barticle}
\endbibitem

\bibitem[\protect\citeauthoryear{H{\"o}fling and
  Tibshirani}{2009}]{hofling2009estimation}
\begin{barticle}[author]
\bauthor{\bsnm{H{\"o}fling},~\bfnm{Holger}\binits{H.}} \AND
  \bauthor{\bsnm{Tibshirani},~\bfnm{Robert}\binits{R.}}
(\byear{2009}).
\btitle{Estimation of sparse binary pairwise Markov networks using
  pseudo-likelihoods.}
\bjournal{Journal of Machine Learning Research}
\bvolume{10}.
\end{barticle}
\endbibitem

\bibitem[\protect\citeauthoryear{Jain, Koehler and
  Risteski}{2019}]{jain2019mean}
\begin{binproceedings}[author]
\bauthor{\bsnm{Jain},~\bfnm{Vishesh}\binits{V.}},
  \bauthor{\bsnm{Koehler},~\bfnm{Frederic}\binits{F.}} \AND
  \bauthor{\bsnm{Risteski},~\bfnm{Andrej}\binits{A.}}
(\byear{2019}).
\btitle{Mean-field approximation, convex hierarchies, and the optimality of
  correlation rounding: a unified perspective}.
In \bbooktitle{Proceedings of the 51st Annual ACM SIGACT Symposium on Theory of
  Computing}
\bpages{1226--1236}.
\end{binproceedings}
\endbibitem

\bibitem[\protect\citeauthoryear{Jalilian et~al.}{2025}]{jalilian2025central}
\begin{barticle}[author]
\bauthor{\bsnm{Jalilian},~\bfnm{Abdollah}\binits{A.}},
  \bauthor{\bsnm{Poinas},~\bfnm{Arnaud}\binits{A.}},
  \bauthor{\bsnm{Xu},~\bfnm{Ganggang}\binits{G.}} \AND
  \bauthor{\bsnm{Waagepetersen},~\bfnm{Rasmus}\binits{R.}}
(\byear{2025}).
\btitle{A central limit theorem for a sequence of conditionally centered random
  fields}.
\bjournal{Bernoulli}
\bvolume{31}
\bpages{2675--2698}.
\end{barticle}
\endbibitem

\bibitem[\protect\citeauthoryear{Jan\v{z}ura}{2002}]{Janzura2002}
\begin{bincollection}[author]
\bauthor{\bsnm{Jan\v{z}ura},~\bfnm{M.}\binits{M.}}
(\byear{2002}).
\btitle{A central limit theorem for conditionally centred random fields with an
  application to testing statistical hypotheses}.
In \bbooktitle{Limit theorems in probability and statistics, {V}ol. {II}
  ({B}alatonlelle, 1999)}
\bpages{209--223}.
\bpublisher{J\'{a}nos Bolyai Math. Soc., Budapest}.
\bmrnumber{1979994}
\end{bincollection}
\endbibitem

\bibitem[\protect\citeauthoryear{Jensen and
  K{\"u}nsch}{1994}]{jensen1994asymptotic}
\begin{barticle}[author]
\bauthor{\bsnm{Jensen},~\bfnm{Jens~Ledet}\binits{J.~L.}} \AND
  \bauthor{\bsnm{K{\"u}nsch},~\bfnm{Hans~R}\binits{H.~R.}}
(\byear{1994}).
\btitle{On asymptotic normality of pseudo likelihood estimates for pairwise
  interaction processes}.
\bjournal{Annals of the Institute of Statistical Mathematics}
\bvolume{46}
\bpages{475--486}.
\end{barticle}
\endbibitem

\bibitem[\protect\citeauthoryear{Kabluchko, L{\"o}we and
  Schubert}{2019}]{kabluchko2019fluctuations}
\begin{barticle}[author]
\bauthor{\bsnm{Kabluchko},~\bfnm{Zakhar}\binits{Z.}},
  \bauthor{\bsnm{L{\"o}we},~\bfnm{Matthias}\binits{M.}} \AND
  \bauthor{\bsnm{Schubert},~\bfnm{Kristina}\binits{K.}}
(\byear{2019}).
\btitle{Fluctuations of the Magnetization for Ising Models on Erdos-Renyi
  Random Graphs--the Regimes of Small p and the Critical Temperature}.
\bjournal{arXiv preprint arXiv:1911.10624}.
\end{barticle}
\endbibitem

\bibitem[\protect\citeauthoryear{Kenyon and Yin}{2017}]{Richard2017}
\begin{barticle}[author]
\bauthor{\bsnm{Kenyon},~\bfnm{Richard}\binits{R.}} \AND
  \bauthor{\bsnm{Yin},~\bfnm{Mei}\binits{M.}}
(\byear{2017}).
\btitle{On the asymptotics of constrained exponential random graphs}.
\bjournal{J. Appl. Probab.}
\bvolume{54}
\bpages{165--180}.
\bdoi{10.1017/jpr.2016.93}
\bmrnumber{3632612}
\end{barticle}
\endbibitem

\bibitem[\protect\citeauthoryear{Lacker, Mukherjee and
  Yeung}{2024}]{lacker2024mean}
\begin{barticle}[author]
\bauthor{\bsnm{Lacker},~\bfnm{Daniel}\binits{D.}},
  \bauthor{\bsnm{Mukherjee},~\bfnm{Sumit}\binits{S.}} \AND
  \bauthor{\bsnm{Yeung},~\bfnm{Lane~Chun}\binits{L.~C.}}
(\byear{2024}).
\btitle{Mean field approximations via log-concavity}.
\bjournal{International Mathematics Research Notices}
\bvolume{2024}
\bpages{6008--6042}.
\end{barticle}
\endbibitem

\bibitem[\protect\citeauthoryear{Lee, Deb and Mukherjee}{2025a}]{lee2025clt}
\begin{barticle}[author]
\bauthor{\bsnm{Lee},~\bfnm{Seunghyun}\binits{S.}},
  \bauthor{\bsnm{Deb},~\bfnm{Nabarun}\binits{N.}} \AND
  \bauthor{\bsnm{Mukherjee},~\bfnm{Sumit}\binits{S.}}
(\byear{2025}a).
\btitle{CLT in high-dimensional Bayesian linear regression with low SNR}.
\bjournal{arXiv preprint arXiv:2507.23285}.
\end{barticle}
\endbibitem

\bibitem[\protect\citeauthoryear{Lee, Deb and
  Mukherjee}{2025b}]{lee2025fluctuations}
\begin{barticle}[author]
\bauthor{\bsnm{Lee},~\bfnm{Seunghyun}\binits{S.}},
  \bauthor{\bsnm{Deb},~\bfnm{Nabarun}\binits{N.}} \AND
  \bauthor{\bsnm{Mukherjee},~\bfnm{Sumit}\binits{S.}}
(\byear{2025}b).
\btitle{Fluctuations in random field Ising models}.
\bjournal{arXiv preprint arXiv:2503.21152}.
\end{barticle}
\endbibitem

\bibitem[\protect\citeauthoryear{Lindeberg}{1922}]{Lindeberg1922}
\begin{barticle}[author]
\bauthor{\bsnm{Lindeberg},~\bfnm{J.~W.}\binits{J.~W.}}
(\byear{1922}).
\btitle{Eine neue {H}erleitung des {E}xponentialgesetzes in der
  {W}ahrscheinlichkeitsrechnung}.
\bjournal{Math. Z.}
\bvolume{15}
\bpages{211--225}.
\bdoi{10.1007/BF01494395}
\bmrnumber{1544569}
\end{barticle}
\endbibitem

\bibitem[\protect\citeauthoryear{Liu}{2017}]{liu2017log}
\begin{barticle}[author]
\bauthor{\bsnm{Liu},~\bfnm{Lu}\binits{L.}}
(\byear{2017}).
\btitle{On the Log Partition Function of Ising Model on Stochastic Block
  Model}.
\bjournal{arXiv preprint arXiv:1710.05287}.
\end{barticle}
\endbibitem

\bibitem[\protect\citeauthoryear{Lov\'{a}sz}{2012}]{Lovasz2012}
\begin{bbook}[author]
\bauthor{\bsnm{Lov\'{a}sz},~\bfnm{L\'{a}szl\'{o}}\binits{L.}}
(\byear{2012}).
\btitle{Large networks and graph limits}.
\bseries{American Mathematical Society Colloquium Publications}
\bvolume{60}.
\bpublisher{American Mathematical Society, Providence, RI}.
\bdoi{10.1090/coll/060}
\bmrnumber{3012035}
\end{bbook}
\endbibitem

\bibitem[\protect\citeauthoryear{Lov{\'a}sz and Szegedy}{}]{lovaszszemeredi}
\begin{bmisc}[author]
\bauthor{\bsnm{Lov{\'a}sz},~\bfnm{L}\binits{L.}} \AND
  \bauthor{\bsnm{Szegedy},~\bfnm{B}\binits{B.}}
\btitle{Szemer{\'e}di’s lemma for the analyst, preprint (2006)}.
\end{bmisc}
\endbibitem

\bibitem[\protect\citeauthoryear{L\"{o}we and Schubert}{2018}]{Lowe2018}
\begin{barticle}[author]
\bauthor{\bsnm{L\"{o}we},~\bfnm{Matthias}\binits{M.}} \AND
  \bauthor{\bsnm{Schubert},~\bfnm{Kristina}\binits{K.}}
(\byear{2018}).
\btitle{Fluctuations for block spin {I}sing models}.
\bjournal{Electron. Commun. Probab.}
\bvolume{23}
\bpages{Paper No. 53, 12}.
\bdoi{10.1214/18-ECP161}
\bmrnumber{3852267}
\end{barticle}
\endbibitem

\bibitem[\protect\citeauthoryear{Mossel, Neeman and
  Sly}{2012}]{mossel2012stochastic}
\begin{barticle}[author]
\bauthor{\bsnm{Mossel},~\bfnm{Elchanan}\binits{E.}},
  \bauthor{\bsnm{Neeman},~\bfnm{Joe}\binits{J.}} \AND
  \bauthor{\bsnm{Sly},~\bfnm{Allan}\binits{A.}}
(\byear{2012}).
\btitle{Stochastic block models and reconstruction}.
\bjournal{arXiv preprint arXiv:1202.1499}.
\end{barticle}
\endbibitem

\bibitem[\protect\citeauthoryear{Mukherjee}{2013}]{mukherjee2013consistent}
\begin{barticle}[author]
\bauthor{\bsnm{Mukherjee},~\bfnm{Sumit}\binits{S.}}
(\byear{2013}).
\btitle{Consistent estimation in the two star exponential random graph model}.
\bjournal{arXiv preprint arXiv:1310.4526}.
\end{barticle}
\endbibitem

\bibitem[\protect\citeauthoryear{Mukherjee, Mukherjee and
  Yuan}{2018}]{Mukherjee2018}
\begin{barticle}[author]
\bauthor{\bsnm{Mukherjee},~\bfnm{Rajarshi}\binits{R.}},
  \bauthor{\bsnm{Mukherjee},~\bfnm{Sumit}\binits{S.}} \AND
  \bauthor{\bsnm{Yuan},~\bfnm{Ming}\binits{M.}}
(\byear{2018}).
\btitle{Global testing against sparse alternatives under {I}sing models}.
\bjournal{Ann. Statist.}
\bvolume{46}
\bpages{2062--2093}.
\bdoi{10.1214/17-AOS1612}
\bmrnumber{3845011}
\end{barticle}
\endbibitem

\bibitem[\protect\citeauthoryear{Mukherjee and
  Ray}{2019}]{mukherjee2019testing}
\begin{barticle}[author]
\bauthor{\bsnm{Mukherjee},~\bfnm{Rajarshi}\binits{R.}} \AND
  \bauthor{\bsnm{Ray},~\bfnm{Gourab}\binits{G.}}
(\byear{2019}).
\btitle{On testing for parameters in Ising models}.
\bjournal{arXiv preprint arXiv:1906.00456}.
\end{barticle}
\endbibitem

\bibitem[\protect\citeauthoryear{Mukherjee and
  Sen}{2021}]{mukherjee2021variational}
\begin{barticle}[author]
\bauthor{\bsnm{Mukherjee},~\bfnm{Sumit}\binits{S.}} \AND
  \bauthor{\bsnm{Sen},~\bfnm{Subhabrata}\binits{S.}}
(\byear{2021}).
\btitle{Variational Inference in high-dimensional linear regression}.
\bjournal{arXiv preprint arXiv:2104.12232}.
\end{barticle}
\endbibitem

\bibitem[\protect\citeauthoryear{Mukherjee, Son and
  Bhattacharya}{2021}]{mukherjee2021fluctuations}
\begin{barticle}[author]
\bauthor{\bsnm{Mukherjee},~\bfnm{Somabha}\binits{S.}},
  \bauthor{\bsnm{Son},~\bfnm{Jaesung}\binits{J.}} \AND
  \bauthor{\bsnm{Bhattacharya},~\bfnm{Bhaswar~B}\binits{B.~B.}}
(\byear{2021}).
\btitle{Fluctuations of the magnetization in the p-spin Curie--Weiss model}.
\bjournal{Communications in Mathematical Physics}
\bvolume{387}
\bpages{681--728}.
\end{barticle}
\endbibitem

\bibitem[\protect\citeauthoryear{Mukherjee, Son and
  Bhattacharya}{2022}]{mukherjee2022estimation}
\begin{barticle}[author]
\bauthor{\bsnm{Mukherjee},~\bfnm{Somabha}\binits{S.}},
  \bauthor{\bsnm{Son},~\bfnm{Jaesung}\binits{J.}} \AND
  \bauthor{\bsnm{Bhattacharya},~\bfnm{Bhaswar~B}\binits{B.~B.}}
(\byear{2022}).
\btitle{Estimation in tensor Ising models}.
\bjournal{Information and Inference: A Journal of the IMA}
\bvolume{11}
\bpages{1457--1500}.
\end{barticle}
\endbibitem

\bibitem[\protect\citeauthoryear{Mukherjee, Son and
  Bhattacharya}{2025}]{mukherjee2025MLE}
\begin{barticle}[author]
\bauthor{\bsnm{Mukherjee},~\bfnm{Somabha}\binits{S.}},
  \bauthor{\bsnm{Son},~\bfnm{Jaesung}\binits{J.}} \AND
  \bauthor{\bsnm{Bhattacharya},~\bfnm{Bhaswar~B.}\binits{B.~B.}}
(\byear{2025}).
\btitle{{Phase transitions of the maximum likelihood estimators in the p-spin
  Curie-Weiss model}}.
\bjournal{Bernoulli}
\bvolume{31}
\bpages{1502 -- 1526}.
\bdoi{10.3150/24-BEJ1779}
\end{barticle}
\endbibitem

\bibitem[\protect\citeauthoryear{Mukherjee and
  Xu}{2023}]{mukherjee2023statistics}
\begin{barticle}[author]
\bauthor{\bsnm{Mukherjee},~\bfnm{Sumit}\binits{S.}} \AND
  \bauthor{\bsnm{Xu},~\bfnm{Yuanzhe}\binits{Y.}}
(\byear{2023}).
\btitle{Statistics of the two star ERGM}.
\bjournal{Bernoulli}
\bvolume{29}
\bpages{24--51}.
\end{barticle}
\endbibitem

\bibitem[\protect\citeauthoryear{Mukherjee et~al.}{}]{Somabha2021}
\begin{barticle}[author]
\bauthor{\bsnm{Mukherjee},~\bfnm{Somabha}\binits{S.}},
  \bauthor{\bsnm{Son},~\bfnm{Jaesung}\binits{J.}},
  \bauthor{\bsnm{Ghosh},~\bfnm{Swarnadip}\binits{S.}} \AND
  \bauthor{\bsnm{Mukherjee},~\bfnm{Sourav}\binits{S.}}
\btitle{{Efficient estimation in tensor Curie-Weiss and Erdős-Rényi Ising
  models}}.
\bjournal{Electronic Journal of Statistics}
\bvolume{18}
\bpages{2405 -- 2449}.
\bdoi{10.1214/24-EJS2255}
\end{barticle}
\endbibitem

\bibitem[\protect\citeauthoryear{Mukherjee
  et~al.}{2024}]{mukherjee2024logistic}
\begin{barticle}[author]
\bauthor{\bsnm{Mukherjee},~\bfnm{Somabha}\binits{S.}},
  \bauthor{\bsnm{Niu},~\bfnm{Ziang}\binits{Z.}},
  \bauthor{\bsnm{Halder},~\bfnm{Sagnik}\binits{S.}},
  \bauthor{\bsnm{Bhattacharya},~\bfnm{Bhaswar~B}\binits{B.~B.}} \AND
  \bauthor{\bsnm{Michailidis},~\bfnm{George}\binits{G.}}
(\byear{2024}).
\btitle{Logistic Regression Under Network Dependence}.
\bjournal{Journal of Machine Learning Research}
\bvolume{25}
\bpages{1--62}.
\end{barticle}
\endbibitem

\bibitem[\protect\citeauthoryear{Newey and McFadden}{1994}]{Newey1994}
\begin{bincollection}[author]
\bauthor{\bsnm{Newey},~\bfnm{Whitney~K.}\binits{W.~K.}} \AND
  \bauthor{\bsnm{McFadden},~\bfnm{Daniel}\binits{D.}}
(\byear{1994}).
\btitle{Large sample estimation and hypothesis testing}.
In \bbooktitle{Handbook of econometrics, {V}ol.\ {IV}}.
\bseries{Handbooks in Econom.}
\bvolume{2}
\bpages{2111--2245}.
\bpublisher{North-Holland, Amsterdam}.
\bmrnumber{1315971}
\end{bincollection}
\endbibitem

\bibitem[\protect\citeauthoryear{Newman}{1975/76}]{Newman1975}
\begin{barticle}[author]
\bauthor{\bsnm{Newman},~\bfnm{Charles~M.}\binits{C.~M.}}
(\byear{1975/76}).
\btitle{Gaussian correlation inequalities for ferromagnets}.
\bjournal{Z. Wahrscheinlichkeitstheorie und Verw. Gebiete}
\bvolume{33}
\bpages{75--93}.
\bdoi{10.1007/BF00538350}
\bmrnumber{398401}
\end{barticle}
\endbibitem

\bibitem[\protect\citeauthoryear{Park and Newman}{2005}]{park2005solution}
\begin{barticle}[author]
\bauthor{\bsnm{Park},~\bfnm{Juyong}\binits{J.}} \AND
  \bauthor{\bsnm{Newman},~\bfnm{Mark~EJ}\binits{M.~E.}}
(\byear{2005}).
\btitle{Solution for the properties of a clustered network}.
\bjournal{Physical Review E—Statistical, Nonlinear, and Soft Matter Physics}
\bvolume{72}
\bpages{026136}.
\end{barticle}
\endbibitem

\bibitem[\protect\citeauthoryear{Ravikumar, Wainwright and
  Lafferty}{2010}]{Ravikumar2010}
\begin{barticle}[author]
\bauthor{\bsnm{Ravikumar},~\bfnm{Pradeep}\binits{P.}},
  \bauthor{\bsnm{Wainwright},~\bfnm{Martin~J.}\binits{M.~J.}} \AND
  \bauthor{\bsnm{Lafferty},~\bfnm{John~D.}\binits{J.~D.}}
(\byear{2010}).
\btitle{High-dimensional {I}sing model selection using {$\ell_1$}-regularized
  logistic regression}.
\bjournal{Ann. Statist.}
\bvolume{38}
\bpages{1287--1319}.
\bdoi{10.1214/09-AOS691}
\bmrnumber{2662343}
\end{barticle}
\endbibitem

\bibitem[\protect\citeauthoryear{Reinert and Ross}{2019}]{Reinert2019}
\begin{barticle}[author]
\bauthor{\bsnm{Reinert},~\bfnm{Gesine}\binits{G.}} \AND
  \bauthor{\bsnm{Ross},~\bfnm{Nathan}\binits{N.}}
(\byear{2019}).
\btitle{Approximating stationary distributions of fast mixing {G}lauber
  dynamics, with applications to exponential random graphs}.
\bjournal{Ann. Appl. Probab.}
\bvolume{29}
\bpages{3201--3229}.
\bdoi{10.1214/19-AAP1478}
\bmrnumber{4019886}
\end{barticle}
\endbibitem

\bibitem[\protect\citeauthoryear{Sambale and Sinulis}{2020}]{Sambale2020}
\begin{barticle}[author]
\bauthor{\bsnm{Sambale},~\bfnm{Holger}\binits{H.}} \AND
  \bauthor{\bsnm{Sinulis},~\bfnm{Arthur}\binits{A.}}
(\byear{2020}).
\btitle{Logarithmic {S}obolev inequalities for finite spin systems and
  applications}.
\bjournal{Bernoulli}
\bvolume{26}
\bpages{1863--1890}.
\bdoi{10.3150/19-BEJ1172}
\bmrnumber{4091094}
\end{barticle}
\endbibitem

\bibitem[\protect\citeauthoryear{Sasakura and Sato}{2014}]{sasakura2014ising}
\begin{barticle}[author]
\bauthor{\bsnm{Sasakura},~\bfnm{Naoki}\binits{N.}} \AND
  \bauthor{\bsnm{Sato},~\bfnm{Yuki}\binits{Y.}}
(\byear{2014}).
\btitle{Ising model on random networks and the canonical tensor model}.
\bjournal{Progress of Theoretical and Experimental Physics}
\bvolume{2014}
\bpages{053B03}.
\end{barticle}
\endbibitem

\bibitem[\protect\citeauthoryear{Shao and Zhang}{2019}]{Qi-Man2019}
\begin{barticle}[author]
\bauthor{\bsnm{Shao},~\bfnm{Qi-Man}\binits{Q.-M.}} \AND
  \bauthor{\bsnm{Zhang},~\bfnm{Zhuo-Song}\binits{Z.-S.}}
(\byear{2019}).
\btitle{{Berry–Esseen bounds of normal and nonnormal approximation for
  unbounded exchangeable pairs}}.
\bjournal{The Annals of Probability}
\bvolume{47}
\bpages{61 -- 108}.
\bdoi{10.1214/18-AOP1255}
\end{barticle}
\endbibitem

\bibitem[\protect\citeauthoryear{Sly and Sun}{2014}]{Sly2014}
\begin{barticle}[author]
\bauthor{\bsnm{Sly},~\bfnm{Allan}\binits{A.}} \AND
  \bauthor{\bsnm{Sun},~\bfnm{Nike}\binits{N.}}
(\byear{2014}).
\btitle{Counting in two-spin models on {$d$}-regular graphs}.
\bjournal{Ann. Probab.}
\bvolume{42}
\bpages{2383--2416}.
\bdoi{10.1214/13-AOP888}
\bmrnumber{3265170}
\end{barticle}
\endbibitem

\bibitem[\protect\citeauthoryear{Sodin}{2019}]{sodin2019classical}
\begin{barticle}[author]
\bauthor{\bsnm{Sodin},~\bfnm{Sasha}\binits{S.}}
(\byear{2019}).
\btitle{The classical moment problem}.
\end{barticle}
\endbibitem

\bibitem[\protect\citeauthoryear{Starr}{2009}]{Starr2009}
\begin{barticle}[author]
\bauthor{\bsnm{Starr},~\bfnm{Shannon}\binits{S.}}
(\byear{2009}).
\btitle{Thermodynamic limit for the {M}allows model on {$S_n$}}.
\bjournal{J. Math. Phys.}
\bvolume{50}
\bpages{095208, 15}.
\bdoi{10.1063/1.3156746}
\bmrnumber{2566888}
\end{barticle}
\endbibitem

\bibitem[\protect\citeauthoryear{van~der Vaart and Wellner}{2023}]{Vaart1996}
\begin{bbook}[author]
\bauthor{\bparticle{van~der} \bsnm{Vaart},~\bfnm{A.~W.}\binits{A.~W.}} \AND
  \bauthor{\bsnm{Wellner},~\bfnm{Jon~A.}\binits{J.~A.}}
(\byear{2023}).
\btitle{Weak convergence and empirical processes---with applications to
  statistics},
\bedition{second} ed.
\bseries{Springer Series in Statistics}.
\bpublisher{Springer, Cham}.
\bdoi{10.1007/978-3-031-29040-4}
\bmrnumber{4628026}
\end{bbook}
\endbibitem

\bibitem[\protect\citeauthoryear{Vanhecke et~al.}{2021}]{vanhecke2021solving}
\begin{barticle}[author]
\bauthor{\bsnm{Vanhecke},~\bfnm{Bram}\binits{B.}},
  \bauthor{\bsnm{Colbois},~\bfnm{Jeanne}\binits{J.}},
  \bauthor{\bsnm{Vanderstraeten},~\bfnm{Laurens}\binits{L.}},
  \bauthor{\bsnm{Verstraete},~\bfnm{Frank}\binits{F.}} \AND
  \bauthor{\bsnm{Mila},~\bfnm{Fr{\'e}d{\'e}ric}\binits{F.}}
(\byear{2021}).
\btitle{Solving frustrated Ising models using tensor networks}.
\bjournal{Physical Review Research}
\bvolume{3}
\bpages{013041}.
\end{barticle}
\endbibitem

\bibitem[\protect\citeauthoryear{Xu and Mukherjee}{2023}]{xu2023inference}
\begin{barticle}[author]
\bauthor{\bsnm{Xu},~\bfnm{Yuanzhe}\binits{Y.}} \AND
  \bauthor{\bsnm{Mukherjee},~\bfnm{Sumit}\binits{S.}}
(\byear{2023}).
\btitle{Inference in Ising models on dense regular graphs}.
\bjournal{The Annals of Statistics}
\bvolume{51}
\bpages{1183--1206}.
\end{barticle}
\endbibitem

\bibitem[\protect\citeauthoryear{Yu, Kolar and Gupta}{2016}]{Ming2016}
\begin{binproceedings}[author]
\bauthor{\bsnm{Yu},~\bfnm{Ming}\binits{M.}},
  \bauthor{\bsnm{Kolar},~\bfnm{Mladen}\binits{M.}} \AND
  \bauthor{\bsnm{Gupta},~\bfnm{Varun}\binits{V.}}
(\byear{2016}).
\btitle{Statistical Inference for Pairwise Graphical Models Using Score
  Matching}.
In \bbooktitle{Advances in Neural Information Processing Systems}
(\beditor{\bfnm{D.}\binits{D.}~\bsnm{Lee}},
  \beditor{\bfnm{M.}\binits{M.}~\bsnm{Sugiyama}},
  \beditor{\bfnm{U.}\binits{U.}~\bsnm{Luxburg}},
  \beditor{\bfnm{I.}\binits{I.}~\bsnm{Guyon}} \AND
  \beditor{\bfnm{R.}\binits{R.}~\bsnm{Garnett}}, eds.)
\bvolume{29}.
\bpublisher{Curran Associates, Inc.}
\end{binproceedings}
\endbibitem

\end{thebibliography}
	
	\appendix
	
	\section{Proof of Main Result}\label{sec:pfmain}
	
	This Section is devoted to proving our main results, namely Theorems \ref{theo:CLTmain},  \ref{theo:conmain}, and \ref{lem:smoothcont}. In the sequel, we will use $\lesssim$ to hide constants free of $N$. We begin  with a preparatory lemma which will be used multiple times in the proof of~\cref{theo:conmain}. The Lemma basically provides concentrations for certain conditionally centered functions of $\ms$. 
	\begin{lemma}\label{lem:auxtail}
		Suppose $\ms\sim \P$. Recall the definition of $t_i$s from \eqref{eq:consig}. Then given any vector $\mathbf{d}^{(N)}:=(d_1,d_2,\ldots ,d_N)$, and scalar $t>0$, there exists a constant $C$ free of $t$ such that the following conclusions hold: 
		\begin{enumerate}
			\item[(a)] Under~\cref{as:cmean}, we have:
			$$\P\Big(\big|\sum_{i=1}^N d_i(g(\si_i)-t_i)\big|>t\Big)\leq 2\exp\left(-\frac{C t^2}{\lVert \mathbf{d}^{(N)}\rVert^2}\right).$$
			\item[(b)] Let $r_i\equiv r_i(\si_1,\si_2,\ldots ,\si_{i-1},b_0,\si_{i+1},\ldots ,\si_N)$ be a function of $N-1$ coordinates where the $i$-th coordinate is fixed at $b_0\in\mcb$, where $\sup_{\ms\in\mcb^N}\max_{1\le i\le N} |r_i|\le 1$. Also assume $|r_i-r_i^j|\le \Q_{N,2}(i,j)$ where $\Q_{N,2}$ is the matrix from \cref{as:cmean}. Then, under~\cref{as:cmean}, provided $\lVert\mathbf{d}^{(N)}\rVert=O(N)$, we have:
			$$\E\left(\sum_{i=1}^N d_i(g(\si_i)-t_i)r_i\right)^2\lesssim N.$$
		\end{enumerate}		
	\end{lemma}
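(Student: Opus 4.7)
The plan for part (a) is to build a Doob martingale along an arbitrary enumeration of the coordinates and then apply Azuma-Hoeffding. Setting $\mathcal{F}_k := \sigma(\sigma_1, \ldots, \sigma_k)$ and $M_k := \mathbb{E}[W \mid \mathcal{F}_k]$ with $W := \sum_{i=1}^N d_i(g(\sigma_i) - t_i)$, the increment $|M_k - M_{k-1}|$ is dominated by the oscillation of $\mathbb{E}[W \mid \mathcal{F}_{k-1}, \sigma_k]$ in its last argument. Three observations control this oscillation: (i) for $i > k$, iterated conditional centering $\mathbb{E}[g(\sigma_i) - t_i \mid \sigma_j, j \ne i] = 0$ forces those summands to vanish after the tower, (ii) the $i=k$ term changes by at most $2|d_k|$ since $g \in [-1,1]$ and $t_k$ does not depend on $\sigma_k$, and (iii) for $i < k$ the $\sigma_k$-dependence enters only through $t_i$ (and through the joint law of $\sigma_{k+1},\ldots,\sigma_N$), whose increment in $\sigma_k$ is bounded by $Q_{N,2}(i,k)$ via the $k = 2$ instance of Assumption 2.2. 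Combining these yields $|M_k - M_{k-1}| \lesssim |d_k| + \sum_i |d_i| Q_{N,2}(i,k)$, so Azuma-Hoeffding together with the operator-norm bound $\|Q_{N,2}Q_{N,2}^{\top}\|_{\mathrm{op}} = O(1)$ (a consequence of the bounded row/column sums in Assumption 2.2) gives $\sum_k |M_k - M_{k-1}|^2 \lesssim \|\mathbf{d}^{(N)}\|^2$ and hence the claimed sub-Gaussian tail.

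For part (b), I would expand $\mathbb{E}[W_r^2]$ with $W_r := \sum_i d_i r_i(g(\sigma_i) - t_i)$ and treat the diagonal and off-diagonal separately. The diagonal contribution $\sum_i d_i^2\,\mathbb{E}[r_i^2(g(\sigma_i)-t_i)^2]$ is at most $4\|\mathbf{d}^{(N)}\|^2$ since $|r_i|, |g|, |t_i| \le 1$. For the cross terms $i \ne j$ I would perform the two-fold substitution $r_j = r_j^i + (r_j - r_j^i)$ and $t_j = t_j^i + (t_j - t_j^i)$, where the $i$-superscript denotes freezing $\sigma_i$ at $b_0$. The ``leading'' piece $d_i d_j\,\mathbb{E}[r_i r_j^i (g(\sigma_i)-t_i)(g(\sigma_j) - t_j^i)]$ vanishes because $r_i$, $r_j^i$, $g(\sigma_j)$ and $t_j^i$ are all measurable with respect to $\sigma(\sigma_k : k \ne i)$ and $\mathbb{E}[g(\sigma_i) - t_i \mid \sigma_k, k \ne i] = 0$. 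Each of the three residual terms contains at least one factor bounded by $Q_{N,2}(j,i)$ (from the hypothesis on $r$ or from the $k=2$ instance of Assumption 2.2), while the remaining factors are $O(1)$. This yields an off-diagonal bound of order $\sum_{i \ne j} |d_i||d_j| Q_{N,2}(i,j)$, which the row-sum assumption reduces to $O(\|\mathbf{d}^{(N)}\|^2)$. Under the hypothesized order on $\mathbf{d}^{(N)}$, this gives the stated $O(N)$ bound.

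The main obstacle is the bookkeeping in part (a): although the zero-conditional-mean identity annihilates the $i > k$ summands and Assumption 2.2 handles the change in each $t_i$ directly, one still has to verify that the joint conditional law of $\sigma_{k+1}, \ldots, \sigma_N$ given $\mathcal{F}_{k-1}$ and $\sigma_k$ does not introduce extra dependence on $\sigma_k$ of a size larger than the $Q_{N,2}$ terms; routing this through the discrete-derivative operator $\Delta$ of Section 4, or equivalently through repeated application of Assumption 2.2, is what makes the estimate clean. In part (b), the corresponding subtlety is ensuring that after the $(r_j, t_j)$ substitutions the correct leading term is exactly conditionally centered, so that only the $Q_{N,2}$-small residuals survive and there is no hidden double-counting among the three cross-error pieces.
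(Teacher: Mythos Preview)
Your part (b) is correct and is essentially the paper's argument with the roles of $i$ and $j$ swapped in the leave-one-out substitution: the paper freezes $\sigma_j$ and writes $(g(\sigma_i)-t_i)(g(\sigma_j)-t_j)r_ir_j=(g(\sigma_i)-t_i^{\,j})(g(\sigma_j)-t_j)r_i^{\,j}r_j+O(\mathbf Q_{N,2}(i,j))$, whose leading term has zero expectation by conditioning on $\{\sigma_k:k\ne j\}$; you freeze $\sigma_i$ instead and condition on $\{\sigma_k:k\ne i\}$. The residual bookkeeping and the operator-norm bound on $\mathbf Q_{N,2}$ are identical.

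Part (a), however, has a genuine gap, and it is precisely the ``main obstacle'' you name but do not close. After killing the $i>k$ summands by the tower property, the increment $M_k-M_{k-1}$ contains terms of the form $d_i\bigl(\mathbb E[t_i\mid\mathcal F_k]-\mathbb E[t_i\mid\mathcal F_{k-1}]\bigr)$ for $i\le k$. The $\sigma_k$-dependence of $\mathbb E[t_i\mid\mathcal F_k]$ has two sources: the direct dependence of $t_i(\boldsymbol\sigma)$ on $\sigma_k$, which Assumption~2.2 bounds by $\mathbf Q_{N,2}(i,k)$, and the dependence of the conditional law of $(\sigma_{k+1},\dots,\sigma_N)$ given $\mathcal F_k$ on $\sigma_k$. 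Assumption~2.2 says nothing about the second source: it constrains the pointwise sensitivity of the functions $t_i$, not how the joint law of the remaining coordinates shifts when one coordinate is perturbed. Repeated application of the $\Delta$-operator or of higher-order $\mathbf Q_{N,k}$ bounds still only controls pointwise sensitivities of $t_i$, so it does not close this gap; a Dobrushin-type contraction would, but that is a much stronger hypothesis than what the lemma assumes.

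The paper sidesteps this entirely by using Chatterjee's exchangeable-pairs concentration inequality with a single Glauber step: pick $I$ uniformly, resample $\sigma_I$ from its full conditional, and set $F(\mathbf x,\mathbf y)=\sum_i d_i(g(x_i)-g(y_i))$, $f(\boldsymbol\sigma)=\mathbb E[F(\boldsymbol\sigma,\widetilde{\boldsymbol\sigma})\mid\boldsymbol\sigma]=N^{-1}\sum_i d_i(g(\sigma_i)-t_i)$. Then $f(\boldsymbol\sigma)-f(\widetilde{\boldsymbol\sigma})=N^{-1}d_I(g(\sigma_I)-g(\widetilde\sigma_I))-N^{-1}\sum_{i\ne I}d_i(t_i-\widetilde t_i)$, and since $\boldsymbol\sigma$ and $\widetilde{\boldsymbol\sigma}$ differ only in coordinate $I$, the second sum is bounded termwise by $2\mathbf Q_{N,2}(i,I)$ via Assumption~2.2. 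Plugging into Chatterjee's Theorem~3.3 gives $\mathbb E\bigl[|(f-\widetilde f)F|\,\big|\,\boldsymbol\sigma\bigr]\lesssim N^{-2}\|\mathbf d^{(N)}\|^2$ and hence the sub-Gaussian tail. The key point is that the exchangeable-pairs route never needs to control how the ``future-given-past'' law depends on $\sigma_k$; it only ever compares two configurations differing in a single coordinate, which is exactly what Assumption~2.2 is designed to handle.
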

	Here, parts (a) and (b) follows by making minor adjustments in the proofs  of~\cite[Lemma 1]{Mukherjee2018}, \cite[Lemma 3.2]{Cha2016}, and \cite[Lemma 3.1]{lee2025fluctuations}, respectively. We include a short proof in \cref{Sec:concproof} for completion.
	
	\subsection{Proof of Theorems~\ref{theo:CLTmain} and \ref{theo:conmain}}
	
	As the proof of \cref{theo:CLTmain} uses \cref{theo:conmain}, we will begin with the proof of the latter. In order to achieve this, it is convenient to work under the following condition:
	
	\begin{Assumption}\label{as:coeffalt}[Uniform boundedness of coefficient vector] 
		The vector $\mc=(c_1,\ldots ,c_N)$ satisfies the following condition:
		$$\limsup_{N\to\infty}\max_{i\in [N]}|c_i|<\infty.$$
	\end{Assumption}
	This Assumption is of course strictly stronger than \cref{as:coeff}. However, as it turns out, through a careful truncation and diagonal subsequence argument, proving our main result under \cref{as:coeffalt} is equivalent to proving it under \cref{as:coeff}. To formalize this, we present the following modified result.
	
	\begin{theorem}\label{theo:conmainder}
		For any $k\in\N$, under Assumptions~\ref{as:cmean}, \ref{as:empcon}, and \ref{as:coeffalt}, we have 
		$$\E T_N^k U_N^{k_1}V_N^{k_2} \to m_{k,k_1,k_2}$$
		as $n\to\infty$, where $m_{k,k_1,k_2}$ is defined in \eqref{eq:limmoment}, and $U_N$, $V_N$ are defined as in \eqref{eq:randvar}. 
	\end{theorem}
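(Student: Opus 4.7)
The plan is to prove \cref{theo:conmainder} by a method of moments combined with a sequential decision-tree expansion, exploiting the uniform boundedness of $\mc$ from \cref{as:coeffalt}. Writing $Y_i := g(\sigma_i)-t_i$ so that $\EE[Y_i\mid\sigma_j,j\neq i]=0$ by the tower property, one expands
\begin{align*}
\E T_N^k U_N^{k_1}V_N^{k_2}
= \frac{1}{N^{k/2+k_1+k_2}}\!\!\sum_{\mathbf{i},\mathbf{j},(\mathbf{p},\mathbf{q})}\!\! \Big(\prod_\alpha c_{i_\alpha}\Big)\Big(\prod_\beta c_{j_\beta}^{2}\Big)\Big(\prod_\gamma c_{p_\gamma}c_{q_\gamma}\Big)\,\E\!\Big[\prod_\alpha Y_{i_\alpha}\prod_\beta\!\big(g(\sigma_{j_\beta})^2\!-\!t_{j_\beta}^2\big)\prod_\gamma Y_{p_\gamma}\big(t_{q_\gamma}^{p_\gamma}\!-\!t_{q_\gamma}\big)\Big].
\end{align*}
The target is to show that only \emph{perfect matchings} of the $k$ indices coming from $T_N^k$ survive asymptotically: a matched pair that collapses to a single coordinate produces, after conditional integration, a factor that averages to $U_N$, whereas a matched pair on two distinct coordinates gets tied together by a discrete-derivative factor $(t_j^i-t_j)$ that averages to $V_N$. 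The $(k-1)!!=(k)!!$ perfect matchings then exactly reproduce the $(U_N+V_N)^{k/2}$-type combinatorics needed for $m_{k,k_1,k_2}$.

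I would implement this along the blueprint of Algorithms~\ref{alg:construct_tree}--\ref{alg:construct_treep2}. At each node, pick an index $i$ appearing in some factor and write, for every other factor at coordinate $\ell$, $Y_\ell=(g(\sigma_\ell)-t_\ell^i)-(t_\ell-t_\ell^i)$, splitting the node into a \emph{centered} child and a \emph{derivative} child. On the centered child, $\sigma_i$ can be integrated against its conditional law given $\{\sigma_j:j\neq i\}$; \cref{prop:baseprop} then forces the branch to vanish identically unless $i$ has been paired with something else, in which case the integration leaves behind a factor of $\EE[Y_i^2\mid\sigma_j,j\neq i]$ whose $c_i^2$-weighted average over $i$ is exactly a $U_N$-ingredient. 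On the derivative child, the swap error $|t_\ell-t_\ell^i|$ is controlled by $\Q_{N,2}(\ell,i)$ via \cref{as:cmean}, and the recursion continues with one fewer index on the remaining factor; this is precisely the mechanism that produces $V_N$-ingredients by tying two distinct coordinates together through the interaction tensor.

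The heart of the argument is the combinatorial bookkeeping, handled by \cref{prop:degbound} and \cref{lem:surviveterm}. Using \cref{lem:auxtail}, the row-sum bound~\eqref{eq:cmean2}, and $\max_i|c_i|=O(1)$ from \cref{as:coeffalt}, one shows that every branch which has an isolated index, any coordinate of multiplicity $\ge 3$, or a tree depth exceeding $\lfloor k/2\rfloor+k_1+k_2$, contributes $o(1)$ after the overall $N^{-k/2-k_1-k_2}$ normalization. The surviving branches are in bijection with perfect matchings of the $k$ indices coming from $T_N^k$, decorated with the natural pairing structure supplied by $U_N^{k_1}V_N^{k_2}$, and their contributions sum to
\begin{align*}
\E T_N^k U_N^{k_1} V_N^{k_2} \;=\; (k)!!\,\E\!\big[(U_N+V_N)^{k/2}\,U_N^{k_1}\,V_N^{k_2}\big] + o(1).
\end{align*}
For odd $k$ no perfect matching exists, so the leading term is $0$. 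Since $(U_N,V_N)$ are uniformly bounded by~\eqref{eq:boundZ1}--\eqref{eq:boundz2} and converge jointly to $(P_1,P_2)$ under \cref{as:empcon}, bounded convergence upgrades the right-hand side to $(k)!!\,\EE[(P_1+P_2)^{k/2}P_1^{k_1}P_2^{k_2}]=m_{k,k_1,k_2}$.

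The main obstacle will be the analytic control of the tree's intermediate depth. Each recursion step replaces conditional means $t_\ell$ by their leave-one-out versions $t_\ell^i$, producing higher-order cross-correlations whose accumulated bound is governed by the iterated tensors $\mathcal{R}[\Q]_{N,k}$ of \cref{lem:smoothcont}; keeping their row sums uniformly bounded is precisely part~(2) of that theorem and is what prevents the error from blowing up with $k$. A second delicate point is avoiding double-counting: branches in which a $V_N$-factor's own coordinates get absorbed into a $T_N$-matching must be counted with the correct multiplicity, and \cref{lem:surviveterm} is exactly what identifies the rank/matching structure of the surviving branches so that one recovers the precise moment $(k)!!\,\EE[(P_1+P_2)^{k/2}P_1^{k_1}P_2^{k_2}]$ rather than merely some generic Gaussian-scale-mixture moment.
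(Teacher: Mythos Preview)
Your proposal is correct and follows essentially the same route as the paper: a multinomial/decision-tree expansion via \cref{prop:baseprop} and Algorithms~\ref{alg:construct_tree}--\ref{alg:construct_treep2}, pruning of non-matching branches through the rank bounds of \cref{prop:degbound} and \cref{lem:surviveterm}, identification of the surviving leaves with $U_N^p V_N^{q/2}$-type products, and a final bounded-convergence step using \cref{as:empcon}. One small clarification: the decoupling of the $U_N^{k_1}V_N^{k_2}$ indices from the $T_N^k$ indices (your ``double-counting'' concern) is handled in the paper by the separate \cref{lem:initremove} rather than by \cref{lem:surviveterm} itself, but the remaining architecture matches.
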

	
	Next, we will derive \cref{theo:conmain} from \cref{theo:conmainder}. 
	
	\begin{proof}[Proof of \cref{theo:conmain}]
		Suppose Assumptions~\ref{as:coeff},~\ref{as:cmean}, and~\ref{as:empcon} are satisfied. First let us assume that \eqref{eq:bigconmoment} holds and establish the existence of a unique $\rho$ with moment sequence $m_{k,0,0}$ and the non-negativity of $P_1+P_2$. We will then establish \eqref{eq:bigconmoment} independently. By \eqref{eq:boundZ1} and \eqref{eq:boundz2}, it follows that $P_1$ and $P_2$ are almost surely bounded, which implies that the sequence $m_{k,k_1,k_2}$ is well-defined. Further, given any $\lambda>0$, we have: 
		$$\sum_{k=0}^{\infty} \frac{\lambda^k m_{k,0,0}}{k!}<\infty.$$
		Therefore, provided we can show $\EE T_N^k\to m_{k,0,0}$ for all $k\ge 0$ (which is a consequence of \eqref{eq:bigconmoment}), $\{m_{k,0,0}\}_{k\ge 0}$ will correspond to the moment sequence of a unique probability measure $\rho$ (see e.g.~\cite[Corollary 2.12]{sodin2019classical}). Set $\mathrm{i}=\sqrt{-1}$. By \cref{lem:auxtail}, part (a), coupled with \eqref{eq:bigconmoment}, we have for any $t\in \R$, the following 
		\begin{align*}
			\varphi_n(t):=\E [\exp(\mathrm{i} t T_N)] &= \sum_{j=0}^{\infty}\frac{(\mathrm{i}t)^j}{j!}\E [T_N^j] \overset{N\to\infty}{\longrightarrow}\sum_{j=0}^{\infty} (-1)^j\frac{t^{2j}}{(2j)!}(2j)!!m_{j,0,0}=\mathbb{E} \exp\left(-\frac{t^2}{2}(P_1+P_2)\right)=:\varphi(t). 
		\end{align*}
		As $P_1$ and $P_2$ are bounded by \eqref{eq:boundZ1} and \eqref{eq:boundz2}, $\varphi(\cdot)$ is everywhere continuous. Therefore by \cite[Theorem 3.3.17]{durrett2019probability}, $\varphi(\cdot)$ is a characteristic function. Clearly it is always real-valued and non-negative. 
		
		With this in mind, suppose that $P_1+P_2$ is not non-negative almost surely. Then there exists $\eta_1>0$ and $0<\eta_2<1$, such that $\mathbb{P}(P_1+P_2<-\eta_1)\ge \eta_2$. As $\varphi(\cdot)$ is a characteristic function, by choosing $t> \sqrt{-2\log{\eta_2}}/\sqrt{\eta_1}$, we have 
		\begin{align*}
			1\ge \varphi(t) &\ge \mathbb{E}[\exp(-t^2 (P_1+P_2)/2)\ind(P_1+P_2\le -\eta_1)] \\ &\ge \exp\left(\frac{t^2\eta_1}{2}\right)\mathbb{P}(P_1+P_2\le -\eta_1)  > 1,
		\end{align*}
		which results in a contradiction. Therefore $P_1+P_2$ is almost surely non-negative. This implies $\varphi(\cdot)$ is the characteristic function of $\textnormal{Law}(\sqrt{P_1+P_2}Z)$ where $Z\sim N(0,1)$ is independent of $P_1,P_2$. This establishes the conclusions of \cref{theo:conmain} outside of \eqref{eq:bigconmoment}.
		
		\vspace{0.05in}
		
		In the rest of the argument, we will focus on proving the aforementioned moment convergence in \eqref{eq:bigconmoment}.
		\noindent It suffices to show that, given any subsequence $\{N_r\}_{r\geq 1}$, there exists a further subsequence $\{N_{r_\ell}\}_{\ell\geq 1}$ such that:
		\begin{equation}\label{eq:cboundedmain}
			\E T_{N_{r_{\ell}}}^k U_{N_{r_{\ell}}}^{k_1} V_{N_{r_{\ell}}}^{k_2}\to m_{k,k_1,k_2}.
		\end{equation}
		
		Towards this direction, define $c_{i,1,M}:=c_i\ind(|c_i|\leq M)$ and $c_{i,2,M}:=c_i-c_{i,M,1}=c_i\ind(|c_i|>M)$. We also define 
		$$T_{N_r,M}:=\frac{1}{\sqrt{N_r}}\sum_{i=1}^{N_r} c_{i,1,M}(g(\si_i)-t_i),$$
		for $M\in\N$. Therefore $T_{N_r,M}$ is the truncated version of the target statistic $T_{N_r}$ defined in \eqref{eq:pivotstat}. In a similar vein, we define 
		$$U_{N_r,M}:=\frac{1}{N_r}\sum_{i=1}^{N_r} c_{i,1,M}^2(g(\si_i)^2-t_i^2), \quad \mbox{and}\quad V_{N_r,M}:=\frac{1}{N_r}\sum_{i,j} c_{i,1,M}c_{j,1,M}(g(\si)-t_i)(t_j^i-t_j).$$
		Clearly $U_{N_r,M}$ and $V_{N_r,M}$ are truncated versions of $U_N$ and $V_N$ defined in \eqref{eq:randvar}. 
		
		We now note that by using \cref{as:coeff}, it follows that: 
		\begin{align}\label{eq:uniftruncbd1}
			\sup_{M\ge 1}\frac{1}{N_{r}}\sum_{i=1}^{N_{r}} c_{i,1,M}^2|g(\si_i)^2-t_i^2|\lesssim \frac{1}{N_r}\sum_{i=1}^{N_r} c_i^2\lesssim 1.
		\end{align}
		Also note that by~\cref{as:cmean}, we have $|t_j^{i}-t_j|\leq \Q_{N,2}(i,j)$. By using \eqref{eq:cmean2} and \cref{as:coeff}, we then get:
		\begin{align}\label{eq:uniftruncbd2}
			\sup_{M\ge 1}\frac{1}{N_{r}}\Bigg|\sum_{i,j=1}^{N_{r}} c_{i,1,M} c_{j,1,M}(g(\si_i)-t_i)(t_j^{i}-t_j)\bigg|\lesssim \frac{1}{N_{r}}\lambda_1(\Q_{N,2})\sum_{i=1}^{N_{r}} c_i^2\lesssim 1.
		\end{align}
		Then by Prokhorov's Theorem and a standard diagonal subsequence argument, there exists a bivariate random variable $\mz_M=(P_{1,M},P_{2,M})$ and a common subsequence $\{N_{r_{\ell}}\}_{\ell\geq 1}$ such that:
		\begin{equation}\label{eq:cbd1}\begin{bmatrix}
				(N_{r_{\ell}})^{-1}\sum_{i=1}^{N_{r_{\ell}}} c_{i,1,M}^2(g(\si_i)^2-t_i^2)\\ (N_{r_{\ell}})^{-1}\sum_{i,j=1}^{N_{r_{\ell}}} c_{i,1,M} c_{j,1,M}(g(\si_i)-t_i)(t_j^{i}-t_j)
			\end{bmatrix}\overset{w}{\longrightarrow} \mz_M
		\end{equation}
		for all $M\in \mathbb{N}$. Further, by using \cref{theo:conmainder}, we can without loss of generality ensure that 
		\begin{align}\label{eq:concfollows}
			\E T_{N_{r_{\ell}},M}^k U_{N_{r_{\ell}},M}^{k_1} V_{N_{r_{\ell}},M}^{k_2}\to m_{k,k_1,k_2,M} \end{align}
		for all $M\ge 1$, where 
		$$m_{k,k_1,k_2,M}:=\begin{cases} 0 & \mbox{if}\, k \mbox{ is odd} \\ (k)!!\EE[(P_{1,M}+P_{2,M})^{k/2} P_{1,M}^{k_1} P_{2,M}^{k_2}] & \mbox{if}\, k \mbox{ is even}.\end{cases}$$
		Now we will show that along this same subsequence $\E T_{N_{r_{\ell}}}^k U_N^{k_1} V_N^{k_2} \to m_{k,k_1,k_2}$ for all $k,k_1,k_2\ge 0$. Note that by using the triangle inequality, we can write 
		\begin{align*}
			&\;\;\;\;\big|\E T_{N_{r_{\ell}}}^k U_{N_{r_{\ell}},M}^{k_1} V_{N_{r_{\ell}},M}^{k_2}-m_{k,k_1,k_2}\big| \\ &\le \big|\E T_{N_{r_{\ell}}}^k U_{N_{r_{\ell}}}^{k_1} V_{N_{r_{\ell}}}^{k_2} - \E T_{N_{r_{\ell}},M}^k U_{N_{r_{\ell}},M}^{k_1} V_{N_{r_{\ell}},M}^{k_2}\big| + \big|\E T_{N_{r_{\ell}},M}^k U_{N_{r_{\ell}},M}^{k_1} V_{N_{r_{\ell}},M}^{k_2} - m_{k,k_1,k_2,M}\big| \\ &\qquad + \big|m_{k,k_1,k_2,M}-m_{k,k_1,k_2}\big|. 
		\end{align*}
		By using \eqref{eq:concfollows}, the middle term in the above display converges to $0$ for every fixed $M$ as $\ell\to\infty$. It therefore suffices to show that 
		\begin{align}\label{eq:cbdd4}
			\lim_{M\to\infty}\limsup_{\ell\to\infty} \big|\E T_{N_{r_{\ell}},M}^k U_{N_{r_{\ell}},M}^{k_1} V_{N_{r_{\ell}},M}^{k_2} - m_{k,k_1,k_2,M}\big| = 0, 
		\end{align}
		and 
		\begin{align}\label{eq:cbdd5}
			\lim_{M\to\infty} m_{k,k_1,k_2,M}=m_{k,k_1,k_2}. 
		\end{align}
		\emph{Proof of \eqref{eq:cbdd4}.} Note that by using \cref{lem:auxtail}, part (a), we have: 
		$$\sup_{M,\ell\ge 1}\E |T_{N_{r_{\ell}},M}|^k\lesssim \sup_{M,\ell\ge 1}\left(\frac{1}{N_{r_{\ell}}}\sum_{i=1}^{N_{r_{\ell}}} c_{i,1,M}^2\right)^{\frac{k}{2}} \le \sup_{\ell\ge 1}\left(\frac{1}{N_{r_{\ell}}} \sum_{i=1}^{N_{r_{\ell}}} c_i^2\right)<\infty,$$
		where the last bound uses \cref{as:coeff}. The same argument also shows that $\E |T_{N_{r_{\ell}}}|^k\lesssim 1$, for all $k\ge 1$. Moreover $U_{N_{r_{\ell}},M}$, $V_{N_{r_{\ell}},M}$, $U_{N_{r_{\ell}}}$, and $V_{N_{r_{\ell}}}$ are all bounded by \eqref{eq:uniftruncbd1}, \eqref{eq:uniftruncbd2}, \eqref{eq:boundZ1}, and \eqref{eq:boundz2} respectively. Therefore, to establish \eqref{eq:cbdd4}, it suffices to show that \begin{equation}\label{eq:cbd4}\lim_{M\to\infty}\limsup_{\ell\to\infty} \P\left(|T_{N_{r_{\ell}}}-T_{N_{r_{\ell}},M}|\geq\epsilon\right)=0, 
		\end{equation}
		
		\begin{equation}\label{eq:cbd6}\lim_{M\to\infty}\limsup_{\ell\to\infty}\P\left(\bigg|U_{N_{r_{\ell}},M}-U_{N_{r_{\ell}}}\bigg|\ge \epsilon\right)\overset{P}{\longrightarrow} 0,
		\end{equation}
		
		\begin{equation}\label{eq:cbd7}	\lim_{M\to\infty}\limsup_{\ell\to\infty}\P\left(\bigg|V_{N_{r_{\ell}},M}-V_{N_{r_{\ell}}}\bigg|\ge \epsilon\right)\overset{P}{\longrightarrow} 0,
		\end{equation}
		for all $\epsilon>0$. We will only prove~\eqref{eq:cbd4} and \eqref{eq:cbd7} as the proof of~\eqref{eq:cbd6} follows using similar computations as that of \eqref{eq:cbd6} (and is in fact much simpler). 
		
		Let us begin with the proof of \eqref{eq:cbd4}. To wit, note that by~\cref{lem:auxtail}(a), we have
		\begin{equation}\label{eq:cbd3}
			\E|T_{N_{r_{\ell}}}-T_{N_{r_{\ell}},M}| = (N_{r_{\ell}})^{-1/2}\E\bigg|\sum_{i=1}^{N_{r_l}} c_{i,2,M}(g(\si_i)-t_i)\bigg|\lesssim \sqrt{(N_{r_{\ell}})^{-1}\sum_{i=1}^{N_{r_{\ell}}} c_i^2\ind(|c_i|>M)}\to 0
		\end{equation}
		as $\ell\to\infty$ followed by $M\to\infty$ by~\cref{as:coeff}. This establishes~\eqref{eq:cbd4} by Markov's inequality.
		
		\noindent Next we prove \eqref{eq:cbd7}. Let us first write $c_i c_j-c_{i,1,M} c_{j,1,M}=c_{i,2,M}c_j+c_{i,1,M}c_{j,2,M}$ and recalling that $|t_j^{i}-t_j|\leq \Q_{N,2}(i,j)$ (see~\cref{as:cmean}), we get the following bound
		\begin{align*}
			&\;\;\;\;\big|V_{N_{r_{\ell}},M}-V_{N_{r_{\ell}}}\big| \\ &=\bigg|\frac{1}{N_{r_{\ell}}}\sum_{i,j=1}^{N_{r_{\ell}}} (c_{i,1,M} c_{j,1,M}-c_i c_j)(g(\si_i)-t_i)(t_j^{i}-t_j)\bigg| \\ &\le \frac{1}{N_{r_{\ell}}}\sum_{i,j=1}^{N_{r_{\ell}}} \left(|c_j c_{i,2,M}|+|c_{i,1,M}c_{j,2,M}|\right)\Q_{N,2}(i,j)\\ &\le 2\lambda_1(\Q_{N,2})\sqrt{\frac{1}{N_{r_{\ell}}}\sum_{i=1}^{N_{r_{\ell}}} c_i^2}\sqrt{\frac{1}{N_{r_{\ell}}} \sum_{i=1}^{N_{r_{\ell}}} c_{i}^2\ind(|c_i| > M)}
		\end{align*}
		which converges to $0$ as $\ell\to\infty$ followed by $M\to\infty$ (using~\cref{as:coeff}). This establishes~\eqref{eq:cbd7} by Markov's inequality and completes the proof.
		
		\emph{Proof of \eqref{eq:cbdd5}.} By \eqref{eq:uniftruncbd1} and \eqref{eq:uniftruncbd2}, it suffices to show that \eqref{eq:cbd6} and \eqref{eq:cbd7} hold. This was already proved above.
	\end{proof}
	
	\begin{proof}[Proof of \cref{theo:CLTmain}]
		Suppose that Assumptions \ref{as:coeff} and \ref{as:bddrowsum} hold. By \cref{lem:auxtail}, part (a), we observe that the sequence $\{T_N\}_{N\ge 1}$ is tight. Moreover by \eqref{eq:boundZ1} and \eqref{eq:boundz2}, we also have that the sequences $\{U_N\}_{N\ge 1}$ and $\{V_N\}_{N\ge 1}$ are tight. Therefore, by Prokhorov's Theorem, there exists a subsequence $N_r$ such that 
		$$(U_{N_r} , V_{N_r}) \overset{w}{\longrightarrow} P=(P_1,P_2).$$
		We note that $(P_1,P_2)$ here can depend on the choice of the subsequence. 
		Therefore, along the sequence $\{N_r\}_{r\ge 1}$, \cref{as:empcon} holds as well. We can now apply \cref{theo:conmain} along the sequence $\{N_r\}_{r\ge 1}$ to get that 
		$$(T_{N_r}, U_{N_r} , V_{N_r})\overset{w}{\longrightarrow} (\sqrt{P_1+P_2}Z,P_1,P_2),$$
		where $Z\sim N(0,1)$ is independent of $(P_1,P_2)$. 
		Now by \eqref{eq:bddzero}, we have that $\mathbb{P}(P_1+P_2\ge \eta/2)=1$. Therefore, by applying Slutsky's Theorem, given any sequence of positive reals $\{a_N\}_{N\ge 1}$ converging to $0$, we have 
		$$\frac{T_{N_r}}{\sqrt{(U_{N_r}+V_{N_r})\vee a_{N_r}}} \overset{w}{\longrightarrow} N(0,1).$$
		As this limit is free of the chosen subsequence $\{N_r\}_{r\ge 1}$, the conclusion follows.
	\end{proof}
	
	To summarize, in this Section, we have proved \cref{theo:conmain} using \cref{theo:conmainder}. Then we proved \cref{theo:CLTmain} using \cref{theo:conmain}. Therefore it is now sufficient to prove~\cref{theo:conmainder}, which is the focus of the following section.
	
	\subsection{Proof of~\cref{theo:conmainder}}\label{sec:pftheo}
	Before delving into the proof of~\cref{theo:conmainder}, let us introduce and recall some notation. Given any $n\geq 1$, recall that \begin{equation}\label{eq:pftheo1} (2n)!!:=(2n-1)\times (2n-3) \times \ldots \times 3 \times 1.\end{equation}  Further, given two real-valued sequences $\{a_n\}_{n\geq 1}$, $\{b_n\}_{n\geq 1}$, we say
	\begin{equation}\label{eq:pftheo2}
		a_n\leftrightarrow b_n \quad \mbox{if}\quad \lim_{n\to\infty} |a_n-b_n|=0.
	\end{equation}
	In the same spirit, given two real valued random sequences $\{A_n\}_{n\ge 1}$ and $\{B_n\}_{n\ge 1}$ defined on the same probability space $(\Omega,P)$, we say 
	\begin{equation}\label{eq:pftheo3}
		A_n\overset{P}{\leftrightarrow} B_n \quad \mbox{if}\quad |A_n-B_n| \overset{P}{\rightarrow}0.
	\end{equation}
	Recall the definition of $\ms_{\cS}$ from~\eqref{eq:newsig}. Further, for any function $u:\B^N\to\R$, $\cS\subseteq [N]$, and $\ms\in \B^N$, let $u^S:\B^N\to\R$ denote the function satisfying $u^S(\ms)=u(\ms_S)$. In particular, with $u(\cdot)\equiv t_i(\cdot)$ (see~\eqref{eq:consig2}), we have $u^{\cS}(\cdot)=t_i^{\cS}(\cdot)$ (see~\eqref{eq:consig}). Similarly, for any $\cS_1\subseteq \cS_2\subseteq [N]$, let $u^{S_1;S_2}:\B^N\to\R$ be such that $u^{S_1;S_2}(\ms)=u(\ms_{\cS_1})-u(\ms_{\cS_2})$. For example, $u^{\phi;S}(\ms)=u(\ms)-u(\ms_{\cS})$, for $\cS\subseteq [N]$.
	
	With the above notation in mind, we now define two important notions:
	\begin{definition}[Rank of a function]\label{def:rankfub}
		Given a function $u:\B^N\to\R$, we define the rank of $u(\cdot)$, denoted by $\ns(u)$, as the minimum element in the set $\mathbb{N}\cup \{\infty\}$ such that 
		$$\bigg|\E \bigg[u(\ms)\bigg]\bigg|\le N^{\ns(u)}$$
		where $\ms\sim\P$. For instance, suppose $\P$ is any probability measure  supported on $\{-1,1\}^N$ and let $$u(\ms):=\bigg(\sum\limits_{i=1}^N \sigma_i\bigg)^k,\qquad k\in\mathbb{N}.$$ Then $\ns(u)\leq k$.  
	\end{definition}
	\begin{definition}[Matching]\label{def:matching}
		Fix $n\geq 1$. Given a finite set $A=\{a_1,a_2,\ldots ,a_{2n}\}$, a \textit{matching} on $A$ is a set of unordered pairs $\{(\mathfrak{m}_{1,1},\mathfrak{m}_{1,2}),\ldots (\mathfrak{m}_{n,1},\mathfrak{m}_{n,2})\}$ with elements from $A$ so that  $\mathfrak{m}_{i,1}>\mathfrak{m}_{i,2}$, $\mathfrak{m}_{i,1}> \mathfrak{m}_{j,1}$ and $\mathfrak{m}_{k,\ell}\in A$ are distinct elements for $i<j$, $k\in [n]$, $\ell=\{1,2\}$. Let $\M(A)$ be the set of all \textit{matchings} of $A$. For example,
		$$\M([4])=\{\{(4,1),(3,2)\},\{(4,2),(3,1)\},\{(4,3),(2,1)\}\}.$$ By~\cite[Page 88, Example E8]{Engel1998}, it follows that $|\M(A)|=(2n)!!$ (see~\eqref{eq:pftheo1}).
	\end{definition}
	
	Finally, we define 
	\begin{equation}
		\begin{aligned}\label{eq:reducedex}
			U_{N,\ib,\jb}:=\frac{1}{N}\sum_{k\notin (i_1,\ldots ,i_p,j_1,\ldots ,j_q)} c_k^2(g(\si_k)^2-t_k^2), \\
			V_{N,\ib,\jb}:=\frac{1}{N}\sum_{m\neq k, (m,k)\notin (i_1,\ldots ,i_p,j_1,\ldots ,j_q)} c_m c_k (g(\si_k)-t_k)(t_m^{k}-t_m).
		\end{aligned}
	\end{equation}
	In particular $U_{N,\ib,\jb}$, $V_{N,\ib,\jb}$ are analogs of $U_N$, $V_N$ from \eqref{eq:randvar} after removing the indices $(i_1,\ldots ,i_p,j_1, \ldots ,j_q)$. 
	We now state a lemma which will be useful in proving~\cref{theo:conmainder}. Its proof is deferred to~\cref{sec:pfsurviveterm}.
	\begin{lemma}\label{lem:surviveterm}
		Fix $k,k_1,k_2,p,q\in  \mathbb{N}\cup \{0\}$, and define  
		\begin{align}\label{eq:cpqk}
			\mathcal{C}_{p,q,k}:= \{(\el_1,\el_2,\ldots ,\el_p,q):\el_i\geq 2\ \forall i\in [p],q+\sum_{i=1}^p \el_i=k\}.
		\end{align}
		Also, for $r,N\in \mathbb{N}$ with $r\le N$  set 
		\begin{align}\label{eq:thnr}
			\Theta_{N,r}:=\{(a_1,a_2,\ldots ,a_r)\in [N]^r: a_i\neq a_j\ \forall i\neq j\}
		\end{align}
		to be the set of all distinct $r$ tuples from $[N]^r$.  For any $(i_1,\ldots ,i_p,j_1,\ldots ,j_q)\in\Theta_{N,p+q}$ and $L:=(\el_1,\el_2,\ldots ,\el_p,q)\in \mathcal{C}_{p,q,k}$, set $$h_{i_r}(\ms):=(c_{i_r}(g(\sigma_{i_r})-t_{i_r}))^{\el_r},\,\, \mbox{for}\,\, r\in [p],\quad \mbox{and} \quad \tih_{j_r}(\ms):=c_{j_r}(g(\sigma_{j_r})-t_{j_r})\,\, \mbox{for}\,\, r\in [q].$$Recall the definitions of $U_N,V_N$ from \eqref{eq:randvar} and $U_{N,\ib,\jb}, V_{N,\ib,\jb}$ from \eqref{eq:reducedex}. Next consider the function $f_L(\cdot):\B^N\to\R$ defined as,
		\begin{equation}\label{eq:root}
			f_L(\ms):=\sum_{(\ib,\jb)\in\Theta_{N,p+q}}\left(\prod\limits_{r=1}^p h_{i_r}(\ms)\right)\left(\prod\limits_{r=1}^q \tih_{j_r}(\ms)\right)U_{N,\ib,\jb}^{k_1} V_{N,\ib,\jb}^{k_2}.
		\end{equation}
		Then the following conclusions hold under Assumptions~\ref{as:cmean}~and~\ref{as:coeffalt}: 
		\begin{enumerate}
			\item[(a)] There exists a universal constant $0<C<\infty$ (free of $N$, only depending on the upper bounds in Assumptions \ref{as:cmean}~and~\ref{as:coeffalt}) such that the rank of $C^{-1}f_L(\cdot)$, i.e., $\ns(C^{-1} f_L)\leq\floor{\frac{k-1}{2}}$,  if either $\exists\ i$ such that $ \el_i>2$, or if $q$ is an odd number.
			\item[(b)] Suppose $q$ is an even number and $\el_i=2$ for $1\leq i\leq p$. Consider the function $\tf_L(\cdot):\B^N\to\R$ given by,
			\begin{align*}
				\tf_L(\ms)&:=\sum_{\mathfrak{m}\in\M([q])} \sum\limits_{(\ib,\jb)\in \Theta_{N,p+q}}\left(\prod_{r=1}^p c_{i_r}^2\left(g(\sigma_{i_r})-t_{i_r}\right)^2\right)\Bigg(\prod_{r=1}^{q/2} \Bigg(c_{j_{\mathfrak{m}_{r,1}}} c_{j_{\mathfrak{m}_{r,2}}}\nonumber \\ & \Big(g(\sigma_{j_{\mathfrak{m}_{r,1}}})- t_{j_{\mathfrak{m}_{r,1}}}\Big) \Big(t_{j_{\mathfrak{m}_{r,2}}}^{j_{\mathfrak{m}_{r,1}}}-t_{j_{\mathfrak{m}_{r,2}}}\Big)\Bigg)\Bigg)U_N^{k_1} V_N^{k_2}.
			\end{align*}
			Then $\E [N^{-k/2}f_L(\ms)] \lra \E[N^{-k/2}\tf_L(\ms)]$ (according to~\eqref{eq:pftheo2}).
		\end{enumerate}
	\end{lemma}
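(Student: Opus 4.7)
The plan is a method-of-moments calculation driven by a decision-tree expansion of each summand of $\EE[f_L(\ms)]$. The key identity is that, for any index $i$ and any function $F$,
\begin{equation*}
\EE\big[(g(\si_i)-t_i)\,F(\ms)\big] \;=\; \EE\big[(g(\si_i)-t_i)\,\big(F(\ms)-F(\ms_{\{i\}})\big)\big],
\end{equation*}
because $F(\ms_{\{i\}})$ is measurable with respect to $\{\si_j : j \ne i\}$ and $\EE[g(\si_i)-t_i \mid \si_j,\, j\ne i]=0$. The residual $F(\ms) - F(\ms_{\{i\}})$ can be further decomposed into sums of discrete derivatives of the conditional means $t_j$, each of which is bounded by an entry of a tensor $\Q_{N,k}$ by \cref{as:cmean}, and whose maximum row sums are uniformly bounded by \eqref{eq:cmean2}. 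Applying this identity iteratively to every $\tih_{j_r}$ and to each of the $\el_r$ conditionally centered copies inside $h_{i_r}$ generates a tree of sub-expectations, to be formalized by two explicit decision-tree construction algorithms. Every leaf is either exactly zero (when conditional centering annihilates it) or is bounded by a product of $\Q_{N,k}$ entries against a partially collapsed index sum; together with the concentration bound in \cref{lem:auxtail}, this yields a clean rank estimate for that branch.

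For part (a), first suppose some $\el_r > 2$. Then the factor $(g(\si_{i_r})-t_{i_r})^{\el_r}$ carries $\el_r - 2 \geq 1$ excess conditionally centered copies that cannot be absorbed into a variance; each must be processed by the tree and paired with an index from elsewhere in the product, consuming a free summation index while only gaining a bounded row-sum factor. The net count shows the surviving rank drops below $k/2$. Similarly, if $q$ is odd, the tree cannot pair off all of $\tih_{j_1},\ldots,\tih_{j_q}$ among themselves, so at least one factor must instead be paired into $U_{N,\ib,\jb}^{k_1}$ or $V_{N,\ib,\jb}^{k_2}$; this cross-pairing again costs a power of $N$ relative to the all-internal matching case. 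A uniform bookkeeping over all tree leaves yields the claimed bound $\ns(C^{-1} f_L) \leq \lfloor (k-1)/2 \rfloor$.

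For part (b), when $q$ is even and $\el_r = 2$ throughout, the only surviving branches of the decision tree under the $N^{-k/2}$ normalization are those that internally pair the $q$ factors $\tih_{j_1},\ldots,\tih_{j_q}$ into $q/2$ matched couples through discrete derivatives of the form $(t_{j_{\mathfrak{m}_{r,2}}}^{j_{\mathfrak{m}_{r,1}}} - t_{j_{\mathfrak{m}_{r,2}}})$. Summing such contributions over $\mathfrak{m}\in\M([q])$ reproduces exactly the expression defining $\tf_L(\ms)$, while every non-matching branch is of size $o(N^{k/2})$ by the same accounting as in part (a), giving $\EE[N^{-k/2}f_L]\lra\EE[N^{-k/2}\tf_L]$. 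The main obstacle will be this combinatorial pruning: the tree has exponentially many leaves, so one must argue uniformly that every non-matching branch is subcritical under the $N^{-k/2}$ rescaling, while simultaneously handling the $U_{N,\ib,\jb}^{k_1} V_{N,\ib,\jb}^{k_2}$ factors, which can either be ``frozen'' to $U_N^{k_1} V_N^{k_2}$ up to negligible error or, in certain branches, participate in the pairings themselves. The interplay between the concentration in \cref{lem:auxtail} and the row-sum bound in \eqref{eq:cmean2} supplies the analytic leverage required to carry out this accounting.
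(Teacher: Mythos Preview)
Your high-level strategy---build a decision tree by iterated conditional centering, then prune branches by rank bounds coming from \cref{as:cmean}---is exactly the paper's approach, and your description of part (b) (surviving leaves are precisely the matchings, everything else is $o(N^{k/2})$) is correct.

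However, two details of your plan for part (a) diverge from what actually works. First, the tree iterates \emph{only} on the $j_r$-indices, never on ``copies inside $h_{i_r}$''. The centering identity you wrote requires the $\iota$-factor to be \emph{linear} in $g(\si_\iota)-t_\iota$ so that its conditional expectation vanishes; when $\ell_r\ge 2$ one has $\EE[(g(\si_{i_r})-t_{i_r})^{\ell_r}\mid \si_j,\,j\ne i_r]\ne 0$, so there is no way to ``process $\ell_r-2$ excess copies'' via the tree. In the paper the $h_{i_r}$ factors are tracked passively (through index sets $\cD$) and bounded pointwise by \cref{lem:supbound}; the iteration variable $\iota$ always comes from $\{j_1,\dots,j_q\}$.

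Second, the mechanism that kills the $\ell_r>2$ and odd-$q$ cases is pure counting, not additional pairing. For every leaf the paper proves (\cref{prop:degbound}) that the rank is at most $p+q-\max(T,\,q-T)$, where $2T$ is the depth of that branch; this is always $\le p+q/2$. When some $\ell_r>2$ the inequality $k/2=(q+\sum_r\ell_r)/2>p+q/2$ is strict, so every leaf has rank $<k/2$ with no further work. When $q$ is odd, $T\ne q/2$ forces $\max(T,\,q-T)>q/2$, and again rank $<p+q/2\le k/2$. Your story about a leftover $j$-factor being forced to pair into $U_{N,\ib,\jb}$ or $V_{N,\ib,\jb}$ describes a different (and subcritical) family of leaves---handled separately in \cref{lem:negterm}(v)--(vi)---but it is not the mechanism that drives part (a).
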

	It is important to note that~\cref{lem:surviveterm} holds without the empirical convergence condition as in~\cref{as:empcon}. We now outline why \cref{lem:surviveterm} is useful for proving \cref{theo:conmainder}. For $k\in\mathbb{N}$, define 
	\begin{align}\label{eq:unag}
		\mathcal{C}_k:= \cup_{p,q\in\mathbb{N}\cup\{0\}}\mathcal{C}_{p,q,k}.
	\end{align}
	Next, we observe that by a standard multinomial expansion we have: 
	\begin{equation}\label{eq:conmain1}
		\E(T_N^k)=\frac{1}{N^{k/2}}\E\Bigg[\sum_{L=(\el_1,\ldots ,\el_p,q) \in \mathcal{C}_k}D(L)f_L(\ms)\Bigg]
	\end{equation}
	where $D(L)\equiv D(\el_1,\ldots ,\el_p,q)$ denotes the coefficient of the term $\left(\prod\limits_{r=1}^p (c_r(g(\sigma_r)-t_r))^{\el_r}\right)\left(\prod\limits_{r=1}^q (c_{p+r}(g(\sigma_{p+r})-t_{p+r}))\right)$ in~\eqref{eq:conmain1}. It is possible to write out $D(\ell_1,\ldots ,\ell_p,q)$ in terms of standard multinomial coefficients, but that is not necessary for the proof for general $(\ell_1,\ldots ,\ell_p,q)$, so we avoid including it here. Next, we make the following simple note.
	\begin{observation}\label{obs:conmain1}
		The outer sum in \eqref{eq:conmain1} is a sum over finitely many indices $|\mathcal{C}_k|$ (depending only on $k$) and $\sup_{L\in\mathcal{C}_k} D(L)$ is finite (depending only on $k$). 
	\end{observation}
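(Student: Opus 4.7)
The plan is to establish both claims by elementary enumeration arguments using the constraints that define $\mathcal{C}_{p,q,k}$.

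For the first claim, namely finiteness of $|\mathcal{C}_k|$, I will observe that for any $(\el_1,\ldots,\el_p,q)\in\mathcal{C}_{p,q,k}$ the requirements $\el_i\geq 2$ and $q\geq 0$ force $2p\leq \sum_{i=1}^p \el_i\leq k$, hence $0\leq p\leq \lfloor k/2\rfloor$. Since $p$ is thus bounded by a function of $k$ alone, only finitely many values of $p$ contribute to the union defining $\mathcal{C}_k$. For each admissible $p$, the value of $q$ is also determined up to finitely many choices ($0\leq q\leq k$), and for a fixed pair $(p,q)$ the set $\mathcal{C}_{p,q,k}$ consists of ordered compositions of $k-q$ into $p$ parts each $\geq 2$, of which there are at most $\binom{k}{p}$. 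Summing these bounds over $p\leq k/2$ and $q\leq k$ yields $|\mathcal{C}_k|\leq (k+1)\cdot 2^k$, which depends only on $k$.

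For the second claim, I will identify $D(L)$ with the natural multinomial bookkeeping constant that arises when the $k$-fold product $T_N^k=N^{-k/2}\bigl(\sum_i c_i(g(\si_i)-t_i)\bigr)^k$ is expanded, grouped by the \emph{multiplicity profile} of the indices, and then rewritten as a sum over ordered distinct tuples $(\ib,\jb)\in\Theta_{N,p+q}$. Concretely, for a fixed profile $L=(\el_1,\ldots,\el_p,q)\in\mathcal{C}_{p,q,k}$, the number of ordered $k$-tuples $(r_1,\ldots,r_k)\in[N]^k$ whose multiset realises $L$ when labelled by a given $(\ib,\jb)\in\Theta_{N,p+q}$ is the multinomial coefficient $\binom{k}{\el_1,\ldots,\el_p,\underbrace{1,\ldots,1}_{q}}$. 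Summing over $\Theta_{N,p+q}$ overcounts each underlying multiset by the factor $q!\prod_{m\geq 2}(\#\{r:\el_r=m\})!$ coming from relabelling of equal-multiplicity components, so
\[
D(L)=\frac{k!}{\bigl(\prod_{r=1}^{p}\el_r!\bigr)\,q!\,\prod_{m\geq 2}(\#\{r:\el_r=m\})!}.
\]
Since every factor in the denominator is a positive integer $\geq 1$, the bound $D(L)\leq k!$ holds uniformly over $L\in\mathcal{C}_k$, giving $\sup_{L\in\mathcal{C}_k} D(L)\leq k!$, which depends only on $k$.

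There is no real obstacle here; the observation is combinatorial and is recorded purely so that later in the proof of \cref{theo:conmainder} one may pass the outer sum in \eqref{eq:conmain1} past any bound of the form $\E N^{-k/2}f_L(\ms)=O(N^{-1/2})$ or $\E N^{-k/2}f_L(\ms)\to m_{k,k_1,k_2}$ and conclude the analogous statement for $\E T_N^k U_N^{k_1}V_N^{k_2}$. The only minor care needed is to verify that the natural identification of $D(L)$ with the multinomial bookkeeping factor is consistent with the convention adopted in \eqref{eq:root} that $f_L$ sums over \emph{ordered} distinct tuples $(\ib,\jb)\in\Theta_{N,p+q}$ rather than over unordered index multisets; this is handled by the symmetry factor $q!\prod_m(\#\{r:\el_r=m\})!$ appearing in the denominator above.
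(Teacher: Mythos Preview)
Your proposal is correct. The paper does not give a formal proof of this observation; it is recorded as self-evident, with the remark that ``it is possible to write out $D(\el_1,\ldots,\el_p,q)$ in terms of standard multinomial coefficients, but that is not necessary for the proof for general $(\el_1,\ldots,\el_p,q)$, so we avoid including it here.'' You go further than the paper by writing out the explicit formula
\[
D(L)=\frac{k!}{\bigl(\prod_{r=1}^{p}\el_r!\bigr)\,q!\,\prod_{m\geq 2}(\#\{r:\el_r=m\})!},
\]
which is consistent with the one special case the paper does compute, namely $D(\underbrace{2,\ldots,2}_{p},q)=\frac{1}{p!}\binom{k}{2}\binom{k-2}{2}\cdots\binom{k-2p+2}{2}=\frac{k!}{2^p\,p!\,q!}$. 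Your crude bounds $|\mathcal{C}_k|\leq (k+1)2^k$ and $D(L)\leq k!$ are more than sufficient for the intended use of the observation.
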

	First suppose that $L=(\ell_1,\ldots ,\ell_p,q)$ where there exists some $\ell_i>2$. Then for all such summands, \cref{lem:surviveterm} yields that 
	$$\frac{1}{N^{k/2}}\E|f_L(\ms)|\lesssim \frac{N^{\lfloor \frac{k-1}{2}\rfloor}}{N^{k/2}} \to 0.$$
	The same comment also applies if $q$ is odd. By \cref{obs:conmain1}, the total contribution of all such summands is therefore asymptotically negligible. The only case left is to consider $L\in\mathcal{C}_k$ of the form 
	\begin{align}\label{eq:intuitsurvive}
		(\underbrace{2,2,\ldots ,2}_{p\,\mbox{times}},q),
	\end{align}
	where $q$ is even and $2p+q=k$. \cref{lem:surviveterm}, part (b), now implies that in all such summands, we can replace $f_L$ by $\tilde{f}_L$. The argument now boils down to calculating $D(L)$ and finding the limit of $N^{-k/2}\E \tilde{f}_L(\ms)$ for all $L\in\mathcal{C}_k$ of the same form as \eqref{eq:intuitsurvive}. Using a simple combinatorial argument, it is easy to check that the quantity $D(\underbrace{2,2,\ldots ,2}_{p\textrm{ times}},q)$ with $p=(k-q)/2$ is given by:
	\begin{equation}\label{eq:conmain10}
		\frac{1}{p!}\left[\binom{k}{2}\cdot \binom{k-2}{2}\cdot \binom{k-4}{2} \ldots \binom{k-2p+2}{2}\right].
	\end{equation} 
	The limit of $\tilde{f}_L$ is derived from \cref{as:empcon} and \cref{lem:auxtail}. The formal steps for the proof are provided below.
	
	\begin{proof}[Proof of \cref{theo:conmainder}]
		
		We break the proof into two cases.
		
		\noindent \emph{(a) $k$ is odd}: Let us define,
		$$\tilde{\mathcal{C}}_k:=\cup_{p,q\in\mathbb{N}\cup\{0\},\ q\textrm{ is even}}\left\{(\el_1,\el_2,\ldots ,\el_p,q): \el_i=2\ \forall i\in [p], \sum_{i=1}^p \el_i+q=k\right\}.$$
		Fix any $\tilde{L}=(\el_1,\el_2,\ldots ,\el_p,q)\in\mathcal{C}_k$ satisfying $\sum_{i=1}^p \el_i+q=k$. As $k$ is odd, either (i) $q$ is odd or (ii) $\sum_{i=1}^p \el_i$ is odd which in turn implies that there exists $i\in [p]$ such that $\el_i\geq 3$. Therefore any such $\tilde{L}$ belongs to $\mathcal{C}_k\setminus \tilde{\mathcal{C}}_k$. Using~\cref{lem:surviveterm}(a),~\cref{obs:conmain1}~and the expansion in~\eqref{eq:conmain1}, we consequently get:
		\begin{equation}\label{eq:conmain2}
			|\E(T_N^k)|\leq \frac{1}{N^{k/2}}\sum_{\substack{L=(\el_1,\ldots ,\el_p,q)\\ \in \mathcal{C}_k\setminus\tilde{\mathcal{C}}_k}}D(L)|\E f_L(\ms)|\lesssim \frac{N^{\floor{(k-1)/2}}}{N^{k/2}}.
		\end{equation}
		The right hand side above converges to $0$ as $N\to\infty$ which implies $\E (T_N^k)\overset{N\to\infty}{\longrightarrow}0$.
		
		\noindent \emph{(b) $k$ is even}: 
		
		Recall the notion of $\lra$ from~\eqref{eq:pftheo2} and $\overset{P}{\leftrightarrow}\,\equiv\, \overset{\P}{\leftrightarrow}$ from \eqref{eq:pftheo3}. Using~\eqref{eq:conmain1}, we have:
		$$\E(T_N^k)=\frac{1}{N^{k/2}}\E\Bigg[\sum_{L=(\el_1,\ldots ,\el_p,q) \in \tilde{\mathcal{C}}_k}D(L)f_L(\ms)\Bigg]+\frac{1}{N^{k/2}}\E\Bigg[\sum_{L=(\el_1,\ldots ,\el_p,q)\in \mathcal{C}_k\setminus\tilde{\mathcal{C}}_k}D(L)f_L(\ms)\Bigg].$$
		The second term in the above display converges to $0$ as $N\to\infty$ by~\eqref{eq:conmain2}. Then~\cref{lem:surviveterm}(b) implies that,
		\begin{align}\label{eq:conmain3}
			\E(T_N^k)&\lra\frac{1}{N^{k/2}}\E\Bigg[\sum_{L=(\el_1,\ldots ,\el_p,q) \in \tilde{\mathcal{C}}_k}D(L)f_L(\ms)\Bigg] \nonumber \\ &\lra \frac{1}{N^{k/2}}\E\Bigg[\sum_{L=(\el_1,\ldots ,\el_p,q) \in \tilde{\mathcal{C}}_k}D(L)\tf_L(\ms)\Bigg].
		\end{align}
		As $q$ is even for $(\el_1,\ldots ,\el_p,q) \in \tilde{\mathcal{C}}_k$, we have  $|\M([q])|=q!!$ (see~\cref{def:matching}). Recall the expression of $\tf_L(\ms)$ from~\cref{lem:surviveterm}(b). By symmetry, we have:
		\begin{align}\label{eq:conmain4}
			N^{-k/2}\E \tf_L(\ms)&=q!! N^{-k/2}\sum\limits_{(\ib,\jb)\in \Theta_{N,p+q}}\E\Bigg[\left(\prod_{r=1}^p c_{i_r}^2(g(\si_{i_r})-t_{i_r})^2\right)\Bigg(\prod_{r=1}^{q/2} \Bigg( c_{j_{2r-1}} c_{j_{2r}}\nonumber \\ &\qquad\qquad \Big(g(\sigma_{j_{2r-1}})-t_{j_{2r-1}}\Big) \Big(t_{j_{2r}}^{j_{2r-1}}-t_{j_{2r}}\Big)\Bigg)U_N^{k_1} V_N^{k_2}\Bigg].
		\end{align}
		
		Recall from \eqref{eq:boundZ1} and \eqref{eq:boundz2} that 
		\begin{equation}\label{eq:conmain6}
			\sum_{i_1} \frac{c_{i_1}^2(g(\si_{i_1})-t_{i_1})^2}{N}\lesssim 1, \quad \sum_{j_1,j_2} \frac{c_{j_1}c_{j_2} \big|g(\sigma_{j_1})-t_{j_1}\big| \big|t_{j_{2}}^{j_1}-t_{j_2}\big|}{N}\lesssim 1.
		\end{equation}
		Also, on the set $\tilde{\mathcal{C}}_k$, we clearly have $k/2=p+q/2$. As $U_N,V_N$ are uniformly bounded, therefore, \eqref{eq:conmain6} implies that 
		\begin{align}\label{eq:conmainunif}
			&\;\;\;\;\frac{1}{N^{k/2}}\sum\limits_{\substack{(i_1,\ldots ,i_p,\\ j_1,\ldots ,j_q)\in \Theta_{N,p+q}}}\left(\prod_{r=1}^p c_{i_r}^2(g(\si_{i_r})-t_{i_r})^2\right)\Bigg(\prod_{r=1}^{q/2} \Bigg(c_{j_{2r-1}}c_{j_{2r}} \big|g(\sigma_{j_{2r-1}})-t_{j_{2r-1}}\big| \big|t_{j_{2r}}^{j_{2r-1}}-t_{j_{2r}}\big|\Bigg)\Bigg)U_N^{k_1}V_N^{k_2} 
			\nonumber \\ &\le \left(\prod_{r=1}^p\left(\frac{1}{N}\sum_{i_r}c_{i_r}^2(g(\sigma_{i_r})-t_{i_r})^2\right)\right)\left(\prod_{r=1}^{q/2}\left(\frac{1}{N}\sum_{j_{2r-1},j_{2r}}c_{j_{2r-1}}c_{j_{2r}}\big|g(\sigma_{j_{2r-1}})-t_{j_{2r-1}}\big|\big|t_{j_{2r}}^{j_{2r-1}}-t_{j_{2r}}\big|\right)\right)U_N^{k_1}V_N^{k_2} \nonumber \\ &\lesssim 1.
		\end{align}
		Therefore the random variable on the left hand side of \eqref{eq:conmainunif} is uniformly bounded. So, to study the limit of its expectation as in \eqref{eq:conmain4}, by appealing to the dominated convergence theorem, it suffices to study its weak limit. In this spirit, we will now prove the following:    
		\begin{align}\label{eq:conmain7}
			&\;\;\;\;\sum\limits_{(\ib,\jb)\in \Theta_{N,p+q}}\left(\prod_{r=1}^p \frac{c_{i_r}^2(g(\si_{i_r})-t_{i_r})^2}{N}\right)\Bigg(\prod_{r=1}^{q/2} \Bigg(\frac{c_{j_{2r-1}}c_{j_{2r}} \Big(g(\sigma_{j_{2r-1}})-t_{j_{2r-1}}\Big) \Big(t_{j_{2r}}^{j_{2r-1}}-t_{j_{2r}}\Big)}{N}\Bigg)\Bigg)U_N^{k_1}V_N^{k_2} \nonumber \\ & \overset{w}{\rightarrow} P_1^{p+k_1} P_2^{q/2+k_2}, 
		\end{align}
		where $(P_1,P_2)$ are defined in \cref{as:empcon}. 
		
		To address the weak limit in ~\eqref{eq:conmain7}, we first note the following identity:
		$$|c_i^2(g(\si)-t_i)^2-c_i^2(g(\si_i)^2-t_i^2)|\lesssim |(g(\si_i)-t_i)t_i|$$
		by using \cref{as:coeffalt}. 
		Therefore, by~\cref{lem:auxtail}(b), we have:
		\begin{align}\label{eq:onestepequiv}
			\frac{1}{N}\E\Bigg|\sum_{i_1} c_{i_1}^2(g(\si_{i_1})-t_{i_1})^2-\sum_{i_1} c_{i_1}^2 (g(\si_{i_1})^2-t_{i_1}^2)\Bigg|\lesssim \frac{1}{N}\E\Bigg|\sum_{i_1} (g(\si_{i_1})-t_{i_1})t_{i_1}\Bigg|\longrightarrow 0.
		\end{align}
		Combining the above observation with~\eqref{eq:conmain6}, we observe that 
		\begin{align*}
			&\;\;\;\frac{1}{N^{k/2}}\sum\limits_{\substack{(i_1,\ldots ,i_p,\\ j_1,\ldots ,j_q)\in \Theta_{N,p+q}}}\left(\prod_{r=1}^p c_{i_r}^2(g(\si_{i_r})-t_{i_r})^2\right)\Bigg(\prod_{r=1}^{q/2} \Bigg(c_{j_{2r-1}}c_{j_{2r}} \big(g(\sigma_{j_{2r-1}})-t_{j_{2r-1}}\big) \big(t_{j_{2r}}^{j_{2r-1}}-t_{j_{2r}}\big)\Bigg)\Bigg)U_N^{k_1}V_N^{k_2} \\ &\leftrightarrow 
			\left(\prod_{r=1}^p\left(\frac{1}{N}\sum_{i_r}c_{i_r}^2(g(\sigma_{i_r})-t_{i_r})^2\right)\right)\left(\prod_{r=1}^{q/2}\left(\frac{1}{N}\sum_{j_{2r-1},j_{2r}}c_{j_{2r-1}}c_{j_{2r}}\big(g(\sigma_{j_{2r-1}})-t_{j_{2r-1}}\big)\big(t_{j_{2r}}^{j_{2r-1}}-t_{j_{2r}}\big)\right)\right)U_N^{k_1}V_N^{k_2} \\ &\overset{\P}{\leftrightarrow} \left(\prod_{r=1}^p\left(\frac{1}{N}\sum_{i_r}c_{i_r}^2(g(\sigma_{i_r})^2-t_{i_r}^2)\right)\right)\left(\prod_{r=1}^{q/2}\left(\frac{1}{N}\sum_{j_{2r-1},j_{2r}}c_{j_{2r-1}}c_{j_{2r}}\big(g(\sigma_{j_{2r-1}})-t_{j_{2r-1}}\big)\big(t_{j_{2r}}^{j_{2r-1}}-t_{j_{2r}}\big)\right)\right)U_N^{k_1}V_N^{k_2} \\ &\overset{w}{\longrightarrow} P_1^{p+k_1} P_2^{q/2+k_2}.
		\end{align*}
		Here the first equivalence follows from \eqref{eq:conmain6}, the second equivalence follows from \eqref{eq:onestepequiv}, and the final weak limit follows from a direct application of \cref{as:empcon}. This establishes \eqref{eq:conmain7}. 
		
		\vspace{0.1in}
		
		Let us now put the pieces together by studying the limit of the expectation in \eqref{eq:conmain3}. First we recall the identity involving $\tilde{f}_L(\ms)$ in \eqref{eq:conmain4}. By using the dominated convergence theorem along with \eqref{eq:conmain7}, we get:
		
		$$\frac{1}{N^{k/2}}\E [\tf_L(\ms)]\overset{N\to\infty}{\longrightarrow} q!! \EE\left[P_1^{p+k_1} P_2^{q/2+k_2}\right]$$
		for $L=(\el_1,\el_2,\ldots ,\el_p,q)\in \tilde{\mathcal{C}}_k$. Plugging the above observation in~\eqref{eq:conmain3}, we then get:
		\begin{align}\label{eq:conmain9}
			\E(T_N^k)\overset{N\to\infty}{\longrightarrow}\sum_{l=(l_1,l_2,\ldots ,l_p,q)\in \tilde{\mathcal{C}}_k} D(L)q!! \EE\left[P_1^p P_2^{q/2}\right].
		\end{align}
		
		Using~\eqref{eq:conmain10} and the identity  $2p+q=k$, we further get:
		\begin{equation}\label{eq:conmain11}
			q!!D(L)=\frac{1}{p!(q/2)!}\left[\binom{k}{2}\cdot \binom{k-2}{2}\ldots \binom{k-2p+2}{2}\cdot \binom{q}{2}\cdot \binom{q-2}{2}\ldots \binom{2}{2}\right]=k!!\cdot \binom{k/2}{q/2}.
		\end{equation}
		Finally, combining~\eqref{eq:conmain9},~\eqref{eq:conmain11}, and the identity $2p+q=k$, we get:
		\begin{align*}
			\E(T_N^{k})&\overset{N\to\infty}{\longrightarrow}k!!\cdot \EE\left[P_1^{k_1} P_2^{k_2} \sum_{q=0,\ q\textrm{ is even}}^k \cdot \binom{k/2}{q/2}P_1^{(k-q)/2} P_2^{q/2}\right]\\ &=k!!\EE\left[P_1^{k_1}P_2^{k_2}\sum_{r=0}^{k/2}\binom{k/2}{r}P_1^{(k/2-r)} P_2^{r}\right]=k!!\cdot \EE[(P_1 + P_2)^{k/2}P_1^{k_1}P_2^{k_2}].
		\end{align*}	
		This completes the proof.
	\end{proof}
	
	\subsection{Proof of~\cref{lem:smoothcont}}
	
	In order to prove \cref{lem:smoothcont}, we will use the following discrete Fa\`{a} Di Bruno type formula (see \cite{faa1855sullo}) whose proof is provided alongside the statement. 
	
	\begin{lemma}\label{cl:smoothclaim2}
		Set $\cS_k=\{j_1,j_2,\ldots ,j_k\}\subseteq [N]^k$, $k\ge 1$. Consider an arbitrary function $w:\B^N\to\R$. Suppose that the function $f:\R\to [-1,1]$ has $k$ continuous and uniformly bounded derivatives. Then we have:
		\begin{align}\label{eq:smoothcont1}
			&\;\;\;\;\De(f\circ w;\tcS;\cS_{\tk})\nonumber \\&=\sum_{D\subseteq \cS_{\tk}\setminus \{j_1\}}\int_0^1 \De(w;D\cup \tcS;\cS_{\tk}\setminus D)\De(f'(w^{j_1}+z(w-w^{j_1}));\tcS;D)\,dz,
		\end{align}
		for all $1\leq\tk\leq k$ and all $\tcS\subseteq [N]$ such that $\tcS\cap \cS_{k}=\phi$.
	\end{lemma}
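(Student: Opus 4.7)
My plan is to derive this discrete chain-rule identity by combining the one-dimensional fundamental theorem of calculus along the coordinate $j_1$ with a discrete Leibniz rule for the operator $\De$. The proof will rest on two purely algebraic facts about $\De$ that I would record first. The \emph{splitting recursion}, namely $\De(\eta;A;B)=\De(\eta;A;B\setminus\{j\})-\De(\eta;A\cup\{j\};B\setminus\{j\})$ for $j\in B\setminus A$, follows at once from the definition by separating the sum over $D\subseteq B$ into the subsums $j\in D$ and $j\notin D$; two immediate consequences are that $\De$ is linear in its first argument and that $\De(\eta;A\cup\{j\};B)=\De(\eta^{j};A;B)$ whenever $j\notin A\cup B$. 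The \emph{discrete Leibniz rule},
\[
\De(uv;A;B)\;=\;\sum_{E\subseteq B}\De(u;A;E)\,\De(v;A\cup E;B\setminus E),
\]
I would then prove by induction on $|B|$, the base $|B|=0$ being trivial and the inductive step a one-line computation that applies the splitting recursion once to each side.

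Next, for any $A\subseteq[N]$ with $j_1\notin A$, the fundamental theorem of calculus (legitimate by the smoothness of $f$) gives
\[
f(w(\ms_A))-f(w^{j_1}(\ms_A))\;=\;(w-w^{j_1})(\ms_A)\int_0^1 f'\!\bigl(w^{j_1}(\ms_A)+z(w-w^{j_1})(\ms_A)\bigr)\,dz.
\]
I would take the alternating sum of this identity over $D\subseteq\cS_{\tk}\setminus\{j_1\}$ with $A=\tcS\cup D$. On the left, the two groups of terms assemble, via the splitting recursion, into $\De(f\circ w;\tcS;\cS_{\tk}\setminus\{j_1\})-\De(f\circ w;\tcS\cup\{j_1\};\cS_{\tk}\setminus\{j_1\})=\De(f\circ w;\tcS;\cS_{\tk})$, which is exactly the left-hand side of~\eqref{eq:smoothcont1}.

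On the right, after Fubini I would apply the Leibniz rule to $\De(G_zH;\tcS;\cS_{\tk}\setminus\{j_1\})$, where $G_z:=f'(w^{j_1}+z(w-w^{j_1}))$ and $H:=w-w^{j_1}$. This produces $\int_0^1\sum_{D\subseteq \cS_{\tk}\setminus\{j_1\}}\De(G_z;\tcS;D)\,\De(H;\tcS\cup D;(\cS_{\tk}\setminus\{j_1\})\setminus D)\,dz$. The final step is to recognize that the $H$-factor is the one asked for in~\eqref{eq:smoothcont1}: expanding $H=w-w^{j_1}$, using linearity of $\De$ and the identity $\De(w^{j_1};\tcS\cup D;\,\cdot\,)=\De(w;\tcS\cup D\cup\{j_1\};\,\cdot\,)$, and then reapplying the splitting recursion ``in reverse'' to reabsorb $j_1$ into the alternation set, one obtains $\De(H;\tcS\cup D;(\cS_{\tk}\setminus\{j_1\})\setminus D)=\De(w;\tcS\cup D;\cS_{\tk}\setminus D)$. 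Substituting this back produces~\eqref{eq:smoothcont1} verbatim.

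The only ingredient that is not a direct manipulation of the definition of $\De$ is the discrete Leibniz rule, and even that is a short induction. I therefore expect the main obstacle to be organizational rather than conceptual: one must carefully track which indices are frozen at $b_0$ (the $A$-argument of $\De$) versus alternated over (the $B$-argument), and ensure that the index $j_1$ migrates between the two sets exactly as dictated by the splitting recursion. This index bookkeeping, rather than any single deep step, is the point that will require real care.
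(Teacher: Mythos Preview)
Your argument is correct and in fact somewhat cleaner than the paper's. The paper proves the identity by a direct induction on $\tk$: the base case $\tk=1$ is exactly your fundamental-theorem-of-calculus step, and the inductive step peels off $j_{\sk+1}$ via the splitting recursion (your first ``purely algebraic fact,'' which the paper states and proves as \eqref{eq:smoothcont4}), applies the inductive hypothesis to the two resulting pieces, and then reassembles them using \eqref{eq:smoothcont4} twice more---once on the $w$-factor and once on the $f'$-factor. Your route is different: you isolate a discrete Leibniz rule $\De(uv;A;B)=\sum_{E\subseteq B}\De(u;A;E)\,\De(v;A\cup E;B\setminus E)$ as a stand-alone lemma, prove it by its own short induction, and then obtain \eqref{eq:smoothcont1} in one shot by applying FTC pointwise, summing, and multiplying out. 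The induction is thus pushed into the Leibniz rule rather than the main identity. What you gain is a more modular proof (the Leibniz rule is reusable and makes the chain-rule structure transparent); what the paper gains is that it never needs to state or verify the full product rule, only the single-index splitting identity. Both proofs have the same essential content, and your final ``reabsorb $j_1$'' step is exactly the inverse of the peeling the paper does in its inductive step.
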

	
	\begin{proof}
		This proof proceeds through induction. 
		
		\emph{$\tk=1$ case.} In this case, the LHS of~\eqref{eq:smoothcont1} is $\De(f\circ w;\tcS;\{j_1\})=f(w^{\tcS})-f(w^{\tcS\cup\{j_1\}})$. Now by the Fundamental Theorem of Calculus, it is easy to check that
		\begin{equation}\label{eq:smoothcont2}
			f(w^{\tcS})-f(w^{\tcS\cup\{j_1\}})=\int_0^1 (w^{\tcS}-w^{\tcS\cup\{j_1\}})\ f'(w^{\tcS\cup\{j_1\}}+z(w^{\tcS}-w^{\tcS\cup\{j_1\}}))\,dz.
		\end{equation}
		Next observe that in the RHS of~\eqref{eq:smoothcont1}, when $\tk=1$, the only permissible choice of $D$ in the summation is $D=\phi$. In this case, $$\De(w;\tcS;\cS_{\tk})=\De(w;\tcS;\{j_1\})=w^{\tcS}-w^{\tcS\cup\{j_1\}}$$ and $$\De(f'(w^{j_1}+z(w-w^{j_1}));\tcS;D)=\De(f'(w^{j_1}+z(w-w^{j_1}));\tcS;\phi)=f'(w^{\tcS\cup\{j_1\}}+z(w^{\tcS}-w^{\tcS\cup\{j_1\}})).$$ Plugging these observations into the RHS of~\eqref{eq:smoothcont1} immediately yields that~\eqref{eq:smoothcont1} holds for $\tk=1$.
		
		\emph{Induction hypothesis for $\tk\leq\sk$.} Next assume that~\eqref{eq:smoothcont1} holds for all $\tk\leq\sk$ for $\sk<k$ and all $\tcS$ such that $\tcS\cap \cS_k=\phi$. We will next prove that~\eqref{eq:smoothcont1} holds for $\tk=\sk+1$ to complete the induction.
		
		\emph{$\tk=\sk+1$ case.} By the induction hypothesis,~\eqref{eq:smoothcont1} holds for $\tk\le\sk$. We will also need the following crucial property of the $\De(\cdot;\cdot;\cdot)$ operator: Given any $\eta:\B^N\to\R$, $j\in[N]$, $D_1,D_2\subseteq [N]$, $j\notin D_1$, $j\notin D_2$, and $D_1\cap D_2=\phi$, we have:
		\begin{align}\label{eq:smoothcont4}
			\De(\eta;D_1;D_2)-\De(\eta;D_1\cup\{j\};D_2)=\De(\eta;D_1;D_2\cup\{j\}).
		\end{align}
		The proof of the above property is deferred to the end of the current proof.
		
		Next we observe that:
		\begin{align}\label{eq:smoothcont3}
			&\;\;\;\;\;\De(f\circ w;\tcS;\cS_{\sk+1})\nonumber \\&\overset{(i)}{=}\De(f\circ w;\tcS;\cS_{\sk})-\De(f\circ w;\tcS\cup \{j_{\sk+1}\};\cS_{\sk})\nonumber \\&\overset{(ii)}{=}\sum_{D\subseteq \cS_{\sk}\setminus \{j_1\}}\int_0^1 \De(w;D\cup \tcS;\cS_{\sk}\setminus D)\De(f'(w^{j_1}+z(w-w^{j_1}));\tcS;D)\,dz\nonumber \\ &-\sum_{D\subseteq \cS_{\sk}\setminus \{j_1\}}\int_0^1 \De(w;\tcS\cup D\cup \{j_{\sk+1}\};\cS_{\sk}\setminus D)\De(f'(w^{j_1}+z(w-w^{j_1}));\tcS\cup\{j_{\sk+1}\};D)\,dz.
		\end{align}
		Here (i) follows by using~\eqref{eq:smoothcont4} with $\eta=f\circ w$, $D_1=\tcS$, $j=j_{\sk+1}$, and $D_2=\cS_{\sk}$, while (ii) follows directly from the induction hypothesis. Next note that 
		\begin{align}\label{eq:smoothcont5}
			&\;\;\;\;\De(w;D\cup \tcS;\cS_{\sk}\setminus D)\nonumber \\ &=\De(w;\tcS\cup D\cup \{j_{\sk+1}\};\cS_{\sk}\setminus D)+\De(w;D\cup \tcS;\cS_{\sk+1}\setminus D),
		\end{align}
		and 
		\begin{align}\label{eq:smoothcont6}
			&\;\;\;\;\;\De(f'(w^{j_1}+z(w-w^{j_1}));\tcS;D)\nonumber \\ &=\De(f'(w^{j_1}+z(w-w^{j_1}));\tcS\cup \{j_{\sk+1}\};D)+\De(f'(w^{j_1}+z(w-w^{j_1}));\tcS;D\cup \{j_{\sk+1}\}),
		\end{align}
		by once again invoking~\eqref{eq:smoothcont4} with $\eta=w$ (for~\eqref{eq:smoothcont5}) or $f'(w^{j_1}+z(w-w^{j_1}))$ (for~\eqref{eq:smoothcont6}), $D_1=D\cup\tcS$ (for~\eqref{eq:smoothcont5}) or $\tcS$ (for~\eqref{eq:smoothcont6}), $j=j_{\sk+1}$ (for~\eqref{eq:smoothcont5}~and~\eqref{eq:smoothcont6}), and $D_2=\cS_{\sk+1}\setminus D$ (for~\eqref{eq:smoothcont5}) or $D$ (for~\eqref{eq:smoothcont6}). Plugging the above observation into~\eqref{eq:smoothcont3}, we further have:
		\begin{align*}
			&\;\;\;\;\;\De(f\circ w;\tcS;\cS_{\sk+1})\nonumber \\&=\sum_{D\subseteq \cS_{\sk}\setminus \{j_1\}}\int_0^1 \De(w;\tcS\cup D\cup \{j_{\sk+1}\};\cS_{\sk+1}\setminus (D\cup\{j_{\sk+1}\}))\De(f'(w^{j_1}+z(w-w^{j_1}));\tcS;D\cup \{j_{\sk+1}\})\,dz\nonumber \\ & \qquad \qquad +\sum_{D\subseteq \cS_{\sk}\setminus \{j_1\}}\int_0^1 \De(w;D\cup \tcS;\cS_{\sk+1}\setminus D)\De(f'(w^{j_1}+z(w-w^{j_1}));\tcS;D)\,dz\\ &=\sum_{D\subseteq \cS_{\sk+1}\setminus \{j_1\}}\int_0^1 \De(w;D\cup \tcS;\cS_{\sk+1}\setminus D)\De(f'(w^{j_1}+z(w-w^{j_1}));\tcS;D)\,dz.
		\end{align*}
		This establishes~\eqref{eq:smoothcont1} for $\tk=\sk+1$ and completes the proof of~\cref{cl:smoothclaim2} by induction. Therefore, it only remains to prove \eqref{eq:smoothcont4}.
		
		\emph{Proof of \eqref{eq:smoothcont4}.} Observe that, as $j\notin D_1\cup D_2$, we get: 
		\begin{align*}
			\De(\eta;D_1;D_2\cup\{j\}) &=\sum_{D\subseteq D_2\cup\{j\}} (-1)^{|D|}\eta(\ms_{D_1\cup D}) \\ &=\sum_{D\subseteq D_2} (-1)^{|D|}\eta(\ms_{D_1\cup D})+\sum_{D\subseteq D_2} (-1)^{|D\cup\{j\}|}\eta(\ms_{D_1\cup (D_2\cup\{j\})})\\ &=\De(\eta;D_1;D_2)-\De(\eta;D_1\cup\{j\};D_2).
		\end{align*}
		This completes the proof.
	\end{proof}
	Next we show how bounds on discrete differences for the function $w$ can be converted into bounds on discrete differences for $f\circ w$, provided the derivatives of $f(\cdot)$ are bounded. To wit, suppose that $\{\cT_{N,k}\}_{N,k\ge 1}$ is a collection of tensors of dimension $N\times \ldots \times N$ ($k$-fold product), with non-negative entries. We assume that 
	\begin{align}\label{eq:Tprop}
		\sup_{N\ge 1}\sum_{j_1,\ldots ,j_k} \cT_{N,k}(j_1,\ldots ,j_k) \le \alpha_k, 
	\end{align}
	for finite positive reals $\alpha_k$. Let us define 
	\begin{align}\label{eq:newTprop}
		&\;\;\;\;\;\tcT_{N,k}(j_1,j_2,\ldots ,j_k)\nonumber \\ &:=\cT_{N,k}(j_1,j_2,\ldots ,j_k)+\sum_{\substack{D\subseteq \{j_1,j_2,\ldots ,j_k\},\\ |D|\leq k-1,\ D\neq \phi}} \tcT_{N,|D|}(D)\cT_{N,k-|D|}(\{j_1,\ldots ,j_k\}\setminus D),
	\end{align}
	where, by convention, $\tcT_{N,1}(j_1)=\cT_{N,1}(j_1)$ for $j_1\in [N]$. 
	\begin{lemma}\label{lem:genmatrixbd}
		(1). For all functions $w:\B^N\to [-1,1]$ satisfying 
		\begin{equation}\label{eq:smoothcont7}
			|\De(w;\tcS;\scS)|\leq C \cT_{N,\tk}(\scS),\quad \sup_{\ms\in\B^N} |w(\ms)|\leq 1,
		\end{equation} 
		for any set $\scS\subseteq \cS_k=\{j_1,\ldots ,j_k\}$, $|\scS|=\tk$, $1\leq \tk\leq k$,  $\tcS\cap\scS=\phi$, and $C>1$, the following holds
		\begin{equation}\label{eq:smoothcont8}
			|\De(f\circ w;\tcS;\scS)|\leq C^{\tk} \tcT_{N,\tk}(\scS), 
		\end{equation}
		for any $f:[-1,1]\to\R$, $\sup_{|x|\leq 1}|f^{\ell}(x)|\leq 1$, $0\le \ell\le \tk$. 
		
		\noindent (2). Suppose \eqref{eq:Tprop} holds. Then there exists finite positive reals $\tal_k$ such that 
		$$\sup_{N\ge 1} \sum_{j_1,\ldots ,j_k} \tcT_{N,k}(j_1,\ldots ,j_k)\le \tal_k.$$
	\end{lemma}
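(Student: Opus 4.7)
The plan for Part~(1) is to induct on $\tk$, using the discrete Fa\`{a} Di Bruno identity~\eqref{eq:smoothcont1} from Lemma~\ref{cl:smoothclaim2}. The base case $\tk=1$ is immediate: the identity collapses to $\Delta(f\circ w;\tilde{\cS};\{j_1\}) = \int_0^1 \Delta(w;\tilde{\cS};\{j_1\})\,f'(w^{j_1}+z(w-w^{j_1}))\,dz$, so $|f'|\leq 1$ combined with the hypothesis $|\Delta(w;\tilde{\cS};\{j_1\})|\leq C\cT_{N,1}(j_1)=C\tcT_{N,1}(j_1)$ delivers the claim.

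For the inductive step at level $\tk\geq 2$, I will apply~\eqref{eq:smoothcont1} and split the sum over $D\subseteq \cS_\tk\setminus\{j_1\}$ into two groups. The $D=\emptyset$ contribution collapses to $f'(v_z)$, where $v_z := (1-z)w^{j_1}+zw$; bounding $|f'|\leq 1$ yields a contribution of at most $C\cT_{N,\tk}(\cS_\tk)$. For $\emptyset\neq D\subseteq \cS_\tk\setminus\{j_1\}$, I plan to apply the induction hypothesis at level $|D|\leq \tk-1$ to the composition $f'\circ v_z$, viewing $v_z$ as the new inner function and $f'$ as the new outer function. The range bound $|v_z|\leq 1$ is trivial, and $f$ having $\tk$ uniformly bounded derivatives forces $f'$ to have $\tk-1\geq |D|$ such derivatives, so $f'$ satisfies the outer-smoothness hypothesis. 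The missing ingredient is a discrete-derivative bound on $v_z$, which follows from the elementary identity $\Delta(w^{j_1};\tilde{\cS}';D') = \Delta(w;\tilde{\cS}'\cup\{j_1\};D')$ whenever $j_1\notin D'\cup\tilde{\cS}'$ (an immediate unfolding of~\eqref{eq:deldef}); taking convex combinations then gives $|\Delta(v_z;\tilde{\cS}';D')|\leq C\cT_{N,|D'|}(D')$. The induction therefore yields $|\Delta(f'(v_z);\tilde{\cS};D)|\leq C^{|D|}\tcT_{N,|D|}(D)$; combining with $|\Delta(w;\tilde{\cS}\cup D;\cS_\tk\setminus D)|\leq C\cT_{N,\tk-|D|}(\cS_\tk\setminus D)$, integrating in $z$, and using $C\geq 1$ to absorb $C^{|D|+1}\leq C^\tk$, the total matches the recursive formula~\eqref{eq:newTprop} for $\tcT_{N,\tk}(\cS_\tk)$. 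My sum runs only over $D$ that avoid $j_1$, but the omitted terms in the $\tcT$ recursion are non-negative, so the target inequality is preserved.

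For Part~(2), I will perform a second induction on $k$, starting from $\tal_1 := \alpha_1$. For $k\geq 2$, the recursion~\eqref{eq:newTprop} expresses $\sum_{(j_1,\ldots,j_k)\in[N]^k}\tcT_{N,k}(j_1,\ldots,j_k)$ as $\sum \cT_{N,k}$ plus a convolution-type sum indexed by nonempty proper subsets $D\subsetneq \cS_k$. After fixing which $|D|$ of the $k$ positions lie in $D$ (giving a binomial factor), the product $\tcT_{N,|D|}(D)\,\cT_{N,k-|D|}(\cS_k\setminus D)$ factorizes across $D$ and its complement when summed over $(j_1,\ldots,j_k)\in[N]^k$, so the inner induction hypothesis together with~\eqref{eq:Tprop} yields $\sum \tcT_{N,k}\leq \alpha_k + \sum_{d=1}^{k-1}\binom{k}{d}\tal_d\,\alpha_{k-d} =: \tal_k$, which is finite.

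The main technical subtlety is in Part~(1): justifying that $v_z=(1-z)w^{j_1}+zw$ genuinely inherits the hypothesized discrete-derivative bounds from $w$ at \emph{every} relevant subset, so that the induction hypothesis can be invoked on the composition $f'\circ v_z$. The identity $\Delta(w^{j_1};\tilde{\cS}';D') = \Delta(w;\tilde{\cS}'\cup\{j_1\};D')$ is the pivotal observation that closes this gap; once it is in place, the combinatorial bookkeeping in the inductive step aligns directly with the recursive structure of $\tcT$, and the rest of the argument is routine.
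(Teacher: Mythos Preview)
Your proposal is correct and follows essentially the same approach as the paper's proof: both argue by induction on $\tk$, invoke the discrete Fa\`{a} Di Bruno identity~\eqref{eq:smoothcont1}, separate the $D=\emptyset$ term, verify the discrete-derivative hypothesis for $v_z=(1-z)w^{j_1}+zw$ via linearity (your identity $\Delta(w^{j_1};\tcS';D')=\Delta(w;\tcS'\cup\{j_1\};D')$ is the paper's linearity step in slightly different packaging), apply the inductive bound to $f'\circ v_z$, and then use non-negativity to absorb the sum over $D\subseteq\scS\setminus\{j_1\}$ into the larger recursion~\eqref{eq:newTprop}. Part~(2) is likewise the same induction with the same binomial factorization and the same definition $\tal_k=\alpha_k+\sum_{d=1}^{k-1}\binom{k}{d}\tal_d\,\alpha_{k-d}$.
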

	
	\begin{proof}
		\emph{Part (1).} Using~\cref{cl:smoothclaim2}, the proof will proceed  via induction on $\tk$, $1\leq \tk\leq k$.
		
		\emph{$\tk=1$ case.} In this case, say $\scS=\{j_{\ell}\}$ for some  $\ell\geq 1$. Suppose that~\eqref{eq:smoothcont7} holds. Observe that $$|\De(f\circ w;\tcS;\scS)|=|f(w^{\tcS})-f(w^{\tcS\cup\{j_{\ell}\}})|\leq |w^{\tcS}-w^{\tcS\cup\{j_{\ell}\}}|=|\De(w;\tcS;\scS)|\leq C \cT_{N,1}(j_{\ell}).$$
		Recall that $\tcT_{N,1}=\cT_{N,1}$. Therefore~\eqref{eq:smoothcont8} holds for $\tk=1$ provided~\eqref{eq:smoothcont7} holds.
		
		\emph{Induction hypothesis for $\tk\leq\sk$.} Next assume that~\eqref{eq:smoothcont8} holds for all $\tk\leq\sk (<k)$  provided~\eqref{eq:smoothcont7} holds. We will next prove~\eqref{eq:smoothcont8} under~\eqref{eq:smoothcont7} for $\tk=\sk+1$ to complete the induction.
		
		\emph{$\tk=\sk+1$ case.} Suppose $\scS\subseteq \cS_k$, $|\scS|=\sk+1$, $2\leq \sk+1\leq k$, $\tcS\cap\scS=\phi$. Without loss of generality,  assume that $\scS=\{j_1,j_2,\ldots ,j_{\sk+1}\}$. By~\eqref{eq:smoothcont1}, observe that:
		\begin{align}\label{eq:smoothcont9}
			&\;\;\;\;|\De(f\circ w;\tcS;\scS)|\\ &\leq |\De(w;\tcS;\scS)| + \sum_{\substack{D\subseteq \scS\setminus \{j_1\},\\ D\neq \phi}}\int_0^1 |\De(w;D\cup \tcS;\scS\setminus D)||\De(f'(w^{j_1}+z(w-w^{j_1}));\tcS;D)|\,dz\nonumber \\&\leq \cT_{N,1+\sk}(j_1,j_2,\ldots ,j_{\sk+1})+C\sum_{D\subseteq \scS\setminus \{j_1\},\ D\neq \phi} \tQ_{N,\sk+1-|D|}(\scS\setminus D)\nonumber \\&\qquad \qquad \int_0^1 |\De(f'(w^{j_1}+z(w-w^{j_1}));\tcS;D)|\,dz,
		\end{align}
		where the last line follows by invoking~\eqref{eq:smoothcont7} for $\tk=\sk+1$.
		
		Next observe that the $\De(\cdot;\cdot;\cdot)$ operator is linear in its first argument, i.e., $\De(\eta_1+\eta_2;\cdot;\cdot)=\De(\eta_1;\cdot;\cdot)+\De(\eta_2;\cdot;\cdot)$ where $\eta_1,\eta_2:\B^N\to\R$. Therefore, for any $z\in [0,1]$ and $D\subseteq \scS\setminus \{j_1\}$, we have:
		$$|\De(w^{j_1}+z(w-w^{j_1});\tcS;D)|\leq (1-z)|\De(w^{j_1};\tcS;D)|+z|\De(w;\tcS;D)|\leq C\cT_{N,|D|}(D),$$
		where the last line once again uses~\eqref{eq:smoothcont7} for $\tk=\sk+1$. Similarly $\sup_{\ms\in\B^N} |w^{j_1}+z(w-w^{j_1})|\leq 1$. Also note that $|D|\leq\sk$ for all $D\subseteq \scS\setminus \{j_1,j_2\}$. The above sequence of observations allows us to invoke the induction hypothesis with $\scS$ replaced with $D$, $f(\cdot)$ replaced by $f'(\cdot)$, and $w$ replaced by $w^{j_1}+z(w-w^{j_1})$. This implies 
		\begin{align*}
			\int_0^1 |\De(f'(w^{j_1}+z(w-w^{j_1}));\tcS;D)|\,dz \le C^{|D|}\tcT_{N,|D|}(D)\le C^{\sk}\tcT_{N,|D|}(D).
		\end{align*}
		The above display coupled with~\eqref{eq:smoothcont9} yields that 
		\begin{align*}
			&\;\;\;\;|\De(f\circ w;\tcS;\scS)|\\ &\leq \cT_{N,1+\sk}(j_1,j_2,\ldots ,j_{\sk+1})+C^{\sk+1}\sum_{D\subseteq \scS\setminus \{j_1\},\ D\neq \phi} \cT_{N,\sk+1-|D|}(\scS\setminus D)\tcT_{N,|D|}(D) \\ & \leq C^{\sk+1}\tcT_{N,1+\sk}(j_1,j_2,\ldots ,j_{\sk+1}).
		\end{align*}
		This completes the proof of part 1 by induction.
		
		\vspace{0.05in}
		
		\emph{Proof of part 2.} Recall the $\alpha_k$s from \eqref{eq:Tprop}. Define $\tal_1:=\alpha_1$ and for $k\geq 2$, set
		\begin{equation}\label{eq:smoothcont10}
			\tal_k:=\alpha_k+\sum_{0<j\leq k-1} {k\choose j} \tal_{j}\alpha_{k-j}.
		\end{equation}
		The proof proceeds via induction on $k$ with $\tal_k$ as defined in~\eqref{eq:smoothcont10}.
		
		\emph{$k=1$ case.} By \eqref{eq:Tprop}, $ \sum_{j_1} \tcT_{N,1}(j_1)= \sum_{j_1} \cT_{N,1}(j_1)\leq \alpha_1=\tal_1$. This establishes the base case.
		
		\emph{Induction hypothesis for $k\leq \sk$.} Suppose the conclusion in~\cref{lem:genmatrixbd}, part (2), holds for all $k\leq \sk$. We now prove the same $k=\sk+1$.
		
		\emph{$k=\sk+1$ case.} By using the definition of $\tcT$ from \eqref{eq:newTprop}, we have:
		\begin{align*}
			&\;\;\;\;\sup_{N\ge 1} \sum_{\{j_1,j_2,\ldots ,j_{\sk+1}\}} \tcT_{N,\sk+1}(j_1,j_2,\ldots ,j_{\sk+1})\\ &\le \sup_{N\ge 1} \sum_{\{j_1,j_2,\ldots ,j_{\sk+1}\}} \cT_{N,\sk+1}(j_1,j_2,\ldots ,j_{\sk+1})+\sup_{N\ge 1}\sum_{\substack{D\subseteq \{j_1,\ldots ,j_{\sk+1}\},\\ |D|\leq \sk,\ D\neq \phi}} \left(\sum_{\{j_t\in D\}}\tcT_{N,|D|}(D)\right)\\ &\qquad \qquad \left(\sum_{j_t\notin D}\cT_{N,\sk+1-|D|}(\{j_1,\ldots ,j_{\sk+1}\}\setminus D)\right).
		\end{align*}
		As $D$ is non-empty and $|D|\le \sk$, we have: 
		$$\sum_{\{j_t\in D\}}\tcT_{N,|D|}(D) \le \tal_{|D|}$$
		by the induction hypothesis and 
		$$\sum_{j_t\notin D}\cT_{N,\sk+1-|D|}(\{j_1,\ldots ,j_{\sk+1}\}\setminus D\cup \{j_1\}) \le \alpha_{\sk+1-|D|}$$
		as $\cT$ satisfies \eqref{eq:Tprop}. Combining the above observations, we get: 
		\begin{align*}
			&\;\;\;\sup_{N\ge 1} \sum_{\{j_1,\ldots ,j_{\sk+1}\}} \tcT_{N,\sk+1}(j_1,j_2,\ldots ,j_{\sk+1}) \nonumber \\ &\le \alpha_{\sk+1}+\sum_{D\subseteq \{j_1,\ldots ,j_{\sk+1}\},\ |D|\le \sk,\ D\ne \phi}\tal_{|D|}\alpha_{\sk+1-|D|} \nonumber \\ &\le \alpha_{\sk+1}+\sum_{j=1}^{\sk+1} {\sk+1 \choose j}\tal_{|D|}\alpha_{\sk+1-|D|} = \tal_{\sk+1}.
		\end{align*}
		This completes the proof by induction.
	\end{proof}
	
	\begin{proof}[Proof of \cref{lem:smoothcont}, parts 1 and 2] Recall that $\mathcal{R}[\cdot]$ is defined in \eqref{eq:newtensor}. Its symmetry follows from definition. The result follows by invoking parts 1 and 2 of \cref{lem:genmatrixbd} with $w\equiv b_{j_1}$, $\scS=\{j_2,\ldots ,j_k\}$, $\cT_{N,k-1}(\scS)=\tQ_{N,k}(j_1,\scS)$.
	\end{proof}	
	
	\section{Preliminaries and auxiliary results for proving~\cref{lem:surviveterm}}\label{sec:pfsurvive}
	
	This section is devoted to establishing the main ingredients for proving~\cref{lem:surviveterm}. The proof is based on a decision tree approach. In particular, we will begin with $f_L(\ms)$ from~\eqref{eq:root} as the \emph{root node} of the tree. Then we decompose the root into a number of \emph{child nodes} to form the \emph{first generation}. Next we will decompose each of the child nodes that do not satisfy a certain \emph{termination condition} into their own child nodes to form the \emph{second generation}, and so on. This process will continue till all the leaf nodes (with no children) satisfy the termination condition. 
	
	\subsection{Constructing the decision tree}\label{sec:construct}
	We begin the process of constructing the tree with a simple observation. First recall the definition of $\Theta_{N,p+q}$ from~\cref{lem:surviveterm} and consider the following proposition.
	\begin{prop}\label{prop:baseprop}
		Suppose $\tp,\tq\in \mathbb{N}$, $(i_1,\ldots ,i_{\tp},j_1,\ldots ,j_{\tq})\in\Theta_{N,{\tp+\tq}}$. We use $\ib=(i_1,\ldots ,i_p)$ and $\jb=(j_1,\ldots ,j_q)$ as shorthand. Let $\{h_{i_r}(\cdot)\}_{1\leq r\leq \tp}$, $\{h_{j_r}\}_{1\leq r\leq \tq}$, $U_{\ib,\jb}(\ms)$, $V_{\ib,\jb}(\ms)$ are functions from $\B^N\to\R$ such that  $h_{{\io}}(\ms):=c_{{\io}}(g(\sigma_{{\io}})-t_{{\io}})$ for some $\io\in \{j_1,j_2,\ldots ,j_{\tq}\}$. Then the following identity holds: 
		\begin{align}\label{eq:baseprop2}
			&\;\;\;\;\E\left[\left(\prod\limits_{r=1}^{\tp} h_{i_r}(\ms)\right)\left(\prod\limits_{r=1}^{\tq} h_{j_r}(\ms)\right)U_{\ib,\jb}(\ms) V_{\ib,\jb}(\ms)\right] \nonumber \\ &=\sum_{(\cD,\cE,\cU,\cV)\in\FG} \E\left[\left(\prod\limits_{r=1}^{\tp} h_{i_r}(\ms;\cD)\right)\left(\prod\limits_{r=1}^{\tq} h_{j_r}(\ms;\cE)\right)U_{\ib,\jb}(\ms;\cU) V_{\ib,\jb}(\ms;\cV)\right]
		\end{align}
		where
		\begin{equation}	
			\begin{aligned}
				&\FG:=\{(\cD,\cE,\cU,\cV):\ \cD\subseteq (i_1,\ldots ,i_{\tp}),\ \cE\subseteq (j_1,\ldots ,j_{\tq})\setminus\io,\ \cU\subseteq \{\iota\},\ \cV\subseteq \{\iota\}, \\ &\qquad \qquad (\cD,\cE,\cU,\cV)\neq ((i_1,\ldots ,i_{\tp}),(j_1,\ldots, j_{\tq})\setminus \io,\{\io\},\{\io\})\}, \label{eq:basepropa}
			\end{aligned}
		\end{equation}
		\begin{align}
			h_{i_r}(\ms;\cD):=h_{i_r}^{\io}(\ms)\mathbbm{1}(i_r\in\cD)+h_{i_r}^{\phi;\io}(\ms)\mathbbm{1}(i_r\in\bar{\cD}), \label{eq:basepropb}	
		\end{align}	
		\begin{align}
			h_{j_r}(\ms;\cE):=h_{j_r}^{\io}(\ms)\mathbbm{1}(j_r\in\cE)+h_{j_r}^{\phi;\io}(\ms)\mathbbm{1}(j_r\in \bar{\cE}), \; j_r\neq\io , \label{eq:basepropc}
		\end{align}
		\begin{align} h_{{\io}}(\ms;\cE):=c_{{\io}}(g(\sigma_{{\io}})-t_{{\io}}),\label{eq:basepropd}
		\end{align}
		\begin{align} U_{\ib,\jb}(\ms;\cU):=U_{\ib,\jb}^{\iota}(\ms)\ind(\iota\in\cU)+U_{\ib,\jb}^{\phi;\iota}(\ms)\ind(\iota\notin \cU),\qquad \label{eq:baseprope}
		\end{align}
		\begin{align} V_{\ib,\jb}(\ms;\cV):=V_{\ib,\jb}^{\io}(\ms)\ind(\io\in\cV)+V_{\ib,\jb}^{\phi;\io}(\ms)\ind(\io\notin\cV),\label{eq:basepropf}
		\end{align}
		and $\bar{\cE}:=((j_1,\ldots ,j_{\tq})\setminus {\io})\setminus \cE$ and $\bar{\cD}:=(i_1,\ldots ,i_{\tp})\setminus \cD$. Further for any fixed $D=(\cD,\cE,\cU,\cV)\in \FG$, we have:
		\begin{align}\label{eq:basepropnew1}	&\;\;\;\;\E\left[\left(\prod\limits_{r=1}^{\tp} h_{i_r}(\ms;\cD)\right)\left(\prod\limits_{r=1}^{\tq} h_{j_r}(\ms;\cE)\right)U_{\ib,\jb}(\ms;\cU)V_{\ib,\jb}(\ms;\cV)\right]\nonumber \\ &=\sum_{\tcE\subseteq \cE} \E\left[\left(\prod\limits_{r=1}^{\tp} h_{i_r}(\ms;\cD,\cD)\right)\left(\prod\limits_{r=1}^{\tq} h_{j_r}(\ms;\cE,\tcE)\right)U_{\ib,\jb}(\ms;\cU,\cU)V_{\ib,\jb}(\ms;\cV,\cV)\right],
		\end{align}
		where
		\begin{align}\label{eq:basepropnew2}
			h_{j_r}(\ms;\cE,\tcE):=h_{j_r}(\ms)\mathbbm{1}(j_r\in\tcE)-h_{j_r}^{\phi;\io}(\ms)\mathbbm{1}(j_r\in\cE\setminus\tcE)+h_{j_r}^{\phi;\io}(\ms)\mathbbm{1}(j_r\in\bar{\cE}), \; j_r\neq\io ,
		\end{align}
		\begin{align}\label{eq:basepropnew3}
			h_{i_r}(\ms;\cD,\cD):=h_{i_r}(\ms;\cD),\quad h_{{\io}}(\ms;\cE,\tcE):=h_{{\io}}(\ms;\cE)=c_{\iota}(g(\si_{\iota})-t_{\iota}),
		\end{align}
		\begin{align}\label{eq:basepropnew4}
			U_{\ib,\jb}(\ms;\cU,\cU):=U_{\ib,\jb}(\ms;\cU),\quad V_{\ib,\jb}(\ms;\cV,\cV):=V_{\ib,\jb}(\ms;\cV).
		\end{align}
	\end{prop}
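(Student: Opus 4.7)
The plan is to prove both identities by mechanical product expansion combined with a single conditional-centering observation. The starting point is the trivial decomposition $\eta(\ms) = \eta^\io(\ms) + \eta^{\phi;\io}(\ms)$ for any $\eta:\B^N\to\R$. I would apply this splitting independently to each of the factors $h_{i_r}$ ($r\in[\tp]$), $h_{j_r}$ ($j_r\neq\io$), $U_{\ib,\jb}$, and $V_{\ib,\jb}$, while leaving the distinguished factor $h_\io(\ms) = c_\io(g(\si_\io) - t_\io)$ untouched. Expanding the resulting product yields a sum indexed exactly by tuples $(\cD,\cE,\cU,\cV)$ recording which factors are taken in their $\io$-evaluated form (indices in $\cD,\cE,\cU,\cV$) and which in their complementary difference form. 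This matches the right-hand side of \eqref{eq:baseprop2} up to the removal of one term.

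The only substantive point is to explain why that one tuple vanishes. For the excluded configuration $((i_1,\ldots,i_\tp),(j_1,\ldots,j_\tq)\setminus\io,\{\io\},\{\io\})$, every factor apart from $h_\io(\ms)$ is evaluated at $\ms_\io$ and therefore depends only on $\{\si_k : k\neq\io\}$. Conditioning on these coordinates and invoking the defining property of $t_\io$ in \eqref{eq:consig2} gives
\begin{equation*}
\E\bigl[h_\io(\ms) \,\big|\, \si_k, k\neq\io\bigr] \;=\; c_\io\bigl(\E[g(\si_\io)\mid \si_k, k\neq\io]-t_\io\bigr) \;=\; 0,
\end{equation*}
so the entire term contributes $0$ to the expectation on the left-hand side of \eqref{eq:baseprop2}. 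Dropping it from the expansion delivers the identity.

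For the refinement \eqref{eq:basepropnew1}, the strategy is to fix a tuple $(\cD,\cE,\cU,\cV)\in\FG$ and further decompose, for each $j_r\in\cE$, the factor $h_{j_r}^\io$ using $h_{j_r}^\io = h_{j_r} - h_{j_r}^{\phi;\io}$. Expanding the product over $j_r\in\cE$ produces a signed sum indexed by $\tcE\subseteq\cE$ in which $j_r\in\tcE$ contributes $h_{j_r}$ while $j_r\in\cE\setminus\tcE$ contributes $-h_{j_r}^{\phi;\io}$; combined with the untouched factors for $j_r\in\bar\cE$, the $h_{i_r}$, and $U,V$, this reproduces \eqref{eq:basepropnew2}--\eqref{eq:basepropnew4} verbatim. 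The only real obstacle is bookkeeping: I must carefully check that the indicator-based formulas in \eqref{eq:basepropb}--\eqref{eq:basepropf} and \eqref{eq:basepropnew2}--\eqref{eq:basepropnew4} encode the expansion consistently, and that the configuration excluded from $\FG$ is precisely the one vanishing term. No probabilistic input beyond the one application of the definition of $t_\io$ is required, so the proposition is purely an algebraic reorganization that isolates the conditionally centered factor $h_\io$ and expresses everything else as controlled discrete differences — a structure that downstream results in \cref{sec:pfsurvive} will exploit to drive the decision-tree recursion.
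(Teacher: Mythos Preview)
Your proposal is correct and follows essentially the same approach as the paper: split each factor via $\eta=\eta^{\io}+\eta^{\phi;\io}$, expand the product over subsets $(\cD,\cE,\cU,\cV)$, drop the single term where all factors except $h_\io$ are $\io$-evaluated by the tower property (since $\E[c_\io(g(\si_\io)-t_\io)\mid\si_k,k\neq\io]=0$), and then refine via $h_{j_r}^{\io}=h_{j_r}-h_{j_r}^{\phi;\io}$ for $j_r\in\cE$ to obtain \eqref{eq:basepropnew1}.
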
 
	\begin{proof}
		Observe that $h_{i_r}(\ms)=h_{i_r}^{\io}(\ms)+h_{i_r}^{\phi;\io}(\ms)$, $h_{j_r}(\ms)=h_{j_r}^{\io}(\ms)+h_{j_r}^{\phi;\io}(\ms)$, $U_{\ib,\jb}(\ms)=U_{\ib,\jb}^{\io}(\ms)+U_{\ib,\jb}^{\phi;\io}(\ms)$, and $V_{\ib,\jb}(\ms)=V_{\ib,\jb}^{\io}(\ms)+V_{\ib,\jb}^{\phi;\io}(\ms)$. Set $\fL:=(i_1,\ldots ,i_{\tp})$ and $\fM:=(j_1,\ldots ,j_{\tq})$. Therefore,
		\begin{align}\label{eq:baseprop1}
			&\;\;\;\;\E\left[\left(\prod\limits_{r=1}^{\tp} h_{i_r}(\ms)\right)\left(\prod\limits_{r=1}^{\tq} h_{j_r}(\ms)\right)U_{\ib,\jb}(\ms)V_{\ib,\jb}(\ms)\right]\nonumber \\ &=\E\bigg[\left(\prod\limits_{r=1}^{\tp} (h_{i_r}^{\io}(\ms)+h_{i_r}^{\phi;\io}(\ms))\right)\left(\prod\limits_{r=1}^{\tq} (h_{j_r}^{\io}(\ms)+h_{j_r}^{\phi;\io}(\ms))\right)(U_{\ib,\jb}^{\io}(\ms)+U_{\ib,\jb}^{\phi;\io}(\ms)) \nonumber \\ &\qquad \qquad (V_{\ib,\jb}^{\io}(\ms)+V_{\ib,\jb}^{\phi;\io}(\ms))\bigg]\nonumber \\ &=\E\Bigg[c_{\io}(\sigma_{\io}-t_{\io})\sum_{\cD\subseteq \fL,\ \cE\subseteq \fM\setminus \io,\cU\subseteq\{\io\},\cV\subseteq\{\io\}}\left(\prod_{i_r\in\cD} h_{i_r}^{\io}(\ms)\right)\left(\prod_{i_r\in\bar{\cD}} h_{i_r}^{\phi;\io}(\ms)\right)\left(\prod_{j_r\in\cE} h_{j_r}^{\io}(\ms)\right)\nonumber \\ &\qquad\qquad \left(\prod_{j_r\in\bar{\cE}} h_{j_r}^{\phi;\io}(\ms)\right)\left(\prod_{\cU\ni\io}U_{\ib,\jb}^{\io}(\ms)\right)\left(\prod_{\cU\not\ni\io}U_{\ib,\jb}^{\phi;\io}(\ms)\right)\left(\prod_{\cV\ni\io}V_{\ib,\jb}^{\io}(\ms)\right)\left(\prod_{\cV\not\ni\io}V_{\ib,\jb}^{\phi;\io}(\ms)\right)\Bigg]\nonumber \\ &=\E\Bigg[c_{\io}(\sigma_{\io}-t_{\io})\sum_{\cD\subseteq \fL,\ \cE\subseteq \fM\setminus \io, \cU\subseteq \{\io\},\cV\subseteq\{\io\}}\left(\prod_{r=1}^{\tp} (h_{i_r}^{\io}(\ms)\mathbbm{1}(i_r\in\cD)+h_{i_r}^{\phi;\io}(\ms)\mathbbm{1}(i_r\in\bar{\cD}))\right)\nonumber \\ & \qquad \qquad \left(\prod_{r=1}^{\tq} (h_{j_r}^{\io}(\ms)\mathbbm{1}(j_r\in\cE)+h_{j_r}^{\phi;\io}(\ms)\mathbbm{1}(j_r\in\bar{\cE}))\right)\left(U_{\ib,\jb}^{\iota}(\ms)\ind(\iota\in\cU)+U_{\ib,\jb}^{\phi;\iota}(\ms)\ind(\iota\notin \cU)\right)\nonumber \\ &\qquad \qquad \left(V_{\ib,\jb}^{\io}(\ms)\ind(\io\in\cV)+V_{\ib,\jb}^{\phi;\io}(\ms)\ind(\io\notin\cV)\right)\Bigg].
		\end{align}
		Next note that in the above summation, the term corresponding to $(\cD,\cE,\cU,\cV)=(\fL,\fM\setminus \io,\{\io\},\{\io\})$ can be dropped. This is because, $h_{i_r}^{\io}(\ms)$, $h_{j_r}^{\io}(\ms)$, $U_{\ib,\jb}^{\io}(\ms)$, and $V_{\ib,\jb}^{\io}(\ms)$ are measurable with respect to the sigma field generated by $(\sigma_1,\ldots, \sigma_{\io-1},\sigma_{\io+1},\ldots ,\sigma_N)$ and consequently, by the tower property, we have:
		\begin{equation*}
			\E\left[c_{{\io}}(\sigma_{{\io}}-t_{{\io}})\left(\prod_{r\in [\tp]} h_{i_r}^{\io}(\ms)\right)\left(\prod_{r\in [\tq]\setminus \io} h_{j_r}^{\io}(\ms)\right)U_{\ib,\jb}^{\io}(\ms)V_{\ib,\jb}^{\io}(\ms)\right]=0.
		\end{equation*}
		The conclusion in~\eqref{eq:baseprop2} then follows by combining the above observation with~\eqref{eq:baseprop1}. The conclusion in~\eqref{eq:basepropnew1} follows by using $h_{j_r}^{\io}(\ms)=h_{j_r}(\ms)-h_{j_r}^{\phi;\io}(\ms)$ for $j_r\in \cE$  and repeating a similar computation as above.
	\end{proof}
	Observe that in~\cref{prop:baseprop} (see~\eqref{eq:basepropnew1}), for every fixed $(\cD,\cE,\tcE,\cU,\cV)$, the left and right hand sides have the same form with the functions $h_{i_r}(\cdot)$, $h_{j_r}(\cdot)$, $U_{\ib,\jb}(\cdot)$, $V_{\ib,\jb}(\cdot)$ on the LHS being replaced with $h_{i_r}(;\cD,\cD)$, $h_{j_r}(;\cE,\tcE)$, $U_{\ib,\jb}(\cdot;\cU,\cU)$, and $V_{\ib,\jb}(\cdot;\cV,\cV)$ on the RHS. This suggests a recursive approach for further splitting $h_{n_r}(;\cD,\cD)$ and $h_{m_r}(;\cE,\tcE)$.
	
	Let us briefly see how \cref{prop:baseprop} ties into our goal of studying the limit of $\E T_N^{k}U_N^{k_1}V_N^{k_2}$ (where $T_N$ is defined in \eqref{eq:pivotstat} and $U_N$, $V_N$ are defined in \eqref{eq:randvar}). Recall the definition of $\mathcal{C}_k$ from \eqref{eq:unag}.  Through some elementary computations (see \cref{lem:initremove}), one can show that 
	\begin{align}\label{eq:mainrep}
		&\;\;\;\;\E T_N^{k} U_N^{k_1} V_N^{k_2} \nonumber \\ &=\E\left[\left(\frac{1}{N^{k/2}}\sum_{\substack{(\ell_1,\ldots ,\ell_p,\\ q)\in \mathcal{C}_k}}\sum_{\substack{(i_1,\ldots ,i_p,\\ j_1,\ldots ,j_q)\in \Theta_{N,p+q}}}\prod_{r=1}^p (c_{i_r}(\si_{i_r}-t_{i_r}))^{\ell_r} \prod_{r=1}^q (c_{j_r}(\si_{j_r}-t_{j_r}))\right)U_N^{k_1} V_N^{k_2}\right] \nonumber \\ &=\E\left[\frac{1}{N^{k/2}}\sum_{\substack{(\ell_1,\ldots ,\ell_p,\\ q)\in \mathcal{C}_k}}\sum_{\substack{(i_1,\ldots ,i_p,\\ j_1,\ldots ,j_q)\in \Theta_{N,p+q}}}\prod_{r=1}^p (c_{i_r}(\si_{i_r}-t_{i_r}))^{\ell_r} \prod_{r=1}^q (c_{j_r}(\si_{j_r}-t_{j_r})) U_{N,\ib,\jb}^{k_1} V_{N,\ib,\jb}^{k_2}\right] + o(1), 
	\end{align}
	where $U_{N,\ib,\jb}$ and $V_{N,\ib,\jb}$ are defined in \eqref{eq:reducedex}. 
	We then apply \cref{prop:baseprop}-\eqref{eq:baseprop2} for every fixed $(i_1,\ldots ,i_p,j_1, \ldots ,j_q)\in\Theta_{N,p+q}$ in \eqref{eq:mainrep} with 
	$$h_{i_r}(\ms)=(c_{i_r}(g(\si_{i_r})-t_{i_r}))^{\ell_r}, \, \, \, h_{j_r}=c_{j_r}(g(\si_{j_r})-t_{j_r}), \, \, \, U_{\ib,\jb}=U_{N,\ib,\jb}, \, \, \, V_{\ib,\jb}=V_{N,\ib,\jb}, \,\,\, \io=j_q.$$
	This implies that the following term in \eqref{eq:mainrep}, which we call the \emph{root}, can be split into nodes indexed by sets of the form $(\cD,\cE,\cU,\cV)$ in $\mathfrak{G}$ (see \eqref{eq:basepropa}). This will form the first level of our tree. Now we take each of the nodes in \emph{level one}, and further split them according to \eqref{eq:basepropnew1}, to get \emph{level two} of the tree. Now note that every node in level two is characterized by sets $(\cD,\cE,\tcE,\cU,\cV)$ where $(\cD,\cE,\cU,\cV)\in\mathfrak{G}$, $\tcE\subseteq \cE$. Also by construction, either $\tcE$ is empty or $\tcE\subseteq \{j_1,\ldots ,j_{q-1}\}$. Also for each $j_r\in\tcE$, $h_{j_r}(\ms;\cE,\tcE)=c_{j_r}(g(\si_{j_r})-t_{j_r})$. If $\tcE$ is empty, we don't split that node further. If not, then we split that node, again by using \cref{prop:baseprop}-\eqref{eq:basepropa} and \eqref{eq:basepropnew1}, and choosing a new $\io\in \tcE$. This will lead to \emph{levels three and four}. We continue this process at every even level of the tree. Our choice of $\io$ is always distinct at every even level and always belongs to $\{j_1,\ldots ,j_q\}$. Therefore, by construction, our tree terminates after at most $2q$ levels. The core of our argument is to characterize all the (finitely many) nodes of the tree that have non-vanishing contribution when summed up over $(i_1,\ldots ,i_p,j_1,\ldots ,j_q)\in\Theta_{N,p+q}$ (after appropriate scaling). 
	
	We now refer the reader to Algorithm~\ref{alg:construct_tree}-\ref{alg:construct_treep2}, where we present a formal description of the above recursive approach to construct the required decision tree.
	
	Observe that~\eqref{eq:baseprop2}~and~\eqref{eq:basepropnew1} have a very similar form. The major difference is that $h_{m_r}(\ms;\cE)$ (see~\eqref{eq:baseprop2}~and~\eqref{eq:basepropc}) equals $h_{m_r}^{\io}(\ms)$ for $m_r\in\cE$, whereas $h_{m_r}(\ms;\cE,\tcE)$ (see~\eqref{eq:basepropnew1}~and~\eqref{eq:basepropnew3}) equals $h_{m_r}(\ms)$ for $m_r\in\tcE$. Also note that $\E h_{m_r}^{\io}(\ms)$ may not equal $0$ whereas $\E h_{m_r}(\ms)=0$. This observation is crucial for the construction of the tree. It ensures that we can drop the $(\cD,\cE,\cU,\cV)=((i_1\ldots ,i_p),(j_1,\ldots ,j_q)\setminus \io, \{\io\}, \{\io\})$ term in $\mathfrak{G}$ (see \eqref{eq:basepropa}). We therefore differentiate between these two cases by referring to them as \textbf{centering} and \textbf{re-centering} steps respectively; see steps 7,8, 21, and 22, in Algorithm~\ref{alg:construct_tree}-\ref{alg:construct_treep2}.
	
	\hbadness=10000
	
	\begin{algorithm}
		\caption{Decision tree --- first and second generations}
		\label{alg:construct_tree}
		
		\begin{algorithmic}[1]
			\State {DECISION TREE}{$(l_1,\ldots ,l_p,q)\in \mathcal{C}_{p,q,k},\, (i_1,\ldots ,i_p,j_1,\ldots ,j_q)\in \Theta_{N,p+q},\,$} {(see \eqref{eq:cpqk} and \eqref{eq:thnr} for relevant definitions).} {Recall the definitions of $U_{N,\ib,\jb}\equiv U_{N,\ib,\jb}(\ms)$ and $V_{N,\ib,\jb}\equiv V_{N,\ib,\jb}(\ms)$ from \eqref{eq:reducedex}.}
			\State Label the \textbf{root node} as $R_0$ and assign \begin{equation}\label{eq:rootnode}R_0\equiv R_0(i_1,\ldots ,i_p,j_1,\ldots ,j_q)\gets \E\left[\left(\prod_{r=1}^p (c_{i_r}(g(\sigma_{i_r})-t_{i_r}))^{l_r}\right)\left(\prod_{r=1}^q c_{j_r}(g(\sigma_{j_r})-t_{j_r})\right)U_{N,\ib,\jb}^{k_1}V_{N,\ib,\jb}^{k_2}\right].\end{equation}
			\State Also assign
			\begin{equation*}
				\cD_0\gets (i_1,\ldots ,i_p),\quad \cE_0\gets (j_1,\ldots ,j_q),\quad \mbox{and}\quad M_0\gets\{j_b\in\cE_0:j_{b'}\notin \cE_0\ \mbox{for}\ b'>b\}=j_q, \; M_0\gets -\infty \ \mbox{if}\ \cE_0=\phi,
			\end{equation*}
			\begin{equation*}
				\cU_0=\cV_0=\phi.
			\end{equation*}
			\If{$q=0$}
			\State \textbf{terminate}.
			\Else 
			\State \textbf{First generation (Centering step)}: Set 
			$\tp\gets p$, $\tq\gets q$, and $\io\gets M_0$ and construct $\FG_1$ as in~\eqref{eq:basepropa}. Enumerate $\FG_1$ as 
			\begin{equation}\label{eq:gen2pt1a}
				\FG_1\gets\{G_{1,1},G_{1,2},\ldots ,G_{1,|\FG_1|}\},
			\end{equation} 
			where each $G_{1,z_1}$ is of the form $(\cD_{1,z_1},\cE_{1,z_1},\cU_{1,z_1},\cV_{1,z_1})$ as in~\eqref{eq:basepropa}. Then apply~\cref{prop:baseprop} with functions $h_{i_r}(\ms)=(c_{i_r}(g(\sigma_{i_r})-t_{i_r}))^{l_r}$ for $r\in [\tp]$, $h_{j_r}(\ms)=c_{j_r}(g(\sigma_{j_r})-t_{j_r})$  for $r\in [\tq]$, $U_{\ib,\jb}(\ms)=U_{N,\ib,\jb}^{k_1}$, and $V_{\ib,\jb}(\ms)=V_{N,\ib,\jb}^{k_2}$, to get the nodes of the \textbf{first generation} (which we label as $R_{1,z_1}\equiv R_{z_1}(i_1,\ldots ,i_p,j_1,\ldots ,j_q)$):
			\begin{equation}\label{eq:gen2pt1c}
				\begin{aligned}
					R_0&=\sum_{k_1:\ (\cD_{1,z_1},\cE_{1,z_1}, \cU_{1,z_1},\cV_{1,z_1})\in \FG_1} R_{z_1},\\  R_{z_1}\gets \E\bigg[\left(\prod\limits_{r=1}^{\tp} h_{i_r}(\ms;\cD_{1,z_1})\right)&\left(\prod\limits_{r=1}^{\tq} h_{j_r}(\ms;\cE_{1,z_1})\right)U_{\ib,\jb}(\ms;\cU_{1,z_1}) V_{\ib,\jb}(\ms;\cV_{1,z_1})\bigg].
				\end{aligned}
			\end{equation}
			Here $h_{i_r}(\ms;\cD_{1,z_1})$ for $r\in [\tp]$, $h_{j_r}(\ms;\cE_{1,z_1})$ for $j_r\in (j_1,\ldots ,j_q)\setminus \io$, $h_{\io}(\ms;\cE_{1,z_1})$, $U_{\ib,\jb}(\ms;\cU_{1,z_1})$, and $V_{\ib,\jb}(\ms;\cV_{1,z_1})$ are defined as in~\eqref{eq:basepropb},~\eqref{eq:basepropc}~\eqref{eq:basepropd},~\eqref{eq:baseprope},~and~\eqref{eq:basepropf} respectively. In addition, we also assign 
			$$M_{1,z_1}\gets M_0, \quad \bar{\cD}_{1,z_1}\gets (i_1,\ldots ,i_{\tp})\setminus \cD_{1,z_1}, \quad \bar{\cE}_{1,z_1}\gets ((j_1,\ldots ,j_{\tq})\setminus \{\io\})\setminus \cE_{1,z_1}.$$
			
			\State \textbf{Second generation (Re-centering step)}: With  
			$\tp$, $\tq$, and $\io$ as in the \textbf{first generation}, by using~\cref{prop:baseprop}-\eqref{eq:basepropnew1}, we get:
			\begin{equation}\label{eq:gen2pt2a}
				R_0=\sum_{\substack{(\cD_{1,z_1},\cE_{1,z_1}, \cU_{1,z_1}, \cV_{1,z_1})\in \FG_1,\ \cE_{2,z_2}\subseteq \cE_{1,z_1},\\  \cD_{2,z_2}=\cD_{1,z_1}, \cU_{2,z_2}=\cU_{1,z_1}, \cV_{2,z_2}=\cV_{1,z_1}}} R_{z_1,z_2}(i_1,\ldots ,i_p,j_1,\ldots ,j_q).
			\end{equation}
			where
			\begin{equation}\label{eq:gen2pt2aa}
				\begin{aligned}
					R_{z_1,z_2}(i_1,\ldots ,i_p,j_1,\ldots ,j_q)&\gets \E\Bigg[\left(\prod\limits_{r=1}^{\tp} h_{i_r}(\ms;\cD_{1,z_1},\cD_{2,z_2})\right)\left(\prod\limits_{r=1}^{\tq} h_{j_r}(\ms;\cE_{1,z_1},\cE_{2,z_2})\right)\\ &\;\;\;\;U_{\ib,\jb}(\ms;\cU_{1,z_1},\cU_{2,z_2})V_{\ib,\jb}(\ms;\cV_{1,z_1},\cV_{2,z_2}\Bigg]
				\end{aligned}
			\end{equation}
			For the definitions of all relevant terms in~\eqref{eq:gen2pt2a} see~\eqref{eq:basepropnew2},~\eqref{eq:basepropnew3},~and~\eqref{eq:basepropnew4}. Further we assign
			\begin{equation}\label{eq:gen2pt2b}
				M_{2,z_2}\gets\{j_b\in\cE_{2,z_2}:j_{b'}\notin \cE_{2,z_2}\ \mbox{for}\ b'>b\},\; M_{2,z_2}\gets -\infty\ \mbox{if}\ \cE_{2,z_2}=\phi, \quad \mbox{and}\quad \bar{\cE}_{2,z_2}\gets \cE_{1,z_1}\setminus \cE_{2,z_2}.
			\end{equation}
			
			\EndIf
			
			\algstore{myalg1}
		\end{algorithmic}
	\end{algorithm}
	\begin{algorithm}
		\caption{Iterative construction of $2T+1$ and $2T+2$-th generation of the decision tree}
		\label{alg:construct_treep2}
		\begin{algorithmic}[1]
			\algrestore{myalg1}
			\State Assign $\boldsymbol{\mathrm{flag}\gets \mathrm{TRUE}}$; $T\gets 1$.
			\While{$\mathrm{flag}=\mathrm{TRUE}$}
			
			\State Set $\mathrm{flag}=\mathrm{FALSE}$.
			\Repeat
			\State over all $(z_1,\ldots ,z_{2T})$ such that $R_{z_1,\ldots ,z_{2T}}\equiv R_{z_1,\ldots ,z_{2T}}(i_1,\ldots ,i_p,j_1,\ldots ,j_q)$ is a node of the ${2T}$-th \textbf{generation}.
			\State Associated with every node of the ${2T}$-th \textbf{generation}, there is a sequence of nodes $R_0\rightarrow R_{z_1}\rightarrow R_{z_1,z_2} \rightarrow \ldots \rightarrow R_{z_1,\ldots ,z_{2T-1}}\rightarrow R_{z_1,\ldots ,z_{2T}}$ where each is a child of its predecessor, sequences of sets $(\cD_0,\cD_{1,z_1},\ldots ,\cD_{2T,z_{2T}})$, $(\cE_0,\cE_{1,z_1},\ldots ,\cE_{2T,z_{2T}})$, $(\cU_{1,z_1},\cU_{2,z_2},\ldots ,\cU_{2T,z_{2T}})$, $(\cV_{1,z_1},\cV_{2,z_2},\ldots ,\cV_{2T,z_{2T}})$, a sequence of integers $(M_0,M_{1,z_1},\ldots ,M_{2T,z_{2T}})$ and functions $\{h_{i_r}(\ms;\cD_{1,z_1},\ldots ,\cD_{2T,z_{2T}})\}_{r\in [p]}$, $\{h_{j_r}(\ms;\cE_{1,z_1},\ldots ,\cE_{2T,z_{2T}})\}_{r\in [q]}$, $U_{\ib,\jb}(\ms;\cU_{1,z_1},\ldots ,\cU_{2T,z_{2T}})$, $V_{\ib,\jb}(\ms;\cV_{1,z_1},\ldots ,\cV_{2T,z_{2T}})$. For $T=1$, these notations were already introduced while describing the \textbf{first generation} (see~\eqref{eq:gen2pt1a},~\eqref{eq:gen2pt1c},~\eqref{eq:gen2pt2a},~\eqref{eq:gen2pt2aa},~and~\eqref{eq:gen2pt2b}).
			\If{$M_{2T,z_{2T}}=-\infty$ or equivalently $\cE_{2T,z_{2T}}=\phi$}
			\State \textbf{terminate}.
			\Else
			\State Set $\mathrm{flag}=\mathrm{TRUE}$.
			
			\State \textbf{$(2T+1)$-th generation (Centering step):} With $\tq=q$, $\tp=p$, we define $\io=M_{2T,z_{2T}}$. Apply~\cref{prop:baseprop}-\eqref{eq:baseprop2} with the functions $(\{h_{i_r}(\ms;\cD_{1,z_1},\ldots ,\cD_{2T,z_{2T}})\}_{r\in [\tp]})$, $(\{h_{j_r}(\ms;\cE_{1,z_1},\ldots ,\cE_{2T,z_{2T}})\}_{r\in \tq})$, $U_{\ib,\jb}(\ms;\cU_{1,z_1},\ldots ,\cU_{2T,z_{2T}})$, and $V_{\ib,\jb}(\ms;\cV_{1,z_1},\ldots ,\cV_{2T,z_{2T}})$. This yields a collection $\FG_{2T+1}$ (depending on $(z_1,\ldots ,z_{2T})$) of sets $\{G_{2T+1,z_{2T+1}}\}$ each of the form $(\cD_{2T+1,z_{2T+1}},\cE_{T,z_{2T+1}},\cU_{2T+1,z_{2T+1}},\cV_{2T+1,z_{2T+1}})$ (see~\eqref{eq:basepropa}), such that, with $R_{z_1,\ldots ,z_{2T+1}}\equiv R_{z_1,\ldots ,z_{2T+1}}(i_1,\ldots ,i_p,j_1,\ldots ,j_q)$,
			\begin{align*}
				&\qquad \qquad \qquad \qquad R_{z_1,\ldots ,z_{2T}}=\sum_{(\cD_{2T+1,z_{2T+1}},\cE_{2T+1,z_{2T+1}},\cU_{2T+1,z_{2T+1}},\cV_{2T+1,z_{2T+1}})\in \FG_{2T+1}} R_{z_1,\ldots ,z_{2T+1}},\\ &\quad R_{z_1,\ldots ,z_{2T+1}}\gets \E\Bigg[\left(\prod\limits_{r=1}^{\tp} h_{i_r}(\ms;\cD_{1,z_1},\ldots ,\cD_{2T,z_{2T}},\cD_{2T+1,z_{2T+1}})\right)\left(\prod\limits_{r=1}^{\tq} h_{j_r}(\ms;\cE_{1,z_1},\ldots ,\cE_{2T,z_{2T}},\cE_{2T+1,z_{2T+1}})\right)\\ &\qquad\qquad\qquad\qquad\qquad U_{\ib,\jb}(\ms;\cU_{1,z_1},\ldots ,\cU_{2T+1,z_{2T+1}}) V_{\ib,\jb}(\ms;\cV_{1,z_1},\ldots ,\cV_{2T+1,z_{2T+1}})\Bigg].
			\end{align*}
			We also set $M_{2T+1,z_{2T+1}}\gets M_{2T,z_{2T}}$, $\bar{\cD}_{2T+1,z_{2T+1}}=(i_1,\ldots, i_{\tp})\setminus \cD_{2T+1,z_{2T+1}}$, and $\bar{\cE}_{2T+1,z_{2T+1}}=((j_1,\ldots ,j_{\tq})\setminus \{M_{2T,z_{2T}}\})\setminus \cE_{2T+1,z_{2T+1}}$.
			\State \textbf{$(2T+2)$-th generation (Re-centering step):} With $\tq=q$, $\tp=p$, $\io$ defined as in the previous generation, by using~\cref{prop:baseprop}-\eqref{eq:basepropnew1}, we get with $R_{z_1,z_2,\ldots ,z_{2T+2}}\equiv R_{z_1,z_2,\ldots ,z_{2T+2}}(i_1,\ldots ,i_p,j_1,\ldots ,j_q)$, 
			\begin{equation}\label{eq:gen2ptreca}
				R_{z_1,\ldots ,z_{2T}}=\sum_{\substack{(\cD_{2T+1,z_{2T+1}},\cE_{2T+1,z_{2T+1}},\cU_{2T+1,z_{2T+1}},\cV_{2T+1,z_{2T+1}})\in \FG_{2T+1},\\ \cE_{2T+2,z_{2T+2}}\subseteq {\color{red} \boldsymbol{\cE_{2T,z_{2T}}\cap \cE_{2T+1,z_{2T+1}}}}, \cD_{2T+2,z_{2T+2}}=\cD_{2T+1,z_{2T+1}}, \\ \cU_{2T+2,z_{2T+2}}=\cU_{2T+1,z_{2T+1}}, \cV_{2T+2,z_{2T+2}}=\cV_{2T+1,z_{2T+1}}}} R_{z_1,\ldots ,z_{2T+2}}
			\end{equation}
			\begin{equation}\label{eq:gen2ptrecaa}
				\begin{aligned}
					R_{z_1,\ldots ,z_{2T+2}}&\gets \E\Bigg[\left(\prod\limits_{r=1}^{\tp} h_{i_r}(\ms;\cD_{1,z_1},\ldots ,\cD_{2T+2,z_{2T+2}})\right)\left(\prod\limits_{r=1}^{\tq} h_{j_r}(\ms;\cE_{1,z_1},\ldots,\cE_{2T+1,z_{2T+2}})\right)\\  &\qquad\qquad\qquad\qquad U_{\ib,\jb}(\ms;\cU_{1,z_1},\ldots ,\cU_{2T+2,z_{2T+2}}) V_{\ib,\jb}(\ms;\cV_{1,z_1},\ldots ,\cV_{2T+2,z_{2T+2}})\Bigg]
				\end{aligned}
			\end{equation}
			For the definitions of all relevant terms in~\eqref{eq:gen2pt2a} see~\eqref{eq:basepropnew2}~and~\eqref{eq:basepropnew3}. Further we assign
			\begin{equation*}
				M_{2T+2,z_{2T+2}}\gets\{j_b\in\cE_{2T+2,z_{2T+2}}:j_{b'}\notin \cE_{2T+2,z_{2T+2}}\ \mbox{for}\ b'>b\},\; M_{2T+2,z_{2T+2}}\gets -\infty\ \mbox{if}\ \cE_{2T+2,z_{2T+2}}=\phi.
			\end{equation*}
			\begin{equation*}
				\bar{\cE}_{2T+2,z_{2T+2}}\gets (\cE_{2T,z_{2T}}\cup \cE_{2T+1,z_{2T+1}})\setminus \cE_{2T+2,z_{2T+2}}.
			\end{equation*}
			
			\EndIf
			\Until no more nodes remain in the $2T$-th \textbf{generation}.
			\State $T\gets T+1$.
			
			\EndWhile
		\end{algorithmic}
	\end{algorithm}
	
	\subsection{An example of a decision tree}\label{sec:exmpledtree}
	
	In this section, we provide an example of a decision tree (see Algorithm~\ref{alg:construct_tree}-\ref{alg:construct_treep2} for details) for better understanding of our techniques, and in the process, we define some relevant terms which will be useful throughout the paper. As the intent here is to build intuition for the proof, we will assume that $U_{\ib,\jb}$ and $V_{\ib,\jb}$ are both constant functions. 
	
	\begin{definition}[Leaf node]\label{def:leafnode}
		A node in the decision tree is called a \emph{leaf node} if it does not have any child nodes. Based on~\cref{alg:construct_tree}, a node is equivalently a leaf node if it satisfies the termination condition, as given in step 17 of~\cref{alg:construct_tree}.
	\end{definition} 
	\begin{observation}[Invariance of sum]\label{obs:insum}
		Note that at every step, whenever a node is split into child nodes, by virtue of~\cref{prop:baseprop}, the sum of the child nodes equals the parent node. Consequently, we have:
		$$R_0(i_1,\ldots ,i_p,j_1,\ldots ,j_q)=\sum_{R_{z_1,\ldots ,z_t}\ \mbox{is a leaf node}} R_{z_1,\ldots ,z_t}(i_1,\ldots ,i_p,j_1,\ldots ,j_q).$$
	\end{observation}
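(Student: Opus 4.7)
The plan is to establish this observation by a straightforward induction on the generation index of the decision tree, relying on the exact splitting identities proved in Proposition~\ref{prop:baseprop}. The only substantive point is to confirm that the tree has finite depth, so that the recursive expansion actually terminates at leaf nodes.

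First, I would observe that the tree has depth at most $2q$. Indeed, the set $\cE_{2T,z_{2T}}$ decreases along every branch of even-indexed generations: at the centering step ($2T{+}1$), the element $\io=M_{2T,z_{2T}}$ is excluded from $\cE_{2T+1,z_{2T+1}}$ by construction (see~\eqref{eq:basepropa}, which takes $\cE\subseteq (j_1,\ldots ,j_{\tq})\setminus\io$), and at the subsequent re-centering step ($2T{+}2$) we impose $\cE_{2T+2,z_{2T+2}}\subseteq \cE_{2T,z_{2T}}\cap\cE_{2T+1,z_{2T+1}}$, so $M_{2T,z_{2T}}\notin \cE_{2T+2,z_{2T+2}}$. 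Hence $|\cE_{2T+2,z_{2T+2}}|\le |\cE_{2T,z_{2T}}|-1$, which forces the termination criterion $\cE_{2T,z_{2T}}=\phi$ (equivalently $M_{2T,z_{2T}}=-\infty$) to hold within at most $2q$ generations along every branch.

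Second, I would argue by induction on $T$ that, for every $T\ge 0$,
\begin{equation*}
R_0(i_1,\ldots,i_p,j_1,\ldots,j_q)\;=\;\sum_{R_{z_1,\ldots,z_{2T}}\text{ is a leaf}} R_{z_1,\ldots,z_{2T}}(i_1,\ldots,i_p,j_1,\ldots,j_q)\;+\;\sum_{R_{z_1,\ldots,z_{2T}}\text{ not a leaf}} R_{z_1,\ldots,z_{2T}}(i_1,\ldots,i_p,j_1,\ldots,j_q).
\end{equation*}
The base case $T=0$ is trivial: if $q=0$ the root is already a leaf, and if $q\ge 1$ the identity $R_0=\sum_{z_1}R_{z_1}=\sum_{z_1,z_2}R_{z_1,z_2}$ is precisely~\eqref{eq:gen2pt1c} followed by~\eqref{eq:gen2pt2a}, each an instance of Proposition~\ref{prop:baseprop} with the choice of functions specified in Step~7. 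For the inductive step, each non-leaf node $R_{z_1,\ldots,z_{2T}}$ has, by Steps~11--12 of Algorithm~\ref{alg:construct_treep2}, a decomposition
$$R_{z_1,\ldots,z_{2T}}\;=\;\sum_{z_{2T+1}} R_{z_1,\ldots,z_{2T+1}}\;=\;\sum_{z_{2T+1},z_{2T+2}} R_{z_1,\ldots,z_{2T+2}}$$
which is again a direct application of~\eqref{eq:baseprop2} (centering, with $\io=M_{2T,z_{2T}}$) and~\eqref{eq:basepropnew1} (re-centering). Substituting these decompositions into the inductive hypothesis advances the induction from $T$ to $T+1$, while leaf contributions carry over unchanged.

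Third, I would conclude by taking $T$ large enough (at most $q$) so that every branch has reached its termination point; then the second sum in the display above is empty and the first equals the claimed identity. The only place where one must be careful is to match the functional arguments $(\cD_{1,z_1},\ldots,\cD_{2T,z_{2T}})$, $(\cE_{1,z_1},\ldots,\cE_{2T,z_{2T}})$, etc., correctly across the centering/re-centering steps, but this is automatic from~\eqref{eq:basepropnew2}--\eqref{eq:basepropnew4}. The hard part is purely bookkeeping: no new inequality, concentration, or smoothness input is needed here, since Proposition~\ref{prop:baseprop} has already supplied the exact splitting identity at each internal node. Consequently the observation reduces to verifying that the decision-tree algorithm really is a finite sequence of identities, which is what the induction above records.
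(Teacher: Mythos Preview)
Your proposal is correct and follows the same approach as the paper: the observation is stated there without a formal proof, with only the one-line justification that each split preserves the sum by Proposition~\ref{prop:baseprop}, so the identity follows by telescoping. You have simply supplied the explicit induction and the finiteness of depth (which the paper records separately as Proposition~\ref{prop:boundlen}); no additional idea is needed beyond what the paper already sketches.
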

	\begin{definition}[Path]\label{def:path}
		A \emph{path} is a sequence of nodes in the tree such that each node in the sequence is a child of its predecessor. For example, $R_0\rightarrow R_{z_1}\rightarrow R_{z_1,z_2} \rightarrow \ldots \rightarrow R_{z_1,\ldots ,z_t}$ is a path if $R_{z_1}$ is a child of $R_0$, $R_{z_1,z_2}$ is a child of $R_{z_1}$ and so on.
	\end{definition}
	\begin{definition}[Branch and length]\label{def:length}
		A branch is a path which begins with the root (see~\eqref{eq:rootnode}) and ends with a leaf node (see~\cref{def:leafnode}). The length of a branch is one less than the number of nodes in that branch (to account for the root node). The tree has \emph{length} $\cT\in \mathbb{N}\cup\{\infty\}$ if no node of the $\cT^{\mathrm{th}}$ generation has any child nodes, i.e., all nodes of the $\cT^{\mathrm{th}}$ generation satisfy the termination condition (see step 17 of~\cref{alg:construct_tree}).
	\end{definition}
	
	In~\cref{fig:egtree}, we present an example of a decision tree when the root (see~\eqref{eq:rootnode}) is $$R_0\equiv (c_{i_1}(\sigma_{i_1}-t_{i_1}))^2(c_{j_1}(\sigma_{j_1}-t_{j_1}))(c_{j_2}(\sigma_{j_2}-t_{j_2}))$$ with $p=1,q=2,$ and $j_1<j_2$. It will also provide some insight into the proof of~\cref{lem:surviveterm} (which is the subject of the next section, i.e.,~\cref{sec:pfsurvive}). 
	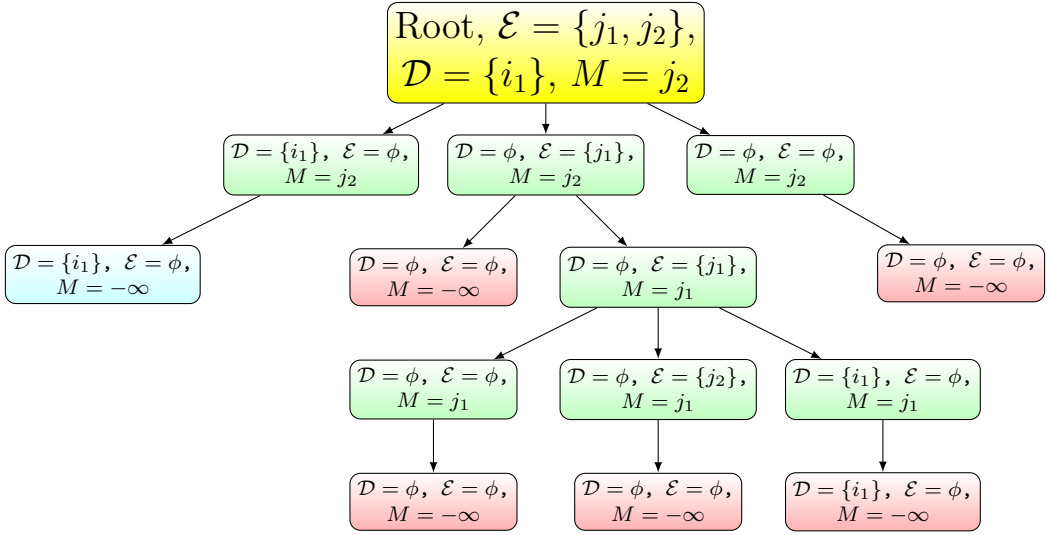
\begin{figure}[h]
		\centering
		\begin{tikzpicture}
			[
			grow                    = down,
			sibling distance        = 10em,
			level distance          = 5em,
			edge from parent/.style = {draw, -latex},
			every node/.style       = {font=\footnotesize},
			sloped
			]
			\node [root] {Root, $\cE=\{j_1,j_2\}$,\\ $\cD=\{i_1\}$, $M=j_2$}
			child { node(gen11) [con] {$\cD=\{i_1\}$, $\cE=\phi$,\\ $M=j_2$}
				child { node [env, below left = 2.2em and 1em of gen11] {$\cD=\{i_1\}$, $\cE=\phi$,\\ $M=-\infty$}
				}
			}
			child { node [con] {$\cD=\phi$, $\cE=\{j_1\}$,\\ $M=j_2$}
				child { node [nocon] {$\cD=\phi$, $\cE=\phi$,\\ $M=-\infty$}
				}
				child {node [con] {$\cD=\phi$, $\cE=\{j_1\}$,\\ $M=j_1$}
					child {node [con] {$\cD=\phi$, $\cE=\phi$,\\ $M=j_1$}
						child {node[nocon] {$\cD=\phi$, $\cE=\phi$,\\ $M=-\infty$}
						}
					}
					child {node [con] {$\cD=\phi$, $\cE=\{j_2\}$,\\ $M=j_1$}
						child {node [nocon] {$\cD=\phi$, $\cE=\phi$,\\ $M=-\infty$}
						}
					}
					child {node [con] {$\cD=\{i_1\}$, $\cE=\phi$,\\ $M=j_1$}
						child{node [nocon] {$\cD=\{i_1\}$, $\cE=\phi$,\\ $M=-\infty$}
						}
					}			
				}
			}
			child { node(gen13) [con] {$\cD=\phi$, $\cE=\phi$,\\ $M=j_2$}
				child { node [nocon, below right = 2.2em and 1em of gen13] {$\cD=\phi$, $\cE=\phi$,\\ $M=-\infty$}
				}
			};
		\end{tikzpicture}
		\caption{In the above diagram, we plot the complete decision tree according to~\cref{alg:construct_tree} when $p=1$, $q=2$. The root node is in yellow, the leaf nodes are in red (except in one case where it is in cyan, the reasons for which are explained in the main text) and the non leaf nodes are in green. The values of $\cD,\cE$, and $M$ are specified along with each node (we drop the subscripts used in Algorithm~\ref{alg:construct_tree}-\ref{alg:construct_treep2} to avoid notational clutter). At the root, $M=j_2$, $\cD=\{i_1\}$, $\cE=\{j_1,j_2\}$ (see step 3 of~\cref{alg:construct_tree}). Therefore, in the first generation, $\cD\subseteq \{i_1\}$ and $\cE\subseteq \{j_1\}$. By~\cref{prop:baseprop}, the case $(\cD,\cE)=(\{i_1\},\{j_1\})$ does not contribute. This leads to $3$ choices for $(\cD,\cE)$ which form the $3$ nodes of the \textbf{first generation}. In $2$ of these nodes $\cE=\phi$, and consequently their child nodes will have $\cE=\phi$ and $M=-\infty$ (see~\eqref{eq:gen2pt2b}) which satisfy the \textbf{termination condition} from step 17 in~\cref{alg:construct_treep2}. For the other node in the first generation, the only remaining option is $\cD=\phi$, $\cE=\{j_1\}$. For its child nodes, by step 8 of~\cref{alg:construct_tree} (see~\eqref{eq:gen2pt2a}), the only options of $\cE$ are $\phi$ and $\{j_1\}$. The case $\cE=\phi$ once again satisfies the termination condition from step 17 of~\cref{alg:construct_treep2} and is thus a leaf node. Therefore the only node in the \textbf{second generation} which has child nodes is the case where $\cD=\phi$, $\cE=\{j_1\}$. This in turn implies $M=\{j_1\}$ (see~\eqref{eq:gen2pt2b}). The \textbf{third and fourth generations} are formed similarly using the recursive approach described in~\cref{alg:construct_treep2}.}
		\label{fig:egtree}
	\end{figure}
	Note that by~\eqref{eq:root}, $f_L(\cdot)$ can be written as:
	\begin{equation}\label{eq:egexplain1}
		\sum_{(i_1,j_1,j_2)\in \Theta_{N,3}} (c_{i_1}(\sigma_{i_1}-t_{i_1}))^2 (c_{j_1}(\sigma_{j_1}-t_{j_1}))(c_{j_2}(\sigma_{j_2}-t_{j_2}))
	\end{equation}
	when $p=1$, $q=2$. By~\cref{fig:egtree},~\eqref{eq:egexplain1} can be split into the sum of $6$ terms corresponding to each leaf node (see~\cref{obs:insum}). Let us focus on the first leaf node (from the right) in the second generation, where $(\cD,\cE)=(\phi,\phi)$. By~\cref{prop:baseprop}, we have:
	\begin{align*}
		&|h_{i_1}^{\phi;j_2}(\ms)|=|(c_{i_1}(g(\sigma_{i_1})-t_{i_1}))^2-(c_{i_1}(g(\sigma_{i_1})-t_{i_1}^{j_2}))^2|\lesssim |t_{i_1}-t_{i_1}^{j_2}|\lesssim \Q_{N,2}(i_1,j_2)\\ & |h_{j_1}(\ms;G)|=|c_{j_1}(g(\sigma_{j_1})-t_{j_1})-c_{j_1}(g(\sigma_{j_1})-t_{j_1}^{j_2})|\lesssim |t_{j_1}-t_{j_1}^{j_2}|\lesssim \Q_{N,2}(j_1,j_2).
	\end{align*}
	Therefore, the contribution of this leaf node can be bounded by
	$$\big|\sum_{(i_1,j_1,j_2)\in\Theta_{N,3}} h_{i_1}^{\phi;j_2}(\ms)h_{j_1}^{\phi;j_2}(\ms)c_{j_2}(g(\si_{j_2})-t_{j_2})\big|\lesssim \sum_{(i_1,j_1,j_2)\in\Theta_{N,3}} \Q_{N,2}(i_1,j_2)\Q_{N,2}(j_1,j_2)\lesssim N$$
	where the last line uses~\cref{as:cmean}. In this case, $k=4$ and so~\eqref{eq:conmain1} implies that the contribution of this leaf node to~\eqref{eq:egexplain1} is negligible asymptotically. A similar argument shows that the contribution of the middle leaf node in the second generation can also be bounded by
	$$\sum_{(i_1,j_1,j_2)\in\Theta_{N,3}} \Q_{N,2}(i_1,j_2)\Q_{N,2}(j_1,j_2)\lesssim N$$ 
	which shows that its contribution too, is negligible by~\eqref{eq:conmain1}. In a similar vein, the contribution of the leftmost leaf node in the fourth generation can be bounded by:
	\begin{equation*}
		\sum_{(i_1,j_1,j_2)\in\Theta_{N,3}} \tQ_{N,3}(i_1,j_1,j_2)\Q_{N,2}(j_1,j_2)\lesssim \sum_{(j_1,j_2)\in [N]^2} \Q_{N,2}(j_1,j_2)\lesssim N
	\end{equation*}
	where $\tQ_{N,3}$ is defined as in~\eqref{eq:newtensor}; also see~\cref{lem:supbound}, part (d). Further, the middle and the rightmost leaf nodes in the fourth generation have contributions bounded by:
	$$\sum_{(i_1,j_1,j_2)\in\Theta_{N,3}} \tQ_{N,3}(i_1,j_1,j_2)\lesssim N,$$
	and
	$$\sum_{(i_1,j_1,j_2)\in \Theta_{N,3}} \Q_{N,2}(i_1,j_2)\Q_{N,2}(j_1,j_2)\lesssim N.$$ 
	This shows that all leaf nodes other than the leftmost leaf node in the second generation of~\cref{fig:egtree} (which is highlighted in {\color{cyan}{\emph{cyan}}}), have asymptotically negligible contribution. Our argument for proving~\cref{lem:surviveterm} is an extension of the above observations for general $p$ and $q$. In the sequel, we will characterize all the leaf nodes which have asymptotically negligible contribution. 
	
	\section{Properties of the decision tree}\label{sec:propdtree}
	In this section, we list some crucial properties of the decision tree that will be important in the sequel for proving \cref{lem:surviveterm}. We first show that the tree constructed in Algorithm~\ref{alg:construct_tree}-\ref{alg:construct_treep2} cannot grow indefinitely.
	\begin{prop}\label{prop:boundlen}
		\item $\cT\leq 2q$, i.e., the length of the tree (see~\cref{def:length})  is finite and bounded by $2q$.
	\end{prop}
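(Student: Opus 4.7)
The plan is to show that, along any branch of the decision tree, the ``pointer'' $M_{2T, z_{2T}}$ strictly decreases in its position index as $T$ grows, so after at most $q$ re-centering steps the pointer must become $-\infty$ and force termination. Since re-centering occurs at every even level, this caps the total length of the tree at $2q$.

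First, I would set up the induction by tracking the quantity $b_T := b$ such that $M_{2T, z_{2T}} = j_b$ (with the convention $b_T = 0$ when $M_{2T, z_{2T}} = -\infty$) along a fixed branch. By the initialization in step 3 of Algorithm \ref{alg:construct_tree} we have $b_0 = q$. The inductive step is the key observation: at an odd generation $2T+1$, the centering step sets $\io = M_{2T, z_{2T}}$, and by the description of $\mathfrak{G}_{2T+1}$ inherited from~\eqref{eq:basepropa} the set $\cE_{2T+1, z_{2T+1}}$ is forced to be a subset of $(j_1, \ldots, j_q) \setminus \{\io\}$; in particular $\io \notin \cE_{2T+1, z_{2T+1}}$.

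Next, I would combine this with the re-centering rule~\eqref{eq:gen2ptreca}, which requires $\cE_{2T+2, z_{2T+2}} \subseteq \cE_{2T, z_{2T}} \cap \cE_{2T+1, z_{2T+1}}$. Since $M_{2T, z_{2T}}$ is, by definition, the element of $\cE_{2T, z_{2T}}$ of largest index $b_T$, and it is excluded from $\cE_{2T+1, z_{2T+1}}$ by the previous step, it is also excluded from $\cE_{2T+2, z_{2T+2}}$. Consequently the largest index appearing in $\cE_{2T+2, z_{2T+2}}$ (if the set is non-empty) is strictly less than $b_T$, which gives $b_{T+1} < b_T$ whenever $b_{T+1} > 0$.

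Finally, iterating this strict inequality yields $b_T \le q - T$ whenever the branch has survived through generation $2T$. Taking $T = q$ forces $b_q \le 0$, which means $\cE_{2q, z_{2q}} = \phi$ and $M_{2q, z_{2q}} = -\infty$, so the termination condition in step 17 of Algorithm \ref{alg:construct_treep2} fires at generation $2q$ at the latest. As the branch was arbitrary, the length $\cT$ of the whole tree satisfies $\cT \le 2q$. There is no real obstacle here beyond faithfully tracking the set-level bookkeeping across the alternating centering/re-centering updates; the heart of the argument is the single-line observation that $M_{2T, z_{2T}}$ is permanently removed from all descendant $\cE$-sets along the branch.
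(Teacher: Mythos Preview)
Your argument is correct and follows essentially the same approach as the paper. The paper's proof is a one-liner: it observes that the cardinalities $|\cE_{2t,z_{2t}}|$ form a strictly decreasing sequence starting at $q$, forcing $\cE_{2t,z_{2t}}=\phi$ for some $t\le q$. You instead track the maximum index $b_T$ present in $\cE_{2T,z_{2T}}$ and show it strictly decreases; this is the same mechanism (the current $\io=M_{2T,z_{2T}}$ is excluded from $\cE_{2T+2,z_{2T+2}}$ via $\cE_{2T+2,z_{2T+2}}\subseteq \cE_{2T,z_{2T}}\cap\cE_{2T+1,z_{2T+1}}$) viewed through a different monotone quantity, and in fact makes explicit the step the paper states without justification.
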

	
	\begin{proof}
		Observe that the cardinality of the sets $\{\cE_{2t,z_{2t}}\}_{t\geq 0}$ form a strictly decreasing sequence. As $|\cE_0|=q$ and $\cE_{2t,z_{2t}}\subseteq \{j_1,\ldots ,j_q\}$, we must have $\cE_{2t,z_{2t}}=\phi$ for some $t\leq q$. Therefore, by step 17 of \cref{alg:construct_treep2}, all branches of the tree (see~\cref{def:length}) have length $\leq 2q$, and consequently $\cT\leq 2q$, which completes the proof.
	\end{proof}
	
	The next set of properties we are interested in, revolves around bounding the contribution of the nodes along an arbitrary branch, say $R_0\rightarrow R_{z_1} \rightarrow \ldots \rightarrow R_{z_1,\ldots ,z_t}$.  The proofs of these results  are deferred to later sections. As a preparation, we begin with the following observation:

	\begin{lemma}\label{lem:supbound}
		Consider a path $R_0\rightarrow R_{z_1}\rightarrow\ldots\rightarrow R_{z_1,\ldots ,z_{2t}}$ of the decision tree constructed in Algorithm~\ref{alg:construct_tree}-\ref{alg:construct_treep2}. Recall the construction of $(\cD_{a,z_a},\cE_{a,z_a},M_{a,z_a},\cU_{a,z_a},\cV_{a,z_a})_{a\in [2t]}$. Then, under Assumptions~\ref{as:cmean}~and~\ref{as:coeffalt}, the following holds:
		\begin{enumerate}
			\item [(a).] The following uniform bound holds:
			\begin{equation}\label{eq:supbound1}
				\max_{t_0\leq 2t}\max\left\{\max_{r=1}^p \max_{\ms\in\B^N}|h_{i_r}(\ms;\cD_{0,z_0},\ldots ,\cD_{t_0,z_{t_0}})|,\max_{r=1}^q \max_{\ms\in \B^N}|h_{j_r}(\ms;\cE_{0,z_0},\ldots ,\cE_{t_0,z_{t_0}})|\right\}\lesssim 1.
			\end{equation}
			\item [(b).] Further, fix $r\in [p]$ and set $\cI_{2t,i_r}:=\{a\in [2t]:\ a\ \mbox{is odd},\ i_r\in\bar{\cD}_{a,z_a}\}$, and $\cI^*_{2t,i_r}:=\{M_{a-1,z_{a-1}}:a\in \cI_{2t,i_r}\}$. Then the following holds:
			\begin{equation}\label{eq:supbound2}
				\max_{\ms\in \B^N}|h_{i_r}(\ms;\cD_{0,z_0},\ldots ,\cD_{2t,z_{2t}})|\lesssim \mathcal{R}[\bQ]_{N,1+|\cI^*_{2t,i_r}|}(i_r, \cI^*_{2t,i_r}). 
			\end{equation}
			\item [(c).] In a similar vein, for $r\in [q]$, set $\cJ_{2t,j_r}:=\{a\in [2t]:\ a\ \mbox{is odd},\ j_r\in\bar{\cE}_{a,z_a}\}\cup \{a\in [2t];\ a\ \mbox{is odd},\ j_r\in \cE_a,z_a)\setminus \cE_{a+1,z_{a+1}}\}$ and $\cJ^*_{2t,j_r}:=\{M_{a-1,z_{a-1}}:a\in \cJ_{2t,j_r}\}$. Then the following holds: 
			\begin{equation}\label{eq:supbound3}
				\max_{\ms\in \B^N}|h_{j_r}(\ms;\cE_{1,z_1},\ldots ,\cE_{2t,z_{2t}})|\lesssim \mathcal{R}[\bQ]_{N,1+|\cJ^*_{2t,j_r}|}(j_r, \cJ^*_{2t,i_r}). 
			\end{equation}
			\item [(d).] Set $\mU_{2t}:=\{a\in [2t]:\ a\ \mbox{is odd},\ \mathcal{U}_{a,z_a}=\phi\}$ and $\mU_{2t}^\star:=\{M_{a-1,z_{a-1}} :\ a\in \mU_{2t}\}$. Then, provided $k_1\ge 1$, we have: 
			\begin{equation}\label{eq:supbound4}
				\max_{\ms\in \B^N}|U_{\ib,\jb}(\ms;\cU_{1,z_1},\ldots ,\cU_{2t,z_{2t}})|\lesssim \bQ^U_{N,|\mU_{2t}^\star|}(\mU_{2t}^\star),  
			\end{equation}
			where $\{\bQ^U_{N,k}\}_{N,k\ge 1}$ is a collection of tensors with non-negative entries satisfying $\sup_{N\ge 1}\sum_{\ell_1,\ldots ,\ell_k} \bQ^U_{N,k}(\ell_1,\ldots ,\ell_k)<\infty$ for all fixed $k\ge 1$.
			\item [(e).] Set $\mV_{2t}:=\{a\in [2t]:\ a\ \mbox{is odd},\ \mathcal{V}_{a,z_a}=\phi\}$ and $\mV_{2t}^\star:=\{M_{a-1,z_{a-1}} :\ a\in \mV_{2,2t}\}$. Then, provided $k_2\ge 1$, we have: 
			\begin{equation}\label{eq:supbound5}
				\max_{\ms\in \B^N}|V_{\ib,\jb}(\ms;\cV_{1,z_1},\ldots ,\cV_{2t,k_{zt}})|\lesssim \bQ^V_{N,|\mU_{2t}^\star|}(\mU_{2t}^\star),  
			\end{equation}
			where $\{\bQ^V_{N,k}\}_{N,k\ge 1}$ is a collection of tensors with non-negative entries satisfying $\sup_{N\ge 1}\sum_{\ell_1,\ldots ,\ell_k} \bQ^V_{N,k}(\ell_1,\ldots ,\ell_k)<\infty$ for all fixed $k\ge 1$.
		\end{enumerate}
	\end{lemma}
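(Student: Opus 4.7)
The five assertions are all proved by induction on the depth $t$ of the path, with the key input being that every time a function is ``split'' along the decision tree, one of the two pieces produced is either a bounded evaluation (under a coordinate substitution) or a discrete derivative of the conditional mean that can be controlled by the tensors from~\cref{as:cmean}. The plan is to handle (a) first as a uniform-boundedness skeleton, then refine this skeleton in (b)--(e) by keeping track of which indices contribute discrete derivatives.

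For part (a), I proceed by induction on $t_0$. At $t_0=0$, the bound is immediate from~\cref{as:coeffalt} together with the fact that $g$ takes values in $[-1,1]$ and $|t_i|\le 1$. For the inductive step, inspect~\eqref{eq:basepropb}--\eqref{eq:basepropf}: whether the index $i_r$ (resp.\ $j_r$) falls in $\cD_{t_0,z_{t_0}}$ or $\bar{\cD}_{t_0,z_{t_0}}$ (resp.\ in $\tcE_{t_0,z_{t_0}}$, $\cE_{t_0,z_{t_0}}\setminus \tcE_{t_0,z_{t_0}}$, or $\bar{\cE}_{t_0,z_{t_0}}$), the new function equals either the previous one evaluated on $\ms_{S}$ for some $S$, or a difference of two such evaluations. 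In every case the sup-norm is bounded by twice that of the previous generation; since the tree has depth at most $2q$ by~\cref{prop:boundlen}, the resulting constants are absorbed into $\lesssim$.

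Parts (b) and (c) are the heart of the lemma. For (b), note that at every odd step $a$ with $i_r\in \bar{\cD}_{a,z_a}$ the $i_r$-factor is replaced by its \emph{difference} $h_{i_r}^{\phi;M_{a-1,z_{a-1}}}$, so it picks up one discrete derivative along the coordinate $M_{a-1,z_{a-1}}$; if instead $i_r\in\cD_{a,z_a}$, the coordinate $M_{a-1,z_{a-1}}$ is simply frozen at $b_0$ and no derivative is added. Consequently, after $t$ rounds the function $h_{i_r}(\ms;\cD_{0,z_0},\ldots,\cD_{2t,z_{2t}})$ is, up to sign, a mixed discrete difference $\De((c_{i_r}(g(\si_{i_r})-t_{i_r}))^{\ell_r};\tcS;\cI^{\star}_{2t,i_r})$ for some $\tcS$ disjoint from $\{i_r\}\cup \cI^{\star}_{2t,i_r}$. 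Writing $(c_{i_r}(g-t_{i_r}))^{\ell_r}$ as a polynomial in $t_{i_r}$ and applying the Fa\`a Di Bruno--type discrete identity established in~\cref{cl:smoothclaim2} (together with~\cref{lem:genmatrixbd}), the resulting mixed difference is bounded by the recursive tensor $\mathcal{R}[\bQ]_{N,1+|\cI^{\star}_{2t,i_r}|}(i_r,\cI^{\star}_{2t,i_r})$, where $\bQ$ is the tensor sequence $\{\Q_{N,k}\}$ of~\cref{as:cmean}. Part (c) is analogous, with the extra bookkeeping that re-centering steps (even generations) create the second source of differences: when $j_r\in \cE_{a,z_a}\setminus \cE_{a+1,z_{a+1}}$ the identity~\eqref{eq:basepropnew2} contributes another factor $-h_{j_r}^{\phi;M_{a-1,z_{a-1}}}$; these contributions are exactly what enlarges $\cJ_{2t,j_r}$ beyond the analog of $\cI_{2t,i_r}$, and the same discrete Fa\`a Di Bruno bound applies.

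For parts (d) and (e), I expand $U_{\ib,\jb}$ and $V_{\ib,\jb}$ using~\eqref{eq:reducedex} as sums over $k$ (resp.\ $m,k$) of terms that depend smoothly on the local conditional means. At every odd step with $\cU_{a,z_a}=\phi$, the factor $U_{\ib,\jb}^{\phi;M_{a-1,z_{a-1}}}$ is used; this introduces one discrete derivative along $M_{a-1,z_{a-1}}$ inside the sum defining $U_{\ib,\jb}$, which by~\cref{as:cmean} is bounded pointwise by a quantity of the form $N^{-1}\sum_k c_k^2 \,\Q_{N,2}(k,M_{a-1,z_{a-1}})\lesssim 1$ per derivative, and the convolution of the successive $\Q_{N,\cdot}$'s against itself yields a tensor $\bQ^U_{N,|\mU^{\star}_{2t}|}$ with bounded total row sums, by the row-sum stability in~\cref{lem:genmatrixbd}(2). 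Part (e) runs in parallel using the second-moment handle in~\cref{lem:auxtail}(b) to absorb the extra factor $(t_m^k-t_m)$ present in $V_{\ib,\jb}$. The hardest bookkeeping in the whole proof is the identification in (c) of the set $\cJ_{2t,j_r}$ as the union of ``centering'' differences and ``re-centering'' differences; once this is set up correctly, the induction collapses cleanly and the $\mathcal{R}[\,\cdot\,]$-recursion from~\eqref{eq:newtensor} is exactly the operation that appears, which is why the right-hand sides in (b)--(e) take their stated form.
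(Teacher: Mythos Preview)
Your treatment of parts (a)--(d) matches the paper's approach essentially line by line: identifying $h_{i_r}(\ms;\cD_0,\ldots,\cD_{2t,z_{2t}})$ as a mixed discrete difference $\De(h_{i_r};\tI_{2t,i_r};\cI^\star_{2t,i_r})$, expanding the polynomial in $t_{i_r}$, and invoking the discrete Fa\`a di Bruno machinery of \cref{cl:smoothclaim2} and \cref{lem:genmatrixbd} is exactly what the paper does.

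The gap is in part~(e). Invoking \cref{lem:auxtail}(b) is incorrect: that lemma gives an \emph{expectation} bound $\E(\cdot)^2\lesssim N$, whereas~\eqref{eq:supbound5} is a pointwise sup-norm bound on $V_{\ib,\jb}(\ms;\cV_{1,z_1},\ldots)$. The real difficulty is structural and cannot be handled ``in parallel'' to~(d): after writing the object as $\De(f\circ V_{N,\ib,\jb};\bar{\mV}_{2t}^\star;\mV_{2t}^\star)$ with $f(x)=x^{k_2}$, you must first control $\De(V_{N,\ib,\jb};\bar{\mV}_{2t}^\star;\mV_{2t}^\star)$. Unlike $U_{N,\ib,\jb}$, each summand of $V_{N,\ib,\jb}$ contains a \emph{product} $(g(\sigma_k)-t_k)(t_\ell^k-t_\ell)$ in which \emph{both} factors depend on the coordinates being differenced, so a single application of~\eqref{eq:cmean1} does not suffice. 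The paper handles this by proving a separate Leibniz-type identity
\[
\De\bigl(t_k(t_\ell-t_\ell^k);\bar{\mV}_{2t}^\star;\mV_{2t}^\star\bigr)
=\sum_{D\subseteq \mV_{2t}^\star}\De\bigl(t_k;\bar{\mV}_{2t}^\star\cup(\mV_{2t}^\star\setminus D);D\bigr)\,\De\bigl(t_\ell;\bar{\mV}_{2t}^\star;(k,\mV_{2t}^\star\setminus D)\bigr),
\]
established by induction on $t$; only after this decomposition can each piece be bounded by products of the $\bQ_{N,\cdot}$ tensors and fed into \cref{lem:genmatrixbd}. Your sketch skips this step entirely, and the tool you cite (\cref{lem:auxtail}(b)) is of the wrong type to fill it.
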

	To understand the implications of~\cref{lem:supbound}, we introduce the notion of rank for every node of the tree, in the same spirit as rank of a function, as defined in~\cref{def:rankfub}.
	\begin{definition}[Rank of a node]\label{def:ranknode}
		Consider any node $R_{z_1,\ldots ,z_t}\equiv R_{z_1,\ldots ,z_t}(i_1,\ldots ,i_p,j_1,\ldots ,j_q)$ of the tree constructed in Algorithm~\ref{alg:construct_tree}-\ref{alg:construct_treep2}. Note that it is indexed by $(i_1,\ldots ,i_p,j_1,\ldots ,j_q)\in\Theta_{N,p+q}$ (see step 1 of~\cref{alg:construct_tree}). Then the rank of a node  $R_{z_1,\ldots ,z_t}$ is given by:
		$$\mathrm{rank}\left(\sum_{(i_1,\ldots ,i_p.j_1,\ldots ,j_q)\in\Theta_{N,p+q}} R_{z_1,\ldots ,z_t}(i_1,\ldots ,i_p,j_1,\ldots ,j_q)\right)$$
		in the sense of~\cref{def:rankfub}.
	\end{definition}
	At an intuitive level,~\cref{lem:supbound} implies that as we go lower and lower down the decision tree constructed in~\cref{alg:construct_tree}, the ranks of successive nodes decreases. This is formalized in the subsequent results.
	\begin{prop}\label{prop:degbound}
		Suppose $R_0\rightarrow R_{z_1}\rightarrow\ldots \rightarrow R_{z_1,\ldots ,z_{2T}}$ is a branch of the tree constructed in Algorithm~\ref{alg:construct_tree}-\ref{alg:construct_treep2} where $R_{z_1,\ldots ,z_{2T}}$ is a leaf node (see~\cref{def:leafnode}). Recall the construction of $(\cD_{a,z_a},\cE_{a,z_a},M_{a,z_a},\cU_{a,z_a},\cV_{a,z_a})_{a\in [2T]}$ from Algorithms~\ref{alg:construct_tree}-\ref{alg:construct_treep2}. Then the following conclusion holds under Assumptions~\ref{as:cmean}~and~\ref{as:coeffalt}:
		$$\mathrm{rank}\bigg(\sum_{(\ib,\jb)} R_{z_1,\ldots ,z_{2T}}(\ib,\jb)\bigg)\leq p+q-\max{\left(T,\Bigg|\cup_{a=1}^{T} (\cE_{2a-2,z_{2a-2}}\setminus M_{2a-2,z_{2a-2}})\setminus \cE_{2a,z_{2a}}\Bigg|\right)}.$$
	\end{prop}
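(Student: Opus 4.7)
The plan is to dominate $\bigl|\sum_{(\ib,\jb)\in\Theta_{N,p+q}} R_{z_1,\ldots,z_{2T}}(\ib,\jb)\bigr|$ by a sum of products of tensor entries using Lemma~\ref{lem:supbound}, and then estimate the sum over $(\ib,\jb)$ by exploiting the bounded row-sum property of the $\mathcal{R}[\bQ]$-tensors. A preliminary structural observation is that the sets $\{(\cE_{2a-2,z_{2a-2}}\setminus M_{2a-2,z_{2a-2}})\setminus \cE_{2a,z_{2a}}\}_{a=1}^{T}$ are pairwise disjoint. This is because any $j$ dropped at the re-centering step $2a$ cannot re-enter $\cE_{2a'}$ for $a'>a$, since each re-centering intersects with $\cE_{2a-2}$ (cf.~\eqref{eq:gen2ptreca}). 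Combined with the fact that the anchor $M_{2a-2,z_{2a-2}}\notin \cE_{2a,z_{2a}}$ at every re-centering step, one obtains $|\cE_0|-|\cE_{2T,z_{2T}}|=T+D$, where $D$ is the cardinality of the union in the statement. At a leaf $\cE_{2T,z_{2T}}=\emptyset$, so $q=T+D$ and the target bound becomes $\mathrm{rank}\le p+\min(T,D)$.

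The key input is a two-pronged incidence analysis for the $j$-factors. First, for each non-anchor dropped index $j_r\in (\cE_{2a-2}\setminus M_{2a-2})\setminus \cE_{2a}$, a short case split at the centering step $2a-1$ (where $\iota=M_{2a-2,z_{2a-2}}\neq j_r$) shows that $j_r$ lies in $\bar{\cE}_{2a-1,z_{2a-1}}$ or in $\cE_{2a-1,z_{2a-1}}\setminus \cE_{2a,z_{2a}}$; either way $(2a-1)\in \cJ_{2T,j_r}$, so the tensor bound of Lemma~\ref{lem:supbound}(c) on $|h_{j_r}|$ is non-trivial and contains $j_r$ together with $M_{2a-2,z_{2a-2}}$. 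Second, for each non-terminal anchor $M_{2a-2,z_{2a-2}}$ with $a\le T-1$, I would show $(2a+1)\in \cJ_{2T,M_{2a-2,z_{2a-2}}}$ by a parallel case split at the centering step $2a+1$ (where $\iota=M_{2a,z_{2a}}\neq M_{2a-2,z_{2a-2}}$), using that $M_{2a-2}\notin \cE_{2a,z_{2a}}$ to handle the second set in the definition of $\cJ$. This yields a ``chain'' of non-trivial tensors $h_{M_0},h_{M_2},\ldots,h_{M_{2T-4}}$, where $h_{M_{2a-2}}$ has coordinates including both $M_{2a-2,z_{2a-2}}$ and $M_{2a,z_{2a}}$.

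With these incidences in hand, I would finish by setting up a bipartite matching between indices $\{i_1,\ldots,i_p,j_1,\ldots,j_q\}$ and non-trivial tensor factors, where an edge joins an index to a factor if and only if the index is a coordinate of the factor's tensor. Each matched pair contributes $O(1)$ upon summing that index via the row-sum property (Lemma~\ref{lem:genmatrixbd}), while each unmatched index contributes $O(N)$; hence $\mathrm{rank}\le p+q-(\text{matching size})$. Matching each dropped $j_r$ to its own $h_{j_r}$ yields $D$ matches. Using the anchor-chain edges $M_{2a-2}\leftrightarrow h_{M_{2a-2}}$ for $a=1,\ldots,T-1$ supplies an additional $T-1$ matches among the anchors, disjoint from the dropped matches. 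When $T\le D$ the matching of size $D$ alone suffices; when $T>D$ the combined matching of size $D+T-1$ also suffices provided $D\ge 1$, and the case $D=0$, $T\ge 1$ is handled separately by examining the factors $U$, $V$ and $h_{i_r}$ (whose tensor coordinates contain $M_{2T-2,z_{2T-2}}$ when $\bar{\cD}_{2T-1,z_{2T-1}}$ or $\mU_{2T}^\star$ or $\mV_{2T}^\star$ is non-empty), possibly augmented by rerouting the chain via an augmenting path.

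The main obstacle will be handling the last anchor $M_{2T-2,z_{2T-2}}$ in the regime $T>D$: its own $h$-bound is trivial (one checks $\cJ_{2T,M_{2T-2}}=\emptyset$ at a leaf because every odd step $a\le 2T-3$ has $M_{2T-2}\in \cE_a\cap \cE_{a+1}$ by downward propagation, while step $2T-1$ is its own anchor step). Producing the missing match requires a delicate structural argument: either exploiting the $U$/$V$ tensors when the corresponding $\cU,\cV$-sets at step $2T-1$ are empty, or producing an augmenting path in the matching that swaps the last anchor into an upstream chain slot and frees a different index to be matched via the $i$-side or via $U,V$, using the precise book-keeping of $\cD_{2T-1,z_{2T-1}}$ and $\cU_{2T-1,z_{2T-1}}$, $\cV_{2T-1,z_{2T-1}}$.
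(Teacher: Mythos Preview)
Your part (b) bound ($\mathrm{rank}\le p+q-D$, matching each dropped $j_r\in(\cE_{2a-2}\setminus M_{2a-2})\setminus\cE_{2a}$ to its own factor $h_{j_r}$ and summing via the row-sum property) is exactly what the paper does.

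For part (a) ($\mathrm{rank}\le p+q-T$), your route diverges from the paper's, and your endgame has a gap. Your ``anchor chain'' observation---that $(2a+1)\in\cJ_{2T,M_{2a-2,z_{2a-2}}}$ for $a\le T-1$, so each non-terminal anchor's own $h$-factor carries the next anchor as a coordinate---is correct and gives $T-1$ usable matches. The problem is the last anchor $M_{2T-2}$: you look for the missing factor at step $2T-1$, but in the bad sub-case where only $\bar{\cE}_{2T-1}\neq\emptyset$ (with $\bar{\cD}_{2T-1}=\emptyset$, $\cU_{2T-1}\neq\emptyset$, $\cV_{2T-1}\neq\emptyset$), the only factor that picks up $M_{2T-2}$ is some $h_{M_{2b-2}}$ already used in the chain, and the promised ``augmenting path'' is left unspecified. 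The right fix is to look at the \emph{first} step rather than the last: when $D=0$ one has $\cE_2=\cE_0\setminus M_0$ and hence $\bar{\cE}_1=\emptyset$, so Proposition~\ref{prop:setinc}(h) forces $\bar{\cD}_1\neq\emptyset$ or $\cU_1=\emptyset$ or $\cV_1=\emptyset$. This produces a genuinely new factor (some $h_{i_r}$, $U$, or $V$) carrying $M_0$, and one can then shift the whole chain by one ($M_0\leftrightarrow$ new factor, $M_{2a}\leftrightarrow h_{M_{2a-2}}$ for $a=1,\dots,T-1$) and sum in reverse order $M_{2T-2},M_{2T-4},\dots,M_0$.

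The paper avoids this last-anchor difficulty entirely by a different mechanism. Instead of the chain, it uses that at \emph{every} centering step $2a-1$ one of $\bar{\cD}_{2a-1}\neq\emptyset$, $\bar{\cE}_{2a-1}\neq\emptyset$, $\cU_{2a-1}=\emptyset$, $\cV_{2a-1}=\emptyset$ holds (Proposition~\ref{prop:setinc}(h)), so each anchor $M_{2a-1}$ appears as a tensor coordinate of some factor in $\{h_\beta:\beta\in\mH_T\}\cup\{U,V\}$ with $\mH_T:=\cup_a(\bar{\cD}_{2a-1}\cup\bar{\cE}_{2a-1})$. It then peels off the anchors sequentially: first those in $\cC_T=\mM_T\setminus\mH_T$ (which appear only as non-leading coordinates), then those in $\tcC_T=\mM_T\cap\mH_T$ from the highest index down, using Proposition~\ref{prop:setinc}(d) to guarantee that each such anchor's own $h$-factor has all its coordinates already absorbed (so it becomes a harmless constant) while the anchor itself can still be summed via some other surviving factor. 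This argument is uniform over all $T$ anchors and does not single out the last one.
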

	The following lemma complements~\cref{prop:degbound} in characterizing all leaf nodes whose contribution is asymptotically negligible.
	\begin{lemma}\label{lem:negterm}
		Consider the same setting and assumptions as in~\cref{prop:degbound}, with $R_{z_1,\ldots ,z_{2T}}$ being the leaf node. Then $\mathrm{rank}(\sum_{(\ib,\jb)}R_{z_1,\ldots ,z_{2T}}(\ib,\jb))<k/2$ if any of the following conclusions hold:
		\begin{enumerate}
			\item[(i)] $T\neq q/2$.
			\item[(ii)] there exists $p_0\in [p]$ such that $l_{p_0}>2$.
			\item[(iii)] there exists $1\leq a_0\leq 2T$ such that $\bar{\cD}_{a_0,z_{a_0}}\neq \phi$.
			\item[(iv)] there exists $1\leq a_0\leq T$ such that $M_{2a_0-1,z_{2a_0-1}}\in \cup_{a=1}^T (\bar{\cD}_{2a-1,z_{2a-1}}\cup \bar{\cE}_{2a-1,z_{2a-1}})$. 
			\item[(v)] there exists $1\le a_0\le T$ such that $\cU_{2a_0-1,z_{2a_0-1}}=\phi$.
			\item[(vi)] there exists $1\le a_0\le T$ such that $\cV_{2a_0-1,z_{2a_0-1}}=\phi$.
			\item[(vii)] there exists $1\leq a_0\leq T$ such that $$\bar{\cE}_{2a_0-1,z_{2a_0-1}}\cap \left(\{j_1,\ldots ,j_q\}\setminus \cE_{2a_0-2,z_{2a_0-2}}\right)\neq \phi.$$
			\item[(viii)] there exists $1\leq a_0\leq T$ such that $\big|(\cE_{2a_0-2,z_{2a_0-2}}\setminus M_{2a_0-2,z_{2a_0-2}})\setminus \cE_{2a_0,z_{2a_0}}\big|\neq 1$ or $|\bar{\cE}_{2a_0-1,z_{2a_0-1}}|\neq 1$ or $|\bar{\cE}_{2a_0-1,z_{2a_0-1}}\cap \cE_{2a_0-2,z_{2a_0-2}}|\neq 1$.
			\item[(ix)] there exists $1\leq a_0\leq T$ such that $((\cE_{2a_0-2,z_{2a_0-2}}\cap \cE_{2a_0-1,z_{2a_0-1}})\setminus\cE_{2a_0,z_{2a_0}})\neq \phi$.	
		\end{enumerate}
	\end{lemma}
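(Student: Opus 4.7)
The overall approach is to combine the refined pointwise bounds of Lemma \ref{lem:supbound} with the baseline estimate of Proposition \ref{prop:degbound} to extract, under each of (i)--(ix), an additional factor of $N^{-1}$ in the sum over $(\ib,\jb)\in\Theta_{N,p+q}$, so that the leaf-node rank drops strictly below $k/2$.

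First I would establish the disjointness identity
\[
\Bigl|\textstyle\bigcup_{a=1}^{T}(\cE_{2a-2,z_{2a-2}}\setminus \{M_{2a-2,z_{2a-2}}\})\setminus\cE_{2a,z_{2a}}\Bigr|=\sum_{a=1}^{T}\bigl(|\cE_{2a-2,z_{2a-2}}|-|\cE_{2a,z_{2a}}|-1\bigr)=q-T.
\]
The three ingredients are (a) $\cE_{2T,z_{2T}}=\phi$ at a leaf, (b) the even-indexed $\cE$-sequence is strictly decreasing with the pivot $M_{2a-2,z_{2a-2}}$ always dropped, and (c) any index dropped at step $2a$ cannot reappear in later layers by monotonicity. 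Consequently $\max(T,|S|)=\max(T,q-T)\ge q/2$, with equality iff $T=q/2$. This handles (i) immediately via Proposition \ref{prop:degbound}: $\max(T,|S|)>q/2$ yields $\mathrm{rank}\le p+q/2-1<k/2$ (using $k\ge 2p+q$). Case (ii) is likewise immediate: $\sum_r \ell_r\ge 2p+1$ forces $k/2\ge p+q/2+\tfrac12$, strictly above the integer-valued baseline $p+q/2$.

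For (iii)--(ix) we may therefore restrict to the regime $T=q/2$ with all $\ell_r=2$, in which the baseline only gives $\mathrm{rank}\le p+q/2=k/2$, and an extra unit of saving is needed. In each case I would identify a specific tensor bound from Lemma \ref{lem:supbound} whose bounded row sum absorbs one of the $p+q$ free summations. Case (iii): an index $i_r\in\bar{\cD}_{a_0,z_{a_0}}$ with $a_0$ odd forces $M_{a_0-1,z_{a_0-1}}\in\cI^\star_{2T,i_r}$, so Lemma \ref{lem:supbound}(b) sharpens $|h_{i_r}|\lesssim\mathcal{R}[\bQ]_{N,\ge 2}(i_r,\dots)$ and the $i_r$-sum saves $N^{-1}$. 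Case (vii) is symmetric via Lemma \ref{lem:supbound}(c) applied to $\cJ^\star_{2T,j_r}$. Cases (v) and (vi) use parts (d) and (e) of Lemma \ref{lem:supbound}: an empty $\cU_{2a_0-1,z_{2a_0-1}}$ or $\cV_{2a_0-1,z_{2a_0-1}}$ enlarges $\mU^\star_{2T}$ or $\mV^\star_{2T}$ by the pivot $M_{2a_0-2,z_{2a_0-2}}$, tightening the bound on $U_{\ib,\jb}$ or $V_{\ib,\jb}$ through a bounded-row-sum tensor and again consuming one summation. Case (iv) places the pivot $M_{2a_0-1,z_{2a_0-1}}$ inside $\bar{\cD}\cup\bar{\cE}$ of a different generation, which couples the pivot to an otherwise-free $i_r$ or $j_r$ through Lemma \ref{lem:supbound}(b)--(c). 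Cases (viii) and (ix) are handled by arguing that any deviation from the ``one-in-one-out'' structure---each re-centering step dropping exactly the pivot plus one additional $j_r$---either enlarges $\bar{\cE}_{2a_0-1,z_{2a_0-1}}$ beyond a single element or introduces a non-pivot drop in the $\cE$-chain that must be paired through an extra coupling in some $h_{j_r}$ bound, again yielding an $N^{-1}$ gain.

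\textbf{Main obstacle.} The chief difficulty is avoiding double-counting: Proposition \ref{prop:degbound} has already used the global index bookkeeping once, so when I try to save an extra factor under, say, (iii), I must verify that the specific summation being tightened was not implicitly exploited inside the baseline. Cases (iv), (viii), and (ix) are the most delicate because the extra couplings they provide interact with the pivots $M_{a,z_a}$ already tracked in the baseline. To resolve this cleanly I would set up a uniform accounting framework---assigning each refined coupling a distinct label tied to its generation, to its role (centering vs.\ re-centering), and to which object (one of $h_{i_r}$, $h_{j_r}$, $U_{\ib,\jb}$, $V_{\ib,\jb}$) it constrains---and prove that across the case analysis these labels are pairwise distinct, so that each of (iii)--(ix) contributes a genuinely new unit of saving on top of Proposition \ref{prop:degbound}.
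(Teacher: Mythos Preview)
Your plan for parts (i)--(vi) is essentially correct and matches the paper's approach: the telescoping identity $\sum_{a=1}^{T}(|\cE_{2a-2}|-|\cE_{2a}|-1)=q-T$ handles (i)--(ii) via Proposition~\ref{prop:degbound} directly, and for (iii)--(vi) you correctly locate the extra tensor factor from Lemma~\ref{lem:supbound} that absorbs one free summation. The ``double-counting'' concern you raise is exactly the right issue.

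However, there are real gaps in (vii)--(ix).

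\textbf{Part (vii).} Your description ``symmetric via Lemma~\ref{lem:supbound}(c)'' underestimates the difficulty. The index $j_\beta\in\bar{\cE}_{2a_0-1}\cap(\{j_1,\dots,j_q\}\setminus\cE_{2a_0-2})$ was already dropped at some earlier step $a_1<a_0$, so $\{M_{2a_1-1},M_{2a_0-1}\}\subseteq\cJ^\star_{2T,j_\beta}$. The problem is that $j_\beta$ may \emph{already belong to} $\mA_T$ (the set summed out in the baseline of Proposition~\ref{prop:degbound}), in which case the coupling you want to exploit has been used. The paper handles this by a case split: if $j_\beta\notin\mA_T$ the argument is as you describe, but if $j_\beta=\gamma_c\in\mA_T$ one must instead sum over the \emph{two pivots} $M_{2a_1-1},M_{2a_0-1}$ (which lie in $\mM_T$, disjoint from $\mA_T$) rather than over $j_\beta$. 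Your labeling framework would need to incorporate this case explicitly.

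\textbf{Parts (viii)--(ix).} Here your proposed mechanism---finding a direct $N^{-1}$ saving via an extra tensor coupling---is not how the argument goes, and it is not clear it can be made to work. The paper instead proceeds by \emph{reduction}: assume none of (i)--(vii) hold (else done). Then one shows, using Proposition~\ref{prop:setinc}(h), that $|(\cE_{2a-2}\setminus M_{2a-2})\setminus\cE_{2a}|\ge 1$ for every $a$; since these sum to $q-T=q/2=T$, equality is forced at each step. Feeding this into Proposition~\ref{prop:setinc}(f) then pins down $|\bar{\cE}_{2a-1}|=1$, $|\bar{\cE}_{2a-1}\cap\cE_{2a-2}|=1$, and $(\cE_{2a-2}\cap\cE_{2a-1})\setminus\cE_{2a}=\phi$ automatically. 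In other words, (viii) or (ix) holding forces one of (i)--(vii) to hold. This is a rigidity/counting argument, not a tensor-bound argument; the ``extra coupling'' you envision would land on indices already absorbed by the baseline, and you would be back to the double-counting trap you yourself flagged.
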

	
	\begin{lemma}\label{lem:initremove}
		Suppose Assumptions \ref{as:coeffalt} and \ref{as:bddrowsum} hold. Then 
		\begin{align*}
			&\;\;\;\;\E T_N^{k} U_N^{k_1} V_N^{k_2} \\ &=\E\left[\frac{1}{N^{k/2}}\sum_{\substack{(\ell_1,\ldots ,\ell_p,\\ q)\in \mathcal{C}_k}}\sum_{\substack{(i_1,\ldots ,i_p,\\ j_1,\ldots ,j_q)\in \Theta_{N,p+q}}}\prod_{r=1}^p (c_{i_r}(\si_{i_r}-t_{i_r}))^{\ell_r} \prod_{r=1}^q (c_{j_r}(\si_{j_r}-t_{j_r})) U_{N,\ib,\jb}^{k_1} V_{N,\ib,\jb}^{k_2}\right] + o(1), 
		\end{align*}
		for all $k,k_1,k_2\in \N\cup\{0\}$, where $U_{N,\ib,\jb}$ and $V_{N,\ib,\jb}$ are defined in \eqref{eq:reducedex}, $\Theta_{N,p+q}$ is defined in \eqref{eq:thnr}, and $\mathcal{C}_k$ is defined in \eqref{eq:unag}.
	\end{lemma}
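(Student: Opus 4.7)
The plan is two-fold: expand $T_N^k$ by the multinomial formula, then replace $U_N^{k_1}V_N^{k_2}$ by its ``deleted'' version $U_{N,\ib,\jb}^{k_1}V_{N,\ib,\jb}^{k_2}$ inside the sum, incurring only $o(1)$ error. For the expansion step, I would write
\[
T_N^k = \frac{1}{N^{k/2}} \sum_{(i_1,\ldots,i_k)\in[N]^k} \prod_{r=1}^k c_{i_r}(g(\si_{i_r})-t_{i_r}),
\]
and group $k$-tuples by their multiplicity pattern. Each pattern is encoded by a composition $L=(\ell_1,\ldots,\ell_p,q)\in\mathcal{C}_{p,q,k}$, with $\ell_r\ge 2$ recording multiplicities of repeated indices and $q$ the number of singletons, together with a distinct assignment $(\ib,\jb)\in\Theta_{N,p+q}$ of actual index values. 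Multiplying by $U_N^{k_1}V_N^{k_2}$, taking expectations, and collecting the multinomial counting into a coefficient $D(L)$, this yields
\[
\E T_N^k U_N^{k_1}V_N^{k_2} = \frac{1}{N^{k/2}} \sum_{L\in\mathcal{C}_k} D(L) \sum_{(\ib,\jb)\in\Theta_{N,p+q}} \E\!\left[\prod_{r=1}^p (c_{i_r}Y_{i_r})^{\ell_r}\prod_{r=1}^q c_{j_r}Y_{j_r}\cdot U_N^{k_1}V_N^{k_2}\right],
\]
where $Y_i:=g(\si_i)-t_i$ (the factor $D(L)$ appears to be suppressed in the statement and absorbed into its notation).

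For the replacement step, I would first establish the uniform deterministic bounds
\[
|U_N-U_{N,\ib,\jb}| \le \frac{C(p+q)}{N}, \qquad |V_N-V_{N,\ib,\jb}| \le \frac{C(p+q)}{N}.
\]
The first is immediate using $|c_i|\le M$ from \cref{as:coeffalt}. The second uses the same bound together with $|t_m^k-t_m|\le \Q_{N,2}(k,m)$ from \cref{as:cmean} and the row-/column-sum boundedness of $\Q_{N,2}$: for each $k\in\ib\cup\jb$, $\sum_m|c_m|\Q_{N,2}(k,m)=O(1)$, so the $O(p+q)N$ removed summands in $V_N$ sum to $O(p+q)$, leaving $O((p+q)/N)$ after the $1/N$ prefactor. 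Combining these with the uniform boundedness of $U_N, V_N$ and their deleted counterparts, a telescoping identity yields an a.s.\ bound $|U_N^{k_1}V_N^{k_2}-U_{N,\ib,\jb}^{k_1}V_{N,\ib,\jb}^{k_2}|\le C(k_1,k_2,p,q)/N$, uniformly in $(\ib,\jb)$.

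The main obstacle is showing that the replacement error
\[
\mathrm{Err}:=\frac{1}{N^{k/2}}\sum_L D(L)\sum_{(\ib,\jb)}\E\!\left[\prod_r (c_{i_r}Y_{i_r})^{\ell_r}\prod_r c_{j_r}Y_{j_r}\cdot\bigl(U_N^{k_1}V_N^{k_2}-U_{N,\ib,\jb}^{k_1}V_{N,\ib,\jb}^{k_2}\bigr)\right]
\]
is $o(1)$. A naive triangle-inequality bound yields only $O(N^{p+q-k/2-1})$, which fails whenever $q$ is close to $k$ (e.g.\ $p=0,q=k$). The required saving comes from the conditional centering of the singleton factors $c_{j_r}Y_{j_r}$. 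Concretely, I would expand the $U$- and $V$-differences as sums of local corrections at single indices $k_\star\in\ib\cup\jb$; for each such correction, Cauchy--Schwarz separates the uniformly bounded $\ib$-product from the $\jb$-sum. The key $L^2$ estimate
\[
\E\!\Bigl(\sum_{\jb\in\Theta_{N,q'}}\prod_{r=1}^{q'}c_{j_r}Y_{j_r}\Bigr)^{\!2} = O(N^{q'}),
\]
which follows from \cref{lem:auxtail}(a) (yielding $\sup_N\E T_N^{2k}<\infty$) together with Newton's identities expressing the relevant elementary symmetric polynomial in $(c_jY_j)_{j=1}^N$ in terms of its power sums, replaces the trivial count $N^{p+q}$ by the effective $N^{p+q/2}$. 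Since $2p+q\le k$ by definition of $\mathcal{C}_{p,q,k}$, this gives $|\mathrm{Err}| = O(N^{p+q/2-k/2-1}) = O(1/N)=o(1)$, completing the proof.
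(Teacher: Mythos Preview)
Your expansion step and the deterministic bounds $|U_N-U_{N,\ib,\jb}|,|V_N-V_{N,\ib,\jb}|\le C(p+q)/N$ are correct, and the Newton-identity $L^2$ bound $\E\bigl(\sum_{\jb\in\Theta_{N,q'}}\prod_r c_{j_r}Y_{j_r}\bigr)^2=O(N^{q'})$ is also valid. The gap is in how you combine them. After telescoping, a representative term is
\[
(U_N-U_{N,\ib,\jb})\cdot P_{(\ib,\jb)}\cdot V_N^{k_2},\qquad P_{(\ib,\jb)}:=\sum_{a=0}^{k_1-1}U_N^{\,a}\,U_{N,\ib,\jb}^{\,k_1-1-a},
\]
and even after writing the first factor as $\sum_{k_\star\in\ib\cup\jb}\frac{1}{N}u_{k_\star}$, the bounded factor $P_{(\ib,\jb)}$ still depends on \emph{all} of $\jb$ through $U_{N,\ib,\jb}$. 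Hence, for fixed $\ib$ and $k_\star$, the sum over $\jb$ has the form $\sum_{\jb}\Pi_{\jb}\cdot P_{(\ib,\jb)}\cdot(\ldots)$, and you cannot pull $\sum_{\jb}\Pi_{\jb}$ out as a standalone object to apply your $L^2$ bound. The sentence ``Cauchy--Schwarz separates the uniformly bounded $\ib$-product from the $\jb$-sum'' glosses over this $\jb$-dependence; any Cauchy--Schwarz that puts $P_{(\ib,\jb)}$ on the bounded side forces $|\Pi_{\jb}|$ on the other, losing the centering and returning you to the failing $O(N^{p+q-k/2-1})$ bound. One can try to iterate (replace $U_{N,\ib,\jb}$ inside $P$ by $U_{N,\ib,\emptyset}$ plus an $O(q/N)$ correction, etc.), but that recursion is not what you described.

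The paper's proof sidesteps this by \emph{not} grouping $T_N^k$ into multiplicity classes at this stage. It keeps the raw expansion over $\abk\in[N]^k$ and simultaneously expands $U_N^{k_1}=N^{-k_1}\sum_{\bbk}\prod_r c_{b_r}^2(g(\si_{b_r})^2-t_{b_r}^2)$ and $V_N^{k_2}=N^{-k_2}\sum_{\mbk,\obk}(\cdots)$. The replacement $U_N^{k_1}V_N^{k_2}\to U_{N,\ib,\jb}^{k_1}V_{N,\ib,\jb}^{k_2}$ is then exactly the restriction $\bbk,\mbk,\obk\cap\abk=\emptyset$, so the error is the sum over configurations with at least one coincidence, say $b_s=a_1$. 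Fixing this one collision leaves $a_2,\ldots,a_k$ free, and their sum is (up to removing a finite set) $(\sum_a c_aY_a)^{k-1}$, whose $(k-1)$st absolute moment is $O(N^{(k-1)/2})$ by \cref{lem:auxtail}(a). Counting the remaining free $b/m/o$ indices against the normalization $N^{-(k/2+k_1+k_2)}$ gives $O(N^{-1/2})$ per collision type. Because no ``mixed'' factor like $P_{(\ib,\jb)}$ ever appears, the free $a$-indices factor cleanly, which is precisely what fails in your telescoping route.
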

	\section{Proof of~\cref{lem:surviveterm}}\label{sec:pfsurviveterm}
	This section is devoted to proving our main technical lemma, i.e.,~\cref{lem:surviveterm} using~\cref{prop:degbound}, Lemmas~\ref{lem:negterm}~and~\ref{lem:initremove}.
	\begin{proof}
		\emph{Part (a).} Recall the construction of the decision tree in Algorithm~\ref{alg:construct_tree}-\ref{alg:construct_treep2} for fixed $(i_1,\ldots ,i_p,j_1,\ldots ,j_q)\in \Theta_{N,p+q}$. The nodes are indexed by $R_{z_1,\ldots ,z_{2T}}\equiv R_{z_1,\ldots ,z_{2T}}(i_1,\ldots ,i_p,j_1,\ldots ,j_q)$. Note that by~\cref{prop:setinc}, part (a), we get:
		\begin{equation*}
			f_L(\ms)=\sum_{R_{z_1,\ldots ,z_{2T}}\ \mbox{is a leaf node}}\; \sum_{(i_1,\ldots ,i_p,j_1,\ldots ,j_q)\in \Theta_{N,p+q}} R_{z_1,\ldots ,z_{2T}}(i_1,\ldots ,i_p,j_1,\ldots ,j_q).
		\end{equation*}
		If $\exists\ l_i>2$, then by~\cref{lem:negterm} (part (ii)), $\mbox{rank}(\sum_{(\ib,\jb)} R_{z_1,\ldots ,z_{2T}}(\ib,\jb))<k/2$ (see~\cref{def:ranknode} to recall the definition of $\mbox{rank}(R_{z_1,\ldots ,z_{2T}})$). Also if $q$ is odd, then $T\neq q/2$ and by~\cref{lem:negterm} (part (i)), $\mbox{rank}(\sum_{(\ib,\jb)} R_{z_1,\ldots ,z_{2T}}(\ib,\jb))<k/2$. As the number of leaf nodes is bounded in $N$ (by~\cref{prop:boundlen}), therefore $\mbox{rank}(f_L)\le k/2$. This completes the proof of part (a).
		
		\emph{Proof of (b).} In this part, $l_r=2$ for $r\in [p]$ and $q$ is even. Set $\FR:=\{R_{z_1,\ldots ,z_{2T}}:\ R_{z_1,\ldots ,z_{2T}}\ \mbox{is a leaf node}\}$ and note that $\FR=(\FR\cap\FB)\cup (\FR\cap\FB^c)$ where,
		\begin{align*}\label{eq:defset}\FB&:=\{R_{z_1,\ldots ,z_{2T}}:\ T=q/2,\ |\bar{\cE}_{2a-1,z_{2a-1}}|= 1,\ \bar{\cD}_{a,k_a}=\phi,\ ((\cE_{2a-2,z_{2a-2}}\cap \cE_{2a-1,z_{2a-1}})\setminus\cE_{2a,z_{2a}})=\phi,\nonumber \\ &\bar{\cE}_{2a-1,z_{2a-1}}\cap \left(\{j_1,\ldots ,j_q\}\setminus \cE_{2a-2,z_{2a-2}}\right)= \phi,\ M_{2a-1,z_{2a-1}}\notin \cup_{t=1}^T (\bar{\cD}_{2t-1,z_{2t-1}}\cup \bar{\cE}_{2t-1,z_{2t-1}}), \\ &\big|(\cE_{2a-2,z_{2a-2}}\setminus M_{2a-2,z_{2a-2}})\setminus \cE_{2a,z_{2a}}\big|=1,\ |\bar{\cE}_{2a-1,z_{2a-1}}\cap \cE_{2a-2,z_{2a-2}}|= 1\ \forall a\in [T],\\ &\ \cU_{2a-1,z_{2a-1}}\neq\phi \ \forall a\in [T],\ \cV_{2a-1,z_{2a-1}}\neq \phi\ \forall a\in [T]\}.
		\end{align*}
		In particular, we have intersected all the events in~\cref{lem:negterm} to form the set $\FB$. Consequently, by~\cref{lem:negterm}, $\mbox{rank}(R_{z_1,\ldots ,z_{2T}})<k/2$ for all $R_{z_1,\ldots ,z_{2T}}\in \FR\cap\FB^c$. Therefore, it follows that:
		\begin{equation}\label{eq:surviveterm1}
			\E[N^{-k/2}f_L(\ms)]\leftrightarrow N^{-k/2}\E\left[\sum_{R_{z_1,\ldots ,z_{2T}}\in \FR\cap\FB} \; \sum_{(\ib,\jb)\in \Theta_{N,p+q}} R_{z_1,\ldots ,z_{2T}}(\ib,\jb)\right].
		\end{equation} 
		Next, for $1\leq b\leq T$, let us define:
		\begin{equation*}
			S_b:=\cup_{r=b}^{T} M_{2r-1,z_{2r-1}}.
		\end{equation*}
		Note that by~\eqref{eq:surviveterm1}, we will now restrict to the set of leaf nodes in $\FR\cap\FB$. For any $r\in [p]$, $i_r\in \cD_{2a-1,z_{2a-1}}\cup \bar{\cD}_{2a-1,z_{2a-1}}$ for all $a\in [T]$. As $\bar{\cD}_{2a-1,z_{2a-1}}=\phi$ for all $a\in [T]$, $i_r\in \cD_{2a-1,z_{2a-1}}=\cD_{2a,z_{2a}}$ (see~\cref{prop:setinc}, part (b)) for all $a\in [T]$. Consequently, we have:
		\begin{equation}\label{eq:surviveterm2}
			h_{i_r}(\ms;\cD_{1,k_1},\ldots ,\cD_{2T,z_{2t}})=(c_{i_r}(g(\sigma_{i_r})-t_{i_r}^{S_1}))^2.
		\end{equation}
		Next we will focus on $U_{N,\ib,\jb}$. As $\cU_{2a-1,z_{2a-1}}\neq \phi$ for all $a\in [T]$ for all leaf nodes in $\FR\cap\FB$. Therefore $\cU_{2a-1,z_{2a-1}}= \{M_{2a-1,z_{2a-1}}\}$ for all $a\in [T]$. As a result, 
		\begin{equation}\label{eq:survivorterm1}
			U_{N,\ib,\jb}^{k_1}(\ms;\cU_{1,z_1},\ldots ,\cU_{2T,z_{2t}})=\big(U_{N,\ib,\jb}^{(S_1)}\big)^{k_1}.
		\end{equation}
		In a similar vein, we also get that for leaf nodes in $\FR\cap\FB$, we also have 
		\begin{equation}\label{eq:survivorterm1}
			V_{N,\ib,\jb}^{k_2}(\ms;\cV_{1,z_1},\ldots ,\cV_{2T,z_{2t}})=\big(V_{N,\ib,\jb}^{(S_1)}\big)^{k_2}.
		\end{equation}
		
		Next we will focus on $j_r$ for $r\in [q]$. As $R_{z_1,\ldots ,z_{2T}}$ is a leaf node, we must have $\cE_{2T,z_{2T}}=\phi$ (see step 17 of~\cref{alg:construct_treep2}). Further as we have restricted to $\FR\cap\FB$, by using~\cref{prop:setinc}, part (e), we get:
		\begin{equation}\label{eq:svtm1}
			\left(\cup_{a=1}^T \big(\bar{\cE}_{2a-1,z_{2a-1}} \cap \cE_{2a-2,z_{2a-2}}\big)\right)\cup \{M_{1,k_1},\ldots ,M_{2T-1,z_{2T-1}}\}=\{j_1,\ldots ,j_q\}.
		\end{equation}
		
		We now consider two disjoint cases: (a) $j_r\in \{M_{1,k_1},\ldots ,M_{2T-1,z_{2t-1}}\}$ or (b) $j_r\in \cup_{a=1}^T \big(\bar{\cE}_{2a-1,z_{2a-1}} \cap \cE_{2a-2,z_{2a-2}}\big)$. 
		
		\emph{Case (a).} If $j_r\in \{M_{1,k_1},\ldots ,M_{2T-1,z_{2t-1}}\}$, then there exists $a^*(r)$ such that $j_r=M_{2a^*(r)-1,z_{2a^*(r)-1}}$. Therefore, $h_{j_r}(\ms;\cE_{1,k_1},\ldots ,\cE_{2a^*(r),z_{2a^*(r)}})=c_{j_r}(g(\sigma_{j_r})-t_{j_r})$. Also, as we have restricted to the case $M_{2a-1,z_{2a-1}}\notin \cup_{t=1}^T (\cD_{2t-1,z_{2t-1}}^c\cup \cE_{2t-1,z_{2t-1}}^c)$ for any $a\in [T]$,  therefore, we have:
		\begin{equation}\label{eq:surviveterm3}
			h_{j_r}(\ms;\cE_{1,k_1},\ldots ,\cE_{2T,z_{2T}})=c_{j_r}(g(\sigma_{j_r})-t_{j_r}^{S_{a^*(r)+1}}).
		\end{equation}
		
		\emph{Case (b).} Next consider the case when $j_r\in \cup_{a=1}^T \big(\bar{\cE}_{2a-1,z_{2a-1}} \cap \cE_{2a-2,z_{2a-2}}\big)$. Then, by~\cref{prop:setinc}, part (g), there exists a unique $\tilde{a}(r)$ such that $j_r\in \cE_{2\tilde{a}(r)-1,z_{2\tilde{a}(r)-1}}^c \cap \cE_{2\tilde{a}(r)-2,z_{2\tilde{a}(r)-2}}$. Consequently, we have:
		\begin{align}\label{eq:surviveterm7}
			h_{j_r}(\ms;\cE_{1,k_1},\ldots ,\cE_{2\tilde{a}(r),z_{2\tilde{a}(r)}})&=c_{j_r}(\si_{j_r}-t_{j_r})-c_{j_r}(\si_{j_r}-t_{j_r}^{M_{2\tilde{a}(r)-1,z_{2\tilde{a}(r)-1}}}) \nonumber \\&=c_{j_r}\left(t_{j_r}^{M_{2\tilde{a}(r)-1,z_{2\tilde{a}(r)-1}}}-t_{j_r}\right)
		\end{align}
		Recall that, under $\FR\cap\FB$, we have $\cE_{2a-1,z_{2a-1}}^c\cap \left(\{j_1,\ldots ,j_q\}\setminus \cE_{2a-2,z_{2a-2}}\right)= \phi$ for all $a\in [T]$. This directly implies that $j_r\notin\cE_{2\bar{a}-1,z_{2\bar{a}-1}}$ for all $\bar{a}>\tilde{a}(r)$. Therefore, using~\eqref{eq:surviveterm7}, we get:
		\begin{align}\label{eq:surviveterm4}
			&\;\;\;\;h_{j_r}(\ms;\cE_{1,k_1},\ldots ,\cE_{2T,z_{2t}})=c_{j_r}\left(t_{j_r}^{S_{\tilde{a}(r)}}-t_{j_r}^{S_{\tilde{a}(r)+1}}\right)
		\end{align}
		
		Having obtained the form of $h_{j_r}(\cdot;\cE_{1,k_1},\ldots ,\cE_{2T,z_{2T}})$ for each $j_r$, we now move on to the rest of the proof.
		
		With $h_{i_r}(\ms;\cD_{1,z_1},\ldots ,\cD_{2T,z_{2T}})$ and  $h_{j_r}(\ms;\cE_{1,z_1},\ldots ,\cE_{2T,z_{2T}})$ as obtained in~\eqref{eq:surviveterm2},~and~\eqref{eq:surviveterm4}, the following holds by definition (see~\eqref{eq:gen2ptreca}):
		\begin{align}\label{eq:surviveterm5}
			&\;\;\;\sum_{(\ib,\jb)\in\Theta_{N,p+q}} R_{z_1,\ldots ,z_{2T}}(\ib,\jb)\nonumber \\ &=\sum_{(\ib,\jb)\in\Theta_{N,p+q}} \left(\prod_{r=1}^p h_{i_r}(\ms;\cD_{1,z_1},\ldots ,\cD_{2T,z_{2T}})\right)\left(\prod_{r=1}^q h_{j_r}(\ms;\cE_{1,z_1},\ldots ,\cE_{2T,z_{2T}})\right)\nonumber \\ &\qquad\qquad U_{N,\ib,\jb}^{k_1}(\ms;\cU_{1,z_1},\ldots ,\cU_{2T,z_{2T}}) V_{N,\ib,\jb}^{k_2}(\ms;\cV_{1,z_1},\ldots ,\cV_{2T,z_{2T}}).
		\end{align}
		As $\mbox{rank}(sum_{(\ib,\jb)\in\Theta_{N,p+q}} R_{z_1,\ldots,z_{2T}}(\ib,\jb))\le k/2$ for all leaf nodes in $\FR\cap\FB$, by the same calculation as in the proof of~\eqref{lem:negterm} (part (iii)), it is easy to show that:
		\begin{align}\label{eq:surviveterm6}
			&\mbox{rank}\bigg( \sum_{(\ib,\jb)\in\Theta_{N,p+q}} R_{z_1,\ldots ,z_{2T}}(\ib,\jb)-\sum_{(\ib,\jb)\in\Theta_{N,p+q}}\left(\prod_{r=1}^p h_{i_r}(\ms)\right)\left(\prod_{j_r\in \{M_{1,k_1},\ldots ,M_{2T-1,z_{2t-1}}\}} h_{j_r}(\ms)\right) \nonumber \\ &\quad \Bigg(\prod_{\substack{j_r\in \cup_{a=1}^T \big(\bar{\cE}_{2a-1,z_{2a-1}}\\ \cap \cE_{2a-2,z_{2a-2}}\big)}} h_{j_r}(\ms;\cE_{1,k_1},\ldots ,\cE_{2\tilde{a}(r),z_{2\tilde{a}(r)}})\Bigg)\big(U_N^{k_1}\big)\big(V_N^{k_2}\big)\bigg)<k/2,
		\end{align}
		where $h_{j_r}(\ms;\cE_{1,k_1},\ldots ,\cE_{2\tilde{a}(r),z_{2\tilde{a}(r)}})$ is defined as in~\eqref{eq:surviveterm7}. By combining~\eqref{eq:surviveterm1},~\eqref{eq:surviveterm5}, and~\eqref{eq:surviveterm6}, the following equivalence holds:
		\begin{align}\label{eq:surviveterm8}
			&\;\;\;\;\E[N^{-k/2}f_L(\ms)]\nonumber \\ &\leftrightarrow N^{-k/2}\E\Bigg[\sum_{R_{z_1,\ldots ,z_{2T}}\in \FR\cap\FB} \sum_{(\ib,\jb)\in\Theta_{N,p+q}} \left(\prod_{r=1}^p h_{i_r}(\ms)\right) \left(\prod_{j_r\in \{M_{1,k_1},\ldots ,M_{2T-1,z_{2t-1}}\}} h_{j_r}(\ms)\right) \nonumber \\ &\qquad\qquad \Bigg(\prod_{\substack{j_r\in \cup_{a=1}^T \big(\bar{\cE}_{2a-1,z_{2a-1}}\\ \cap \cE_{2a-2,z_{2a-2}}\big)}} h_{j_r}(\ms;\cE_{1,k_1},\ldots ,\cE_{2\tilde{a}(r),z_{2\tilde{a}(r)}})\Bigg) U_N^{k_1} V_N^{k_2} \Bigg].
		\end{align}
		Let us now write the right hand side of~\eqref{eq:surviveterm8} in terms of matchings (see~\cref{def:matching} for details) as in the statement of~\cref{lem:surviveterm} (part (b)). For $R_{z_1,\ldots ,z_{2T}}\in \FR\cap\FB$, $\big(\bar{\cE}_{2a-1,z_{2a-1}} \cap \cE_{2a-2,z_{2a-2}}\big)$ are all singletons for $a\in [T]$. Let the set \begin{equation}\label{eq:surviveterm9}
			\mfm:=\{(\mfm_{1,1},\mfm_{1,2}),(\mfm_{2,1},\mfm_{2,2}),(\mfm_{3,1},\mfm_{3,2}),\ldots ,(\mfm_{q/2,1},\mfm_{q/2,2})\}
		\end{equation}
		be defined such that $j_{\mfm_{a,1}}=M_{2a-1,z_{2a-1}}$ and $\{j_{\mfm_{a,2}}\}=\big(\bar{\cE}_{2a-1,z_{2a-1}} \cap \cE_{2a-2,z_{2a-2}}\big)$ for $a\in [q/2]$. By~\eqref{eq:svtm1}, $(j_{\mfm_{a,1}},j_{\mfm_{a,2}})_{a=1}^{[q/2]}$ induces a partition on $\{j_1,\ldots ,j_q\}$. By the definition of $M_{2a-1,z_{2a-1}}$ (see steps 21 and 22 in~\cref{alg:construct_treep2}), $\mfm_{a,1}>\mfm_{a,2}$ and $\mfm_{a,1}<\mfm_{a',1}$ for $a>a'$. Therefore, the set $\mfm$ in~\eqref{eq:surviveterm9} yields a matching on the set $[q]$ (in the sense of~\cref{def:matching}). With this observation, note that:
		\begin{align}\label{eq:surviveterm10}
			&\;\;\;\; h_{j_{\mfm_{a,2}}}(\ms;\cE_{1,k_1},\ldots ,cE_{2a,z_{2a}})h_{j_{\mfm_{a,1}}}(\ms)=c_{j_{\mfm_{a,1}}}c_{j_{\mfm_{a,2}}}(g(\sigma_{j_{\mfm_{a,1}}})-t_{j_{\mfm_{a,1}}})(t_{j_{\mfm_{a,2}}}^{j_{\mfm_{a,1}}}-t_{j_{\mfm_{a,2}}}).
		\end{align}
		Finally, by using~\eqref{eq:surviveterm8},~\eqref{eq:surviveterm10}, and~\eqref{eq:surviveterm2}, we have:
		\begin{align*}
			&\;\;\;\;\;\E[N^{-k/2}f_L(\ms)]\\ &\leftrightarrow N^{-k/2}\E\Bigg[\sum_{\substack{R_{k_1,\ldots ,k_T}\\ \in \FR\cap\FB}} \sum_{(\ib,\jb)\in\Theta_{N,p+q}}\left(\prod_{r=1}^p h_{i_r}(\ms)\right)\left(\prod_{a=1}^{q/2} h_{j_{\mu_{a,2}}}(\ms;\cE_{1,k_1},\ldots ,\cE_{2a,z_{2a}})h_{j_{\mu_{a,1}}}(\ms)\right)\nonumber U_N^{k_1} V_N^{k_2}\Bigg]\\ &\leftrightarrow\E\Bigg[\sum_{\mfm\in\M([q])} \sum\limits_{(\ib,\jb)\in \Theta_{N,p+q}}\left(\prod_{r=1}^p c_{i_r}^2\left(g(\sigma_{i_r})-t_{i_r}\right)^2\right)\Bigg(\prod_{a=1}^{q/2} c_{j_{\mfm_{a,1}}}c_{j_{\mfm_{a,2}}}(g(\sigma_{j_{\mfm_{a,1}}})-t_{j_{\mfm_{a,1}}})(t_{j_{\mfm_{a,2}}}^{j_{\mfm_{a,1}}}-t_{j_{\mfm_{a,2}}})\Bigg)\nonumber \\ & \qquad \qquad U_N^{k_1}V_N^{k_2}\Bigg].
		\end{align*}
		This completes the proof.	
	\end{proof}
	\section{Proofs from~\cref{sec:propdtree}}\label{sec:pfpropdtree}
	This Section is devoted to proving Lemmas  \ref{lem:supbound}, \ref{lem:negterm}, \ref{lem:initremove}, and \cref{prop:degbound}. To establish these results,  
	we begin this section by presenting a collection of set-theoretic results which follow immediately from our construction of the decision tree (as in Algorithm~\ref{alg:construct_tree}-\ref{alg:construct_treep2}). We leave the verification of these results to the reader. These properties will be leveraged in the proofs of the results in \cref{sec:propdtree}.
	\begin{prop}\label{prop:setinc}
		Consider a path $R_0\rightarrow R_{z_1}\rightarrow\ldots\rightarrow R_{z_1,\ldots ,z_{2t}}$ of the decision tree constructed in Algorithm~\ref{alg:construct_tree}-\ref{alg:construct_treep2}. Recall the construction of $(\cD_{a,z_a},\cE_{a,z_a},M_{a,z_a},\cU_{a,z_a},\cV_{a,z_a})_{a\in [t]}$ from Algorithm \ref{alg:construct_tree}-\ref{alg:construct_treep2}. Then, 
		\begin{enumerate}
			\item[(a).] Leaf nodes only occur in even-numbered generations of the tree.
			\item[(b).] For any positive integer $a$, $\cD_{2a-1,z_{2a-1}}=\cD_{2a,z_{2a}}$, $\cU_{2a-1,z_{2a-1}}=\cU_{2a,z_{2a}}$, $\cV_{2a-1,z_{2a-1}}=\cV_{2a,z_{2a}}$,  $M_{2a-1,z_{2a-1}}=M_{2a-2,z_{2a-2}}$, and $\{M_{2a-1},z_{2a-1}\}_{a=1}^t$ are $t$ distinct elements.
			
			\item[(c).] $\cE_{2a,z_{2a}}\subseteq (\cE_{2a-2,z_{2a-2}}\setminus M_{2a-2,z_{2a-2}})$ for any $a\in [t]$.
			\item[(d).] $M_{2a-1,z_{2a-1}}\notin \cup_{\ell=1}^{a} (\bar{\cD}_{2\ell-1,z_{2\ell-1}}\cup \bar{\cE}_{2\ell-1,z_{2\ell-1}})$ for $a\in [t]$.
			\item[(e).]  Recall $\cE_{0}=\{j_1,\ldots ,j_q\}$. For any $a\ge 1$, we have:
			\begin{align*}
				\cE_{2a,z_{2a}}\cup \big(\cup_{\ell=1}^a (\bar{\cE}_{2\ell-1,z_{2\ell-1}}&\cap \cE_{2\ell-2,z_{2\ell-2}})\big)\cup \left(\cup_{\ell=1}^{a} ((\cE_{2\ell-1,z_{2\ell-1}}\cap \cE_{2\ell-2,z_{2\ell-2}})\setminus \cE_{2\ell,z_{2\ell}})\right)\\ &\cup \{M_{1,z_1},\ldots ,M_{2a-1,z_{2a-1}}\}=\cE_{0}.
			\end{align*}
			\item[(f).] For any $a\geq 1$,
			$$(\cE_{2a-2,z_{2a-2}}\setminus M_{2a-2,z_{2a-2}})\setminus \cE_{2a}\supseteq (\bar{\cE}_{2a-1,z_{2a-1}}\cap \cE_{2a-2,z_{2a-2}})\cup ((\cE_{2a-1,z_{2a-1}}\cap \cE_{2a-2,z_{2a-2}})\setminus \cE_{2a,z_{2a}}),$$
			where the two sets on the right hand side above are disjoint.
			\item[(g).] The sets $\{\bar{\cE}_{2a-1,z_{2a-1}}\cap \cE_{2a-2,z_{2a-2}}\}_{a=1}^t$ are disjoint. Further, the two sets $\big(\cup_{a=1}^t (\bar{\cE}_{2a-1,z_{2a-1}}\cap \cE_{2a-2,z_{2a-2}})\big)$ and $\{M_{1,z_1},\ldots ,M_{2t-1,z_{2t-1}}\}$ are also disjoint.
			\item[(h)] For any $a\in [t]$, the following cannot hold simultaneously: $\bar{\cD}_{2a-1,z_{2a-1}}=\phi$, $\bar{\cE}_{2a-1,z_{2a-1}}=\phi$, $\cU_{2a-1,z_{2a-1}}\neq \phi$, and $\cV_{2a-1,z_{2a-1}}\neq \phi$.
		\end{enumerate}
	\end{prop}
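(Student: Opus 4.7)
The plan is to verify each item (a)--(h) as a direct consequence of the rules laid out in Algorithms \ref{alg:construct_tree}--\ref{alg:construct_treep2}, treating the whole proposition as bookkeeping. All eight parts share a common reference frame: at each generation, only $\cE$ shrinks in a controlled way, while $\cD$, $\cU$, $\cV$ are frozen across the centering/re-centering pair; so I will first record that frame, then read each claim off it.

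I would begin by proving (a) and (b) together, since they fix the parity structure used in every other item. Item (a) follows because the termination check (Step 17 of Algorithm~\ref{alg:construct_treep2}) happens only after the even re-centering step, so a leaf can only be born at an even generation; (b) follows by inspection of the recursive rule \eqref{eq:gen2ptreca}, which carries $\cD$, $\cU$, $\cV$ unchanged from odd to even, together with the centering step's rule $M_{2a-1,z_{2a-1}} \gets M_{2a-2,z_{2a-2}}$. The distinctness of the $M_{2a-1,z_{2a-1}}$ follows inductively once (c) is established: $M_{2a-2,z_{2a-2}} \in \cE_{2a-2,z_{2a-2}}$, but $\cE_{2a,z_{2a}} \subseteq \cE_{2a-2,z_{2a-2}} \setminus \{M_{2a-2,z_{2a-2}}\}$, so each $M$ drops out of all later $\cE$'s. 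Item (c) is the highlighted constraint $\cE_{2T+2,z_{2T+2}}\subseteq \cE_{2T,z_{2T}}\cap \cE_{2T+1,z_{2T+1}}$ in \eqref{eq:gen2ptreca} combined with $\cE_{2a-1,z_{2a-1}} \subseteq (j_1,\ldots,j_q) \setminus \{M_{2a-2,z_{2a-2}}\}$ coming from the construction of $\FG_{2T+1}$ via \eqref{eq:basepropa}.

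Items (d) and (h) are the only places where the explicit exclusion from $\FG$ in \eqref{eq:basepropa} really enters. For (d), I would split on whether $\ell < a$ or $\ell = a$. The $\ell = a$ case is direct from the centering rule: $M_{2a-1,z_{2a-1}} = \io$, which by construction of $\FG_{2a-1}$ lies neither in $\cE_{2a-1,z_{2a-1}}$ nor in $\bar{\cE}_{2a-1,z_{2a-1}} = ((j_1,\ldots,j_q)\setminus\{\io\})\setminus \cE_{2a-1,z_{2a-1}}$, and trivially not in $\bar{\cD}_{2a-1,z_{2a-1}}\subseteq (i_1,\ldots,i_p)$ by distinctness of indices in $\Theta_{N,p+q}$. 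For $\ell < a$, iterating (c) gives $M_{2a-1,z_{2a-1}} \in \cE_{2a-2,z_{2a-2}} \subseteq \cE_{2\ell,z_{2\ell}} \subseteq \cE_{2\ell-1,z_{2\ell-1}}$, so it cannot lie in $\bar{\cE}_{2\ell-1,z_{2\ell-1}}$. Item (h) is the statement that the excluded tuple $((i_1,\ldots,i_p),(j_1,\ldots,j_q)\setminus\io,\{\io\},\{\io\})\in\FG$ in \eqref{eq:basepropa} is precisely the configuration where all four sets $\bar{\cD}, \bar{\cE}$ are empty and $\cU, \cV$ are non-empty; so its removal is exactly (h).

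Items (e)--(g) are obtained by carefully decomposing $\cE_{2\ell-2,z_{2\ell-2}}$ into disjoint parts at a single step and then telescoping. Concretely, I would write
\[
\cE_{2\ell-2,z_{2\ell-2}} = \cE_{2\ell,z_{2\ell}}\;\sqcup\;\big((\cE_{2\ell-1,z_{2\ell-1}} \cap \cE_{2\ell-2,z_{2\ell-2}})\setminus \cE_{2\ell,z_{2\ell}}\big)\;\sqcup\;\{M_{2\ell-1,z_{2\ell-1}}\}\;\sqcup\;\big(\bar{\cE}_{2\ell-1,z_{2\ell-1}}\cap \cE_{2\ell-2,z_{2\ell-2}}\big),
\]
where disjointness uses $M_{2\ell-2,z_{2\ell-2}} \notin \cE_{2\ell-1,z_{2\ell-1}}$ (from the $\io$-exclusion in $\FG_{2\ell-1}$), the defining rule $\cE_{2\ell,z_{2\ell}}\subseteq \cE_{2\ell-1,z_{2\ell-1}}\cap \cE_{2\ell-2,z_{2\ell-2}}$, and the complementarity of $\cE_{2\ell-1,z_{2\ell-1}}$ with $\bar{\cE}_{2\ell-1,z_{2\ell-1}}$ within $(j_1,\ldots,j_q)\setminus\{M_{2\ell-2,z_{2\ell-2}}\}$. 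Telescoping this identity from $\ell=1$ to $\ell=a$ yields (e); the last two summands together show (f); and (g) is obtained by noting that for $\ell < a$ the containment (c) gives $\bar{\cE}_{2a-1,z_{2a-1}}\cap \cE_{2a-2,z_{2a-2}}\subseteq \cE_{2\ell,z_{2\ell}}$ while $\bar{\cE}_{2\ell-1,z_{2\ell-1}} \cap \cE_{2\ell-2,z_{2\ell-2}}$ is disjoint from $\cE_{2\ell,z_{2\ell}}$, and that any $M_{2a-1,z_{2a-1}}$ lands in each earlier $\cE_{2\ell-1,z_{2\ell-1}}$ by (d), hence not in $\bar{\cE}_{2\ell-1,z_{2\ell-1}}$. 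There is no real obstacle here beyond careful notational hygiene; the only subtlety to watch is that $\cE_{2\ell-1,z_{2\ell-1}}$ is \emph{not} in general a subset of $\cE_{2\ell-2,z_{2\ell-2}}$, which is why the decomposition must be written in terms of the intersection $\cE_{2\ell-1,z_{2\ell-1}}\cap\cE_{2\ell-2,z_{2\ell-2}}$ rather than $\cE_{2\ell-1,z_{2\ell-1}}$ alone.
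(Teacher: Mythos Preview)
Your proposal is correct and aligns with the paper's treatment: the paper states that these are set-theoretic facts that ``follow immediately from our construction of the decision tree'' and leaves the verification to the reader, so there is no written proof to compare against. Your step-by-step bookkeeping---fixing the parity structure via (a)--(b), extracting the nesting (c) from the highlighted constraint in \eqref{eq:gen2ptreca} together with \eqref{eq:basepropa}, reading (d) and (h) off the exclusion in $\FG$, and telescoping the one-step disjoint decomposition of $\cE_{2\ell-2,z_{2\ell-2}}$ for (e)--(g)---is exactly the kind of direct verification the paper has in mind.
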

	
	\subsection{Proof of~\cref{lem:supbound}}
	To begin with, recall the notation $\De(\cdot;\cdot;\cdot)$ from~\cref{sec:howtover}.
	\begin{proof}
		Parts (a), (b), and (c) of~\cref{lem:supbound} are similar. We will only prove part (b) here among these three. We will also prove parts (d) and (e). 
		
		\emph{Part (b).} Define  $\tI_{2t,i_r}:=\{M_{a-1,z_{a-1}}:\ a\in [2t]\}\setminus \mathcal{I}^*_{2t,i_r}$. By a simple induction, it follows that 
		$$h_{i_r}(\ms;\cD_0,\cD_{1,z_1},\ldots , \cD_{2t,z_{2t}})=\De(h_{i_r};\tI_{2t.i_r};\mathcal{I}^\star_{2t,i_r}).$$
		As $h_{i_r}(\ms)=c_{i_r}(\si_{i_r}-t_{i_r})^{\ell_r}$, note that $$h_{i_r}(\ms)=(c_{i_r}(g(\si_{i_r}-t_{i_r})^{\ell_r}))=c_{i_r}^{\ell_r}\sum_{s=0}^{\ell_r} {\ell_r \choose s} (-1)^{\ell_r-s}(g(\si_{i_r}))^{s}(t_{i_r})^{\ell_r-s}.$$
		By combining the above displays, we get:
		\begin{align*}
			|h_{i_r}(\ms;\cD_0,\cD_{1,z_1},\ldots ,\cD_{2t,z_{2t}})|&=\Bigg|\De\left(c_{i_r}^{\ell_r}\sum_{s=0}^{\ell_r} {\ell_r \choose s} (-1)^{\ell_r-s}(g(\si_{i_r}))^{s}(t_{i_r})^{\ell_r-s};\tI_{2t,i_r};\mathcal{I}^*_{2t,i_r}\right)\Bigg|\\ &\leq |c_{i_r}^{\ell_r}|\sum_{s=0}^{\ell_r} {\ell_r \choose s} |g(\si_{i_r})|^{s}\Bigg|\De\left((t_{i_r})^{\ell_r-s};\tI_{2t,i_r};\mathcal{I}^*_{2t,i_r}\right)\Bigg|.
		\end{align*}
		By an application of \cref{lem:smoothcont}, part 1, we have:
		$$\Bigg|\De\left((t_{i_r})^{\ell_r-s};\tI_{2t,i_r};\mathcal{I}^*_{2t,i_r}\right)\Bigg|\lesssim \mathcal{R}[\bQ]_{N,1+|\mathcal{I}^*_{2t,i_r}|}(i_r, \mathcal{I}^*_{2t,i_r}).$$
		Therefore, 
		\begin{align*}
			|h_{i_r}(\ms;\cD_0,\cD_{1,z_1},\ldots ,\cD_{2t,z_{2t}})| \lesssim \mathcal{R}[\bQ]_{N,1+|\mathcal{I}^*_{2t,i_r}|}(i_r, \mathcal{I}^*_{2t,i_r})|c_{i_r}^{\ell_r}|\sum_{s=0}^{\ell_r} {\ell_r \choose s} \lesssim \mathcal{R}[\bQ]_{N,1+|\mathcal{I}^*_{2t,i_r}|}(i_r, \mathcal{I}^*_{2t,i_r}).
		\end{align*}
		This completes the proof.
		
		\emph{Part (d).} Define $\bar{\mU}_{2t}^\star:=\{M_{0,},M_{2,z_2},\ldots ,M_{2t,z_{2t}}\}\setminus \mU_{2t}^\star$. As 
		\begin{align*}
			U_{\ib,\jb}(\ms;\cU_{1,z_1},\ldots ,\cU_{2t,z_{2t}})=\De(f\circ U_{N,\ib,\jb};\bar{\mU}_{2t}^\star;\mU_{2t}^\star),
		\end{align*}
		where $f(x)=x^{k_1}$ and $U_{N,\ib,\jb}$ is defined in \eqref{eq:reducedex}. Our strategy is to first bound $\De(U_{N,\ib,\jb};\bar{\mU}_{2t}^\star;\mU_{2t}^\star)$, and then invoke \cref{lem:genmatrixbd}, parts 1 and 2. As $\{M_{0,},M_{2,z_2},\ldots ,M_{2t,z_{2t}}\}\subseteq \{j_1,\ldots ,j_q\}$, we have 
		\begin{align*}
			\big|\De(U_{N,\ib,\jb};\bar{\mU}_{2t}^\star;\mU_{2t}^\star)\big|&=\bigg|\De\left(\frac{1}{N}\sum_{a\neq (\ib,\jb)} (g(\si_a)^2-t_a^2);\bar{\mU}_{2t}^\star;\mU_{2t}^\star\right)\bigg| \\ &\lesssim N^{-1}\sum_{a\notin (\ib,\jb)}\big|\De(t_a^2;\bar{\mU}_{2t}^\star;\mU_{2t}^\star)\big| \lesssim N^{-1}\sum_{a\notin (\ib,\jb)}\mathcal{R}[\bQ]_{N,1+|\mU_{2t}^\star|}(a,\mU_{2t}^\star).
		\end{align*}
		Without loss of generality, suppose that $\mU_{2t}^\star=\{j_1,\ldots ,j_r\}$. Then the above inequality can be written as 
		$$\big|\De(U_{N,\ib,\jb};\bar{\mU}_{2t}^\star;\mU_{2t}^\star)\big|\lesssim N^{-1}\sum_{a\notin (\ib,\jb)}\mathcal{R}[\bQ]_{N,1+r}(a,\{j_1,\ldots ,j_r\})=:\cT_{N,r}(j_1,\ldots ,j_r).$$
		By \cref{lem:smoothcont}, part (2), $\sup_{N\ge 1}\sum_{j_1,\ldots ,j_r} \cT_{N,r}(j_1,\ldots ,j_r)<\infty$. Now with $\cT$ as defined above, construct $\tcT$ as in \eqref{eq:newTprop}. The conclusion now follows with $Q^U_{N,|\mU_{2t}^\star|}=\tcT_{N,|\mU_{2t}^\star|}$, by \cref{lem:genmatrixbd}, parts 1 and 2.
		
		\emph{Part (e).} Define $\bar{\mV}_{2t}^\star:=\{M_{0,z_0}, M_{2,z_2}, \ldots , M_{2t,z_{2t}}\}\setminus \mV_{2t}^\star$. As in the proof of part (d), our strategy would be to bound $\De(V_{N,\ib,\jb};\bar{\mV}_{2t}^\star;\mV_{2t}^\star)$ and then apply \cref{lem:genmatrixbd}. 
		
		We note that 
		\begin{align}\label{eq:biv1}
			&\;\;\;\;\bigg|\De\left(\frac{1}{N}\sum_{k\neq \ell, (k,\ell)\notin (\ib,\jb)} (g(\si_k)-t_k)(t_{\ell}^k-t_{\ell});\bar{\mV}_{2t}^\star;\mV_{2t}^\star\right)\bigg|\nonumber \\&\le \frac{1}{N}\sum_{k\neq \ell,(k,\ell)\notin (\ib,\jb)}\bigg|g(\si_k)\De(t_{\ell}^k-t_{\ell};\bar{\mV}_{2t}^\star;\mV_{2t}^\star)\bigg|+\frac{1}{N}\sum_{k\neq \ell, (k,\ell)\notin (\ib,\jb)} \bigg|\De(t_k(t_{\ell}^k-t_{\ell});\bar{\mV}_{2t}^\star;\mV_{2t}^\star)\bigg| \nonumber\\ &\le \frac{1}{N}\sum_{k\neq \ell,(k,\ell)\notin (\ib,\jb)}\bQ_{N,2+|\mV_{2t}^\star|}(\ell,k,\mV_{2t}^\star) +\frac{1}{N}\sum_{k\neq \ell, (k,\ell)\notin (\ib,\jb)} \bigg|\De(t_k(t_{\ell}^k-t_{\ell});\bar{\mV}_{2t}^\star;\mV_{2t}^\star)\bigg|,
		\end{align}
		where the last inequality follows from the boundedness of $g(\cdot)$ and \eqref{eq:cmean1}. 
		To bound the second term in \eqref{eq:biv1}, we will show the following claim: 
		\begin{align}\label{eq:bivclaim1}
			\De(t_k(t_{\ell}-t_{\ell}^k);\bar{\mV}_{2t}^\star;\mV_{2t}^\star)=\sum_{D\subseteq \mV_{2t}^\star} \De(t_k;\bar{\mV}_{2t}^\star\cup (\mV_{2t}^\star\setminus D);D)\De(t_{\ell};\bar{\mV}_{2t}^\star;(k,\mV_{2t}^\star\setminus D))
		\end{align}
		for $k\neq \ell, (k,l)\notin (\ib,\jb)$. Let us first complete the proof assuming the above claim. without loss of generality, assume $\mV_{2t}^\star=\{j_1,\ldots ,j_r\}$. By combining \eqref{eq:biv1} and \eqref{eq:bivclaim1}, we get: 
		\begin{align*}
			&\;\;\;\;\bigg|\De\left(\frac{1}{N}\sum_{k\neq \ell, (k,\ell)\notin (\ib,\jb)} (g(\si_k)-t_k)(t_{\ell}^k-t_{\ell});\bar{\mV}_{2t}^\star;\mV_{2t}^\star\right)\bigg|\nonumber \\&\le \frac{1}{N}\sum_{k\neq \ell,(k,\ell)\notin (\ib,\jb)}\left(\bQ_{N,2+r}(\ell,k,\{j_1,\ldots ,j_r\})+\sum_{D\subseteq \{j_1,\ldots ,j_r\}} \bQ_{N,1+|D|}(k,D)\bQ_{N,2+r-|D|}(\ell,k,\{j_1,\ldots ,j_r\}\setminus D)\right)  \\ &=:\cT_{N,r}(j_1,\ldots ,j_r).
		\end{align*}
		By \eqref{eq:cmean2}, it is immediate that $\sum_{j_1,\ldots ,j_r} \cT_{N,r}(j_1,\ldots ,j_r)<\infty$. Now with $\cT$ as defined above, construct $\tcT$ as in \eqref{eq:newTprop}. The conclusion now follows with $Q^V_{N,|\mV_{2t}^\star|}=\tcT_{N,|\mV_{2t}^\star|}$, by using \cref{lem:genmatrixbd}, parts 1 and 2.
		
		\emph{Proof of \eqref{eq:bivclaim1}.} We will prove \eqref{eq:bivclaim1} by induction on $t$. 
		\emph{$t=1$ case.} Note from \cref{alg:construct_tree}, $M_{0,z_0}=j_q$. If $\cV_{1,z_1}=\phi$, then $\mV_2^\star=\{j_q\}$ and $\bar{\mV}_{2}^\star=\phi$. Then 
		\begin{align*}
			\De(t_k(t_{\ell}-t_{\ell}^k);\bar{\mV}_{2}^\star;\mV_{2}^\star)&=t_k(t_{\ell}-t_{\ell}^k)-t_k^{j_q}(t_{\ell}^{j_q}-t_{\ell}^{\{k,j_q\}}) \\ &=(t_k-t_k^{j_q})(t_{\ell}-t_{\ell}^k)+t_k^{j_q}(t_{\ell}-t_{\ell}^k-t_{\ell}^{j_q}+t_{\ell}^{\{k,j_q\}}).
		\end{align*}
		Also in this case, the subset $D$ in \eqref{eq:bivclaim1} can be either $\phi$ or $\{j_q\}$. Therefore, 
		\begin{align*}
			&\;\;\;\;\sum_{D\subseteq \mV_{2}^\star}\De(t_k;\bar{\mV}_{2}^\star\cup (\mV_{2}^\star\setminus D);D)\De(t_{\ell};\bar{\mV}_{2}^\star;(k,\mV_{2}^\star\setminus D)) \\ &=\De(t_k;\{j_q\};\phi)\De(t_{\ell};\phi;\{k,j_q\})+\De(t_k;\phi;\{j_q\})\De(t_{\ell};\phi;\{k\}) = t_k^{j_q}(t_{\ell}-t_{\ell}^k-t_{\ell}^{j_q}+t_{\ell}^{\{k,j_q\}})+(t_k-t_k^{j_q})(t_{\ell}-t_{\ell}^k).
		\end{align*}
		Therefore \eqref{eq:bivclaim1} holds. The other case is $\cV_{1,z_1}=\{j_q\}$, which yields $\mV_{2}^\star=\phi$ and $\bar{\mV}_2^\star=\{j_q\}$. Then 
		\begin{align*}
			\De(t_k(t_{\ell}-t_{\ell}^k);\bar{\mV}_{2}^\star;\mV_{2}^\star)&=t_k^{j_q}(t_{\ell}^{j_q}-t_{\ell}^{\{k,j_q\}}).
		\end{align*}
		Also in this case, the subset $D$ in \eqref{eq:bivclaim1} must be $\phi$. Therefore, 
		\begin{align*}
			&\;\;\;\;\sum_{D\subseteq \mV_{2}^\star} (-1)^{|D|+1}\De(t_k;\bar{\mV}_{2}^\star\cup (\mV_{2}^\star\setminus D);D)\De(t_{\ell};\bar{\mV}_{2}^\star;(k,\mV_{2}^\star\setminus D)) \\ &=\De(t_k;\{j_q\};\phi)\De(t_{\ell};\{j_q\};\{k\})=t_k^{j_q}(t_{\ell}^{j_q}-t_{\ell}^{\{k,j_q\}}).
		\end{align*}
		This completes the proof for the base case. 
		
		\emph{Induction hypothesis.} We suppose \eqref{eq:bivclaim1} holds for $t\le \st$. 
		
		\emph{$t=\st+1$ case.} Suppose $M_{2\st+1,z_{2\st+1}}=j_r$ for some $1\le r\le q$, where $j_r\notin \mV_{2t}^\star \cup \bar{\mV}_{2t}^\star$. If $\cV_{2\st+1,z_{2\st+1}}=\phi$, then $\mV_{2\st+2}^\star=\mV_{2\st}^\star\cup \{j_r\}$ and $\bar{\mV}_{2\st+2}^\star=\bar{\mV}_{2\st}$. Then, by the induction hypothesis, we have: 
		\begin{align*}
			&\;\;\;\;\De(t_k(t_{\ell}-t_{\ell}^k);\bar{\mV}_{2\st+2}^\star;\mV_{2\st+2}^\star) \\ &=\sum_{D\subseteq \mV_{2\st}^\star} \De\left(\De(t_k;\bar{\mV}_{2\st}^\star\cup (\mV_{2\st}^\star\setminus D);D)\De(t_{\ell};\bar{\mV}_{2\st}^\star;(k,\mV_{2\st}^\star\setminus D));\phi;\{j_r\}\right) \\ &=\sum_{D\subseteq \mV_{2\st}^\star} \bigg(\De(t_k;\bar{\mV}_{2\st+2}^\star\cup (\mV_{2\st}^\star\setminus D);D\cup \{j_r\})\De(t_{\ell};\bar{\mV}_{2\st+2}^\star;(k,\mV_{2\st}^\star\setminus D)) \\ &\qquad\qquad +\De(t_k;\bar{\mV}_{2\st+2}^\star\cup (\mV_{2\st}^\star\setminus D) \cup \{j_r\};D)\De(t_{\ell};\bar{\mV}_{2\st+2}^\star;(k,j_r,\mV_{2\st}^\star\setminus D))\bigg) \\ &=\sum_{D\subseteq \mV_{2\st}^\star} \bigg(\De(t_k;\bar{\mV}_{2\st+2}^\star\cup (\mV_{2\st+2}^\star\setminus (D\cup \{j_r\}));(D\cup \{j_r\}))\De(t_{\ell};\bar{\mV}_{2\st+2}^\star;(k,(\mV_{2\st+2}^\star\setminus (D\cup \{j_r\}))) \\ &\qquad\qquad +\De(t_k;\bar{\mV}_{2\st+2}^\star\cup (\mV_{2\st+2}^\star\setminus D) ;D)\De(t_{\ell};\bar{\mV}_{2\st+2}^\star;(k,\mV_{2\st+2}^\star\setminus D))\bigg) \\ &=\sum_{D\subseteq \mV_{2\st+2}^\star} \De(t_k;\bar{\mV}_{2\st+2}^\star\cup (\mV_{2\st+2}^\star\setminus D);D)\De(t_{\ell};\bar{\mV}_{2\st+2}^\star;(k,\mV_{2\st+2}^\star\setminus D)).
		\end{align*}
		Therefore \eqref{eq:bivclaim1} holds. The other case where $\cV_{2\st+1,z_{2\st+1}}=j_r$, the required equality is immediate. This completes the proof of \eqref{eq:bivclaim1}.
	\end{proof}

	\subsection{Proof of~\cref{prop:degbound}}
	Given any subset $D\subseteq \{1, 2,\ldots ,q\}$, $r\notin D$, and $\tilde{D}\subseteq D$, with $|D|,|\tilde{D}|\geq 1$, define
	\begin{equation}\label{eq:obsdef}\mathcal{R}[\bQ]_{N,1+|D\setminus \tilde{D}|}(r,(-j_i,\ i\in \tilde{D})):=\sum_{j_i,\ i\in \tilde{D}} \mathcal{R}[\bQ]_{N,1+|D\setminus \tilde{D}|}(r,D).\end{equation}
	By~\cref{lem:smoothcont}, part (2), we easily observe that:
	\begin{equation}\label{eq:obsdegbound}
		\sup_{N\ge 1} \max_{r}\max_{j_i,\ i\in D\setminus \tilde{D}} \mathcal{R}[\bQ]_{N,1+|D\setminus \tilde{D}|}(r,(-j_i,\ i\in\tilde{D}))< \infty.
	\end{equation}
	Similarly, we define 
	\begin{equation}\label{eq:obsdeg1}
		\bQ^U_{N,|D\setminus \tilde{D}|}(-j_i, i\in\tilde{D}) := \sum_{j_i, i\in\tilde{D}} \bQ^U_{N,1+|D\setminus \tilde{D}|}(D),
	\end{equation}
	and 
	\begin{equation}\label{eq:obsdeg2}
		\bQ^V_{N,|D\setminus \tilde{D}|}(-j_i, i\in\tilde{D}) := \sum_{j_i, i\in\tilde{D}} \bQ^V_{N,1+|D\setminus \tilde{D}|}(D).
	\end{equation}
	By \cref{lem:supbound}, parts (d) and (e), we get: 
	\begin{equation}\label{eq:obsdegbound1}
		\sup_{N\ge 1} \max_{j_i,\ i\in D\setminus \tilde{D}} \bQ^U_{N,|D\setminus \tilde{D}|}(-j_i,\ i\in\tilde{D})< \infty, \quad \mbox{and}\quad \sup_{N\ge 1} \max_{j_i,\ i\in D\setminus \tilde{D}} \bQ^V_{N,|D\setminus \tilde{D}|}(-j_i,\ i\in\tilde{D})< \infty.
	\end{equation}
	We will use \eqref{eq:obsdegbound} and \eqref{eq:obsdegbound1} multiple times in the proof.
	
	By construction, the collection of sets $\{(\cE_{2a-2,z_{2a-2}}\setminus M_{2a-2,z_{2a-2}})\setminus \cE_{2a,z_{2a}}\}_{a=1}^T$ are disjoint. Therefore, $\big|\cup_{a=1}^T ((\cE_{2a-2,z_{2a-2}}\setminus M_{2a-2,z_{2a-2}})\setminus \cE_{2a,z_{2a}})\big|=\sum_{a=1}^T |(\cE_{2a-2,z_{2a-2}}\setminus M_{2a-2,z_{2a-2}})\setminus \cE_{2a,z_{2a}}|$. We will therefore separately show the following: 
	
	(a) $\mbox{rank}(R_{z_1,\ldots ,z_{2T}})\leq p+q-T$, and 
	
	(b) $\mbox{rank}(R_{z_1,\ldots ,z_{2T}})\leq p+q-\sum_{a=1}^T |(\cE_{2a-2,z_{2a-2}}\setminus M_{2a-2,z_{2a-2}})\setminus \cE_{2a,z_{2a}}|$.
	
	\emph{For part (a)}. Let us enumerate $\mH_T:=\cup_{a=1}^T (\bar{\cD}_{2a-1,z_{2a-1}}\cup\, \bar{\cE}_{2a-1,z_{2a-1}})$ arbitrarily as $(\beta_1,\ldots ,\beta_{|\mH_T|})$. Note that $\cup_{a=1}^T M_{2a-1,z_{2a-1}}$ is the union of $T$ distinct singletons by~\cref{prop:setinc}, part (b). For $\beta_r\in \mH_T$, define 
	\begin{equation}\label{eq:kstar}\msk_{2t,\beta_r}:=\begin{cases} \cI^*_{2t,\beta_r} & \mbox{if } \beta_r\in \{i_1,\ldots ,i_p\}\\ 
			\cJ^*_{2T,\beta_r} & \mbox{if } \beta_r\in \{j_1,\ldots ,j_q\}\end{cases}\end{equation}
	for $t\in [T]$ (see the statement of~\cref{lem:supbound} for relevant definitions). Also let $\msk_{2t,U}:=\{M_{2a-1,z_{2a-1}}: a\in [T], \cU_{2a-1,z_{2a-1}}=\phi\}$ and $\msk_{2t,V}:=\{M_{2a-1,z_{2a-1}}: a\in [T], \cV_{2a-1,z_{2a-1}}=\phi\}$. By~\cref{prop:baseprop}, 
	\begin{align}\label{eq:centerprop}
		M_{2a-1,z_{2a-1}}\in \left(\cup_{r=1}^{|\mH_T|} \msk_{2t,\beta_r}\right)\cup \msk_{2t,U} \cup \msk_{2t,V}.
	\end{align}
	Let $\mM_T:=\{M_{2a-1,z_{2a-1}}: a\in [T]\}$. By using~\cref{lem:supbound}, we then have:
	\begin{align}\label{eq:degbound1}
		&\;\;\;\;\mbox{rank}\bigg(\sum_{(\ib,\jb)}R_{z_1,\ldots ,z_{2T}}(\ib,\jb)\bigg)\nonumber \\ &\leq \mbox{rank}\bigg(\sum_{(\ib,\jb)} \mathcal{R}[\bQ]_{N,1+|\msk_{2t,\beta_a}|}(\{\beta_a\}, \msk_{2t,\beta_a})\bigg)\bigg(\bQ^U_{N,|\msk_{2t,U}|}(\msk_{2t,U})\bigg)\bigg(\bQ^V_{N,|\msk_{2t,V}|}(\msk_{2t,V})\bigg)\bigg).
	\end{align}
	Note that the sets $\mH_T$ and $\mM_T$ need not be disjoint. Thus, define $\cC_T:=\mM_T\setminus \mH_T$. Recall the notation in \eqref{eq:obsdef}, \eqref{eq:obsdeg1}, and \eqref{eq:obsdeg2}. Using~\eqref{eq:centerprop}~and~\eqref{eq:degbound1}, we then get:
	\begin{align}\label{eq:degbound11}
		&\;\;\;\;\mbox{rank}\bigg(\sum_{(\ib,\jb)} R_{z_1,\ldots ,z_{2T}}(\ib,\jb)\bigg)\nonumber \\ &\leq \mbox{rank}\bigg(\sum_{(\ib,\jb)\setminus \cC_T} \bigg(\prod_{a=1}^{|\mH_T|} \mathcal{R}[\bQ]_{N,1+|\msk_{2t,\beta_a}|}(\{\beta_a\}, -(\cC_T\cap\msk_{2t,\beta_a}))\bigg)\bigg(\bQ^U_{N,|\msk_{2t,U}|}(-(\cC_T\cap \msk_{2t,U}))\bigg)\nonumber \\ &\qquad\qquad\qquad \bigg(\bQ^V_{N,|\msk_{2t,V}|}(-(\cC_T\cap\msk_{2t,V}))\bigg)\bigg).
	\end{align}
	
	Next define $\tcC_T:=\mM_T\cap \mH_T$, $\tau:=|\tcC_T|$ and enumerate $\tcC_T$ as $\{M_{2\ell_1,z_{2\ell_1-1}},\ldots , M_{2\ell_{\tau}-1,z_{2\ell_{\tau}-1}}\}$ where $\ell_1<\ell_2<\ldots <\ell_{\tau}$. Define $\cF_t:=\{M_{2\ell_{t}-1,z_{2\ell_{t}-1}},\ldots ,M_{2\ell_{\tau}-1,z_{2\ell_{\tau}-1}}\}$ for $t\leq \tau$. Then $\msk_{2t,M_{2\ell_{\tau}-1,z_{2\ell_{\tau}-1}}}\subseteq \cC_T$ by \cref{prop:setinc}, part (d). Moreover, by \eqref{eq:centerprop} and \cref{prop:setinc}, part (d), we have 
	$$M_{2\ell_{\tau}-1,z_{2\ell_{\tau}-1}}\in \left(\cup_{r=1, \beta_r\notin \cF_{\tau}}^{|\mH_T|} \msk_{2t,\beta_r}\right)\cup \msk_{2t,U} \cup \msk_{2t,V}.$$
	Consequently, observe that,
	\begin{align}\label{eq:degbound12}
		&\;\;\;\sum_{(\cup_{a=1}^{|\mH_T|} \{\beta_a\})}\bigg(\prod_{a=1}^{|\mH_T|} \mathcal{R}[\bQ]_{N,1+|\msk_{2t,\beta_a}|}(\{\beta_a\},-(\cC_T\cap\msk_{2t,\beta_a}))\bigg)\bigg(\bQ^U_{N,|\msk_{2t,U}|}(-(\cC_T\cap \msk_{2t,U}))\bigg)\nonumber \\ & \qquad \qquad \bigg(\bQ^V_{N,|\msk_{2t,V}|}(-(\cC_T\cap\msk_{2t,V}))\bigg)\nonumber \\ &\lesssim \sum_{(\cup_{a=1}^{|\mH_T|} \{\beta_a\})\setminus \cF_{\tau}} \bigg(\prod_{\substack{a\in [\mH_T]:\\ \ \beta_a\notin \cF_{\tau}}} \mathcal{R}[\bQ]_{N,1+|\msk_{2t,\beta_a}|}(\{\beta_a\},-((\cC_T\cup \cF_{\tau})\cap\msk_{2t,\beta_a}))\bigg)\bigg(\bQ^U_{N,|\msk_{2t,U}|}(-((\cC_T\cup \cF_{\tau})\cap \msk_{2t,U}))\bigg)\nonumber \\ &\qquad\qquad \bigg(\bQ^V_{N,|\msk_{2t,V}|}(-((\cC_T\cup \cF_{\tau})\cap \msk_{2t,V}))\bigg)\nonumber \\ &\qquad \qquad \bigg(\max_{M_{2\ell_{\tau}-1,z_{2\ell_{\tau}-1}}} \tQ_{N,1+|\msk_{2t,M_{2\ell_{\tau}-1,z_{2\ell_{\tau}-1}}}|}(\{M_{2\ell_{\tau}-1,z_{2\ell_{\tau}-1}}\},-\msk_{2t,M_{2\ell_\tau}-1,z_{2\ell_{\tau}-1}})\bigg)\nonumber\\ &\lesssim  \sum_{(\cup_{a=1}^{|\mH_T|} \{\beta_a\})\setminus \cF_{\tau}} \bigg(\prod_{\substack{a\in [\mH_T]:\\ \ \beta_a\notin \cF_{\tau}}} \mathcal{R}[\bQ]_{N,1+|\msk_{2t,\beta_a}|}(\{\beta_a\},-((\cC_T\cup \cF_{\tau})\cap\msk_{2t,\beta_a}))\bigg)\bigg(\bQ^U_{N,|\msk_{2t,U}|}(-((\cC_T\cup \cF_{\tau})\cap \msk_{2t,U}))\bigg)\nonumber \\ &\qquad \qquad \bigg(\bQ^V_{N,|\msk_{2t,V}|}(-((\cC_T\cup \cF_{\tau})\cap \msk_{2t,V}))\bigg).
	\end{align}
	where the last line follows from~\eqref{eq:obsdegbound}. Then $\msk_{2t,M_{2\ell_{\tau-1}-1,z_{2\ell_{\tau-1}-1}}}\subseteq \cC_T\cup \cF_{\tau}$ by \cref{prop:setinc}, part (d). Moreover, by \eqref{eq:centerprop} and \cref{prop:setinc}, part (d), we have 
	$$M_{2\ell_{\tau-1}-1,z_{2\ell_{\tau-1}-1}}\in \left(\cup_{r=1, \beta_r\notin \cF_{\tau-1}}^{|\mH_T|} \msk_{2t,\beta_r}\right)\cup \msk_{2t,U} \cup \msk_{2t,V}.$$
	Therefore, by repeating the same argument as above, we get
	\begin{align*}
		&\;\;\;\sum_{(\cup_{a=1}^{|\mH_T|} \{\beta_a\})}\bigg(\prod_{a=1}^{|\mH_T|} \mathcal{R}[\bQ]_{N,1+|\msk_{2t,\beta_a}|}(\{\beta_a\},-(\cC_T\cap\msk_{2t,\beta_a}))\bigg)\bigg(\bQ^U_{N,|\msk_{2t,U}|}(-(\cC_T\cap \msk_{2t,U}))\bigg)\nonumber \\ & \qquad \qquad \bigg(\bQ^V_{N,|\msk_{2t,V}|}(-(\cC_T\cap\msk_{2t,V}))\bigg)\\ &\lesssim  \sum_{(\cup_{a=1}^{|\mH_T|} \{\beta_a\})\setminus \cF_{\tau-1}} \bigg(\prod_{\substack{a\in [\mH_T]:\\ \ \beta_a\notin \cF_{\tau-1}}} \mathcal{R}[\bQ]_{N,1+|\msk_{2t,\beta_a}|}(\{\beta_a\},-((\cC_T\cup \cF_{\tau-1})\cap\msk_{2t,\beta_a}))\bigg)\nonumber \\ &\qquad \qquad \bigg(\bQ^U_{N,|\msk_{2t,U}|}(-((\cC_T\cup \cF_{\tau-1})\cap \msk_{2t,U}))\bigg)\bigg(\bQ^V_{N,|\msk_{2t,V}|}(-((\cC_T\cup \cF_{\tau-1})\cap \msk_{2t,V}))\bigg).
	\end{align*}
	which is the same as the right hand side of~\eqref{eq:degbound12} with $\tau$ replaced by $\tau-1$. Proceeding backwards as above, we can replace with $\tau=1$. Observe that $\cC_T\cup\cF_1=\mM_T$. Proceeding recursively as above and using~\eqref{eq:degbound11}, we then get:
	\begin{align*}
		&\;\;\;\;\;\mbox{rank}\bigg(\sum_{(\ib,\jb)} R_{z_1,\ldots ,z_{2T}}(\ib,\jb)\bigg)\\ & \le
		\mbox{rank}\bigg(\sum_{(\ib,\jb) \setminus \mM_T} \bigg(\prod_{a\in [\mH_T]: \beta_a\notin \cF_1}\mathcal{R}[\bQ]_{N,1+|\msk_{2t,\beta_a}|}(\{\beta_a\}, -(\mM_T\cap \msk_{2t,\beta_a}))\bigg)\bigg(\bQ^U_{N,|\msk_{2t,U}|}(-(\mM_T\cap \msk_{2t,U}))\bigg)\bigg) \\ &\qquad \qquad \bigg(\bQ^V_{N,|\msk_{2t,V}|}(-(\mM_T\cap \msk_{2t,V}))\bigg) \\ &\le \mbox{rank}\bigg(\sum_{(\ib,\jb)\setminus \mM_T} 1\bigg)=p+q-T.
	\end{align*}
	Here the last line follows from \eqref{eq:obsdegbound} and \eqref{eq:obsdegbound1}. This proves part (a).
	
	\emph{For part (b).} Note that for $a\in [T]$, the sets $\{(\cE_{2a-2}\setminus M_{2a-2,z_{2a-2}})\setminus \cE_{2a,z_{2a}}\}_{a\in [T]}$ are disjoint. Let us enumerate $\mA_T:=\cup_{a=1}^T ((\cE_{2a-2}\setminus M_{2a-2,z_{2a-2}})\setminus \cE_{2a,z_{2a}}))$ arbitrarily as  $\{\gamma_1,\ldots ,\gamma_{|\mA_T|}\}$. Using~\cref{lem:supbound}, part (c), we consequently get:
	\begin{equation}\label{eq:degbound3}
		\mbox{rank}\bigg(\sum_{(\ib,\jb)} R_{z_1,\ldots ,z_{2T}}(\ib,\jb)\bigg)\leq \mbox{rank}\bigg(\sum_{(\ib,\jb) \setminus (\gamma_1,\ldots ,\gamma_{|\mA_T|})} \sum_{(\gamma_1,\ldots ,\gamma_{|\mA_T|})} \bigg(\prod\limits_{a=1}^{|\mA_T|} \mathcal{R}[\bQ]_{N,1+|\cJ^*_{2T,\gamma_a}|}(\gamma_a , \cJ^*_{2T,\gamma_a})\bigg)\bigg).
	\end{equation}
	
	Recall that we had defined $\mM_T$ as $\{M_{1,k_1},\ldots , M_{2T-1,z_{2T-1}}\}$. Also, by definition of $\mA_T$, we have $\mM_T\cap \mA_T=\phi$. Consequently $\mA_T\cap \cJ^*_{2T,\gamma_a}=\phi$ for any $a\in [T]$. Also note that $\max_{\cJ^*_{2T,\gamma_a}}\sum_{\gamma_a} \mathcal{R}[\bQ]_{N,1+|\cJ^*_{2T,\gamma_a}|}(\gamma_a,\cJ^*_{2T,\gamma_a})\lesssim 1$ by~\cref{lem:smoothcont}, part (2). As $\gamma_a$'s are all distinct, we have:
	\begin{align*}
		\mbox{rank}\bigg(\sum_{(\ib,\jb)} R_{z_1,\ldots ,z_{2T}}(\ib,\jb)\bigg)\le \mbox{rank}\left(\sum_{(\ib,\jb) \setminus (\gamma_1,\ldots ,\gamma_{|\mA_T|})} 1\right) = p+1-|\mA_T|.
	\end{align*}
	This establishes (b).
	
	\subsection{Proof of~\cref{lem:negterm}}\label{sec:pfnegterm}
	The following inequality will be useful throughout this proof:
	\begin{equation}\label{eq:negterm1}
		\frac{k}{2}=\frac{1}{2}\left(q+\sum_{r=1}^p l_r\right)\geq p+\frac{q}{2}.
	\end{equation}
	\emph{Part (i).} Note that if $T>q/2$, then by~\cref{prop:degbound}, $\mathrm{rank}(R_{z_1,\ldots ,z_{2T}})\leq p+q-T<p+q/2\leq k/2$ (by~\eqref{eq:negterm1}).
	
	Next consider the case $T<q/2$. Recall $\cE_{0,z_0}\equiv (j_1,\ldots ,j_q)$ as in the proof of~\cref{prop:boundlen}. As $R_{z_1,\ldots ,z_{2T}}$ is a leaf node, we have $\cE_{2T,z_{2T}}=\phi$ (see step 17 of~\cref{alg:construct_tree}). We consequently get:
	\begin{align*}
		\Bigg|\cup_{a=1}^{T} (\cE_{2a-2,z_{2a-2}}\setminus M_{2a-2,z_{2a-2}})\setminus \cE_{2a,z_{2a}}\Bigg|& = \sum_{a=1}^{T} \Big|(\cE_{2a-2,z_{2a-2}}\setminus M_{2a-2,z_{2a-2}})\setminus \cE_{2a,z_{2a}}\Big|\\ &=\sum_{a=1}^T (|\cE_{2a-2,z_{2a-2}}|-|\cE_{2a,z_{2a}}|-1)=q-T>q/2.
	\end{align*}
	Using the above observation in~\cref{prop:degbound}, we get that $\mathrm{rank}(R_{z_1,\ldots ,z_{2T}})=p+q-\Big|\sum_{a=1}^{T} (\cE_{2a-2,z_{2a-2}}\setminus M_{2a-2,z_{2a-2}})\setminus \cE_{2a,z_{2a}}\Big|<p+q/2\leq k/2$ (see~\eqref{eq:negterm1}). This completes the proof of part (i).
	
	\emph{Part (ii).} Note that, if there exists $p_0\in [p]$ such that $l_{p_0}>2$, then a strict inequality holds in~\eqref{eq:negterm1}, i.e., $k/2>p+q/2$. If $T\neq q/2$, then the conclusion follows from part (i). If $T=q/2$, then by~\cref{prop:degbound}, $\mbox{rank}(R_{z_1,\ldots ,z_{2T}})\leq p+q/2<k/2$. This completes the proof.
	
	\emph{Part (iii).} Without loss of generality, we can restrict to the case $T=|\mA_T|=q/2$. Let $i_c\in \cD_{a_0,k_{a_0}}^c$. Recall the definitions of $\mA_T$, $\{\gamma_1,\ldots ,\gamma_{|\mA_T|}\}$ and $\mM_T$ from the proof of~\cref{prop:degbound}. As $\mA_T\cup \mM_T\subseteq \{j_1,\ldots ,j_q\}$, we therefore have $i_c\notin \mA_T\cup \mM_T$. Recall the definitions of $\cI^\star$ and $\cJ^\star$ from \cref{prop:degbound}, parts (b) and (c). Consequently, using~\cref{lem:supbound}, we get:
	
	\begin{align}\label{eq:negterm4}
		&\;\;\;\;\;\mbox{rank}(R_{z_1,\ldots ,z_{2T}})\nonumber \\ &\leq \mbox{rank}\bigg(\sum_{(\ib,\jb) \setminus (\gamma_1,\ldots ,\gamma_{|\mA_T|}, i_c)} \sum_{(\gamma_1,\ldots ,\gamma_{|\mA_T|},i_c)} \bigg(\prod\limits_{a=1}^{|\mA_T|} \mathcal{R}[\bQ]_{N,1+|\cJ^*_{2T,\gamma_a}|}(\gamma_a, \cJ^*_{2T,\gamma_a})\bigg)\mathcal{R}[\bQ]_{N,1+|\cI^*_{2T,i_c}|}(i_c, \cI^*_{2T,i_c})\bigg)\nonumber \\ &\leq \mbox{rank}\bigg(\sum_{(\ib,\jb) \setminus (\gamma_1,\ldots ,\gamma_{|\mA_T|}, i_c)}  1\bigg)\leq p+q/2-1<k/2,
	\end{align}
	where the last step follows by summing over the indices $(\gamma_1,\ldots ,\gamma_{|\mA_T|})$ first, followed by summing over the index $i_c$ and then using~\eqref{eq:cmean2}. This proves part (iii).
	
	\emph{Part (iv).} Recall that we had defined $\mH_T$ as $\cup_{a=1}^T (\bar{\cD}_{2a-1,z_{2a-1}}\cup \bar{\cE}_{2a-1,z_{2a-1}}^c)$. Assume that there exist $a_0$ such that $M_{2a_0-1,z_{2a_0-1}}\in \mH_T$. As $\mA_T\cap\mM_T=\phi$, we conclude that $M_{2a_0-1,z_{2a_0-1}}\notin \mA_T$. With this observation, the rest of the argument as same as in part (iii), and we leave the details to the reader.
	
	\emph{Part (v).} Suppose there exists $a_0$ such that $\cU_{2a_0-1,z_{2a_0-1}}=\phi$. Recall the definition of $\msk_{2t,U}=\{M_{2a-1,z_{2a-1}}: a\in [T], \cU_{2a-1,z_{2a-1}}=\phi\}$ from the proof of \cref{prop:degbound}. Then $M_{2a_0-1,2a_0-1}\in\msk_{2t,U}$. Recall the definition of $\cJ^\star$ from \cref{prop:degbound}, part (c). Consequently, using~\cref{lem:supbound}, we get:
	\begin{align}\label{eq:negterm4}
		&\;\;\;\;\;\mbox{rank}(R_{z_1,\ldots ,z_{2T}})\nonumber \\ &\leq \mbox{rank}\bigg(\sum_{\substack{(\ib,\jb) \setminus \\ (\gamma_1,\ldots ,\gamma_{|\mA_T|}, M_{2a_0-1,z_{2a_0-1}})}} \sum_{\substack{(\gamma_1,\ldots ,\gamma_{|\mA_T|},\\ M_{2a_0-1,z_{2a_0-1}})}} \bigg(\prod\limits_{a=1}^{|\mA_T|} \mathcal{R}[\bQ]_{N,1+|\cJ^*_{2T,\gamma_a}|}(\gamma_a, \cJ^*_{2T,\gamma_a})\bigg)\bQ^U_{N,|\msk_{2t,U}|}(\msk_{2t,U})\bigg)\nonumber \\ &\leq \mbox{rank}\bigg(\sum_{(\ib,\jb) \setminus (\gamma_1,\ldots ,\gamma_{|\mA_T|}, M_{2a_0-1,z_{2a_0-1}})}  1\bigg)\leq p+q/2-1<k/2,
	\end{align}
	where the last step follows by summing over the indices $(\gamma_1,\ldots ,\gamma_{|\mA_T|})$ first, followed by summing over the index $M_{2a_0-1,z_{2a_0-1}}$ and then using \cref{lem:smoothcont}, part 2 and \cref{lem:supbound}, part (d). This proves part (iii).
	
	\emph{Part (vi).} The proof is the same as that of part (v). So we skip the details for brevity.
	
	\emph{Part (vii).} Without loss of generality, we restrict to the case $T=|\mA_T|=q/2$ and $M_{2a-1,z_{2a-1}}\notin \mH_T$ for any $a\in [T]$. It therefore suffices to show that $\mbox{rank}(R_{z_1,\ldots ,z_{2T}})<k/2$ if there exist $j_{\beta}$ such that
	\begin{equation}\label{eq:degg}
		j_{\beta}\in \left(\bar{\cE}_{2a_0-1,z_{2a_0-1}}\cap \left(\{j_1,\ldots ,j_q\}\setminus \cE_{2a_0-2,z_{2a_0-2}}\right)\right)\setminus \mM_T.\end{equation}
	By~\eqref{eq:degg}, $M_{2a_0-1,z_{2a_0-1}}\in \cJ^*_{2T,j_{\beta}}$. Further, as $j_{\beta}\notin \cE_{2a_0-2,z_{2a_0-2}}$, by~\cref{prop:setinc}, part (e), there exists $a_1<a_0$ such that
	\begin{equation}\label{eq:degclaim} \{M_{2a_1-1,z_{2a_1-1}},M_{2a_0-1,z_{2a_0-1}}\}\subseteq \cJ^*_{2T,j_{\beta}}.\end{equation}
	We split the rest of the proof into two cases:
	
	\emph{Case 1 - $j_{\beta}\notin \mA_T$:} By applying~\cref{lem:supbound}, we get: 
	
	\begin{align}\label{eq:negterm3}
		&\;\;\;\;\;\mbox{rank}(R_{z_1,\ldots ,z_{2T}})\nonumber \\ &\leq \mbox{rank}\bigg(\sum_{\substack{(\ib,\jb) \setminus\\ (\gamma_1,\ldots ,\gamma_{|\mA_T|}, j_{\beta})}} \sum_{(\gamma_1,\ldots ,\gamma_{|\mA_T|},j_{\beta})} \bigg(\prod\limits_{a=1}^{|\mA_T|} \mathcal{R}[\bQ]_{N,1+|\cJ^*_{2T,\gamma_a}|}(\gamma_a,\cJ^*_{2T,\gamma_a})\bigg)\mathcal{R}[\bQ]_{N,1+|\cJ^*_{2T,j_{\beta}}|}(j_{\beta}, \cJ^*_{2T,j_{\beta}})\bigg)\nonumber \\ &\leq \mbox{rank}\bigg(\sum_{(\ib,\jb) \setminus (\gamma_1,\ldots ,\gamma_{|\mA_T|}, j_{\beta})}  1\bigg)\leq p+q/2-1<k/2,	\end{align}
	where the last line follows by first summing over $(\gamma_1,\ldots ,\gamma_{|\mA_T|})$ followed by $j_{\beta}$. This works because $j_{\beta}\notin \mA_T$ and $\mA_T\cap \mM_T=\phi$. We can consequently sum over the indices in $\mA_T$ keeping $j_{\beta}$ fixed. Finally, as $\cJ^*_{2T,j_{\beta}}\neq \phi$ (by~\eqref{eq:degclaim}), we have $\max_{\cJ^*_{2T,j_{\beta}}}\sum_{j_{\beta}} \mathcal{R}[\bQ]_{N,1+|\cJ^*_{2T,j_{\beta}|}}(\{j_{\beta}\}\cup \cJ^*_{2T,j_{\beta}})\lesssim 1$, by \cref{lem:smoothcont} part 2. This establishes~\eqref{eq:negterm3}.
	
	\emph{Case 2 - $j_{\beta}=\gamma_c$ for some $c\leq |\mA_T|$:} Once again, by applying~\cref{lem:supbound}, we get
	\begin{align}\label{eq:negtermnew}
		&\;\;\;\;\;\mbox{rank}(R_{z_1,\ldots ,z_{2T}})\nonumber \\ &\leq \mbox{rank}\bigg(\sum_{\substack{(\ib,\jb) \setminus\\ ((\gamma_1,\ldots ,\gamma_{c-1},\gamma_{c+1},\ldots ,\gamma_{|\mA_T|})\\ \cup  \{M_{2a_1-1,z_{2a_1-1}},M_{2a_0-1,z_{2a_0-1}}\})}} \sum_{\substack{((\gamma_1,\ldots ,\gamma_{c-1},\gamma_{c+1},\ldots ,\gamma_{|\mA_T|})\\ \cup  \{M_{2a_1-1,z_{2a_1-1}},M_{2a_0-1,z_{2a_0-1}}\})}} \mathcal{R}[\bQ]_{N,1+|\cJ^*_{2T,j_{\beta}}|}(j_{\beta}, \cJ^*_{2T,\gamma_c})\bigg)\nonumber \\ &\;\;\;\;\;\bigg(\prod\limits_{a=1,\ a\neq c}^{|\mA_T|} \mathcal{R}[\bQ]_{N,1+|\cJ^*_{2T,\gamma_a}|}(\gamma_a, \cJ^*_{2T,\gamma_a})\bigg)\nonumber \\ &\overset{(a)}{\leq} \mbox{rank}\bigg(\sum_{\substack{(\ib,\jb) \setminus\\ ((\gamma_1,\ldots ,\gamma_{c-1},\gamma_{c+1},\ldots ,\gamma_{|\mA_T|})\\ \cup  \{M_{2a_1-1,z_{2a_1-1}},M_{2a_0-1,z_{2a_0-1}}\})}} \sum_{  \{M_{2a_1-1,z_{2a_1-1}},M_{2a_0-1,z_{2a_0-1}}\}} \mathcal{R}[\bQ]_{N,1+|\cJ^*_{2T,j_{\beta}}|}(j_{\beta}, \cJ^*_{2T,j_{\beta}})\bigg)\nonumber \\ & \overset{(b)}{\leq} \mbox{rank}\bigg(\sum_{\substack{(\ib,\jb) \setminus\\ ((\gamma_1,\ldots ,\gamma_{c-1},\gamma_{c+1},\ldots ,\gamma_{|\mA_T|})\\ \cup  \{M_{2a_1-1,z_{2a_1-1}},M_{2a_0-1,z_{2a_0-1}}\})}} 1\bigg)\leq p+q/2-1<k/2.	
	\end{align}
	Here (a) follows from the fact that $\mA_T\cap \mM_T=\phi$, which implies that we can sum up over $(\gamma_1,\ldots ,\gamma_{c-1},\gamma_{c+1},\ldots ,\gamma_{|\mA_T|})$ keeping $M_{2a_1-1,z_{2a_1-1}}, M_{2a_0-1,z_{2a_0-1}}$ fixed. Finally, (b) follows from \eqref{eq:degclaim}. This completes the proof of part (v).
	
	\emph{Parts (viii) and (ix).} Without loss of generality, we can restrict to the case $T=|\mA_T|=q/2$, $\cup_{a=1}^T \bar{\cD}_{2a-1,z_{2a-1}}=\phi$, $\mM_T\cap \mH_T=\phi$, $\cU_{2a-1,z_{2a-1}}\neq \phi$, $\cV_{2a-1,z_{2a-1}}\neq \phi$ for all $a\in [T]$, and 
	$$\cup_{a=1}^T \left(\bar{\cE}_{2a-1,z_{2a-1}} \cap \left(\{j_1,\ldots ,j_q\}\setminus \cE_{2a-2,z_{2a-2}}\right)\right)=\phi,$$
	from parts (i), (iii), (iv), (vii) above. By the above display, we observe that \begin{equation}\label{eq:dgbd1}
		\bar{\cE}_{2a-1,z_{2a-1}}=\bar{\cE}_{2a-1,z_{2a-1}}\cap \cE_{2a-2,z_{2a-2}}.
	\end{equation}
	We next claim that, for any $a\in [T]$, the following holds:
	\begin{equation}\label{eq:dgbd2}
		\big|\big(\cE_{2a-2,z_{2a-2}}\setminus M_{2a-2,z_{2a-2}}\big)\setminus \cE_{2a,z_{2a}}\big|\geq 1.
	\end{equation}
	First let us complete the proof assuming~\eqref{eq:dgbd2}. Observe that
	$$q/2=|\mA_T|=\sum_{a=1}^T \big|\big(\cE_{2a-2,z_{2a-2}}\setminus M_{2a-2,z_{2a-2}}\big)\setminus \cE_{2a,z_{2a}}\big|\geq \sum_{a=1}^T 1=q/2.$$
	Therefore, equality holds throughout the above display and so $\big|\big(\cE_{2a-2,z_{2a-2}}\setminus M_{2a-2,z_{2a-2}}\big)\setminus \cE_{2a,z_{2a}}\big|=1$. As $\bar{\cD}_{2a-1,z_{2a-1}}=\phi$, $\cU_{2a-1,z_{2a-1}}\neq \phi$, $\cV_{2a-1,z_{2a-1}}=\phi$, we must have $|\cE_{2a-1,z_{2a-1}}|\geq 1$, by \cref{prop:baseprop}, part (h). By~\cref{prop:setinc}, part (f), we have:
	\begin{align}\label{eq:dgbd3}
		1&=\big|\big(\cE_{2a-2,z_{2a-2}}\setminus M_{2a-2,z_{2a-2}}\big)\setminus \cE_{2a,z_{2a}}\big|\nonumber \\ &\geq \big|(\bar{\cE}_{2a-1,z_{2a-1}}\cap \cE_{2a-2,z_{2a-2}})\big| + \big|((\cE_{2a-2,z_{2a-2}}\cap \cE_{2a-1,z_{2a-1}})\setminus \cE_{2a,z_{2a}})\big|\nonumber \\ &\overset{\dagger}{=} \big|\bar{\cE}_{2a-1,z_{2a-1}}\big| + \big|((\cE_{2a-2,z_{2a-2}}\cap \cE_{2a-1,z_{2a-1}})\setminus \cE_{2a,z_{2a}})\big|\geq 1
	\end{align}
	where $\dagger$ follows from~\eqref{eq:dgbd1}. Once again, we must have equality throughout~\eqref{eq:dgbd3}. The equality condition immediately completes the proof.
	
	\emph{Proof of~\eqref{eq:dgbd2}} Suppose that~\eqref{eq:dgbd2} does not hold. By~\cref{prop:setinc}, part (c), this would imply $\cE_{2a,z_{2a}}=(\cE_{2a-2,z_{2a-2}}\setminus M_{2a-2,z_{2a-2}})$. By a similar computation as in~\eqref{eq:dgbd3} would imply $\bar{\cE}_{2a-1,z_{2a-1}}=\phi$, which coupled with $\bar{\cD}_{2a-1,z_{2a-1}}=\phi$, $\cU_{2a-1,z_{2a-1}}\neq \phi$, and $\cV_{2a-1,z_{2a-1}}\neq\phi$, yields a contradiction to \cref{prop:setinc}, part (f), and proves~\eqref{eq:dgbd2}.
	
	\subsection{Proof of \cref{lem:initremove}}\label{sec:pfinitremove}
	We will use the shorthands $\abk,\bbk,\mbk,\obk$ for the index sets $(a_1,\ldots ,a_k)\in [N]^{k}$, $(b_1,\ldots ,b_{k_1})\in [N]^{k_1}$, $(m_1,\ldots ,m_{k_2})\in [N]^{k_2}$, and $(o_1,\ldots ,o_{k_2})\in [N]^{k_2}$. Note that 
	\begin{align*}
		&\;\;\;\;\E T_N^k U_N^{k_1} V_N^{k_2} \\ &=\frac{1}{N^{\frac{k}{2}+k_1+k_2}}\sum_{\abk,\bbk,\mbk,\obk} \E\prod_{r=1}^k (c_{a_r}(g(\si_{a_r})-t_{a_r})) \prod_{r=1}^{k_1}(c_{b_r}^2(g(\si_{b_r})^2-t_{b_r}^2)\prod_{r=1}^{k_2} c_{m_r} c_{o_r}(g(\si_{m_r})-t_{m_r})(t_{o_r}^{m_r}-t_{o_r})
	\end{align*}
	The crux of the statement of \cref{lem:initremove} is to show that the contribution of the summands above, when either of the index sets $\bbk,\mbk,\obk$, overlap with $\abk$, are negligible as $N\to\infty$. To see how, we will first replace each of the unrestricted sums across indices $b_j$ with a sum over $b_j\neq a_1,\ldots ,a_k$. Let us do this inductively. Define $N_{\abk}=[N]\setminus \abk$. Suppose we have already replaced the unrestricted sum over $(b_1,\ldots ,b_{s-1})\in [N]^{s-1}$ with sum over $(b_1,\ldots ,b_{s-1})\in N_{\abk}^{s-1}$, $1\le s\le k_1$. Consider the case where $b_s=a_1$. Let us write $\bbk_{s-1}=(b_1,\ldots , b_{s-1})$, $\bbk_{-s}=(b_{s+1},\ldots ,b_{k_1})$, and $\abk_{-1}=(a_2,\ldots ,a_k)$. The corresponding summands are given by 
	\begin{align*}
		&\;\;\;\;\frac{1}{N^{\frac{k}{2}+k_1+k_2}}\E\sum_{\substack{\abk,\bbk_{s-1}\in N_{\abk}^{s-1},\\ \bbk_{-s},\mbk,\obk}}c_{a_1}^3(g(\si_{a_1})-t_{a_1})(g(\si_{a_1})^2-t_{a_1}^2)\prod_{r=2}^k (c_{a_r}(g(\si_{a_r})-t_{a_r}))\prod_{r=1, r\neq s}^{k_1}c_{b_r}^2(g(\si_{b_r})^2-t_{b_r}^2)\\ &\qquad \qquad \prod_{r=1}^{k_2} c_{m_r} c_{o_r}(g(\si_{m_r})-t_{m_r})(t_{o_r}^{m_r}-t_{o_r}) \\ &=\frac{1}{N^{\frac{k}{2}+k_1+k_2}}\E\sum_{\bbk_{s-1},\bbk_{-s}} \left(\sum_{a_1\in [N]\setminus \bbk_{s-1}} c_{a_1}^3(g(\si_{a_1})-t_{a_1})(g(\si_{a_1})^2-t_{a_1}^2)\right) \left(\sum_{\abk_{-1}\in ([N]\setminus \bbk_{s-1})^{k-1}} \prod_{r=2}^k c_{a_r}(g(\si_{a_r})-t_{a_r})\right) \\ &\qquad \qquad \prod_{r=1,r\neq s}^{k_1} c_{b_r}^2(g(\si_{b_r})^2-t_{b_r}^2)\sum_{\mbk,\obk} \prod_{r=1}^{k_2} c_{m_r} c_{o_r}(g(\si_{m_r})-t_{m_r})(t_{o_r}^{m_r}-t_{o_r}) \\ &\overset{(i)}{\lesssim} \frac{1}{N^{\frac{1}{2}+k_1+k_2}}\sum_{\bbk_{s-1},\bbk_{-s}}\left(\sum_{a_1\in [N]\setminus \bbk_{s-1}} 1\right)\E\bigg|\frac{1}{\sqrt{N}}\sum_{a\notin \bbk_{s-1}} c_a(g(\si_a)-t_a)\bigg|^{k-1}\prod_{r=1,r\neq s}^{k_1} c_{b_r}^2(g(\si_{b_r})^2-t_{b_r}^2) \\ &\qquad\qquad \bigg|\sum_{\mbk,\obk} \prod_{r=1}^{k_2}\Q_{N,2}(o_r,b_r)\bigg| \\ &\overset{(ii)}{\lesssim} \frac{N^{k_1+k_2}}{N^{1/2+k_1+k_2}}= O(N^{-1/2}).\end{align*}
	Here (i) follows from \cref{as:coeffalt}, \eqref{eq:cmean1}, and the fact that $g(\cdot)$ is bounded. Next, (ii) follows from \eqref{eq:cmean2} and \cref{lem:auxtail}, part (a). Therefore, the contribution of the terms where the indices $\bbk$ overlap non-trivially with $\abk$, are all negligible. 
	
	Let us now show that the contribution of the terms when either of the vectors $\mbk,\obk$ overlaps with $\abk$, is again negligible. For notational simplicity, we will only show that the contributions when either $m_1=a_1$ or $o_1=a_1$ are negligible. We can now assume that $\bbk$ does not overlap with $\abk$. First, let us assume that $m_1=a_1$ and $o_1,(m_2,o_2),\ldots ,(m_{k_2},o_{k_2})$ are unrestricted. Let us write $\mbk_{-1}=(m_2,\ldots ,m_{k_2})$ and $\obk_{-1}=(o_2,\ldots ,o_{k_2})$. The corresponding summands are given by 
	\begin{align*}
		&\;\;\;\;\frac{1}{N^{\frac{k}{2}+k_1+k_2}}\E \sum_{\substack{\abk,\bbk\in N_{\abk}^k, \\ \mbk_{-1}, \obk}} c_{a_1}^2 c_{o_1}(g(\si_{a_1})-t_{a_1})^2(t_{o_1}^{a_1}-t_{o_1}) \prod_{r=2}^k c_{a_r}(g(\si_{a_r})-t_{a_r})\prod_{r=1}^{k_1} c_{b_r}^2 (g(\si_{b_r})^2-t_{b_r}^2) \\ &\qquad\qquad \prod_{r=2}^{k_2} c_{m_r} c_{o_r}(g(\si_{m_r})-t_{m_r})(t_{o_r}^{m_r}-t_{o_r}) \\ &=\frac{1}{N^{\frac{k}{2}+k_1+k_2}}\E\sum_{\bbk}\left(\sum_{a_1\in [N]\setminus \bbk, o_1} c_{a_1}^2 c_{o_1}(g(\si_{a_1})-t_{a_1})^2(t_{o_1}^{a_1}-t_{o_1})\right)\left(\sum_{\abk_{-1}\in ([N]\setminus \bbk)^{k-1}} \prod_{r=2}^{k} c_{a_r}(g(\si_{a_r})-t_{a_r})\right) \\ &\qquad \qquad \prod_{r=1}^{k_1} c_{b_r}^2 (g(\si_{b_r})^2-t_{b_r}^2) \sum_{\mbk_{-1},\obk_{-1}}\prod_{r=2}^{k_2} c_{m_r} c_{o_r}(g(\si_{m_r})-t_{m_r})(t_{o_r}^{m_r}-t_{o_r}) \\ &\overset{(iii)}{\lesssim} \frac{1}{N^{\frac{1}{2}+k_1+k_2}}\left(\sum_{a_1,o_1}\Q_{N,2}(o_1,a_1)\right)\E\bigg|\frac{1}{\sqrt{N}}\sum_{a\notin [N]\setminus \bbk} c_a(g(\si_a)-t_a)\bigg|^{k-1} \prod_{r=1}^{k_1} \big|c_{b_r}^2 (g(\si_{b_r})^2-t_{b_r}^2)\big| \\ &\qquad \qquad \prod_{r=2}^{k_2}\left(\sum_{m_r,o_r} \Q_{N,2}(o_r,m_r)\right) \\ &\overset{(iv)}{\lesssim} \frac{N^{k_1+k_2}}{N^{\frac{1}{2}+k_1+k_2}}=O(N^{-1/2}).
	\end{align*}
	Here (iii) follows from \cref{as:coeffalt}, \eqref{eq:cmean1}, and the fact that $g(\cdot)$ is bounded. Also (iv) follows from \eqref{eq:cmean2} and \cref{lem:fixsol}, part (a). This implies the contribution when $m_1\in \abk$ is negligible. Next, we assume $m_1\notin \abk$ and $o_1=a_1$, while $\mbk_{-1},\obk_{-1}$ are all unrestricted. The corresponding summands are given by 
	\begin{align*}
		&\;\;\;\;\frac{1}{N^{\frac{k}{2}+k_1+k_2}}\E \sum_{\substack{\abk,\bbk\in N_{\abk}^k, \\ m_1\notin \abk, \mbk_{-1}, \obk_{-1}}} c_{a_1}^2 c_{m_1}(g(\si_{a_1})-t_{a_1})(g(\si_{m_1})-t_{m_1})(t_{a_1}^{m_1}-t_{a_1}) \prod_{r=2}^k c_{a_r}(g(\si_{a_r})-t_{a_r}) \\ &\qquad\qquad \prod_{r=1}^{k_1} c_{b_r}^2 (g(\si_{b_r})^2-t_{b_r}^2)\prod_{r=2}^{k_2} c_{m_r} c_{o_r}(g(\si_{m_r})-t_{m_r})(t_{o_r}^{m_r}-t_{o_r}) \\ &=\frac{1}{N^{\frac{k}{2}+k_1+k_2}}\E\sum_{\bbk,m_1}\left(\sum_{a_1\in [N]\setminus \bbk, m_1\neq a_1} c_{a_1}^2 c_{m_1}(g(\si_{a_1})-t_{a_1})(g(\si_{m_1})-t_{m_1})(t_{a_1}^{m_1}-t_{a_1})\right) \\ &\qquad \qquad  \left(\sum_{\abk_{-1}\in ([N]\setminus (\bbk,m_1))^{k-1}} \prod_{r=2}^{k} c_{a_r}(g(\si_{a_r})-t_{a_r})\right) \prod_{r=1}^{k_1} c_{b_r}^2 (g(\si_{b_r})^2-t_{b_r}^2) \\ &\qquad\qquad\sum_{\mbk_{-1},\obk_{-1}}\prod_{r=2}^{k_2} c_{m_r} c_{o_r}(g(\si_{m_r})-t_{m_r})(t_{o_r}^{m_r}-t_{o_r}) \\ &\overset{(v)}{\lesssim} \frac{1}{N^{\frac{1}{2}+k_1+k_2}}\left(\sum_{a_1,o_1}\Q_{N,2}(a_1,m_1)\right)\E\bigg|\frac{1}{\sqrt{N}}\sum_{a\notin [N]\setminus (\bbk,m_1)} c_a(g(\si_a)-t_a)\bigg|^{k-1} \prod_{r=1}^{k_1} \big|c_{b_r}^2 (g(\si_{b_r})^2-t_{b_r}^2)\big| \\ &\qquad \qquad \prod_{r=2}^{k_2}\left(\sum_{m_r,o_r} \Q_{N,2}(o_r,m_r)\right) \\ &\overset{(vi)}{\lesssim} \frac{N^{k_1+k_2}}{N^{\frac{1}{2}+k_1+k_2}}=O(N^{-1/2}).
	\end{align*}
	As before, (v) follows from \cref{as:coeffalt}, \eqref{eq:cmean1}, and the fact that $g(\cdot)$ is bounded. Also (vi) follows from \eqref{eq:cmean2} and \cref{lem:auxtail}, part (a). This completes the proof.
	\section{Proofs of Applications}\label{sec:pfappl}
	This Section is devoted to the proofs of results from Sections \ref{sec:ising}, \ref{sec:highising}, and \ref{sec:ergm}.
	
	\subsection{Proofs from \cref{sec:ising}}
	\begin{proof}[Proof of \cref{theo:conmainis}]
		Recall the definitions of $U_N$ and $V_N$ from \eqref{eq:randvar}. By an application of \cref{theo:CLTmain}, the proof of \cref{theo:conmainis} will follow once we establish \cref{as:cmean}.
		To wit, recall the definition $m_i=\sum_{j=1,j\neq i}^N \A_N(i,j)\si_j$ from \eqref{eq:avisdef}. Therefore $m_i^k=\sum_{j=1,j\neq i,k}^N \A_N(i,j)\si_j$. By \cref{as:bddrowsum}, we note that 
		$$\max_{1\le i\le N}\sum_{k=1}^N |m_i-m_i^k|\le \max_{1\le i\le N}\sum_{k=1}^N \A_N(i,k)\lesssim 1, \qquad \qquad \max_{1\le k\le N}\sum_{i=1}^N |m_i-m_i^k|\le \max_{1\le k\le N}\sum_{i=1}^N \A_N(i,k)\lesssim 1.$$
		Recall the definition of $\Xi$ from \eqref{eq:candef1}. As $\Xi'$ has uniformly bounded derivatives of all orders, an application of \cref{lem:smoothcont} the establishes \cref{as:cmean}.
	\end{proof}
	
	In order to prove the remaining results from \cref{sec:ising}, it will be useful to consider the following corollary of \cref{theo:conmainis}.
	
	We present a corollary to \cref{theo:conmainis} that helps simplify $U_N+V_N$ (see \eqref{eq:randvar} with $g(x)=x$) under model \eqref{eq:model} when $\A_N$ satisfies the Mean-Field condition \cref{as:mfield}. This will be helpful in proving all the results in \cref{sec:ising}.
	\begin{corollary}\label{cor:isingmean}
		Consider the same assumptions as in \cref{theo:conmainis}. In addition suppose that \cref{as:mfield} holds. Define 
		\begin{align}\label{eq:varterm}
			v_N^2:=\left(\frac{1}{N}\sum_{i=1}^N c_i^2 \Xi''(\beta m_i+B)-\frac{\beta}{N}\sum_{i\neq j} c_i c_j \A_N(i,j)\Xi''(\beta m_i+B)\Xi''(\beta m_j+B)\right)\vee a_N,
		\end{align}
		for a strictly positive sequence $a_N\to 0$. 
		Then the following holds: 
		$$\frac{1}{v_N}\sum_{i=1}^N c_i(\si_i-\Xi'(\beta m_i+B)) \overset{w}{\longrightarrow} N(0,1).$$
		for any strictly positive sequence $a_N\to 0$. 
	\end{corollary}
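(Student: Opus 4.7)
The plan is to apply \cref{theo:conmainis} and then identify the random studentizer $U_N+V_N$ with the explicit expression
\[
W_N \;:=\; \frac{1}{N}\sum_{i=1}^N c_i^2\,\Xi''(\beta m_i+B) \;-\; \frac{\beta}{N}\sum_{i\neq j} c_i c_j\,\A_N(i,j)\,\Xi''(\beta m_i+B)\,\Xi''(\beta m_j+B)
\]
up to an $o_{\P}(1)$ error. Once $U_N+V_N=W_N+o_{\P}(1)$ is established, \eqref{eq:bddzero} forces $W_N\ge \eta/2$ with probability tending to $1$, so a Slutsky argument applied to the pivotal limit of \cref{theo:conmainis} transfers the conclusion to $T_N/\sqrt{W_N\vee a_N}=T_N/v_N$, which is the stated claim.

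For the $U_N$ piece, I will use the Ising conditional moment identity $\E[\si_i^2\mid\si_j,j\ne i]=\Xi''(\beta m_i+B)+t_i^2$ to decompose
\[
\si_i^2-t_i^2 \;=\; \Xi''(\beta m_i+B) \;+\; \bigl(\si_i^2-\E[\si_i^2\mid\cdot]\bigr).
\]
The residual $N^{-1}\sum_i c_i^2\bigl(\si_i^2-\E[\si_i^2\mid\cdot]\bigr)$ is exactly of the form handled by \cref{lem:auxtail}(a) with $g(x)=x^2$ and $d_i=c_i^2$. One verifies \cref{as:cmean} for this new $g$ via \cref{lem:smoothcont}, since the new conditional mean $\Xi''(\beta m_i+B)+\Xi'(\beta m_i+B)^2$ is a bounded smooth function of the linear form $m_i$ whose discrete differences are controlled by the row sums of $\A_N$ (\cref{as:bddrowsum}); this yields an overall rate of $O_{\P}(N^{-1/2})$.

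The decisive step is the treatment of $V_N$. A first-order Taylor expansion combined with the mean value theorem gives
\[
t_j^i-t_j \;=\; -\beta\,\A_N(i,j)\,\si_i\,\Xi''(\beta m_j+B) \;+\; O\!\bigl(\A_N(i,j)^2\bigr)
\]
uniformly in $\ms$, because $\Xi'''$ is bounded on bounded intervals. The quadratic remainder contributes at most $C\,N^{-1}\sum_{i,j}\A_N(i,j)^2=C\,\|\A_N\|_F^2/N$ to $V_N$, which is $o(1)$ by \cref{as:mfield}; this is the one and only place where the Mean-Field condition is used. The leading contribution is $-\beta N^{-1}\sum_{i\neq j}c_ic_j\A_N(i,j)\si_i(\si_i-t_i)\Xi''(\beta m_j+B)$, and substituting the algebraic identity $\si_i(\si_i-t_i)=\Xi''(\beta m_i+B)+(\si_i^2-\E[\si_i^2\mid\cdot])-t_i(\si_i-t_i)$ produces the desired $\Xi''\Xi''$ piece of $W_N$ plus two residuals of the schematic form $N^{-1}\sum_i\bigl(\si_i^2-\E[\si_i^2\mid\cdot]\bigr)s_i$ and $N^{-1}\sum_i t_i(\si_i-t_i)s_i$, where $s_i=c_i\sum_j c_j\,\A_N(i,j)\,\Xi''(\beta m_j+B)$ is uniformly bounded by \cref{as:bddrowsum} and \cref{as:coeffalt}.

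The main obstacle is that $s_i$ depends on $\si_i$ through the $m_j$'s and so is not $\si_{-i}$-measurable---exactly the hypothesis needed to appeal to \cref{lem:auxtail}(b) directly. My plan is a two-stage reduction. First, I replace $s_i$ by its leave-one-out version $s_i^\star$ obtained by freezing $\si_i$ at $b_0=0$; the pointwise bound $|\Xi''(\beta m_j+B)-\Xi''(\beta m_j^i+B)|\le C\,\A_N(i,j)$ renders the aggregate replacement error bounded by $C\,\|\A_N\|_F^2/N=o(1)$ via \cref{as:mfield}. Second, $s_i^\star$ is $\si_{-i}$-measurable and uniformly bounded, and a direct second-moment computation---exploiting that $\si_i^2-\E[\si_i^2\mid\cdot]$ and $\si_i-t_i$ are conditionally centered given $\si_{-i}$, together with a further leave-two-out substitution to handle the cross-site covariances (whose aggregate replacement error is again absorbed by $\|\A_N\|_F^2/N=o(1)$)---delivers each residual at rate $O_{\P}(N^{-1/2})$. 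Assembling these estimates yields $U_N+V_N=W_N+o_{\P}(1)$, at which point the Slutsky passage outlined at the start concludes the proof.
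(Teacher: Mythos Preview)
Your proposal is correct and follows essentially the same route as the paper: show $U_N+V_N=W_N+o_{\P}(1)$ by combining the conditional second-moment identity for $U_N$ with a first-order Taylor expansion of $t_j^i-t_j$ (whose quadratic remainder is killed by \cref{as:mfield}) and a direct second-moment bound on the conditionally centered residual. Two small remarks: the paper Taylor-expands around $m_j^i$ rather than $m_j$, so the factor $\Xi''(\beta m_j^i+B)$ is already $\si_{-i}$-measurable and your separate leave-one-out step $s_i\to s_i^\star$ is unnecessary; and your appeal to \cref{as:coeffalt} is not among the corollary's hypotheses---the paper first reduces to bounded $c_i$'s via the truncation argument behind \eqref{eq:cbd6}--\eqref{eq:cbd7}.
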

	
	\begin{proof}
		By \cref{theo:conmainis} and \eqref{eq:bddzero}, it suffices to show that 
		\begin{align}\label{eq:uvradiate}
			U_N-\frac{1}{N}\sum_{i=1}^N c_i^2\Xi''(\beta m_i+B) \overset{\P}{\longrightarrow}0, \quad \mbox{and}\quad  V_N+\frac{\beta}{N}\sum_{i\neq j}c_ic_j\A_N(i,j)\Xi''(\beta m_i+B)\Xi''(\beta m_j+B)\overset{\P}{\longrightarrow} 0.
		\end{align}
		By \eqref{eq:cbd6} and \eqref{eq:cbd7}, we can assume without loss of generality $\mc$ satisfies \cref{as:coeffalt}. 
		
		Let us begin with the first display of \eqref{eq:uvradiate}. Note that $\E[\si_i^2|\si_j,j\neq i]=\Xi''(\beta m_i+B)+(\Xi'(\beta m_i+B))^2$. Therefore, by \cref{lem:auxtail}, part (a), we have 
		$$U_N-\frac{1}{N}\sum_{i=1}^N c_i^2 \Xi''(\beta m_i+B) = \frac{1}{N}\sum_{i=1}^N c_i^2 (\si_i^2-\E[\si_i^2|\si_j,j\neq i]) \overset{\P}{\longrightarrow} 0.$$
		We move on to the second display of \eqref{eq:uvradiate}. Direct calculation shows that 
		\begin{align*}
			V_N&=-\frac{\beta}{N}\sum_{i\neq j} c_i c_j \A_N(i,j)(\si_i^2-\si_i\Xi'(\beta m_i+B))\Xi''(\beta m_j^i+B) + \frac{1}{N}\lVert \A_N\rVert_F^2.
		\end{align*}
		As a result, we have:
		\begin{align}\label{eq:two1}
			&\;\;\;\;V_N+\frac{\beta}{N}\sum_{i\neq j}c_ic_j\A_N(i,j)\Xi''(\beta m_i+B)\Xi''(\beta m_j+B) \nonumber \\ &=-\underbrace{\frac{\beta}{N}\sum_{i\neq j} c_i c_j \A_N(i,j)(\si_i^2-\si_i\Xi'(\beta m_i+B)-\Xi''(\beta m_i+B))\Xi''(\beta m_j^i+B)}_{R_N}+o(1),
		\end{align}
		where the last display uses \cref{as:mfield}. Note that $\E R_N=0$. Further 
		\begin{align*}
			|\E R_N^2| &=\frac{\beta^2}{N^2}\bigg|\sum_{j_1\neq i_1, j_2\neq i_2} c_{i_1}c_{i_2}c_{j_1}c_{j_2}\A_N(i_1,j_1)\A_N(i_2,j_2)\E\bigg[(\si_{i_1}^2-\si_{i_1}\Xi'(\beta m_{i_1}+B)-\Xi''(\beta m_{i_1}+B)) \\ & \qquad (\si_{i_2}^2-\si_{i_2}\Xi'(\beta m_{i_2}+B)-\Xi''(\beta m_{i_2}+B))\Xi''(\beta m_{j_1}^{i_1}+B)\Xi''(\beta m_{j_2}^{i_2}+B)\bigg]\bigg| \\ &\lesssim N^{-1}+\frac{\beta^2}{N^2}\bigg|\sum_{j_1\neq i_1,j_2\neq i_2,i_1\neq i_2}\A_N(i_1,j_1)\A_N(i_2,j_2)\E\bigg[(\si_{i_1}^2-\si_{i_1}\Xi'(\beta m_{i_1}+B)-\Xi''(\beta m_{i_1}+B))\\ & \qquad (\si_{i_2}^2-\si_{i_2}\Xi'(\beta m_{i_2}^{i_1}+B)-\Xi''(\beta m_{i_2}^{i_1}+B))\Xi''(\beta m_{j_1}^{i_1}+B)\Xi''(\beta m_{j_2}^{i_1,i_2}+B)\bigg]\bigg|.
		\end{align*}
		The inner expectation in the above display equals $0$. The last inequality uses \cref{as:bddrowsum}. Therefore $R_N\overset{\P}{\longrightarrow} 0$. This completes the proof.
	\end{proof}
	\begin{proof}[Proof of \cref{thm:jointCLT}]
		Recall the notation $\mca_f$ and $\mcb_f$ from \eqref{eq:amat} and \eqref{eq:bmat} respectively. We begin with the following observation from \cite[Corollary 1.5]{bhattacharya2023gibbs} --- 
		\begin{align}\label{eq:check}
			\frac{1}{N}\sum_{i=1}^N \left(m_i-\fs\left(\frac{i}{N}\right)\right)^2\overset{\P}{\longrightarrow}0.
		\end{align}
		Define the following functions in $(\beta,B)$ --- 
		$$g_i(\beta,B):=\begin{pmatrix} m_i(\si_i-\Xi'(\beta m_i+B)) \\  \si_i-\Xi'(\beta m_i+B)\end{pmatrix}.$$
	\end{proof}
	By \eqref{eq:check}, we note that 
	$$-\frac{1}{N}\sum_{i=1}^N \nabla_{(\beta,B)} g_i(\beta,B)\overset{\P}{\longrightarrow} \begin{pmatrix} \int_0^1 \fs^2(x)\Xi''(\beta \fs(x)+B)\,dx & \int_0^1 \fs(x)\Xi''(\beta \fs(x)+B)\,dx \\ \int_0^1 \fs(x)\Xi''(\beta \fs(x)+B)\,dx & \int_0^1 \Xi''(\beta \fs(x)+B)\,dx \end{pmatrix}=\mca_{\fs}.$$
	By \cite[Theorem 1.11]{ghosal2018joint}, $\sqrt{N}(\hbem-\beta,\hbm-B)=O_{\P}(1)$ and $\mca_{\fs}$ is invertible. 
	
	Next we will derive the weak limit of $N^{-1/2}\sum_{i=1}^N g_i(\beta,B)$. We proceed using the Cram\'{e}r-Wold device. First note that by \eqref{eq:check} and \cref{lem:auxtail}, part (b), we have that for each $a,b\in\R$, the following holds: 
	\begin{align}\label{eq:probequivalence}
		\begin{pmatrix} a \\ b \end{pmatrix}^{\top} N^{-1/2}\sum_{i=1}^N g_i(\beta,B)=\frac{1}{\sqrt{N}}\sum_{i=1}^N \left(a \fs\left(\frac{i}{N}\right)+b\right)(\si_i-\Xi'(\beta m_i+B))+o_{\P}(1).
	\end{align}
	Using \eqref{eq:probequivalence}, we need to derive a CLT for $N^{-1/2}\sum_{i=1}^N (a \fs(i/N)+b)(\si_i-\Xi'(\beta m_i+B))$. We will use \cref{theo:conmain} for this. To achieve this, we need to identify the limit of  $U_N+V_N$ where $(U_N,V_N)$ are defined as in \eqref{eq:randvar} with $g(x)=x$ and $c_i=a \fs(i/N)+b$. By \eqref{eq:uvradiate}, we have 
	$$\begin{pmatrix} U_N-\frac{1}{N}\sum_{i=1}^N c_i^2 \Xi''(\beta m_i+B) \\ V_N+\frac{\beta}{N}\sum_{i\neq j} c_i c_j \A_N(i,j)\Xi''(\beta m_i+B)\Xi''(\beta m_j+B)\end{pmatrix} \overset{\P}{\longrightarrow} \begin{pmatrix} 0 \\ 0 \end{pmatrix}.$$
	Also by \eqref{eq:check}, we have that 
	\begin{align*}
		&\;\;\;\frac{1}{N}\sum_{i=1}^N c_i^2 \Xi''(\beta m_i+B)-\frac{\beta}{N}\sum_{i\neq j} c_i c_j\A_N(i,j)\Xi''(\beta m_i+B)\Xi''(\beta m_j+B) \\ &\overset{\P}{\longrightarrow} a^2 \mcb_{\fs}(1,1)+b^2 \mcb_{\fs}(2,2) + 2ab \mcb_{\fs}(1,2).
	\end{align*}
	We refer the reader to \eqref{eq:bmat} for relevant definitions. Combining te above displays with \cref{theo:conmain}, we get: 
	$$N^{-1/2}\sum_{i=1}^N g_i(\beta,B) \overset{w}{\longrightarrow} N(\mathbf{0}_2,\mcb_{\fs}).$$
	The conclusion now follows from \cref{prop:CLTMPLE}. To apply the result, we note that (A1) and (A3) follow from \cite[Theorem 1.11]{ghosal2018joint}, and (A2) has been proved above.
	\begin{proof}[Proof of \cref{theo:regclt}]
		By \cite[Lemma 2.1, part (b)]{deb2020fluctuations}, we have 
		\begin{align}\label{eq:curtaincall}
			\frac{1}{N}\sum_{i=1}^N (m_i-t_{\vrh})^2\overset{\P}{\longrightarrow} 1.
		\end{align}
		Next we look at the variance term $v_N$ in \eqref{eq:varterm}. Also assume that $\Xi''(\beta t_{\vrh}+B)(\upsilon_1-\upsilon_2\Xi''(\beta t_{\vrh}+B))>0$.  
		By leveraging \cref{cor:isingmean}, it suffices to show that $v_N^2\to \Xi''(\beta t_{\vrh}+B)(\upsilon_1-\upsilon_2 \Xi''(\beta t_{\vrh}+B))$. To wit, note that by \eqref{eq:curtaincall}, we have
		$$\frac{1}{N}\sum_{i=1}^N c_i^2\Xi''(\beta m_i+B)\overset{\P}{\longrightarrow} \upsilon_1 \Xi''(\beta t_{\vrh}+B), \quad \frac{1}{N}\sum_{i\neq j} c_i c_j \A_N(i,j)\Xi''(\beta m_i+B)\Xi''(\beta m_j+B) \overset{\P}{\longrightarrow} \upsilon_2(\Xi''(\beta t_{\vrh}+B))^2.$$
		As $\Xi''(\beta t_{\vrh}+B)(\upsilon_1-\upsilon_2\Xi''(\beta t_{\vrh}+B))>0$, the above display implies that $v_N^2\to \Xi''(\beta t_{\vrh}+B)(\upsilon_1-\upsilon_2 \Xi''(\beta t_{\vrh}+B))$. When $\Xi''(\beta t_{\vrh}+B)(\upsilon_1-\upsilon_2 \Xi''(\beta t_{\vrh}+B))=0$, the conclusion follows by repeating the same second moment calculation as in \cref{lem:auxtail}, part (b). We omit the details for brevity.
	\end{proof}
	
	\begin{proof}[Proof of \cref{thm:margbeta}]
		First let us show that $\hbem$ exists and  $\hbem\overset{\P}{\longrightarrow} \beta$. Consider the map $\beta \mapsto h_N(\beta)$ where 
		$$h_N(\tbt):=\frac{1}{N}\sum_{i=1}^N m_i(\si_i-\Xi'(\tbt m_i+B)),$$
		for $\tbt\ge 0$. Then $h_N(\cdot)$ is strictly decreasing. By \eqref{eq:curtaincall}, we have 
		$$h_N(\tbt)\overset{\P}{\longrightarrow} h(\tbt), \qquad \mbox{where}\qquad h(\tbt):=t_{\vrh}(t_{\vrh}-\Xi'(\tbt t_{\vrh}+B)).$$
		Now $h(\tb)$ is strictly decreasing and has a unique root at $\tbt=\beta$. Fix an arbitrary $\epsilon>0$. Then $h(\beta-\epsilon)>0>h(\beta+\epsilon)$. As $h_N(\tbt)\overset{\P}{\longrightarrow}h(\tbt)$, we have $h_N(\beta-\epsilon)>0>h_N(\beta+\epsilon)$ with probability converging to $1$. As $h_N(\cdot)$ is strictly decreasing, there exists unique $\hbem$ such that $h_N(\hbem)=0$ and $\hbem\in (\beta-\epsilon,\beta+\epsilon)$ with probability converging to $1$. As $\epsilon>0$ is arbitrary, $\hbem\overset{\P}{\longrightarrow}\beta$.  
		
		Now we will establish the asymptotic normality of $\hbem$ based on \cref{prop:CLTMPLE}. First note that by using \eqref{eq:curtaincall}, we have 
		$$-h_N'(\beta)\overset{\P}{\longrightarrow} t_{\vrh}^2 \Xi''(\beta t_{\vrh}+B).$$
		Next by combining \cref{lem:auxtail}, part (b), and \cref{theo:regclt}, we have: 
		$$\frac{1}{\sqrt{N}}\sum_{i=1}^N m_i(\si_i-\Xi'(\beta m_i+B))=\frac{t_{\vrh}}{\sqrt{N}}\sum_{i=1}^N (\si_i-\Xi'(\beta t_{\vrh}+B))+o_{\P}(1)\overset{w}{\longrightarrow} N(0,t_{\vrh}^2\Xi''(\beta t_{\vrh}+B)(1-\beta \Xi''(\beta t_{\vrh}+B))).$$
		The conclusion now follows by invoking \cref{prop:CLTMPLE}, \eqref{eq:limtheo}.
	\end{proof}
	
	\begin{proof}[Proof of \cref{thm:margB}]
		The existence of $\hbm$ and its consistency follow the same way as in the proof of \cref{thm:margbeta}. We omit the details for brevity. Define the map $\tB\mapsto H_N(\tB)$ where 
		$$H_N(\tB):=\frac{1}{N}\sum_{i=1}^N (\si_i-\Xi'(\beta m_i+\tB)).$$
		Once again by using \eqref{eq:curtaincall}, we have:
		$$-H_N'(B)\overset{\P}{\longrightarrow} \Xi''(\beta t_{\vrh}+B).$$
		From \cref{theo:regclt}, we have: 
		$$\frac{1}{\sqrt{N}}\sum_{i=1}^N (\si_i-\Xi'(\beta m_i+B))\overset{w}{\longrightarrow} N(0,\Xi''(\beta t_{\vrh}+B)(1-\beta \Xi''(\beta t_{\vrh}+B))).$$
		The conclusion follows by invoking \cref{prop:CLTMPLE}, \eqref{eq:limtheo}.
	\end{proof}
	\begin{proof}[Proof of \cref{prop:bipartg}]
		Recall the notion of cut norm from \cref{def:defirst}. With $\A_N$ chosen as the scaled adjacency matrix of a complete bipartite graph, as in \cref{prop:bipartg}, we have $d_{\square}(W_{N\A_N},W)\to 0$ where 
		\begin{align}\label{eq:baje_W}
			W(x,y)=&2\text{ if }(x,y)\in (0,.5)\times (.5\times 1) \text{ or }(x,y)\in (.5,1)\times (0,.5), \nonumber\\
			=&0\text{ otherwise }.
		\end{align}
		With $W(\cdot,\cdot)$ as in \eqref{eq:baje_W}, elementary calculus shows that with $\beta<0$ and large enough in absolute value, \eqref{eq:optimprob} admits exactly two optimizers which are of the form 
		$$\fs(x)=\begin{cases} t_1 & \mbox{if}\,\,\, 0<x\le 0.5 \\ t_2 & \mbox{if}\,\,\, 0.5<x\le 1\end{cases}, \qquad \fs(x)=\begin{cases} t_2 & \mbox{if}\,\,\, 0<x\le 0.5 \\ t_1 & \mbox{if}\,\,\, 0.5<x\le 1\end{cases},$$
		where $t_1,t_2$ are of different signs and magnitudes. Recall the definition of $U_N$ (with $g(x)=x$) from \eqref{eq:randvar} and that of $\Xi$ from \eqref{eq:candef1}. From \cite[Corollary 1.5]{bhattacharya2023gibbs}, we have
		\begin{align}\label{eq:mixcon}
			\min\left\{\frac{1}{N}\left(\sum_{i=1}^{N/2}|m_i-t_1|+\sum_{i=N/2+1}^N |m_i-t_2|\right),\frac{1}{N}\left(\sum_{i=1}^{N/2}|m_i-t_2|+\sum_{i=N/2+1}^N |m_i-t_1|\right)\right\}\overset{\P}{\longrightarrow}0.
		\end{align}
		By using \eqref{eq:uvradiate}, \eqref{eq:mixcon}, and the symmetry across the two communities would imply that 
		$$U_N=\frac{1}{N}\sum_{i=1}^{N/2} \Xi''(\beta m_i+B) + o_{\P}(1) \overset{w}{\longrightarrow} \frac{1}{2}\delta_{\frac{1}{2}\Xi''(\beta t_1+B)} + \frac{1}{2}\delta_{\frac{1}{2}\Xi''(\beta t_2+B)},$$
		which is a two component mixture. By using \eqref{eq:uvradiate} again, we also have $V_N\overset{\P}{\longrightarrow} 0$. The conclusion now follows by invoking \cref{theo:conmain}.
	\end{proof}
	
	\begin{proof}[Proof of \cref{prop:bipartpseudo}]
		Define 
		$$\mcH_N(\tlh,\tB):=\begin{pmatrix} \frac{1}{N}\sum_{i=1}^{N/2} (\si_i-\Xi'(\beta m_i+\tlh+\tB)) \\ \frac{1}{N}\sum_{i=1}^{N/2} (\si_i-\Xi'(\beta m_i+\tlh+\tB)+\frac{1}{N}\sum_{i=N/2+1}^N (\si_i-\Xi'(\beta m_i+B))\end{pmatrix}.$$
		Observe that as $(\tlh,\tB)$ lies in a compact set $K$, the Jacobian of $\mH_N$ given by 
		$$-\nabla \mH_N(\tlh,\tB)=\begin{pmatrix} \frac{1}{N}\sum_{i=1}^{N/2}\Xi''(\beta m_i+\tlh+\tB) & \frac{1}{N}\sum_{i=1}^{N/2}\Xi''(\beta m_i+\tlh+\tB) \\ \frac{1}{N}\sum_{i=1}^{N/2}\Xi''(\beta m_i+\tlh+\tB) & \frac{1}{N}\sum_{i=1}^{N/2}\Xi''(\beta m_i+\tlh+\tB)+\frac{1}{N}\sum_{i=N/2+1}^N \Xi''(\beta m_i+\tB)\end{pmatrix},$$
		has eigenvalues that are uniformly upper and lower bounded on $K$. 
		
		Moreover, $\mH_N(h,B)\overset{P}{\longrightarrow} 0$ by \cref{lem:auxtail}, part (a). Therefore by \cref{prop:ConsisMPLE}, $(\hh,\hbm)\overset{\P}{\longrightarrow} (0,B)$. 
		
		\noindent We will now use \cref{prop:CLTMPLE} to derive the asymptotic distribution of $(\hh,\hbm)$. Fix arbitrary $a,b\in \R$. Define $c_i=a\mathbf{1}(1\le i\le N/2)+b$. Recall the definition of $U_N$ and $V_N$ from \eqref{eq:randvar} with $\mc$ as defined above. Note that they can be simplified as 
		\begin{align*}
			U_N=\frac{1}{N}\sum_{i=1}^N c_i^2(\si_i^2-t_i^2)&=\frac{(a+b)^2}{N}\sum_{i=1}^{N/2} \Xi''(\beta m_i+B)+\frac{b^2}{N}\sum_{i=N/2+1}^N \Xi''(\beta m_i+B)+o_{\P}(1)\\ &=-\begin{pmatrix} a \\ b\end{pmatrix}^{\top} \nabla \mcH_N(0,B) \begin{pmatrix} a \\ b\end{pmatrix} + o_{\P}(1).
		\end{align*}
		by \cref{lem:auxtail}, part (a). Further by \eqref{eq:uvradiate}, we also get: 
		\begin{align*}
			V_N&=-\frac{\beta}{N}\sum_{1\le i\neq j\le N} (a\mathbf{1}(1\le i\le N/2)+b)(a\mathbf{1}(1\le i\le N/2)+b)\A_N(i,j)\Xi''(\beta m_i+B)\Xi''(\beta m_j+B)\\ &=-\left(\frac{4ab\beta}{N^2}+\frac{2b^2\beta}{N^2}\right)\sum_{1\le i\le N/2, \, N/2+1\le j\le N} \Xi''(\beta m_i+B)\Xi''(\beta m_j+B).
		\end{align*}
		Recall the definitions of $\tto$ and $\ttt$ from \cref{prop:bipartpseudo}. Define $$\kappa_{1,2}:=\begin{pmatrix}\frac{a^2}{2}\tto+\frac{b^2}{2}(\ttt+\tto)+ab\tto \\ -b\beta (a+b)\tto\ttt \end{pmatrix}.$$
		Define $\kappa_{2,1}$ as above by reversing the roles of $\tto$ and $\ttt$. 
		Then by using \eqref{eq:mixcon}, we get:
		\begin{align*}
			\begin{pmatrix} U_N \\ V_N \end{pmatrix} = \begin{pmatrix} -\begin{pmatrix} a \\ b \end{pmatrix}^{\top}\nabla \mcH_N(0,B) \begin{pmatrix} a\\ b\end{pmatrix} \\ V_N\end{pmatrix}\overset{w}{\longrightarrow}  \xi \delta_{\kappa_{1,2}}+(1-\xi)\delta_{\kappa_{2,1}},
		\end{align*}
		where $\xi$ is Rademacher. By using \cref{theo:conmain} and the above display yields 
		\begin{align*}
			\begin{pmatrix} \frac{1}{\sqrt{N}}\sum_{1\le i\le N/2} (\si_i-\Xi'(\beta m_i+B)) \\ \frac{1}{\sqrt{N}}\sum_{i=1}^N (\si_i-\Xi'(\beta m_i+B))\end{pmatrix}\overset{w}{\longrightarrow} & \xi N\left(\begin{pmatrix} 0 \\ 0\end{pmatrix},\begin{pmatrix} \frac{1}{2}\tto & \frac{1}{2}(\tto-\beta \tto\ttt) \\ \frac{1}{2}(\tto-\beta \tto\ttt) & \frac{1}{2}(\tto+\ttt)-\beta \tto\ttt\end{pmatrix}\right) \\ &+(1-\xi)N\left(\begin{pmatrix} 0 \\ 0\end{pmatrix},\begin{pmatrix} \frac{1}{2}\ttt & \frac{1}{2}(\ttt-\beta \tto\ttt) \\ \frac{1}{2}(\ttt-\beta \tto\ttt) & \frac{1}{2}(\tto+\ttt)-\beta \tto\ttt\end{pmatrix}\right).
		\end{align*}
		The conclusion follows by combining the two displays above with \cref{prop:CLTMPLE}.
	\end{proof}
	
	\subsection{Proofs from \cref{sec:highising}}
	\begin{proof}[Proof of \cref{thm:highordclt}]
		By invoking \cref{theo:CLTmain}, it suffices to show that \cref{as:cmean} holds. Fix $\{j_1,\ldots ,j_k\}$ and let $\tilde{\cS}\subseteq [N]$ be such that $\tilde{\cS}\cap\{j_1,\ldots ,j_k\}=\phi$. Write  $$\E[\si_i|\si_{\ell},\ell\neq i]=\Xi'(\beta m_i+B),$$
		where $m_i$s are defined in \eqref{eq:m_ihigh}. For convenience of the reader, we recall it here.
		\begin{align*}
			m_i=\frac{1}{N^{v-1}}\sum_{(i_2,\ldots ,i_v)\in \cS(N,v,i)}\mathrm{Sym}[\A_N](i,i_2,\ldots ,i_v)\left(\prod_{a=2}^v \si_{i_a}\right), \quad \mbox{for} \,\, i\in [N].
		\end{align*}
		Therefore
		$$\sum_{D\subseteq \{j_1,\ldots ,j_k\}} (-1)^{|D|} m_i^{\tilde{\cS}\cup D}=\frac{1}{N^{v-1}}\sum_{D\subseteq \{j_1,\ldots ,j_k\}} (-1)^{|D|}\sum_{(i_2,\ldots ,i_v)\in \cS(N,v,i)}\mathrm{Sym}[\A_N](i,i_2,\ldots ,i_v)\left(\prod_{a=2}^v \si_{i_a}\right)^{\tilde{\cS}\cup D}.$$
		Therefore $m_i$s are polynomials of degree $k$. So, for each summand, if there exists some $j_{\ell}$ such that $j_{\ell}\notin (i_2,\ldots ,i_v)$, then the corresponding summand equals $0$. This immediately implies that the left hand side of the above display equals $0$ if $k\ge v$. And for $k<v$, we have 
		\begin{align}\label{eq:mismooth}
			\bigg|\sum_{D\subseteq \{j_1,\ldots ,j_k\}} (-1)^{|D|} m_i^{\tilde{\cS}\cup D}\bigg|\lesssim \sum_{(i_2,\ldots ,i_{v-k})\in S(N,v,\{i,j_1,\ldots ,j_k\})}\mathrm{Sym}[\A_N](i,j_1,\ldots ,j_k,i_2,\ldots ,i_{v-k}), 
		\end{align}
		where $S(N,v,\{i,j_1,\ldots ,j_k\})$ denotes the set of all distinct tuples of $[N]^{v-k-1}$ such that none of the elements equal to $\{i,j_1,\ldots ,j_k\}$. \cref{as:cmean} now follows from combining \eqref{eq:mismooth} with \cref{lem:smoothcont}.
	\end{proof}
	
	\begin{proof}[Proof of \cref{thm:jointCLThigh}]
		The proof of this Theorem is exactly the same as that of \cref{thm:jointCLT} except for the invertibility of $\mca_{\fs}$. Therefore, for brevity, we will only prove that $\mca_{\fs}$ is invertible under the assumptions of the Theorem. As $B>0$, by replacing a function $f:[0,1]\to [-1,1]$ with $|f|$, it follows that the unique $\fs$ that optimizes \eqref{eq:highoptim} must be non-negative almost everywhere. Also $f\equiv 0$ is not an optimizer of \eqref{eq:highoptim} as $B>0$. Recall the definition of $\mca_{\fs}$ from \eqref{eq:amat}. Then by the Cauchy-Schwartz inequality $\mca_{\fs}$ is singular if and only if $\fs$ is constant everywhere. However under the irregularity assumption \cref{as:irretensor} $\fs$ is not a constant function by \cite[Theorem 1.2(ii)]{bhattacharya2023gibbs}. Therefore $\mca_{\fs}$ must be invertible. This completes the proof.
	\end{proof}
	
	\subsection{Proofs from \cref{sec:ergm}}
	\begin{proof}[Proof of \cref{thm:ergmclt}]
		Once again, by \cref{theo:CLTmain}, the conclusion will follow if we can verify \cref{as:cmean}. Without loss of generality, we will assume that $\tilde{\cS}=\phi$. Recall from \eqref{eq:conergm} that
		\begin{equation}\label{eq:whichterm}
			\E[Y_{ij}|Y_{-ij}]=L(\eta_{ij}), \quad \eta_{ij}:=\sum_{m=1}^k \frac{\beta_m}{N^{v_m-2}}\sum_{(a,b)\in E(H_m)}\sum_{\substack{(k_1,\ldots ,k_{v_m}) \textrm{ are distinct, }\\ \{k_a,k_b\}=\{i,j\}}}\prod_{(p,q)\in E(H_m)\setminus (a,b)} Y_{k_p k_q}.
		\end{equation}
		Fix the edges $\mathfrak{E}_1=(i,j)$ and let $\mathfrak{E}_{\ell}=(i_{\ell},j_{\ell})$ for $2\le \ell\le r$. Let $\mathrm{CV}(\mathfrak{E}_1,\ldots ,\mathfrak{E}_r)$ denote the number of distinct vertices within the edge set $\mathfrak{E}_1,\ldots ,\mathfrak{E}_r$. Define the sequence of tensors $\Q_{N,r}$ defined by 
		$$\Q_{N,r}(\mathfrak{E}_1,\ldots ,\mathfrak{E}_r)=\frac{1}{N^{\mathrm{CV}(\mathfrak{E}_1,\ldots ,\mathfrak{E}_r)-2}}.$$
		It is easy to check that the max row sums of the above tensors are bounded for all $r$. 
		Therefore the left hand side of the above display can be bounded by 
		\begin{align*}
			&\;\;\;\;\sum_{D\subseteq \{\mathfrak{E}_2,\ldots ,\mathfrak{E}_r\}} (-1)^{|D|}\eta_{\mathfrak{E}_1}^{\mathfrak{E}_2,\ldots ,\mathfrak{E}_r}\\ &=\sum_{m=1}^k \frac{\beta_m}{N^{v_m-2}}\sum_{D\subseteq \{\mathfrak{E}_2,\ldots ,\mathfrak{E}_r\}}(-1)^{|D|}\left(\sum_{(a,b)\in E(H_m)}\sum_{\substack{(k_1,\ldots ,k_{v_m}) \textrm{ are distinct, }\\ \{k_a,k_b\}=\{i,j\}}}\prod_{(p,q)\in E(H_m)\setminus (a,b)} Y_{k_p k_q}\right)^{\mathfrak{E}_2,\ldots ,\mathfrak{E}_r}. 
		\end{align*}
		Therefore all the vertices covered by $\mathfrak{E}_2,\ldots ,\mathfrak{E}_r$ must be covered by one of the $k_{\ell}$s. As $\{k_a,k_b\}=\{i,j\}$, the above claim restricts $\mathrm{CV}(\mathfrak{e}_1,\ldots ,\mathfrak{E}_r)$ many of the $k_{\ell}$s. As a result, we have: 
		\begin{align*}
			\bigg|\sum_{D\subseteq \{\mathfrak{E}_2,\ldots ,\mathfrak{E}_r\}} (-1)^{|D|}\eta_{\mathfrak{E}_1}^{\mathfrak{E}_2,\ldots ,\mathfrak{E}_r}\bigg|&\lesssim \sum_{m=1}^k \frac{\beta_m}{N^{v_m-2}}N^{v_m-\mathrm{CV}(\mathfrak{E}_1,\ldots ,\mathfrak{E}_r)}\lesssim \frac{1}{N^{\mathrm{CV}(\mathfrak{E}_1,\ldots ,\mathfrak{E}_r)-2}}=\Q_{N,r}(\mathfrak{E}_1,\ldots ,\mathfrak{E}_r).
		\end{align*}
		This completes the proof.
	\end{proof}
	
	\begin{proof}[Proof of \cref{cor:ergmclt}]
		Using \cref{thm:ergmclt}, we only need to find the weak limits of $\Une$ and $\Vne$ under the sub-critical regime. We will leverage the fact that in the sub-critical regime, draws from the model \eqref{eq:ergm} are equivalent (for weak limits) to Erd\H{o}s-R\'{e}nyi random graphs with edge probability $\ps$. In particular, by using \cite[Theorem 1.6]{Reinert2019}, we have: 
		\begin{align}\label{eq:etaweak}
			\frac{1}{{N\choose 2}}\sum_{1\le i<j\le N} \delta_{\eta_{ij}}\overset{w}{\longrightarrow}2\sum_{m=1}^k \beta_m e_m (\ps)^{m-1}.
		\end{align}
		
		\emph{Limit of $\Une$}. By \cref{lem:auxtail}, part (a), we have 
		$$\Une=\frac{1}{{N \choose 2}}\sum_{1\le i<j\le N} (Y_{ij}-L^2(\eta_{ij}))=\frac{1}{{N\choose 2}}\sum_{1\le i<j\le N} (L(\eta_{ij})-L^2(\eta_{ij}))+o_{\PPe}(1).$$
		By \eqref{eq:etaweak}, we then get: 
		$$\Une\overset{\PPe}{\longrightarrow} \ps(1-\ps).$$
		
		\emph{Limit of $\Vne$}. Through direct computations, we have: 
		\begin{align*}
			\Vne&=\frac{1}{{N\choose 2}}\sum_{\substack{(i_1,j_1)\neq (i_2,j_2) \\ \in \mathcal{I}}}(Y_{i_1 j_1}-L(\eta_{i_1 j_1}))(L(\eta_{i_2 j_2}^{(i_1,j_1)})-L(\eta_{i_2 j_2})) \\ &=\frac{1}{{N\choose 2}}\sum_{\substack{(i_1,j_1)\neq (i_2,j_2) \\ \in \mathcal{I}}}(Y_{i_1 j_1}-L(\eta_{i_1 j_1}))(\eta_{i_2 j_2}^{(i_1,j_1)}-\eta_{i_2 j_2})L'(\eta_{i_2 j_2}))+o_{\PPe}(1).
		\end{align*}
		Next observe that 
		\begin{align*}
			&\;\;\;\;\eta_{i_2 j_2}^{(i_1,j_1)}-\eta_{i_2 j_2} \\ &=-Y_{i_1 j_1}\sum_{m=1}^k \frac{\beta_m}{N^{v_m-2}}\sum_{\substack{(a,b),(c,d)\\ \in E(H_m)}}\sum_{\substack{(k_1,\ldots ,k_{v_m}) \textrm{ are distinct, }\\ \{k_a,k_b\}=\{i_2,j_2\}, \{k_c,k_d\}=\{i_1,j_1\}}}\prod_{(p,q)\in E(H_m)\setminus ((a,b)\cup (c,d))} Y_{k_p k_q}.
		\end{align*}
		Combining the equations above with \cref{lem:auxtail}, part (a), we then get: 
		\begin{align*}
			&\;\;\;\;\Vne\\ &=-\frac{1}{{N\choose 2}}\sum_{\substack{(i_1,j_1),(i_2,j_2)\\ \in\mathcal{I}}} (L(\eta_{i_1 j_1})-L^2(\eta_{i_1 j_1}))(L(\eta_{i_2 j_2})-L^2(\eta_{i_2 j_2}))\frac{\beta_m}{N^{v_m-2}}\\ &\qquad\qquad\qquad\sum_{\substack{(a,b),(c,d)\\ \in E(H_m)}}\sum_{\substack{(k_1,\ldots ,k_{v_m}) \textrm{ are distinct, }\\ \{k_a,k_b\}=\{i_2,j_2\}, \{k_c,k_d\}=\{i_1,j_1\}}}\prod_{(p,q)\in E(H_m)\setminus ((a,b)\cup (c,d))} Y_{k_p k_q}+o_{\PPe}(1) \\ &=2(\ps(1-\ps))^2\sum_{m=1}^k \beta_m (\ps)^{e_m-2}e_m(e_m-1)+o_{\PPe}(1),
		\end{align*}
		where the last line follows from \eqref{eq:etaweak}. As $\vphb'(\ps)=2\sum_{m=1}^k \beta_m (\ps)^{e_m-2}e_m(e_m-1)$. This completes the proof.
	\end{proof}
	
	\begin{proof}[Proof of \cref{thm:MPLergmCLT}]
		Recall from \eqref{eq:plf} that the pseudolikelihood function is given by 
		$$\mathrm{PL}(\beta_1):=\sum_{(i,j)\in\mathcal{I}} \left(Y_{ij}\eta_{ij}(\beta_1)-\log(1+\exp(\eta_{ij}(\beta_1))\right).$$
		Therefore 
		$$\mathrm{PL}'(\beta_1)=2\sum_{(i,j)\in\mathcal{I}}(Y_{ij}-L(\eta_{ij}), \qquad \mathrm{PL}''(\beta_1)=-4\sum_{(i,j)\in\mathcal{I}}L(\eta_{ij})(1-L(\eta_{ij})).$$
		As $K$ is a known compact set, $\hbpm$ exists and $\hbpm\overset{\PPe}{\longrightarrow}\beta_1$ by \cref{prop:ConsisMPLE}. As a result, 
		the conclusion in \eqref{eq:firstconc} follows from \cref{prop:CLTMPLE}. 
		
		For the conclusion in \eqref{eq:secondconc}, note that 
		$$\frac{1}{{N\choose 2}}\sum_{(i,j)\in\mathcal{I}} L(\eta_{ij})(1-L(\eta_{ij}))\overset{\PPe}{\longrightarrow}\ps(1-\ps).$$
		The conclusion now follows by combining the above display with \eqref{eq:firstconc} and \cref{cor:ergmclt}.
	\end{proof}
	\section{Proof of auxiliary results}\label{sec:pfauxi}
	This section is devoted to proving some auxiliary results from earlier in the paper whose proofs were deferred.
	
	\begin{proof}[Proof of \cref{prop:CLTMPLE}]
		By (A3), there exists a sequence $r_N\to 0$ slow enough such that $\mathbb{P}_{\theta_0}(\lVert \htmp-\theta_0\rVert\ge r_N)\to 0$.  Define $B(\theta_0;r_N):=\{\theta:\lVert \theta-\theta_0\rVert\le \epsilon\}$. Then for all $N$ large enough, $B(\theta_0;r_N)$ is contained in the interior of the parameter space $\Theta$. Therefore without loss of generality, we can always operate under the event $\htmp\in B(\theta_0;r_N)$. Note that
		$$\sum_{i=1}^N \nabla f_i(\htmp)=0.$$
		By a first order Taylor expansion of the left hand side, we observe that there exists $\tilde{\theta}\in B(\theta_0;r_N)$ (as both $\theta_0,\htmp\in B(\theta_0;r_N)$) such that 
		$$\left(\frac{1}{N}\sum_{i=1}^N \nabla^2 f_i(\tilde{\theta})\right)\sqrt{N}(\htmp-\theta_0)=-\frac{1}{\sqrt{N}}\sum_{i=1}^N \nabla f_i(\theta_0).$$
		By (A1), $$\left(\frac{1}{N}\sum_{i=1}^N \nabla^2 f_i(\tilde{\theta})\right)^{-1}\left(\frac{1}{N}\sum_{i=1}^N \nabla^2 f_i(\theta_0)\right)\overset{\mathbb{P}_{\theta_0}}{\longrightarrow} \mathbf{I}_p.$$
		Therefore $\sqrt{N}(\htmp-\theta_0)=O_p(1)$. This implies 
		$$\left(\frac{1}{N}\sum_{i=1}^N \nabla^2 f_i(\theta_0)\right)\sqrt{N}(\htmp-\theta_0)=-\frac{1}{\sqrt{N}}\sum_{i=1}^N \nabla f_i(\theta_0)+o_{\mathbb{P}_{\theta_0}}(1).$$
		The conclusion now follows by using (A2).
	\end{proof}
	
	\begin{proof}[Proof of \cref{prop:ConsisMPLE}]
		As $\sum_{i=1}^N \nabla f_i(\htmp)=0$, by (B1), we have:
		\begin{align*}
			&\;\;\;\;\left\langle \frac{1}{N}\sum_{i=1}^N \nabla f_i(\htmp)-\frac{1}{N}\sum_{i=1}^N \nabla f_i(\theta_0),\htmp-\theta_0\right\rangle \le -\alpha \lVert \htmp-\theta_0\rVert^2 \\ &\implies \left\lVert \frac{1}{N}\sum_{i=1}^N \nabla f_i(\theta_0)\right\rVert \lVert \htmp-\theta_0\rVert \ge \alpha \lVert \htmp-\theta_0\rVert^2.  
		\end{align*}
		The last inequality follows from Cauchy-Schwartz. The conclusion now follows from (B2).
	\end{proof}
	\begin{proof}[Proof of~\cref{lem:auxtail}]\label{Sec:concproof}
		
		\emph{Part (a)}. Let $\ms$ be drawn according to~\eqref{eq:pivotstat} and suppose $\tms$ is drawn by moving one step in the Glauber dynamics, i.e., let $I$ be a random variable which is discrete uniform on $\{1,2,\ldots,N\}$, and replace the $I$-th coordinate of $\ms$ by an element drawn from the conditional distribution of $\sigma_I$ given the rest of the $\sigma_j$'s. It is easy to see that $(\ms,\tms)$ forms an exchangeable pair of random variables. Next, define an anti-symmetric function $F(\mathbf{x},\mathbf{y}):=\sum_{i=1}^N d_i(g(x_i)-g(y_i))$, which yields that
		$$\E\left(F(\ms,\tms)|\ms\right)=\frac{1}{N}\sum_{i=1}^N d_i(g(\sigma_i)-t_i)=:f(\ms).$$
		Observe that
		$$f(\ms)-f(\tms)=\frac{1}{N}d_I(g(\sigma_I)-g(\tilde{\sigma}_I))-\frac{1}{N}\sum_{i\neq I} d_i(t_i-\tilde{t}_i),$$
		where $\tilde{t}_i$ is defined as in~\eqref{eq:consig2} with $\ms$ replaced by $\tms$. Also note that, by~\cref{as:cmean}, $|t_i-\tilde{t}_i|\le 2\Q_{N,2}(i,I)$ for all $i\neq I$. By using these observations, it is easy to see that
		\begin{align*}
			&\E\left[|(f(\ms)-f(\tms))F(\ms,\tms)|\big|\ms\right]\\ &=\E\left[\frac{1}{N}d_I^2(g(\si_I)-g(\tilde{\si}_I))^2+\frac{1}{N}\sum_{i\neq I} |d_i||d_I||t_i-\tilde{t}_i||g(\si_I)-g(\tilde{\si}_I)|\big|\ms\right]\\ &\lesssim \frac{1}{N^2}\sum_{i=1}^N d_i^2+\frac{1}{N^2}\sum_{i\neq j}|d_i||d_j|\Q_{N,2}(i,j)\lesssim \frac{1}{N^2}\sum_{i=1}^N d_i^2.
		\end{align*}
		By invoking~\cite[Theorem 3.3]{Cha2005}, we get the desired conclusion.
		
		\vspace{0.1in}
		
		\emph{Part (b)}. Recall the definition of $t_i^j$, $i\neq j$ from~\eqref{eq:consig}. Observe that
		$$\E\left(\sum_{i=1}^N d_i(g(\si_i)-t_i)r_i\right)^2=\E\left(\sum_{i=1}^N d_i^2(g(\si_i)-t_i)^2r_i^2\right)+\sum_{i\neq j}d_id_j\E\left((g(\si_i)-t_i)(g(\si_j)-t_j)r_ir_j\right).$$
		The first term in the above display is clearly $\lesssim N$ under the assumptions of~\cref{lem:auxtail}. Focusing on the second term, note that for $i\neq j$, we have:
		$$(g(\si_i)-t_i)(g(\si_j)-t_j)r_ir_j=(g(\si_i)-t_i^j)(g(\si_j)-t_j)r_i^jr_j+O\left(\Q_{N,2}(i,j)\right),$$
		where the above follows from \cref{as:cmean}. As 
		$$\E (g(\si_i)-t_i^j)(g(\si_j)-t_j)r_i^j r_j = 0$$
		for $i\neq j$. Combining the above displays we get:
		$$\E\left(\sum_{i=1}^N d_i(g(\si_i)-t_i)r_i\right)^2\lesssim N+\sum_{i\neq j}|d_i||d_j|\Q_{N,2}(i,j)\le N + \lambda_1(\Q_{N,2})\sum_{i=1}^N d_i^2 \lesssim N,$$
		thereby completing the proof.
	\end{proof}
	
	\begin{proof}[Proof of~\cref{lem:fixsol}]
		Consider the following sequence of probability measures:
		$$\frac{d\vrh_{\theta}}{d\vrh}(x)=\exp(\theta x-\Xi(\theta))$$
		for $\theta\in\R$. By standard properties of exponential families, $\Xi''(\theta)=\mbox{Var}_{\vrh_{\theta}}(X)>0$ as $\vrh$ is assumed to be non-degenerate. Therefore $\Xi'(\cdot)$ is one-to-one and $(\Xi')^{-1}(\cdot)$ is well defined. Further, it is easy to check that $\phi(\cdot)$ is maximized in the interior of the support of $(\Xi')^{-1}(\cdot)$ (see e.g.,~\cite[Lemma 1(ii)]{mukherjee2021variational}). Consequently, any maximizer (local or global) of $\phi(\cdot)$ must satisfy
		\begin{equation*}
			\tph(x)=0,\quad \mbox{where}\,\,\, \tph(x):=x-\Xi'(rx+s).
		\end{equation*}
		As the case $r=0$ is trivial, we will consider $r>0$ throughout the rest of the proof.
		
		\emph{Proof of (a).} Suppose $(r,s)\in\Theta_{11}$. Note that $C(0)=0$. As the probability measure $\vrh$ is symmetric around 0, we also have $\Xi'(0)=0$. Therefore $\tph(0)=0$. Further, observe that
		\begin{align}\label{eq:fixsol1}
			\tph'(x)=1-r \Xi''(rx),\qquad \phi''(x)=-r^2 \Xi'''(rx).
		\end{align}
		We split the argument into two cases: (i) $r<(\Xi''(0))^{-1}$, and (ii) $r=(\Xi''(0))^{-1}$.
		
		\emph{Case (i).} Note that $\Xi'(\cdot)$ and hence $\tph(\cdot)$ are both odd functions. It then suffices to show that $\tph(x)>0$ for $x>0$. We proceed by contradiction. Assume that there exists $x_0>0$ such that $\tph(x_0)=0$. First, observe that $\tph'(0)=1-r\Xi''(0)>0$ which implies that $0$ is a local maxima of $\phi(\cdot)$. Further $\lim\limits_{x\to\infty} \tph(x)=\infty$, which implies that $\tph(\cdot)$ must have at least two positive roots (recall $0$ is already shown to be a root of $\tph(\cdot)$). By Rolle's Theorem, $\tph''(\cdot)$ must have at least one positive root. Consequently, by~\eqref{eq:fixsol1}, $\Xi'''(\cdot)$ must have a positive root. As $\Xi'''(\cdot)$ is an odd function, then Assumption~\eqref{eq:secasn} implies $\Xi'''(\cdot)$ must be $0$ in a neighborhood of $0$. This forces $\vrh$ to be Gaussian, which is a contradiction! This completes the proof for (i).
		
		\emph{Case (ii).} For $(r,s)\in\Theta_{11}$, note that $\phi(\cdot)$ implicitly depends on $r$. Therefore writing $\phi(r;x)\equiv \phi(x)$, we have from case (i) that $\phi(r,x)< \phi(r,0)$ for all $r<(\Xi''(0))^{-1}$ and all $x$. By continuity, this implies $\phi((\Xi''(0))^{-1};x)\leq \phi((\Xi''(0))^{-1};0)$ and consequently $0$ is a global maximizer of $\phi(\cdot)\equiv \phi(r;\cdot)$ for $r=(\Xi''(0))^{-1}$. As a result $\phi(\cdot)$ is negative at some point close to $0$ which again implies that either $0$ is the unique maximizer of $\phi(\cdot)$ or $\tph(x)=0$ has at least two positive solutions. The rest of the argument is same as in case (i).
		
		\emph{Proof of (b).} By symmetry, it is enough to prove part (b) for $s>0$. First note that $\Xi'(s)>0$ which implies $\tph(0)<0$. As $\lim\limits_{x\to\infty} \tph(x)=\infty$, either $\tph(\cdot)$ has a unique positive root or at least $3$ positive roots. If the latter holds, then $\Xi'''(\cdot)$ must have a positive root, which gives a contradiction by the same argument as used in the proof of part (a)(i). This implies $\phi(\cdot)$ has a unique positive maximizer, say at $t_{\vrh}$. Also $\Xi'''(r t_{\vrh}+s)\neq 0\implies \tph''(t_{\vrh})\neq 0$. Consequently, we must have $\tph'(t_{\vrh})=1-r\Xi'(r t_{\vrh + s})>0$.
		
		\emph{Proof of (c).} In this case $\tph(0)=0$ and $\tph'(0)<0$. Therefore, $\tph(\cdot)$ either has a unique positive root or at least $3$ positive roots. The rest of the argument is same as in the other parts of the lemma, so we omit them for brevity.
	\end{proof}
	
\end{document}